\documentclass[pdflatex,sn-mathphys-num]{sn-jnl}

\usepackage{geometry}
\geometry{margin=1.2in}
\usepackage{graphicx}%
\usepackage{multirow}%
\usepackage{amsmath,amssymb,amsfonts}%
\usepackage{amsthm}%
\usepackage{mathrsfs}%
\usepackage[title]{appendix}%
\usepackage{xcolor}%
\usepackage{textcomp}%
\usepackage{manyfoot}%
\usepackage{booktabs}%
\usepackage{algorithm}%
\usepackage{algorithmicx}%
\usepackage{algpseudocode}%
\usepackage{listings}%


\theoremstyle{thmstyleone}%
\newtheorem{theorem}[subsubsection]{Theorem}
\newtheorem{proposition}[subsubsection]{Proposition}%
\newtheorem{lemma}[subsubsection]{Lemma}%
\newtheorem{corollary}[subsubsection]{Corollary}%

\theoremstyle{thmstyletwo}%
\newtheorem{example}[subsubsection]{Example}%
\newtheorem{remark}[subsubsection]{Remark}%
\newtheorem{remarks}[subsubsection]{Remarks}%
\numberwithin{equation}{subsection}

\theoremstyle{thmstylethree}%
\newtheorem{definition}[subsubsection]{Definition}%

\raggedbottom

\usepackage{tikz-cd}
\usetikzlibrary{matrix,decorations.pathreplacing, calc, positioning,fit}	
\usepackage[all]{xy}
\SelectTips{cm}{}
\usepackage{tikz}
\usepackage{verbatim}

\usepackage{mathdots}
\RequirePackage{xspace}
\RequirePackage{etoolbox}
\RequirePackage{varwidth}
\RequirePackage{enumitem}
\RequirePackage{tensor}
\RequirePackage{mathtools}
\RequirePackage{longtable}
\RequirePackage{multirow}
\RequirePackage{makecell}
\RequirePackage{bigints}

\setcounter{tocdepth}{1}

\newcommand{\sF}{\ensuremath{\mathscr{F}}\xspace}

\newcommand{\BA}{\ensuremath{\mathbb{A}}\xspace}

\newcommand{\BC}{\ensuremath{\mathbb{C}}\xspace}
\newcommand{\BD}{\ensuremath{\mathbb{D}}\xspace}

\newcommand{\BF}{\ensuremath{\mathbb{F}}\xspace}
\newcommand{\BG}{\ensuremath{\mathbb{G}}\xspace}

\newcommand{\BK}{\ensuremath{\mathbb{K}}\xspace}

\newcommand{\BM}{\ensuremath{\mathbb{M}}\xspace}
\newcommand{\BN}{\ensuremath{\mathbb{N}}\xspace}

\newcommand{\BQ}{\ensuremath{\mathbb{Q}}\xspace}
\newcommand{\BR}{\ensuremath{\mathbb{R}}\xspace}

\newcommand{\BV}{\ensuremath{\mathbb{V}}\xspace}

\newcommand{\BZ}{\ensuremath{\mathbb{Z}}\xspace}

\newcommand{\bA}{\ensuremath{\mathbf{A}}\xspace}
\newcommand{\bB}{\ensuremath{\mathbf{B}}\xspace}

\newcommand{\bi}{\ensuremath{\mathbf{i}}\xspace}
\newcommand{\bj}{\ensuremath{\mathbf{j}}\xspace}

\newcommand{\bw}{\ensuremath{\mathbf{w}}\xspace}

\newcommand{\CA}{\ensuremath{\mathcal{A}}\xspace}
\newcommand{\CB}{\ensuremath{\mathcal{B}}\xspace}

\newcommand{\CD}{\ensuremath{\mathcal{D}}\xspace}

\newcommand{\CF}{\ensuremath{\mathcal{F}}\xspace}
\newcommand{\CG}{\ensuremath{\mathcal{G}}\xspace}

\newcommand{\CI}{\ensuremath{\mathcal{I}}\xspace}

\newcommand{\CK}{\ensuremath{\mathcal{K}}\xspace}
\newcommand{\CL}{\ensuremath{\mathcal{L}}\xspace}
\newcommand{\CM}{\ensuremath{\mathcal{M}}\xspace}

\newcommand{\CO}{\ensuremath{\mathcal{O}}\xspace}
\newcommand{\CP}{\ensuremath{\mathcal{P}}\xspace}

\newcommand{\CS}{\ensuremath{\mathcal{S}}\xspace}

\newcommand{\CX}{\ensuremath{\mathcal{X}}\xspace}
\newcommand{\CY}{\ensuremath{\mathcal{Y}}\xspace}

\newcommand{\ad}{\mathrm{ad}}
\DeclareMathOperator{\Aut}{Aut}

\DeclareMathOperator{\End}{End}
\newcommand{\Fil}{\ensuremath{\mathrm{Fil}}\xspace}
\DeclareMathOperator{\Gal}{Gal}
\newcommand{\GL}{\mathrm{GL}}
\DeclareMathOperator{\Gr}{Gr}
\newcommand{\GU}{\mathrm{GU}}

\newcommand{\id}{\ensuremath{\mathrm{id}}\xspace}

\DeclareMathOperator{\im}{im}

\newcommand{\loc}{\ensuremath{\mathrm{loc}}\xspace}
\newcommand{\naive}{\ensuremath{\mathrm{naive}}\xspace}
\newcommand{\OGr}{\mathrm{OGr}}
\DeclareMathOperator{\ord}{ord}

\newcommand{\red}{\ensuremath{\mathrm{red}}\xspace}
\DeclareMathOperator{\Res}{Res}
\newcommand{\Sh}{\mathrm{Sh}}

\DeclareMathOperator{\spann}{span}
\DeclareMathOperator{\Spec}{Spec}
\newcommand{\spin}{\ensuremath{\mathrm{spin}}\xspace}
\newcommand{\SO}{{\mathrm{SO}}}

\DeclareMathOperator{\sgn}{sgn}
\DeclareMathOperator{\tr}{tr}
\newcommand{\U}{\mathrm{U}}
\newcommand{\wt}{\widetilde}
\newcommand{\wh}{\widehat}
\newcommand{\ov}{\overline}
\newcommand{\lra}{\longrightarrow}
\newcommand{\ini}{\text{in}}
\newcommand{\lcm}{\text{lcm}}	

\DeclareMathOperator{\Span}{Span}
\newcommand{\WT}{\text{WT}}

\DeclareMathOperator{\Adm}{\text{Adm}}
\newcommand{\Fl}{\mathcal{F}\ell}
\newcommand{\aform}{\ensuremath{\langle\text{~,~}\rangle}\xspace}
\newcommand{\sform}{\ensuremath{(\text{~,~})}\xspace}

\newenvironment{altenumerate}
	{\begin{list}
			{(\theenumi) }
			{\usecounter{enumi}
				\setlength{\labelwidth}{0pt}
				\setlength{\labelsep}{0pt}
				\setlength{\leftmargin}{0pt}
				\setlength{\itemsep}{\the\smallskipamount}
				\renewcommand{\theenumi}{\roman{enumi}}
		}}
		{\end{list}}
	\newenvironment{altenumerate2}
	{\begin{list}
			{\textup{(\theenumii)} }
			{\usecounter{enumii}
				\setlength{\labelwidth}{0pt}
				\setlength{\labelsep}{0pt}
				\setlength{\leftmargin}{2em}
				\setlength{\itemsep}{\the\smallskipamount}
				\renewcommand{\theenumii}{\alph{enumii}}
		}}
		{\end{list}}
	
	\newenvironment{altitemize}
	{\begin{list}
			{$\bullet$}
			{\setlength{\labelwidth}{0pt}
				\setlength{\itemindent}{5pt}
				\setlength{\labelsep}{5pt}
				\setlength{\leftmargin}{0pt}
				\setlength{\itemsep}{\the\smallskipamount}
		}}
		{\end{list}}

\setitemize[0]{leftmargin=*,itemsep=\the\smallskipamount}
\setenumerate[0]{leftmargin=*,itemsep=\the\smallskipamount}
	
\renewcommand{\to}{%
		\ifbool{@display}{\longrightarrow}{\rightarrow}%
	}
	\let\shortmapsto\mapsto
	\renewcommand{\mapsto}{%
		\ifbool{@display}{\longmapsto}{\shortmapsto}%
	}
	\newlength{\olen}
	\newlength{\ulen}
	\newlength{\xlen}
	\newcommand{\xra}[2][]{%
		\ifbool{@display}%
		{\settowidth{\olen}{$\overset{#2}{\longrightarrow}$}%
			\settowidth{\ulen}{$\underset{#1}{\longrightarrow}$}%
			\settowidth{\xlen}{$\xrightarrow[#1]{#2}$}%
			\ifdimgreater{\olen}{\xlen}%
			{\underset{#1}{\overset{#2}{\longrightarrow}}}%
			{\ifdimgreater{\ulen}{\xlen}%
				{\underset{#1}{\overset{#2}{\longrightarrow}}}
				{\xrightarrow[#1]{#2}}}}%
		{\xrightarrow[#1]{#2}}
	}
	\makeatother
	\newcommand{\xyra}[2][]{%
		\settowidth{\xlen}{$\xrightarrow[#1]{#2}$}%
		\ifbool{@display}%
		{\settowidth{\olen}{$\overset{#2}{\longrightarrow}$}%
			\settowidth{\ulen}{$\underset{#1}{\longrightarrow}$}%
			\ifdimgreater{\olen}{\xlen}%
			{\mathrel{\xymatrix@M=.12ex@C=3.2ex{\ar[r]^-{#2}_-{#1} &}}}%
			{\ifdimgreater{\ulen}{\xlen}%
				{\mathrel{\xymatrix@M=.12ex@C=3.2ex{\ar[r]^-{#2}_-{#1} &}}}
				{\mathrel{\xymatrix@M=.12ex@C=\the\xlen{\ar[r]^-{#2}_-{#1} &}}}}}%
		{\mathrel{\xymatrix@M=.12ex@C=\the\xlen{\ar[r]^-{#2}_-{#1} &}}}%
	}
	\makeatletter
	\newcommand{\xla}[2][]{%
		\ifbool{@display}%
		{\settowidth{\olen}{$\overset{#2}{\longleftarrow}$}%
			\settowidth{\ulen}{$\underset{#1}{\longleftarrow}$}%
			\settowidth{\xlen}{$\xleftarrow[#1]{#2}$}%
			\ifdimgreater{\olen}{\xlen}%
			{\underset{#1}{\overset{#2}{\longleftarrow}}}%
			{\ifdimgreater{\ulen}{\xlen}%
				{\underset{#1}{\overset{#2}{\longleftarrow}}}
				{\xleftarrow[#1]{#2}}}}%
		{\xleftarrow[#1]{#2}}
	}
	\newcommand{\isoarrow}{%
		\ifbool{@display}{\overset{\sim}{\longrightarrow}}{\xrightarrow\sim}%
	}
	\renewcommand{\lra}{%
		\ifbool{@display}{\longleftrightarrow}{\leftrightarrow}%
	}
	\newcommand{\undertilde}{\raisebox{0.4ex}{\smash[t]{$\scriptstyle\sim$}}}

\begin{document}

\title[Ramified unitary local model]{On the moduli description of ramified unitary local models of signature $(n-1,1)$}


\author{\fnm{Yu} \sur{Luo}}\email{yluo237@wisc.edu}
\affil{\orgdiv{Department of Mathematics}, \orgname{University of Wisconsin-Madison}, \orgaddress{\street{480 Lincoln Drive}, \city{Madison}, \postcode{53706}, \state{WI}, \country{USA}}}


\abstract{We provide a moduli description of the ramified unitary local model of signature $(n-1,1)$ with arbitrary parahoric level structure, assuming the residue field has characteristic not equal to $2$, thereby confirming a conjecture of Smithling \cite{Smithling2015}. 
Our approach involves writing down explicit equations for the special fiber and proving that they define a normal, Cohen–Macaulay scheme, which is also of independent interest.
As applications, we obtain moduli descriptions for:
(1) ramified unitary Pappas–Zhu local models with arbitrary parahoric level;
(2) the irreducible components of their special fiber in the maximal parahoric case;
(3) integral models of ramified unitary Shimura varieties with arbitrary (quasi-)parahoric level.}

\maketitle
\tableofcontents

\section{Introduction}\label{intro}
\subsection{Background}\label{intro_back}
Given a Shimura variety $\Sh_K(G,h)$ defined over the reflex field $E$, one of the fundamental problems in arithmetic geometry is constructing and studying nice integral models of $\Sh_K(G,h)$ defined over the ring of integers $\CO_E$.
While those integral models have very complicated geometric structures, their local behaviors can be governed by some schemes defined in terms of linear algebra data, which are called local models.
	
To be more precise, suppose $\CO$ is the completion of $\CO_E$ at a non-archimedean place. Ideally, given an integral model $\CS_K(G,h)$ of the Shimura variety over $\Spec \CO$, there exists a corresponding local model $M_K(G,h)$ over $\Spec \CO$, and vice versa.
Moreover, their geometric structures are related by the \emph{local model diagram}:
\begin{equation*}
\begin{aligned}
\xymatrix{
&\wt{\CS}_K(G,h)\ar[ld]_-{\pi}\ar[dr]^-{\wt{\varphi}}\\
\CS_K(G,h) &&  M_K(G,h),
}
\end{aligned}
\end{equation*}
where $\pi$ is a principal homogeneous space of a smooth group scheme over $\CO$, and $\wt{\varphi}$ is a smooth morphism. 
These morphisms have the same relative dimension, hence $\CS_K(G,h)$ and $M_K(G,h)$ are \'etale locally identified; cf.\ \cite{RZ}.
Consequently, we can reduce questions of local nature about $\CS_K(G,h)$ to the corresponding questions about $M_K(G,h)$.
Since local models are constructed from linear algebra data, they should be easier to study than the integral models $\CS_K(G,h)$.

Motivated by this, it is natural to ask for the existence of ``nice'' local models.
In the celebrated paper \cite{PZ2013}, Pappas and Zhu gave a uniform group-theoretic construction of the local models for tamely ramified groups. Their construction was further refined by He, Pappas, and Rapoport \cite{HPR}, among others work. For tamely ramified groups, such refined model has been confirmed to be ``canonical'' in the sense of Scholze-Weinstein, cf.\, \cite[Corollary 2.17]{HPR}.

On the other hand, since the group-theoretic construction cannot provide a moduli description, it remains an interesting problem to find a moduli description for Pappas-Zhu's model.
In the PEL setting, Rapoport and Zink \cite{RZ} constructed natural moduli functors for integral models of Shimura varieties and their associated local models based on lattice-theoretic data. 
However, Pappas \cite{Pappas2000} observed that these functors may fail to be flat.
It is now known that the Pappas–Zhu model is a closed subscheme of the Rapoport-Zink model, see, for example, \cite[Lemma 4.1]{HPR}. 
The task is therefore to modify the Rapoport–Zink's moduli functor so that it describes the Pappas–Zhu model.

\subsection{Ramified unitary local model}\label{intro_ramified-unitary-lm}
In this paper, we focus on the local model associated with the unitary group defined by a hermitian form over a quadratic extension $F/F_0$ of a $p$-adic field. 
When $F/F_0$ is split or unramified, only algebraic groups of type $A$ are involved, and G\"ortz \cite{Gortz2001} has shown that the corresponding Rapoport–Zink models are flat.
In contrast, when $F/F_0$ is ramified, the associated Rapoport-Zink models may fail to be flat, see \cite{Pappas2000}.
		
To be more precise, let $F/F_0$ be a ramified quadratic extension of $p$-adic fields with $p\neq 2$\footnote{Let us note that the case $p=2$ is fundamentally more difficult and is not excluded merely for the sake of simplicity, we maintain this assumption \emph{throughout the paper.}}. 
Let $(V,\phi)$ be an $F/F_0$-hermitian space of dimension $n\geq 3$\footnote{The case when $n\leq 2$ is well-understood and has different patterns; cf. for example, \cite[\S 8]{RSZ2018}.} and let $m:=\lfloor n/2\rfloor$. When the hermitian form $\phi$ splits\footnote{The case where the hermitian form $\phi$ is non-split, so that the unitary group $\U(V,\phi)$ may be non-quasi-split, will be treated in \S \ref{application_nonsplit}.}, the conjugacy classes of parahoric subgroups of $\U(V)$ can be labeled by nonempty subsets $I\subset \{0,\cdots,m\}$ satisfying the condition
\begin{equation}\label{intro_back:local-dynkin}
	\text{ if }n\text{ is even, then }m-1\in I\Rightarrow m\in I
	\text{ (for odd $n$, no condition on $I$ is imposed)}.
\end{equation}
These index sets relate to the local Dynkin diagram; cf.\ \cite[4.a]{PR2008} or \S \ref{general_dyn}.

We also fix a pair $(r,s)$ with $r+s=n$, called the \emph{signature}. In this paper, we focus on the case $(r,s)=(n-1,1)$. 
Given such data, we define the Rapoport-Zink local model $M_I^{\naive}(r,s)$ associated with the quasi-split unitary group $\U(V)$, 
This is called the ``naive'' local model, since it is not flat in general; see \S \ref{moduli_naive} for the precise definition. 
		
In \cite[\S 11]{PR2008}, Pappas and Rapoport defined the local model $M_I^{\loc}(r,s)$ as the scheme-theoretic closure of the generic fiber of $M_I^\naive(r,s)$. This model coincides with the corresponding Pappas-Zhu local model, cf.\ Proposition \ref{application_nonsplit:PZ-vs-loc}.
As a first step toward a moduli description of $M_I^{\loc}(r,s)$, Pappas \cite{Pappas2000} introduced the \emph{wedge condition} to the moduli functor; cf.\ \S \ref{moduli_PR}.
Let $M_I^\wedge(r,s)$ denote the closed subscheme of $M_I^\naive(r,s)$ defined by the wedge condition, this give rise to a chain of closed subschemes of the naive model, which are equalities when restricting to the generic fibers:
\begin{equation*}
	M_I^\loc(r,s)\subset M_I^\wedge(r,s)\subset M^\naive_I(r,s).
\end{equation*}
In the self-dual maximal parahoric case, i.e., when $I=\{0\}$, Pappas conjectured that $M^\wedge_{\{0\}}(r,s)=M^\loc_{\{0\}}(r,s)$, and verified this for the signature $(r,s)=(n-1,1)$.
		
However, for the other index sets $I$, the wedge condition is not enough. The next advance came in \cite{PR2009} where Pappas and Rapoport introduced the \emph{spin condition}; cf.\ \S \ref{moduli_PR}.
Denote by $M_I^{\spin}(r,s)$ the closed subscheme of $M_I^\wedge(r,s)$ defined by this condition, they conjectured that $M_I^{\spin}(r,s)=M_I^{\loc}(r,s)$.
Later, Rapoport, Smithling and Zhang \cite{RSZ2017} verified that when $n$ is even, the inclusion $M_{\{m\}}^\loc(n-1,1)\subset M_{\{m\}}^\spin(n-1,1)$ is an equality.
In earlier work \cite{Smithling2011, Smithling2014}, Smithling showed that the spin models have the same underlying topological space as the local models under the inclusion $M^\loc_I(r,s)\subset M^\spin_I(r,s)$.
However, in \cite{Smithling2015}, in response to a question of Rapoport, he showed that $M^{\spin}_{\{m\}}(n-1,1)$ is not equal to $M^\loc_{\{m\}}(n-1,1)$ when $n\geq 5$ is odd\footnote{For another counterexample, see \cite[Remark 9.14]{RSZ2018}.}.
In the same paper, Smithling introduced the \emph{strengthened spin condition}, which defines a closed subscheme $M_I(r,s)$; cf.\ \S \ref{moduli_ss}. The strengthened spin model fits into the chain of closed immersions:
\begin{equation*}
	M_I^\loc\subset M_I\subset M_I^\spin\subset M_I^\wedge\subset M_I^\naive.
\end{equation*}
Smithling conjectured that $M_I(r,s)=M_I^{\loc}(r,s)$, and confirmed this in the case $I=\{m\}$ with $(r,s)=(n-1,1)$ for odd $n\geq 5$.
Later, Yu verified in her thesis \cite{Yu2019} that when $n=2m$, we have $M_{\{m-1,m\}}^\loc(n-1,1)=M_{\{m-1,m\}}(n-1,1)$.
		
The main result of this paper is the following:
		
\begin{theorem}\label{intro_back:main}
For any nonempty subset $I\subset\{0,1,\cdots,m\}$ satisfying \eqref{intro_back:local-dynkin}, the inclusion $M_I^\loc(n-1,1)\subset M_I(n-1,1)$ is an equality.
In other words, for signature $(n-1,1)$, the quasi-split ramified unitary local model is represented by the strengthened spin model.
\end{theorem}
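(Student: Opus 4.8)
The plan is to reduce the theorem to the statement that the special fiber of $M_I:=M_I(n-1,1)$ is reduced. Write $\CO$ for the complete discrete valuation ring over which the model is defined, $F$ for its fraction field, $k$ for its residue field, and put $\overline{M}_I:=M_I\otimes_{\CO}k$ and $\overline{M}_I^{\loc}:=M_I^{\loc}\otimes_{\CO}k$. By definition $M_I^{\loc}$ is the scheme-theoretic closure in $M_I^{\naive}$ of the smooth generic fiber, so $M_I^{\loc}$ is $\CO$-flat and reduced. Smithling's theorem that $M_I^{\loc}$ and $M_I^{\spin}$ have the same underlying topological space, together with the chain of closed immersions $M_I^{\loc}\subset M_I\subset M_I^{\spin}$, forces $|M_I^{\loc}|=|M_I|$, hence $|\overline{M}_I^{\loc}|=|\overline{M}_I|$ on special fibers, and therefore $(\overline{M}_I)_{\mathrm{red}}=\overline{M}_I^{\loc}$. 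Now suppose $\overline{M}_I$ is reduced. Then $\overline{M}_I=\overline{M}_I^{\loc}$ as schemes; fixing a projective embedding of $M_I^{\naive}$ over $\CO$, the Hilbert polynomial of $\overline{M}_I$ equals that of $\overline{M}_I^{\loc}$, which by $\CO$-flatness of $M_I^{\loc}$ equals that of the generic fiber $M_I^{\loc}\otimes F=M_I\otimes F$; constancy of the fiberwise Hilbert polynomial over the reduced Noetherian base $\Spec\CO$ then shows $M_I$ is $\CO$-flat. The ideal sheaf of $M_I^{\loc}$ in $M_I$ vanishes after inverting a uniformizer (the two models coincide over $F$), hence is a torsion sheaf; being a subsheaf of the now torsion-free structure sheaf of $M_I$, it is zero. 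Thus $M_I=M_I^{\loc}$, and the theorem is reduced to: $\overline{M}_I$ is reduced.

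\textbf{Local charts and worst points.}
To prove reducedness I would argue chart by chart on the standard affine cover of $M_I^{\naive}$ --- one chart for each choice of a complement to the relevant filtration at each vertex $i\in I$ --- on which a point is recorded by a tuple of matrices. Transcribe the lattice-chain conditions, the wedge condition, the spin condition, and the strengthened spin condition of \cite{Smithling2015} into explicit polynomial equations in the matrix entries. For the signature $(n-1,1)$ the wedge condition is a rank-$\le 1$ (i.e.\ ``$\wedge^2=0$'') condition and is correspondingly rigid, which is what makes this signature tractable. Each chart of $\overline{M}_I$ carries a $\mathbb{G}_m$-action coming from a cocharacter of the torus in the parahoric, contracting it to a single fixed point $x_0$; its coordinate ring is then a nonnegatively graded $k$-algebra with degree-zero part $k$, so its nilradical is a homogeneous ideal without degree-zero part, hence contained in $\mathfrak{m}_{x_0}$. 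Consequently $\overline{M}_I$ is reduced if and only if each completed local ring $\widehat{\mathcal{O}}_{\overline{M}_I,x_0}$ is reduced. The action of the parahoric group identifies worst points of charts of the same combinatorial type, leaving a finite, explicitly indexed list of local rings to treat, organized by the type of $x_0$, the parity of $n$, and whether $I$ meets $\{m-1,m\}$ (where \eqref{intro_back:local-dynkin} enters).

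\textbf{The local rings at worst points.}
The heart is to show each $\widehat{\mathcal{O}}_{\overline{M}_I,x_0}$ is reduced. One route is to exhibit a monomial (Gr\"obner) degeneration of the ideal cutting out the chart and check that the initial ideal is radical of the expected dimension $n-1$; this yields reducedness and $\CO$-flatness at once, bypassing part of the first step. Another is to identify $\widehat{\mathcal{O}}_{\overline{M}_I,x_0}$ with a completed local ring of a classically understood variety --- a determinantal or rank variety, or a nilpotent orbit closure, cut down by the spin equations --- known to be reduced (indeed normal and Cohen--Macaulay), and to match it against the corresponding point of $\overline{M}_I^{\loc}$, which by \cite{PZ2013} is a union of affine Schubert varieties and hence reduced. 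When the worst-point local ring for $I$ degenerates to, or is assembled from, those for the maximal parahorics $\{i\}$ with $i\in I$, one can feed in the already-known cases $I=\{0\}$, $I=\{m\}$, and $I=\{m-1,m\}$.

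\textbf{The main obstacle.}
The essential difficulty is the analysis just described. The equations contributed by the \emph{strengthened} spin condition are genuinely finer than those of the ordinary spin condition --- as the inequality $M^{\spin}_{\{m\}}(n-1,1)\neq M^{\loc}_{\{m\}}(n-1,1)$ for odd $n\ge 5$ already shows --- and one must verify that in every degenerate chart, and for every admissible $I$, these extra equations cut the non-reduced scheme $\overline{M}_I^{\spin}$ down to exactly the reduced scheme $\overline{M}_I^{\loc}$. Keeping track of the spinor coordinates and their sign behaviour in the worst-case charts, uniformly in $n$ and in the index set $I$, and dispatching the various parity and endpoint cases, is where essentially all of the effort goes.
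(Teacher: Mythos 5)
Your first step---deducing $M_I=M_I^\loc$ from reducedness of the special fiber of $M_I$ using Smithling's topological flatness---is correct and is essentially the paper's own reduction (Corollary \ref{moduli_fc:reduced}); your Hilbert-polynomial/torsion-sheaf argument is a harmless variant of the flatness criterion invoked there. From that point on, however, the proposal is a plan rather than a proof, and the two ingredients that actually carry the theorem are missing. First, you never establish reducedness for the maximal parahoric levels that were not previously known, namely the strongly non-special $I=\{\kappa\}$. The paper's core content is the explicit computation of the strengthened spin condition on the worst-point chart (Sections \ref{equ}--\ref{max}), leading to the presentation $k[\bA,\bB]/\bigl(\bigwedge^2(\bA,\bB),\,\bA-\bA^t,\,\tr(\bA H)\bigr)$, followed by Cohen--Macaulayness via Conca's symmetric determinantal rings, smoothness away from the component intersection, Serre's criterion, and a Gr\"obner-basis computation for the intersection of the two components. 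Your text only names candidate techniques (``a Gr\"obner degeneration'', ``a determinantal or rank variety \dots known to be reduced'') without exhibiting the equations or the degeneration; and your fallback of ``matching against the corresponding point of $\overline{M}_I^\loc$'' is circular, since reducedness of the local model is already known---what must be shown is that the strengthened spin equations cut out exactly that reduced scheme.

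Second, your passage from the maximal cases to general $I$ is not supplied. Saying that the worst-point local ring for $I$ ``degenerates to, or is assembled from'' those of the maximal parahorics is not an argument, and carrying out the worst-point (or all-fixed-points) computation separately for every admissible $I$, as your chart-by-chart plan would require, is a substantially larger task that you do not perform. The paper instead embeds $M_{\CI,s}$ into the affine flag variety $\Fl_\CI$ and proves (Proposition \ref{general:intersection-of-tilde-M}) that it equals the scheme-theoretic intersection $\bigcap_{i\in\CI}\wt{M}_{\CI,\{i\}}$ of pullbacks from the maximal levels; once each maximal-level special fiber is known to be reduced, these pullbacks are unions of Schubert varieties, and an intersection of unions of Schubert varieties is Frobenius split, hence reduced, for $p\ge 3$, with a separate spreading-out argument via a standard model over $\BZ[1/2]$ to handle residue characteristic $0$. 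Without either this intersection/Frobenius-splitting mechanism (including the characteristic-zero case) or a completed case-by-case computation, the reducedness of $\overline{M}_I$---and hence the theorem---remains unproven in your proposal.
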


\begin{remark}
Theorem \ref{intro_back:main} also holds when $F/F_0=k(u)/k(t)$ is a ramified quadratic extension of function fields with $u^2=t$, where $k$ is a perfect field of characteristic $p\neq 2$. 
\end{remark}

We further obtain a moduli description of the Pappas-Zhu model associated with \emph{any} $F/F_0$-hermitian space $(V,\phi)$ where $F/F_0$ is a ramified quadratic extension of $p$-adic fields with $p\neq 2$. See \S \ref{application_nonsplit} for details.

Suppose $(G,\{\mu\},K)$ is a local model triple \cite[\S 2.1]{HPR} where $G=\GU(V,\phi)$ is the unitary similitude group of the hermitian space $(V,\phi)$. Assume that the geometric cocharacter class $\{\mu\}$ gives the signature $(n-1,1)$, and $K$ is a parahoric subgroup of $G(F)$.
The result is the following:
\begin{theorem}\label{intro_back:non-split}
The Pappas-Zhu local model $M^\loc_K(G,\{\mu\})$ is represented by the moduli functor $\CM_\CL(G,\mu)$ defined in \S \ref{application_nonsplit_moduli}.
\end{theorem}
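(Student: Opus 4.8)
\emph{Proof strategy.}
The plan is to deduce Theorem~\ref{intro_back:non-split} from the quasi-split case, Theorem~\ref{intro_back:main}, by a flatness-descent argument after passing to the maximal unramified extension. Recall from \S\ref{application_nonsplit_moduli} that one first chooses a periodic $\CO_F$-lattice chain $\CL$ in $(V,\phi)$ whose connected stabilizer is the given parahoric $K$, and that $\CM_\CL(G,\mu)$ is the closed subscheme of a Rapoport--Zink-type naive lattice-chain model $M_\CL^{\naive}$ cut out by the wedge condition and the strengthened spin condition; one must also check that, up to isomorphism, the functor so obtained depends only on the triple $(G,\{\mu\},K)$ and not on the auxiliary chain $\CL$.

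First I would check that $\CM_\CL(G,\mu)$ has the expected generic fiber. Over $E$ the chain $M_\CL^{\naive}\supset\cdots\supset\CM_\CL(G,\mu)$ consists of equalities, since the wedge and strengthened spin conditions impose nothing there, and $(M_\CL^{\naive})_E$ is the twisted homogeneous space attached to $(G,\{\mu\})$, which is precisely the generic fiber of the Pappas--Zhu local model $M^{\loc}_K(G,\{\mu\})$ (Proposition~\ref{application_nonsplit:PZ-vs-loc}). By the same proposition $M^{\loc}_K(G,\{\mu\})$ is the scheme-theoretic closure of its generic fiber inside $M_\CL^{\naive}$; since $\CM_\CL(G,\mu)\subset M_\CL^{\naive}$ is a closed subscheme with the same generic fiber, we obtain $M^{\loc}_K(G,\{\mu\})\subset\CM_\CL(G,\mu)$, and it remains only to prove that $\CM_\CL(G,\mu)$ is flat over $\CO$ (equivalently, equals that closure).

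For flatness I would base change along the faithfully flat map $\CO\to\CO_{\breve F_0}$, where $\breve F_0$ is the completion of the maximal unramified extension. Since $\charac(k)\neq 2$, the residue field of $\CO_{\breve F_0}$ is algebraically closed, $\CO_{\breve F_0}^\times$ is $2$-divisible, and one checks $\Nm(\breve F^\times)=\breve F_0^\times$; hence every hermitian form of rank $n$ over $\breve F/\breve F_0$ is split, $G\otimes_{F_0}\breve F_0$ is isomorphic to the quasi-split unitary similitude group, and such an isomorphism carries $\CL$ to a periodic lattice chain in the split hermitian space, self-dual up to homothety, with connected stabilizer corresponding to some index set $I'\subset\{0,\dots,m\}$ satisfying \eqref{intro_back:local-dynkin}. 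Because the wedge and strengthened spin conditions are formulated purely in terms of the lattices and the induced pairing, base change identifies $\CM_\CL(G,\mu)\otimes_\CO\CO_{\breve F_0}$ with $M_{I'}(n-1,1)_{\CO_{\breve F_0}}$. By the version of Theorem~\ref{intro_back:main} over a general complete discrete valuation ring, applied to $\CO_{\breve F}/\CO_{\breve F_0}$, this scheme equals $M^{\loc}_{I'}(n-1,1)_{\CO_{\breve F_0}}$ and in particular is flat; since flatness descends along a faithfully flat base change, $\CM_\CL(G,\mu)$ is $\CO$-flat, which together with the previous paragraph yields $\CM_\CL(G,\mu)=M^{\loc}_K(G,\{\mu\})$.

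I expect the main obstacle to be the identification made after base change: one must verify that the strengthened spin condition is stable under unramified base change and insensitive to the rationality of the hermitian form, so that it really does become the quasi-split strengthened spin condition over $\CO_{\breve F_0}$, and one must pin down the index set $I'$ and confirm it satisfies \eqref{intro_back:local-dynkin} — which requires controlling the lattice chains (those involving $\pi$-modular lattices) that occur for non-split hermitian forms but do not appear in the split picture. For odd $n$ the descent step can be bypassed entirely: rescaling $\phi$ by a suitable element of $F_0^\times$ already makes $G$ quasi-split and identifies $\CM_\CL(G,\mu)$ with some $M_I(n-1,1)$ directly, so the genuine content is concentrated in the even, non-quasi-split case.
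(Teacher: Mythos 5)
Your proposal is correct and follows essentially the same route as the paper: base change along an unramified extension splitting the hermitian form, identify $\CM_\CL(G,\mu)$ with a quasi-split strengthened spin model $M_{I'}$, invoke the split-case result (via the fact that the Pappas--Zhu model is the flat closure of the generic fiber inside the naive lattice-chain model), and descend. The only differences are cosmetic: the paper works over a finite unramified extension $L_0$ and descends the equality of closed subschemes of the naive model through a commutative diagram, while you descend flatness from the completed maximal unramified extension (note the faithfully flat base change should be $\CO_F\to\CO_{\breve F}$, not to $\CO_{\breve F_0}$, since $\CM_\CL(G,\mu)$ lives over $\CO_F$) and then conclude by the closure characterization of flatness.
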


\begin{remark}
The moduli functor here is defined only with the strengthened spin condition. In particular, we do not need to impose the Kottwitz condition or the wedge condition, see Remark \ref{moduli_ss:imply-kott}.
\end{remark}

\subsection{Ramified unitary local models of strongly non-special parahoric subgroups}
Using the theory of (partial) affine flag varieties and their relation to local models, we can reduce the proof of Theorem \ref{intro_back:main} into the cases when the parahoric subgroups are maximal; see \S \ref{general} for details of the reduction. 
Furthermore, we will focus on the following cases.

\begin{definition}\label{intro_back:general-max}
Suppose $n\geq 3$. A non-empty subset $I\subset\{0,1,\cdots,m\}$ is called \emph{strongly non-special} if: (1) it equals a singleton set $\{\kappa\}$; (2) when $n=2m$, $\kappa$ is not equal to $0,m-1,m$; (3) when $n=2m+1$, $\kappa$ is not equal to $0,m$.
\end{definition}

In \S \ref{moduli_setup_alg-group}, we will relate non-empty index sets $I$ with parahoric subgroups. A strongly non-special index set $I$ will correspond to a maximal parahoric subgroup which is neither special nor self-dual.
Note that the first example of strongly non-special index appears when $n=5$ and $\kappa=1$.

We will prove that when the index is strongly non-special,
the strengthened spin models represent the local models:

\begin{theorem}\label{intro_back:max}
Let $n\geq 5$, suppose the signature is $(n-1,1)$ and $I=\{\kappa\}$ is strongly non-special. 
\begin{altenumerate}
\item The strengthened spin model $M_{\{\kappa\}}$ represents the local model $M^\loc_{\{\kappa\}}$. It is flat, normal, and Cohen-Macaulay of dimension $n$.
\item The special fiber $M^\loc_{\{\kappa\},s}$ is reduced and has two irreducible components $Z_1$ and $Z_2$, each of which is normal and Cohen-Macaulay of dimension $n-1$.
\item The intersection of the irreducible components $Z_1\cap Z_2$ is an irreducible quadratic cone of dimension $n-2$, which is normal and Cohen-Macaulay.
\item Let the ``worst point'' be the unique closed Schubert cell contained in the special fiber of the local model; cf.\ Lemma \ref{equ_chart:wp}.
The closed subvarieties $Z_1$, $Z_2$, and their intersection $Z_1\cap Z_2$ are all singular, and their singular locus consists solely of the worst point. 
In particular, the local model $M^\loc_{\{\kappa\}}$ does not admit a semi-stable reduction.
\item(\S \ref{application_moduli-irr})  The irreducible components $Z_1$ and $Z_2$, as well as their intersection $Z_1\cap Z_2$, have moduli descriptions.
\end{altenumerate}
\end{theorem}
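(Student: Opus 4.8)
The plan is to reduce the entire statement to an explicit computation in one affine chart around the worst point, and then to propagate everything by the action of the parahoric group scheme. Recall from \S \ref{general} that $M^\loc_{\{\kappa\}}$ is, by definition, the scheme-theoretic closure in $M^\naive_{\{\kappa\}}$ of its generic fibre --- a smooth variety of dimension $n-1$ --- that the special fibre of $M^\loc_{\{\kappa\}}$ is a union of Schubert-type strata indexed by the $\mu$-admissible set $\Adm(\mu)_{\{\kappa\}}$, and that the worst point $x_0$ of Lemma~\ref{equ_chart:wp} is attached to the unique minimal admissible element, hence lies in the closure of every stratum. Since $M^\loc_{\{\kappa\}}\subseteq M_{\{\kappa\}}\subseteq M^\naive_{\{\kappa\}}$ is a chain of closed immersions that are isomorphisms on generic fibres, the equality $M_{\{\kappa\}}=M^\loc_{\{\kappa\}}$ in part~(i) is equivalent to the $\CO$-flatness of $M_{\{\kappa\}}$, i.e.\ to $\pi$ being a nonzerodivisor on its structure sheaf. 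Flatness, reducedness of the special fibre, normality and Cohen-Macaulayness are all local; $M_{\{\kappa\}}$ and each component $Z_i$ are stable under the connected parahoric group scheme acting on $M^\naive_{\{\kappa\}}$, whose orbits on the special fibre are exactly the strata; and since $M_{\{\kappa\}}$ is proper over $\CO$ and $x_0$ meets the closure of every stratum, the parahoric translates of any affine neighborhood $U$ of $x_0$ cover $M_{\{\kappa\}}$. It therefore suffices to prove all assertions on $U\cap M_{\{\kappa\}}$ and, afterwards, on $U\cap Z_i$ and $U\cap(Z_1\cap Z_2)$.

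I would then carry out the chart computation. In the standard $\CO$-basis adapted to $\Lambda_\kappa$, the chart $U$ is an affine space inside the ambient Grassmannian, and $U\cap M_{\{\kappa\}}$ is cut out by the wedge, Kottwitz, and strengthened spin conditions (where, cf.\ Remark~\ref{moduli_ss:imply-kott}, the Kottwitz condition already follows from the strengthened spin one), the strengthened spin condition amounting to sign and vanishing constraints on the relevant Pfaffian-type coordinates. Using $p\neq 2$, I would diagonalize the hermitian form, bring these equations to a normal form, and identify $U\cap M_{\{\kappa\}}=\Spec R$ with an explicit scheme governed by a single nondegenerate quadratic form. From the normal form one reads off that $R$ is $\CO$-flat, that $R/\pi R$ is reduced and equidimensional of dimension $n-1$ (so $\dim R=n$) with exactly two minimal primes, and that $R$ is Cohen-Macaulay --- via a complete-intersection presentation, or a flat degeneration of $R$ to a complete intersection or to a reduced Stanley-Reisner/toric model. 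Flatness gives $M_{\{\kappa\}}=M^\loc_{\{\kappa\}}$; it is Cohen-Macaulay of dimension $n$ by the chart, and normal by Serre's criterion $(R_1)+(S_2)$: here $(S_2)$ is the Cohen-Macaulay property, while $(R_1)$ is immediate because flatness together with reducedness of the special fibre makes the local rings at the codimension-one points --- the generic points of $Z_1$ and $Z_2$ --- discrete valuation rings. This establishes~(i), and translation by the group action spreads it to all of $M_{\{\kappa\}}$.

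For~(ii): the special fibre $M^\loc_{\{\kappa\},s}=M_{\{\kappa\},s}$ is reduced, visibly so in the chart and hence everywhere by the group action and density of the top strata; the two maximal elements of $\Adm(\mu)_{\{\kappa\}}$ correspond to its two irreducible components $Z_1,Z_2$ of dimension $n-1$, which by the same chart analysis are Cohen-Macaulay and, being regular in codimension one (a Jacobian computation shows they are smooth away from $x_0$, which has codimension $n-1\geq 2$ in each $Z_i$), also normal. For~(iii): in the chart the scheme-theoretic intersection $Z_1\cap Z_2$ is cut out by one further nondegenerate quadratic relation, hence is an affine quadric cone of dimension $n-2$; for $n\geq 5$ such a cone is integral, normal and Cohen-Macaulay, with a unique singular point at its vertex $x_0$. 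For~(iv): the same Jacobian computation shows that $Z_1$, $Z_2$, and $Z_1\cap Z_2$ are smooth on the complement of $x_0$ and singular at $x_0$ --- the Zariski tangent space there is strictly too large; for $Z_1\cap Z_2$ this is just the vertex of a cone of dimension $\geq 3$ --- so $x_0$ is their only singular point, and since near $x_0$ the special fibre is not a normal-crossings divisor (its two branches meet along a singular quadric cone rather than transversally), $M^\loc_{\{\kappa\}}$ admits no semi-stable reduction. For~(v), the chart exhibits $Z_1$, $Z_2$, and $Z_1\cap Z_2$ as the loci where the tautological object of $M^\naive_{\{\kappa\}}$ satisfies one additional linear incidence (or rank) condition; checking in the chart that these closed subfunctors coincide with the components and their intersection yields the moduli descriptions, carried out in \S \ref{application_moduli-irr}.

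The main obstacle is the chart computation of the second step. Writing the strengthened spin condition explicitly in a neighborhood of the worst point is delicate because it is sign-sensitive --- this is also exactly where the hypothesis $p\neq 2$ is used, via the diagonalization of the hermitian form and the resulting quadratic normal form --- and one must then extract simultaneously from the resulting ring its $\CO$-flatness, the reducedness of its special fibre, and its Cohen-Macaulayness. The Cohen-Macaulayness is the crux: it will require recognizing the chart as a complete intersection outright, or producing an explicit flat degeneration of it to a complete intersection or to a reduced union of linear subspaces, after which the bookkeeping of singular loci needed for $(R_1)$ --- and hence for the normality of $Z_1$, $Z_2$ and $Z_1\cap Z_2$ --- is comparatively routine.
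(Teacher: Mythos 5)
Your overall strategy is the same as the paper's: reduce everything to the affine chart at the worst point via the parahoric action and topological considerations, write out the strengthened spin condition there, prove the resulting ring is Cohen--Macaulay with reduced special fibre, deduce flatness (hence $M_{\{\kappa\}}=M^\loc_{\{\kappa\}}$), and then obtain normality, the component structure, the singular loci, and the moduli descriptions. The genuine gap is exactly at the step you call the crux. After the translation, the special fibre of the chart is $\Spec k[\bA,\bB]/\bigl(\bigwedge^2(\bA,\bB),\,\bA-\bA^t,\,\tr(\bA H)\bigr)$ with $\bA$ symmetric of size $(n-2\kappa)\times(n-2\kappa)$ and $\bB$ of size $(n-2\kappa)\times 2\kappa$ (Theorem \ref{max_red:reduced_sym}); since strong non-specialness forces $n-2\kappa\geq 3$, this is a symmetric rank-$\le 1$ determinantal scheme cut by a trace hyperplane. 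It is \emph{not} ``governed by a single nondegenerate quadratic form'' and it is not a complete intersection (its codimension is far smaller than the minimal number of generators of the ideal of $2$-minors), so the first two of your proposed routes to Cohen--Macaulayness fail outright. The third route (a flat/Gr\"obner degeneration) is the right spirit, but it is not carried out and is precisely where the real work lies: the paper imports Conca's theorems on symmetric determinantal/ladder ideals (Theorem \ref{max_red:Conca}) to get that $k[\bA,\bB]/(\bigwedge^2(\bA,\bB),\bA-\bA^t)$ is a normal Cohen--Macaulay domain, then cuts by the nonzerodivisor $\tr(\bA H)$, and still must verify $(R_0)$ by an explicit cover by principal opens before Serre's criterion gives reducedness. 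Without this input your proofs of Cohen--Macaulayness, reducedness of the special fibre, and hence flatness do not go through; note also that the paper deduces flatness from Smithling's topological flatness plus reducedness of the special fibre, rather than from an integral normal form of the chart, whose equations over $\CO$ are only derived in \S \ref{intequ} after flatness is known.

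The same misidentification propagates to (ii)--(iv). The intersection $Z_1\cap Z_2$ is, in the chart, $\Spec k[\bA,\bB]/(\bA,\bigwedge^2\bB,\bB^tH\bB)$: the cone over a Segre-type product of a quadric with a projective space inside the rank-$\le 1$ locus, not a quadric hypersurface cone ``cut out by one further nondegenerate quadratic relation,'' so the textbook facts you invoke about quadric cones do not apply. Its integrality requires the explicit Gr\"obner-basis argument of Proposition \ref{max_red:int}, and its Cohen--Macaulayness is obtained from the criterion of \cite[Lemma 4.22]{Gortz2001} relating CM of the union to CM of the intersection. Likewise, irreducibility and integrality of the second component (the one carrying the isotropy condition $\bB^tH\bB$) need separate arguments (the projective-chart analysis in Proposition \ref{max_red:irr} and the primality trick in Proposition \ref{max_red:irr-red}); your ``same chart analysis'' does not supply them. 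The remaining steps of your outline (Serre's criterion for normality of the total space, singularity only at the worst point, failure of semi-stable reduction because the intersection of the branches is not regular, and the moduli descriptions of \S \ref{application_moduli-irr}) do match the paper once these commutative-algebra inputs are in place.
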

\begin{remarks}
\begin{altenumerate}
\item We exclude the cases $I=\{0\}$ and $I=\{m\}$ for all $n$, and the case $I=\{m-1,m\}$ when $n$ is even, since they already appear in \cite{Pappas2000,Smithling2015,Yu2019}, and they have different geometric structures.
But our proof generalizes to all those cases except when $I=\{m\}$ for $n=2m$; cf.\ Remarks \ref{max_red:recover}.
\item All those properties, including the moduli descriptions of the irreducible components of the special fiber, also apply to the ramified unitary Pappas-Zhu model $M_K^\loc(G,\{\mu\})$ of signature $(n-1,1)$ when $K$ is a strongly non-special parahoric subgroup. For instance, they remain valid for the local model associated with a non-split Hermitian space $(V,\phi)$. See \S \ref{application_nonsplit} for more comments.
\item Some results in Theorem \ref{intro_back:max} have been proved using the group-theoretic data. For instance, by \cite{PZ2013} (resp.\ \cite{HR22}), all the ramified unitary local models are normal (resp. Cohen-Macaulay). By \cite{HPR}, the ramified unitary local models with a strongly non-special parahoric level structure do not admit semi-stable reductions. Comparing to their proofs, ours are more ad hoc and direct.

\item Let $*\in M^{\loc}_{\{\kappa\}}$ denote the worst point of the local model. It is proved in \cite[Corollary 1.3.2]{HLS-regular} that the complement $M^{\loc}_{\{\kappa\}}\setminus \{*\}$ admits a semi-stable reduction.

\item As a by-product, we also verify that the local model $M^\loc_{\{\kappa\}}$ is Gorenstein when $n=4\kappa+2$. Determining whether the local model is Gorenstein in the remaining cases remains an interesting open question, see also Remark \ref{max_red:non-gorenstein}.
\end{altenumerate}
\end{remarks}

To compare the strengthened spin model with the local model,  it suffices to verify that the special fiber of the strengthened spin model is reduced, since both models are topologically flat; cf.\ \S \ref{moduli_fc}.

Using the equivariant action of the loop group, we can further restrict to an affine neighborhood $U_{\{\kappa\}}$ of the worst point in the strengthened spin local model $M_{\{\kappa\}}$; cf.\ \S \ref{equ_chart}.
The proof of Theorem \ref{intro_back:max} then follows from the following explicit computation of $U_{\{\kappa\}}$:
\begin{theorem}\label{intro_back:maxco}
Let $U_{\{\kappa\}}$ be the open affine chart defined in the previous paragraph.
Given a matrix $X$ of indeterminates, let $\bigwedge^2 X$ denote the set of all its $2$-minors. Let $\CO=\CO_F$ be the ring of the integers in $F$, with perfect residue field $k$.
Under the same assumption as in Theorem \ref{intro_back:max}, we have:
\begin{altenumerate}
\item The open affine chart $U_{\{\kappa\}}$ is isomorphic to the spectrum of the ring
\begin{equation*}
    R_{\CO}\simeq \frac{\CO[\bA,\bB]}{\bigwedge^2(\bA,\bB),\bA-\bA^t,\tr(\bA H)-2\pi}.
\end{equation*}
Here $\bA$, resp.,\ $\bB$
\footnote{The $\bA$ and $\bB$ we define here is the same as in \eqref{max_red:sub}, but different from $A$ and $B$ in \eqref{equ_chart:reordered-X}.}
is a matrix of indeterminates of size $(n-2\kappa)\times (n-2\kappa)$, resp.,\ $(n-2\kappa)\times 2\kappa$, and $H$ is the anti-diagonal unit matrix of size $(n-2\kappa)\times(n-2\kappa)$.
    \item The special fiber $U_{\{\kappa\},s}$ of the open affine chart is isomorphic to the spectrum of the ring
\begin{equation*}
    R_s\simeq \frac{k[\bA,\bB]}{\bigwedge^2(\bA,\bB),\bA-\bA^t,\tr(\bA H)}.
\end{equation*}
The ring $R_s$ is reduced and Cohen-Macaulay of dimension $n-1$. 

\item The affine chart $U_{\{\kappa\},s}$ has two irreducible components, isomorphic to the spectra of the rings
\begin{equation*}
    R_{s,1}\simeq \frac{k[\bA,\bB]}{\bA,\bigwedge^2 \bB}\quad\text{and}\quad R_{s,2}\simeq \frac{k[\bA,\bB]}{\bigwedge^2(\bA,\bB),\bA-\bA^t,\tr(\bA H),\bB^tH\bB}.
\end{equation*}
Both are normal and Cohen-Macaulay of dimension $n-1$, with the singular locus defined by $\bA=0,\bB=0$.

\item The intersection of $\Spec R_{s,1}$ and $\Spec R_{s,2}$ is isomorphic to the spectrum of the ring
\begin{equation*}
    R_{s,12}\simeq \frac{k[\bA,\bB]}{\bA,\bigwedge^2\bB,\bB^tH\bB}.
\end{equation*}
The scheme $\Spec R_{s,12}$ is normal, Cohen-Macaulay, and irreducible of dimension $n-2$, with the singular locus defined by $\bA=0,\bB=0$.
\end{altenumerate}
\end{theorem}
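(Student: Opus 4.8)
The plan is to carry out the explicit computation of the open affine chart $U_{\{\kappa\}}$ in four stages, matching the four parts of the statement. First I would identify the coordinate ring $R_\CO$. Starting from the Rapoport–Zink-type lattice description of $M_{\{\kappa\}}$ near the worst point, I would use the loop-group translation (from \S \ref{equ_chart}) to bring the worst point to the origin, and write out a generic point of the chart as a matrix $X$ of indeterminates satisfying the defining equations of the strengthened spin model. After reordering the basis as in \eqref{equ_chart:reordered-X} and passing to the new variables $\bA$, $\bB$ of \eqref{max_red:sub}, the wedge condition $\bigwedge^2 X = 0$ becomes $\bigwedge^2(\bA + \pi H, \bB) = 0$, the self-duality/Hermitian symmetry gives $A - A^t = 0$ (or rather $\bA-\bA^t=0$, with the footnoted caveat about which matrices are meant), and the Kottwitz trace condition contributes $\tr(\bA H) + (n-2\kappa-2)\pi = 0$. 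The nontrivial input here is that the strengthened spin condition, which a priori imposes many more equations, collapses on this chart to exactly these; this should follow from the analysis of the spin condition already available in \S \ref{moduli_ss} together with the observation (Remark \ref{moduli_ss:imply-kott}) that the strengthened spin condition implies the Kottwitz condition, so no independent Kottwitz equation need be added. I expect this identification — showing the spin equations give nothing beyond the displayed relations — to be the main obstacle, requiring careful bookkeeping with the exterior-algebra coordinates.

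Second, for the special fiber I would simply set $\pi = 0$ in $R_\CO$, obtaining $R_s = k[\bA,\bB]/(\bigwedge^2(\bA,\bB),\, \bA - \bA^t,\, \tr(\bA H))$. To prove $R_s$ is reduced and Cohen–Macaulay of dimension $n-1$, I would compute a Gröbner basis of the defining ideal with respect to a suitable monomial order, identify the initial ideal, and check that it is radical (e.g. squarefree) — this both pins down the dimension via the Hilbert function and, since $R_s$ and its initial degeneration share Hilbert series, lets me deduce Cohen–Macaulayness of $R_s$ from that of the (monomial, hence combinatorially tractable) initial ideal, or alternatively verify the $S_2$ and depth conditions directly using the explicit description of $\Spec R_s$ as glued from its components.

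Third and fourth, I would exhibit the two irreducible components. The equations $\bigwedge^2(\bA,\bB) = 0$ force the rows of the block matrix $(\bA \mid \bB)$ to be proportional, so either $\bA = 0$ (giving, after imposing $\bigwedge^2 \bB = 0$, the component $\Spec R_{s,1}$, a cone over a Segre-type variety) or $\bA \neq 0$ generically, in which case every row of $\bB$ is a scalar multiple of the corresponding row of $\bA$, and combined with $\bA = \bA^t$ and $\tr(\bA H)=0$ this forces the relation $\bB^t H \bB = 0$, yielding $\Spec R_{s,2}$. One checks these two closed subschemes are the irreducible components by verifying each is prime (again via a Gröbner/initial-ideal computation, or by exhibiting each as the image of an explicit parametrization) and that their union is all of $\Spec R_s$ (which is immediate from the case split). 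Normality of each follows from Serre's criterion: Cohen–Macaulayness gives $S_2$, and $R_1$ holds because the singular locus, cut out by $\bA = 0, \bB = 0$, has codimension $\geq 2$ — a dimension count on the explicit equations. The intersection $\Spec R_{s,1} \cap \Spec R_{s,2}$ is obtained by adjoining $\bA = 0$ to $R_{s,2}$, giving $R_{s,12} = k[\bA,\bB]/(\bA, \bigwedge^2 \bB, \bB^t H \bB)$; its irreducibility, normality, Cohen–Macaulayness and dimension $n-2$ follow by recognizing $\Spec R_{s,12}$ as (a cone over) the variety of rank-$\leq 1$ matrices whose columns are isotropic for the split form $H$, for which these properties are classical, or again by a direct initial-ideal argument. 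Throughout, the singular locus of each stratum being exactly $\{\bA = 0, \bB = 0\}$ — the worst point — is checked by a Jacobian-rank computation at a general point of each component, completing parts (iii) and (iv) and, together with part (ii), the whole theorem.
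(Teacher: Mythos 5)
Your overall skeleton matches the paper's (translate the conditions into equations on the chart of the worst point, pass to the special fiber, split off two components, then commutative algebra), and for (i) your route is essentially the paper's worst-term bookkeeping of \S\ref{equ_ss-comp} and \S\ref{intequ}, with two caveats: the symmetry relation $\bA=\bA^t$ and the trace relation come out of the strengthened spin condition itself, not from ``self-duality'' or the Kottwitz condition (which is implied, Remark \ref{moduli_ss:imply-kott}); and identifying the integral chart with $\Spec R_\CO$ still requires the closure/flatness step via \cite[Proposition 14.17]{GW2020} that the paper runs in \S\ref{intequ}. For (ii) you propose a genuinely different route: a Gr\"obner basis for the full ideal $(\bigwedge^2(\bA,\bB),\bA-\bA^t,\tr(\bA H))$ with squarefree Cohen--Macaulay initial ideal. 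The paper instead quotes Conca's theorem that $k[\bA,\bB]/(\bigwedge^2(\bA,\bB),\bA-\bA^t)$ is a normal Cohen--Macaulay domain (Theorem \ref{max_red:Conca}), kills $\tr(\bA H)$ as a non-zero-divisor, and gets reducedness from Serre's criterion plus an explicit smoothness computation on the charts $D(x_{pq})$. Your alternative is plausible in principle, but the Gr\"obner basis is never exhibited (the paper only carries out such a computation for the much smaller ideal of $R_{s,12}$), so at present it is a promissory note rather than a proof.

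The genuine gaps are in (iii)--(iv). Your case split on $\bA=0$ versus $\bA\neq 0$ gives only the set-theoretic decomposition $V(I)=V(I_1)\cup V(I_2)$; the theorem needs $\Spec R_{s,2}$ and $\Spec R_{s,12}$ to be \emph{integral}, and neither of your proposed devices delivers this: a parametrization (or recognizing $R_{s,12}$ as a cone over a quadric times a projective space) proves irreducibility of the underlying set but says nothing about the displayed ideals being radical, and a direct Gr\"obner proof of primality of $(\bigwedge^2(\bA,\bB),\bA-\bA^t,\tr(\bA H),\bB^tH\bB)$ is not sketched. The paper has to work for exactly this point: integrality of $R_{s,12}$ is proved by an explicit Gr\"obner-basis computation (Proposition \ref{max_red:int}), and primality of $I_2$ is then obtained by an indirect ideal-theoretic argument that leverages the already-established reducedness of $R_s$ and primality of $I_1+I_2$ (Proposition \ref{max_red:irr-red}). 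Second, you invoke Cohen--Macaulayness of $R_{s,2}$ (for $S_2$ in Serre's criterion) and of $R_{s,12}$ but never establish it; in the paper this is not read off from the equations at all, but imported from the Frobenius-splitting/coherence results identifying the components of $M^\loc_{I,s}$ with affine Schubert varieties (Theorem \ref{moduli_fc:coh}), with $R_{s,12}$ then handled by G\"ortz's lemma on intersections (Corollary \ref{max_red:irr-comp}); a purely commutative-algebra replacement would be a substantive new argument. Finally, a Jacobian computation ``at a general point'' only gives generic smoothness; to conclude that the singular locus is exactly $\bA=\bB=0$ you must verify smoothness at every point away from the origin, as the paper does chart by chart on the opens $D(x_{pq})$.
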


The main body of the paper will focus on the proof of Theorem \ref{intro_back:maxco} (ii) and (iv). Part (i) of the theorem is irrelevant to the main result, and its proof will be postponed to Section \ref{intequ}. Part (iii) of the theorem will be a consequence of (ii) and (iv).

To get the defining equations in (ii), we mimic Arzdorf's computation \cite{Arzdorf2009}, which is further generalized in \cite{Smithling2015, Yu2019}.
We explicitly write down the defining equations of the ring $R_s$ using the ``worst term''; cf.\ Definition \ref{equ_ss-setup:wt-def}.
We then show that $R_s$ is Cohen-Macaulay and generically smooth. By Serre’s criterion, it follows that $R_s$ is reduced; see Corollary \ref{max_red:lm}.
This part is based on Conca's work about the symmetric determinant varieties \cite{Conca1994-1,Conca1994-2}.
Next, we prove that $R_{s,12}$ is normal using a Gr\"obner basis computation.
The remaining assertions follow from conceptual arguments.

\begin{remark}
\begin{altenumerate}
\item The commutative algebra results in Theorem \ref{intro_back:max}, in particular the normal and Cohen-Macaulay properties of $R_{s,2}$ and $R_{s,12}$, may be of independent interests.
\item It is quite interesting to point out that the defining equations of the local models $M^\loc_{\{\kappa\}}$ given in Theorem \ref{max_red:reduced_sym} look the same for even and odd $n$. 
From the group-theoretic point of view, these two cases are expected to be different. For instance, they have different types of local Dynkin diagrams. 
The same phenomenon also appears in the orthogonal setting: the even and odd dimensional special orthogonal groups fall into two classes of (local) Dynkin diagrams. But the defining equations of the local models have the same form; see \cite{Zachos}.

Another interesting phenomenon concerns the singularity structure. In both the unitary and orthogonal cases, the only singular point on each irreducible component of the special fiber is the worst point. Moreover, blowing up the worst point yields a regular integral model. The orthogonal case is treated in \cite{PZa2022}.
For the ramified unitary case, this was first studied by Kr\"amer in \cite{Kramer} for self-dual level, and later extended to all vertex levels in \cite{HLS-regular}.
\end{altenumerate}
\end{remark}
		
\subsection{Moduli description of the local model of the unitary group}\label{intro_moduli}
For the reader’s convenience and future reference, we briefly summarize the moduli description of the unitary local models of signature $(n-1,1)$, with appropriate citations.
The axioms {\fontfamily{cmtt}\selectfont LM1-LM8} used here are listed in \S \ref{moduli}.

We start with the relative setup. Let $F_0$ be a $p$-adic local field and let $F$ be an \'etale quadratic extension of $F_0$. 
Let $(V,\phi)$ be a $F/F_0$-hermitian space. Consider now the unitary similitude group $G=\GU(V,\phi)$ defined over $F_0$. After base change to an algebraic closure $\ov{F_0}$, the group splits as $G_{\ov{F_0}}\simeq \BG_m\times\GL_n$. 

We fix a minuscule cocharacter $\mu$ given by $z\mapsto (z,\mathrm{diag}(z^{(1)},1^{(n-1)}))$, corresponding to the signature $(n-1,1)$ considered in this paper. 
Let $\CL=\{L_i\}_{i\in I}$ be a self-dual lattice chain in the sense of \cite[Chapter 3]{RZ}. These data determine the (Pappas-Zhu) local model $M^{\loc}_{\CL}(G,\{\mu\})$. 

When $F/F_0$ is split or unramified, the local model can be defined by the axioms {\fontfamily{cmtt}\selectfont LM1-LM5} (see \S \ref{moduli_naive}) and is flat with semi-stable reduction, as shown in \cite{Gortz2001}, see also \cite[Appendix A]{RSZ-2021} for the case when $p=2$. 

When $F/F_0$ is ramified and $p\neq 2$, the local model is defined using the axioms {\fontfamily{cmtt}\selectfont LM1-LM4} and {\fontfamily{cmtt}\selectfont LM8}. It is flat, normal, and Cohen-Macaulay. 

In certain special cases, the moduli description can be simplified, as discussed in \S \ref{intro_ramified-unitary-lm}. These cases are summarized in the table below. Moreover, in some cases, there exist variants of the integral model—often referred to as splitting models—which are not canonical models in the sense of Scholze–Weinstein, but exhibit better geometric properties; we include these in the comments.
{\small{\setlongtables
\renewcommand{\arraystretch}{1.25}
\begin{longtable}{|c|c|c|c|c|}
\hline
\begin{varwidth}{\linewidth}
   \centering
  Index 
\end{varwidth} 
&
\begin{varwidth}{\linewidth}
   \centering
 Moduli 
\end{varwidth} 
   & \begin{varwidth}{\linewidth}
   \centering
   Reference
\end{varwidth} 
   &  \begin{varwidth}{\linewidth}
   \centering
 Comments 
\end{varwidth}       
       \\
    \hline
   $\begin{array}{cc}
   I=\{0\}\\
   \text{(self-dual)}
   \end{array}$ 
 & {\fontfamily{cmtt}\selectfont LM1-LM6} &   \cite{Pappas2000} 
 & Semi-stable model with moduli description \cite{Kramer}
  \\
\hline
$\begin{array}{cc}
   n=2m, I=\{m\}\\
   \text{($\pi$-modular)}
   \end{array}$
 & {\fontfamily{cmtt}\selectfont LM1-LM6+LM7} &   \cite{RSZ2017} 
 & A variant of spin condition is in \cite[\S 3.1]{RSZ2017}
   \\
\hline
$\begin{array}{cc}
   n=2m+1, I=\{m\}\\
   \text{(almost $\pi$-modular)}
   \end{array}$
 & {\fontfamily{cmtt}\selectfont LM1-LM5+LM8} &   \cite{Smithling2015} 
 & Semi-stable model without moduli description \cite{Richarz13}
    \\
\hline
$\begin{array}{cc}
 n=2m \\
 I=\{m-1,m\}
 \end{array}$
 & {\fontfamily{cmtt}\selectfont LM1-LM5+LM8} &   \cite{Yu2019} 
 & Semi-stable model with moduli description \cite{Yu2019} 
   \\
\hline
$I$ strongly non-special
 & {\fontfamily{cmtt}\selectfont LM1-LM5+LM8} &   Thm. \ref{intro_back:max} 
 & $\begin{array}{cc}
 \text{Splitting model with moduli description \cite{Zachos-Zhao-2024}}\\
 \text{Semi-stable model without moduli description \cite{HLS-regular}}
 \end{array}$
   \\
\hline
$I$ arbitrary
 & {\fontfamily{cmtt}\selectfont LM1-LM5+LM8} &   Thm. \ref{intro_back:non-split} 
 &    \\
\hline
\end{longtable}}}

\begin{remark}
\begin{altenumerate}
\item For $n\leq 3$, the moduli description of the integral models admits further refinements, see \cite[Remarks 5.2.1]{LRZ25}.
\item For $p=2$, only partial results are available; see the works of Kirch \cite{Kirch2018} and J. Yang \cite{JYang-2024}.
\end{altenumerate}
\end{remark}

For global applications, we are also interested in the algebraic group over $\BQ_p$ defined by
$$
G^{\BQ_p}:=\{g\in\mathrm{Res}_{F_0/\BQ_p}\mathrm{GU}(V)\mid c(g)\in\BG_{m,\BQ_p}\},
$$
where $c(g)$ denotes the similitude factor. After base change to an algebraic closure, we have
$$
G^{\BQ_p}\otimes{\ov{\BQ}_p}\simeq \BG_m\times (\GL_n)^{[F_0:\BQ_p]}.
$$
We choose the nontrivial cocharacter $\mu$ for one of the $\GL_n$ and take the trivial cocharacter for the others. See \cite[\S 2.2]{RSZ-2021} for a more detailed discussion.
In this case, the (absolute) Pappas-Zhu local model $M^{\loc}_\CL(G^{\BQ_p},\{\mu^{\BQ_p}\})$ is well-behaved, thanks to the work of Levin \cite{Levin16}. A moduli description of this model has been studied by several authors \cite{PR2003,PR2005,RZ17,Mi2022,RSZ-2021,KRZ2024}, using the Eisenstein condition, and has been extended to full generality in \cite{LMZ25}. We refer the reader to \cite[\S 3]{LMZ25} for a comprehensive treatment of this question, which is independent of the content of this paper.

\subsection{The structure of the paper}
The organization of the paper is as follows.
In Section 2, we review different moduli functors related to the ramified unitary local models with the split hermitian form, including the naive, wedge, spin, and strengthened spin models. We will also define the local model and discuss its relations with those moduli functors. 
In Section 3, we focus on the strongly non-special case. We will translate the axioms defining the strengthened spin models into commutative algebra relations.
In Section 4, we simplify those relations and study the geometric properties of the strengthened spin model for the strongly non-special parahoric subgroup, in particular, we deduce Theorem \ref{intro_back:maxco} (ii)-(iv).
In Section 5, we review the theory of affine flag varieties, and use them to prove Theorem \ref{intro_back:main}.
In Section 6, we give moduli descriptions for any ramified unitary local models with signature $(n-1,1)$, whose hermitian form are not necessarily split. We will also discuss a moduli description of the irreducible components of the special fibers local model in the strongly non-special case. Then, we will give a global application to Shimura varieties.
In Section 7, we give the proofs of some propositions left in Section 3.
In Section 8, we give the defining equations of the local model at an affine neighborhood of the worst point over the ring of integers.

\subsection{Acknowledgments}
The author wants to thank his advisor Michael Rapoport for helpful discussions and for reading drafts of this paper.
The author wants to thank Qiao He and Yousheng Shi for presenting the author with this problem. 
The author wants to thank Daniel Erman, Brian Lawrence, and Qihang Li for helpful discussions. 
The author wants to thank Ulrich G\"ortz, Thomas Haines, Jie Yang, and the referee for helpful comments.
		
\subsection{Conflict of Interest Statement}
On behalf of all authors, the corresponding author states that there is no conflict of interest. 
\section{The moduli problems}\label{moduli}
In this section, we review the definitions of $M_I^\loc$, $M_I^\naive$, $M_I^\wedge$, and $M_I^\spin$ from \cite{PR2009}, and the strengthened spin model $M_I$ from \cite{Smithling2015}.

\subsection{Basic setup}\label{moduli_setup}
\subsubsection{Notation}\label{moduli_setup_notation}
Assume $\CO_F/\CO_{F_0}$ is a quadratic extension of complete discrete valuation rings with the same perfect residue field $k$ with $\text{char}(k)=p\neq 2$ and uniformizers $\pi$, resp. $\pi_0$ such that $\pi^2=\pi_0$.

We will work with a fixed integer $n\geq 3$. For $i\in\{1,\cdots,n\}$, we write 
\begin{equation*}
    i^\vee:=n+1-i.
\end{equation*}
For $i\in \{1,\cdots,2n\}$, we write
\begin{equation*}
    i^*:=2n+1-i.
\end{equation*}
For $S\subset\{1,\cdots,2n\}$, we write
\begin{equation*}
    S^*=\{i^*\mid i\in S\}\text{ and }S^\perp=\{1,\cdots,2n\}\setminus S^*.
\end{equation*}	
We also define
\begin{equation*}
    \sum S:=\sum_{i\in S} i.
\end{equation*}
		
For a real number $a$, we write $\lfloor a\rfloor$ for the greatest integer $\leq a$, and $\lceil a\rceil$ for the least integer $\geq a$.
We write $a,\cdots,\wh{b},\cdots,c$ for the list $a,\cdots,c$ with $b$ omitted.

Suppose $X$ is a scheme defined over a discrete valuation ring $\CO$; we will denote by $X_\eta$ (resp.,\ $X_s$) the generic (resp.,\ special) fiber of the scheme.
\subsubsection{Linear algebra setup of the split hermitian form}\label{moduli_setup_alg-group}
Consider the vector space $F^n$ with the standard $F$-basis $e_1,\cdots,e_n$. Consider the split $F/F_0$-Hermitian form
\begin{equation*}\label{moduli_setup:herm}
    \phi: F^n\times F^n\rightarrow F,\quad \phi(ae_i,be_j)=\bar{a}b\delta_{ij^\vee},\quad a,b\in F,
\end{equation*}
where $a\mapsto\bar{a}$ is the nontrivial element of $\Gal(F/F_0).$ 
Attached to $\phi$ are the respective ``alternating'' and ``symmetric'' $F_0$-bilinear forms $F^n\times F^n\rightarrow F_0$ given by
\begin{equation*}
\langle x,y\rangle:=\frac{1}{2}\tr_{F/F_0}(\pi^{-1}\phi(x,y))
\quad\text{and}\quad
(x,y):=\frac{1}{2}\tr_{F/F_0}(\phi(x,y)).
\end{equation*}		
		
For each integer $i=bn+c$ with $0\leq c<n$, define the standard $\CO_F$-lattices
\begin{equation}\label{moduli_setup:lattice}
	\Lambda_i:=\sum_{j=1}^c\pi^{-b-1}\CO_F e_j+\sum_{j=c+1}^n\pi^{-b}\CO_F e_j\subset F^n.
\end{equation}
		
For all $i$, the $\langle\,,\,\rangle$-dual of $\Lambda_i$ in $F^n$ is $\Lambda_{-i}$, by which we mean that
\begin{equation*}
    \{x\in F^n\mid \langle \Lambda_i,x\rangle\subset\CO_{F_0}\}=\Lambda_{-i}.
\end{equation*}
We have a perfect $\CO_{F_0}$-bilinear pairing:
\begin{equation*}\label{moduli_setup:dual}
	\Lambda_i\times\Lambda_{-i}\xrightarrow{\langle\,,\,\rangle} \CO_{F_0}
\end{equation*}
Similarly, $\Lambda_{n-i}$ is the $(\,,\,)$-dual of $\Lambda_i$ in $F^n$. The $\Lambda_i$'s forms a complete, periodic, self-dual lattice chain
\begin{equation*}
    \cdots\subset\Lambda_{-2}\subset\Lambda_{-1}\subset\Lambda_0\subset\Lambda_1\subset\Lambda_2\subset\cdots.
\end{equation*}
		
Let $I \subset \{0,\dotsc,m\}$ be a nonempty subset satisfying \eqref{intro_back:local-dynkin}, and let $r + s = n$ be a partition.  Let
\begin{equation*}
    E = F \quad\text{if}\quad r \neq s \quad\text{and}\quad E = F_0 \quad\text{if}\quad r = s.
\end{equation*}
This field $E$ correspond to the reflex field of the unitary similitude Shimura variety. We will always assume $n=r+s\geq 3$, and  $(r,s)=(n-1,1)$ for our result.
		
The unitary similitude group we are interested in is defined as a reductive group over $F_0$, such that given any $F_0$-algebra $R$, we have
\begin{equation*}
	\GU(F^n,\phi)(R)=\{g\in\GL_F(F^n\otimes_{F_0}R)\mid \langle gx,gy\rangle=c(g)\langle x,y\rangle,c(g)\in R^\times\}.
\end{equation*}
We define the group $\CP_I$ as a smooth group scheme over $\CO_{F_0}$, which represents the moduli problem that associates each $\CO_{F_0}$-algebra $R$ to the group
\begin{equation*}\label{P_I}
	\CP_I(R)=\{g\in\GU(F^n,\phi)(R\otimes_{\CO_{F_0}}F_0)\mid g(\Lambda_i\otimes_{\CO_{F_0}}R)=\Lambda_i\otimes_{\CO_{F_0}}R,\forall i\in I\}.
\end{equation*}
And $\CP_I^\circ$ is defined as the neutral connected component of $\CP_I$. We define $P_I:=\CP_I(\CO_{F_0})$, and $P_I^\circ:=\CP_I^\circ(\CO_{F_0})$. The latter is a (connected) parahoric subgroup of $\GU(V,\phi)$.

\subsection{Naive model}\label{moduli_naive}
For the remaining part of this section, we define several moduli functors related to local models. Historically, these functors served as candidates for the ramified unitary local models with split hermitian form. 
While it is now standard to define local models of specific groups purely in terms of group-theoretic data, the classical moduli-theoretic approach remains essential for the purposes of this paper.
		
The \emph{naive local model $M_I^\naive$} is a projective scheme over $\Spec \CO_E$. It represents the moduli problem that sends each $\CO_E$-algebra $R$ to the set of all families:
\begin{equation*}
    (\sF_i \subset \Lambda_i \otimes_{\CO_{F_0}}R)_{i\in \pm I + n\mathbb{Z}},
\end{equation*}
such that
\begin{altenumerate}		
	\item[\fontfamily{cmtt}\selectfont LM1.]\label{moduli_naive:LM1}
		for all $i$, $\sF_i$ is an $\CO_F \otimes_{\CO_{F_0}} R$-submodule of $\Lambda_i \otimes_{\CO_{F_0}} R$, and an $R$-direct summand of rank $n$;
	\item[\fontfamily{cmtt}\selectfont LM2.]\label{moduli_naive:LM2}
		for all $i < j$, the natural arrow $\Lambda_i \otimes_{\CO_{F_0}} R \to \Lambda_j \otimes_{\CO_{F_0}} R$ carries $\sF_i$ into $\sF_j$;
	\item[\fontfamily{cmtt}\selectfont LM3.]\label{moduli_naive:LM3}
		for all $i$, the isomorphism $\Lambda_i \otimes_{\CO_{F_0}} R \xra[\undertilde]{\pi \otimes 1} \Lambda_{i-n} \otimes_{\CO_{F_0}} R$ identifies
		\begin{equation*}
		    \sF_i \isoarrow \sF_{i-n};
		\end{equation*}
	\item[\fontfamily{cmtt}\selectfont LM4.]\label{moduli_naive:LM4}
		for all $i$, the perfect $R$-bilinear pairing
        \begin{equation*}
            (\Lambda_i \otimes_{\CO_{F_0}} R) \times (\Lambda_{-i} \otimes_{\CO_{F_0}} R)
			\xra{\aform \otimes R} R
        \end{equation*}
		identifies $\sF_i^\perp$ with $\sF_{-i}$ inside $\Lambda_{-i} \otimes_{\CO_{F_0}} R$; and
	\item[\fontfamily{cmtt}\selectfont LM5.]\label{moduli_naive:kott}
        (Kottwitz condition) for all $i$, the element $\pi \otimes 1 \in \CO_F\otimes_{\CO_{F_0}} R$ acts on $\sF_i$ as an $R$-linear endomorphism with characteristic polynomial
		\begin{equation*}
		    \det(T\cdot \id - \pi \otimes 1 \mid \sF_i) = (T-\pi)^s(T+\pi)^r \in R[T].
		\end{equation*}
		\end{altenumerate}
		When $r = s$, the polynomial on the right-hand side of {\fontfamily{cmtt}\selectfont LM5} is $(T^2 - \pi_0)^s$.

\begin{remark}
Using axiom {\fontfamily{cmtt}\selectfont LM3}, axiom {\fontfamily{cmtt}\selectfont LM4} is equivalent to requiring that the perfect $R$-bilinear pairing
\begin{equation*}
     (\Lambda_i \otimes_{\CO_{F_0}} R) \times (\Lambda_{n-i} \otimes_{\CO_{F_0}} R)
			\xra{\sform \otimes R} R
\end{equation*}
identifies $\CF_i^\perp$ with $\CF_{n-i}$ inside $\Lambda_{n-i}\otimes_{\CO_{F_0}}R$.
\end{remark}
\subsection{Wedge and spin conditions}\label{moduli_PR}
\subsubsection{Wedge condition}\label{moduli_PR_wedge}
The \emph{wedge condition} on an $R$-point $(\sF_i)_i$ of $M_I^\naive$ is that
\begin{altenumerate}
	\item[\fontfamily{cmtt}\selectfont LM6.]\label{moduli_PR:wedge}
	   if $r \neq s$, then for all $i$,
        \begin{equation*}
            \sideset{}{_R^{s+1}}{\bigwedge} (\,\pi\otimes 1 + 1 \otimes \pi \mid \sF_i\,) = 0
			\quad\text{and}\quad
			\sideset{}{_R^{r+1}}{\bigwedge} (\,\pi\otimes 1 - 1 \otimes \pi \mid \sF_i\,) = 0.
        \end{equation*}
			(There is no condition when $r=s$.)
\end{altenumerate}
		
The \emph{wedge local model $M_I^\wedge$} is the closed subscheme in $M_I^\naive$ cut out by the wedge condition.
Note that the splitting of the hermitian form is unnecessary for defining the wedge condition.

\subsubsection{Spin condition}\label{moduli_PR_spin}
		
Next, we turn to the spin condition, which involves the symmetric form \sform and requires more notation.  Let
\begin{equation*}
    V := F^n \otimes_{F_0} F,
\end{equation*}
regarded as an $F$-vector space of dimension $2n$ via the action of $F$ on the right tensor factor.  Let
\begin{equation*}
    W := \sideset{}{_F^n}\bigwedge V.
\end{equation*} 
		
When $n$ is even, the form \sform splits over $F^n$; when $n$ is odd, \sform splits after tensoring with $F$.
In both cases there is an $F$-basis $f_1,\dotsc,f_{2n}$ of $V$ such that $(f_i,f_j) = \delta_{ij^*}$.  
Hence there is a canonical decomposition of $W$ as an $\SO\bigl(\sform\bigr)(F) \simeq \SO_{2n}(F)$-representation:
\begin{equation*}
	W = W_1 \oplus W_{-1}.
\end{equation*}
Intrinsically, $W_1$ and $W_{-1}$ have the property that for any totally isotropic $n$-dimensional subspace $\sF \subset V$, the line $\bigwedge_F^n\sF \subset W$ is contained in $W_1$ or $W_{-1}$. In this way, they distinguish the two connected components of the orthogonal Grassmannian $\OGr(n, V)$ over $\Spec F$; cf.\ \cite[\S 8.2.1]{PR2009}.
Concretely, $W_1$ and $W_{-1}$ can be described as follows.  
For $S = \{i_1< \dots < i_n\} \subset \{1,\dotsc,2n\}$ of cardinality $n$, let
\begin{equation*}\label{moduli_PR:disp-f-S}
	f_S := f_{i_1} \wedge \dotsb \wedge f_{i_n} \in W.
\end{equation*}
Let $\sigma_S$ be the permutation on $\{1,\dotsc,2n\}$ sending
\begin{equation*}
    \{1,\dotsc,n\} \xra[\undertilde]{\sigma_S} S
\end{equation*} 
in increasing order, and
\begin{equation*}
    \{n+1,\dotsc,2n\} \xra[\undertilde]{\sigma_S} \{1,\dotsc,2n\} \smallsetminus S
\end{equation*}
in increasing order.  For varying $S$ of cardinality $n$, the $f_S$'s form a basis of $W$, and we define an $F$-linear operator $a$ on $W$ such that:
\begin{equation*}
    a(f_S) := \sgn(\sigma_S)f_{S^\perp}.
\end{equation*}
Then, when $f_1,\dotsc,f_{2n}$ is a split basis for \sform,
\begin{equation}\label{moduli_PR:W_+-1}
	W_{\pm 1} = \spann_F\bigl\{\,f_S \pm \sgn(\sigma_S)f_{S^\perp}\mid  \#S=n\,\bigr\}
\end{equation}
is the $\pm 1$-eigenspace for $a$.  Any other split basis is carried onto $f_1,\dotsc,f_{2n}$ by an element $g$ in the orthogonal group.  If $\det g = 1$ then $W_1$ and $W_{-1}$ are both $g$-stable, whereas if $\det g = -1$ then $W_1$ and $W_{-1}$ are interchanged by $g$.  In this way, $W_1$ and $W_{-1}$ are independent of choices up to labeling.

In this paper, we pin down a particular choice of $W_1$ and $W_{-1}$ as in \cite[\S 7.2]{PR2009} and \cite{Smithling2011, Smithling2014}. 
If $n = 2m$ is even, then
\begin{equation}\label{moduli_PR:even-basis}
	-\pi^{-1}e_1,\dotsc,-\pi^{-1}e_m, e_{m+1},\dotsc,e_n,e_1,\dotsc,e_m, \pi e_{m+1},\dotsc, \pi e_{n}	
\end{equation}
is a split ordered $F_0$-basis for \sform in $F^n$, and we take $f_1,\dotsc,f_{2n}$ to be the image of this basis in $V$.  
		
If $n = 2m+1$ is odd, then we take $f_1,\dotsc,f_{2n}$ to be the following split ordered basis
\begin{equation}\label{moduli_PR:odd-basis}
	\begin{gathered}
		-\pi^{-1} e_1 \otimes 1, \dotsc, -\pi^{-1} e_m \otimes 1, e_{m+1}\otimes 1 - \pi e_{m+1} \otimes \pi^{-1}, e_{m+2} \otimes 1,\dotsc, e_n\otimes 1, \\
		e_1\otimes 1,\dotsc, e_m \otimes 1, \frac{e_{m+1} \otimes 1 + \pi e_{m+1} \otimes \pi^{-1}}2, \pi e_{m+2} \otimes 1, \dotsc, \pi e_{n} \otimes 1.
	\end{gathered}
\end{equation}
For $\Lambda$ an $\CO_{F_0}$-lattice in $F^n$, we define a natural $\CO_F$-lattice in $W$:
\begin{equation}\label{moduli_PR:lattice-in-W}
	W(\Lambda) := \sideset{}{_{\CO_F}^n}\bigwedge (\Lambda \otimes_{\CO_{F_0}} \CO_F),
\end{equation}
and we define two sublattices:
\begin{equation*}
    W(\Lambda)_{\pm 1} := W_{\pm 1} \cap W(\Lambda).
\end{equation*}

We now formulate the spin condition.  If $R$ is an $\CO_F$-algebra, then the \emph{spin condition} on an $R$-point $(\sF_i \subset \Lambda_i \otimes_{\CO_{F_0}}R )_i$ of $M_I^\naive$ is:
\begin{altenumerate}
	\item[\fontfamily{cmtt}\selectfont LM7.] \label{moduli_PR:spin}
	    for all $i$, the line $\bigwedge_R^n \sF_i \subset W(\Lambda_i) \otimes_{\CO_{F}}R$ is contained in
        \begin{equation*}
            \im\bigl[ W(\Lambda_i)_{(-1)^s} \otimes_{\CO_{F}} R 
			\to W (\Lambda_i) \otimes_{\CO_{F}}R
			\bigr].
        \end{equation*}
\end{altenumerate}
This defines the spin condition when $r \neq s$.  When $r = s$, $W_{\pm 1}$ is defined over $F_0$ since \sform is already split before extending scalars $F_0 \to F$, and the spin condition on $M_{I,\CO_F}^\naive$ descends to $M_I^\naive$ over $\Spec \CO_{F_0}$.  In all cases, the \emph{spin local model $M_I^\spin$} is the closed subscheme of $M_I^\wedge$ where the spin condition is satisfied.
\begin{remark}
	Our definition of $a$ above is the same as in \cite{Smithling2011, Smithling2014}.  
	As noted in these papers, this agrees only up to sign with the analogous operators denoted $a_{f_1\wedge \dotsb \wedge f_{2n}}$ in \cite[Display 7.6]{PR2009}, and $a$ in \cite[\S 2.3]{Smithling2011}. There is a sign error in the statement of the spin condition in \cite[\S 7.2.1]{PR2009} leading to this discrepancy.
\end{remark}
		
We give an efficient way to calculate the sign $\sgn(\sigma_S)$ occurring in the expression \eqref{moduli_PR:W_+-1} for $W_{\pm 1}$, which will be used in \S \ref{Appendix_second-proof}.
\begin{lemma}[\cite{Smithling2015}, Lemma 2.4.1]\label{moduli_PR:sign-sigma-S}
\begin{equation*}
    \sgn(\sigma_S)=(-1)^{\sum S+\frac{n(n+1)}{2}}=(-1)^{\sum S+\lceil n/2\rceil}.\eqno\qed
\end{equation*}
\end{lemma}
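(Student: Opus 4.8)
The plan is to compute $\sgn(\sigma_S)$ by counting the inversions of $\sigma_S$. By construction $\sigma_S$ is increasing on the block $\{1,\dots,n\}$ (mapping it onto $S$) and increasing on the block $\{n+1,\dots,2n\}$ (mapping it onto the complement $\{1,\dots,2n\}\setminus S$), so no pair of indices lying in the same block can be an inversion. Hence the inversions of $\sigma_S$ are exactly the pairs $(a,b)$ with $a\le n<b$ and $\sigma_S(a)>\sigma_S(b)$; since $\sigma_S(a)$ runs over $S$ and $\sigma_S(b)$ runs over its complement as $a$ and $b$ vary, the number of inversions equals
\[
  N(S):=\#\{(x,y)\mid x\in S,\ y\notin S,\ x>y\},\qquad\text{so}\qquad \sgn(\sigma_S)=(-1)^{N(S)}.
\]

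The next step is to evaluate $N(S)$ directly. For a fixed $x\in S$, the number of $y\notin S$ with $y<x$ is $(x-1)-\#\{z\in S\mid z<x\}$. Summing over $x\in S$, and using $\sum_{x\in S}\#\{z\in S\mid z<x\}=\binom{n}{2}$ together with $\sum_{x\in S}(x-1)=(\sum S)-n$, we get
\[
  N(S)=\Bigl(\sum S\Bigr)-n-\binom{n}{2}=\Bigl(\sum S\Bigr)-n-\frac{n(n-1)}{2}.
\]
Reducing modulo $2$, and using $-1\equiv 1$, this gives $N(S)\equiv \sum S + n + \tfrac{n(n-1)}{2}=\sum S+\tfrac{n(n+1)}{2}\pmod 2$, which is the first claimed equality. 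For the second equality one only needs $\tfrac{n(n+1)}{2}\equiv\lceil n/2\rceil\pmod 2$; writing $n=2k$ or $n=2k+1$ and computing both sides settles this at once.

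I do not expect any real obstacle here: the statement is elementary. The one point demanding care is the identification of the inversions of $\sigma_S$ with the ``crossing pairs'' between $S$ and its complement --- a miscount there would change the constant $n+\binom{n}{2}$ --- together with the small parity bookkeeping at the end. To guard against such slips I would check the two extreme cases: for $S=\{1,\dots,n\}$ one has $\sigma_S=\mathrm{id}$, $N(S)=0$, and $\sum S=\tfrac{n(n+1)}{2}$, so the right-hand side is $(-1)^{n(n+1)}=1$; for $S=\{n+1,\dots,2n\}$ the permutation $\sigma_S$ interchanges the two halves, $N(S)=n^2$, and indeed $\sum S-n-\binom{n}{2}=n^2$, matching $\sgn(\sigma_S)=(-1)^{n}$. (An alternative would be a short induction, starting from $S=\{1,\dots,n\}$ and observing that replacing an element $x\in S$ by $x+1\notin S$ both flips $\sgn(\sigma_S)$, via a single transposition, and flips $(-1)^{\sum S}$; but the inversion count above is cleaner.)
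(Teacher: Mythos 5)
Your argument is correct: the identification of the inversions of $\sigma_S$ with the pairs $(x,y)$, $x\in S$, $y\notin S$, $x>y$ is valid because $\sigma_S$ is increasing on each of the two blocks, and the count $N(S)=\sum S-n-\binom{n}{2}$ together with the parity reduction $\tfrac{n(n+1)}{2}\equiv\lceil n/2\rceil\pmod 2$ gives exactly the stated formula; your two extreme-case checks also come out right. Note that the paper itself offers no proof of this lemma --- it is quoted from Smithling's paper (Lemma 2.4.1 there) with a bare \qed --- so there is no in-paper argument to compare against; your inversion-counting proof is the standard elementary one and fills that gap correctly.
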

		
\subsection{Strengthened spin condition}\label{moduli_ss}
		
We now formulate Smithling's strengthened spin condition \cite{Smithling2015}, which will be used to define the moduli space  $M_I$.  The idea is to restrict further the intersection in the definition of $W(\Lambda_i)_{\pm 1}$ to incorporate a version of the Kottwitz condition.  We continue with the notation in the previous subsections.

The operator $\pi \otimes 1$ acts $F$-linearly and semi-simply on $V=F^n\otimes_{F_0}F$ with eigenvalues $\pi$ and $-\pi$.  
Let $V_\pi$ and $ V_{-\pi}$ denote their respective eigenspaces.  Let
\begin{equation*}\label{moduli_ss:Wrs}
	W^{r,s} := \sideset{}{_F^r}\bigwedge V_{-\pi} \otimes_F \sideset{}{_F^s}\bigwedge V_\pi.
\end{equation*}
	Then $W^{r,s}$ is naturally a subspace of $W$, and we have the decomposition
\begin{equation*}
    W = \bigoplus_{r+s=n}W^{r,s}.
\end{equation*}
Let
\begin{equation*}\label{moduli_ss:Wrs-pm1}
	W_{\pm 1}^{r,s} := W^{r,s} \cap W_{\pm 1}.
\end{equation*}
For any $\CO_{F_0}$-lattice $\Lambda$ in $F^n$, let
\begin{equation}\label{moduli_ss:Wrs-pmlambda}
	W(\Lambda)_{\pm 1}^{r,s} := W_{\pm 1}^{r,s} \cap W(\Lambda) \subset W,
\end{equation}
where $W(\Lambda)$ is defined in \eqref{moduli_PR:lattice-in-W}.
Given any $\CO_{F}$-algebra $R$, we define
\begin{equation}\label{moduli_ss:L}
    L^{r,s}_{\pm 1}(\Lambda)(R):=\im\bigl[ W(\Lambda)_{\pm 1}^{r,s} \otimes_{\CO_F} R \to W(\Lambda) \otimes_{\CO_{F}}R\bigr].
\end{equation}
The \emph{strengthened spin condition} on an $R$-point $(\CF_i)\in M_I^{\naive}(R)$ is now defined as follows:
		
\begin{altenumerate}
	\item[\fontfamily{cmtt}\selectfont LM8.]\label{moduli_ss:strengthened-spin} \quad For all $i$, the line
 $\bigwedge_R^n \sF_i \subset W(\Lambda_i)\otimes_{\CO_{F}}R$ is contained in $L^{r,s}_{(-1)^s}(\Lambda_i)(R).$
\end{altenumerate}
		
This defines the condition in the case $r \neq s$.  When $r = s$, the condition is defined after base change to $M_{I,\CO_F}^{\naive}$. Since the subspaces $W^{r,s}$ and $W_{\pm 1}$ are Galois-stable, and the condition descends from $M_{I,\CO_F}^\naive$ to $M_I^\naive$.  In all cases, we write $M_I$ for the locus in $M_I^\spin$ where the condition is satisfied.
		
\begin{remark}\label{moduli_ss:non-split-form}
The spin and strengthened spin conditions can be defined for \emph{any} hermitian spaces, 
since we always take the base change $V=F^n\otimes_{F_0}F$ first, over which the hermitian and symmetry forms will split.   
\end{remark}

\begin{remark}\label{moduli_ss:imply-kott}
By \cite[Lemma 5.1]{Smithling2015}, the strengthened spin condition {\fontfamily{cmtt}\selectfont LM8} implies the Kottwitz condition {\fontfamily{cmtt}\selectfont LM5}. 
In Corollary \ref{general:wedge-from-ss}, we will further show that {\fontfamily{cmtt}\selectfont LM8} also implies the wedge condition {\fontfamily{cmtt}\selectfont LM6}.
\end{remark}

The following proposition will be useful in practice:
\begin{proposition}\label{moduli_ss:dual}
    The strengthened spin condition on $\CF_i\subset \Lambda_{i,R}$ is equivalent to the strengthened spin condition on $\CF_{n-i}\subset\Lambda_{n-i,R}$.
\end{proposition}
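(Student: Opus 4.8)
The plan is to show that the perfect symmetric pairing $\sform$ from {\fontfamily{cmtt}\selectfont LM4} identifies the data at $\Lambda_i$ with the data at $\Lambda_{n-i}$ in a way that is compatible with the strengthened spin condition {\fontfamily{cmtt}\selectfont LM8}. First I would recall that, by the Remark following the definition of $M_I^\naive$, condition {\fontfamily{cmtt}\selectfont LM4} says exactly that the pairing $(\Lambda_i\otimes R)\times(\Lambda_{n-i}\otimes R)\to R$ identifies $\CF_i^\perp$ with $\CF_{n-i}$; equivalently, $\CF_{n-i}$ is the image of the natural map $(\Lambda_i\otimes R/\CF_i)^\vee\to \Lambda_{n-i}\otimes R$. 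Taking top exterior powers, the perfect pairing on $V$ induces a perfect pairing on $W=\bigwedge_F^n V$ (the ``$\bigwedge^n$ of a $2n$-dimensional symplectic/orthogonal-type space'' pairing), under which $\bigwedge_R^n\CF_i$ and $\bigwedge_R^n\CF_{n-i}$ are identified up to a canonical twist. So the statement will follow once I check that this induced pairing on $W$ carries the lattice-with-eigenspace datum $L^{r,s}_{(-1)^s}(\Lambda_i)$ to $L^{r,s}_{(-1)^s}(\Lambda_{n-i})$.

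The key steps, in order: (1) Show the pairing on $V$ sends the $(\pm\pi)$-eigenspace $V_{\pm\pi}$ of $\pi\otimes 1$ to the annihilator of $V_{\pm\pi}$, hence induces a perfect pairing $V_\pi\times V_{-\pi}\to F$; consequently the induced pairing on $W=\bigwedge^n V$ restricts to a perfect pairing $W^{r,s}\times W^{s,r}\to F$ — note the swap of $(r,s)$. Since here $(r,s)=(n-1,1)$, we have $\{r,s\}=\{n-1,1\}$ symmetric under the swap only in the sense that $W^{r,s}$ pairs with $W^{s,r}$; I must track this carefully, but the sign $(-1)^s$ appearing in {\fontfamily{cmtt}\selectfont LM8} is what needs to be matched. (2) Show the operator $a$ (hence the eigenspace decomposition $W=W_1\oplus W_{-1}$) is compatible with this pairing: concretely, using the basis $f_S\mapsto \sgn(\sigma_S) f_{S^\perp}$, the dual basis pairing exchanges $S$ with $\{1,\dots,2n\}\setminus S$, and Lemma~\ref{moduli_PR:sign-sigma-S} lets me compute that $W_{\pm 1}$ pairs perfectly with $W_{\pm 1}$ (or with $W_{\mp 1}$, depending on parity of $n$ — this is where the $\lceil n/2\rceil$ factor enters). (3) Show the lattices match: the pairing $\Lambda_i\times\Lambda_{n-i}\to\CO_{F_0}$ being perfect implies $W(\Lambda_i)$ and $W(\Lambda_{n-i})$ are $\CO_F$-dual lattices in $W$ under the induced pairing, so intersecting with $W^{r,s}_{\pm 1}$ and dualizing exchanges $L^{r,s}_{(-1)^s}(\Lambda_i)$ with the analogous sublattice for $\Lambda_{n-i}$. (4) Assemble: under all these identifications, ``$\bigwedge_R^n\CF_i\subset L^{r,s}_{(-1)^s}(\Lambda_i)(R)$'' is literally transported to the corresponding statement for $\CF_{n-i}$, because the annihilator of a line in $L$ contains $L^\perp$ iff the dual line lies in the dual sublattice, combined with the fact that $\CF_{n-i}=\CF_i^\perp$.

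The main obstacle will be step (2), bookkeeping the signs: one must pin down precisely how $a$ interacts with the canonical pairing on $W$, i.e. whether $\langle a x, a y\rangle = \pm\langle x,y\rangle$ and what that sign is as a function of $n\bmod 4$, and then confirm that the ``$(-1)^s$'' label in {\fontfamily{cmtt}\selectfont LM8} is self-consistent under the swap $(r,s)\leftrightarrow(s,r)$ forced by step (1). I expect this to come down to a clean computation with $\sgn(\sigma_S)$ via Lemma~\ref{moduli_PR:sign-sigma-S} and the observation that $\sum S + \sum(S^\perp)$ and $\#(S\cap S^\perp)$ are controlled, but it requires care; everything else is formal duality of perfect pairings and exterior powers. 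I would also remark that since the definition of $M_I$ involves the family indexed by $i\in\pm I+n\BZ$ and $n-i$ runs over the same index set, this proposition is exactly what is needed to reduce checking {\fontfamily{cmtt}\selectfont LM8} to ``half'' the indices, e.g. the $i\geq \lceil n/2\rceil$ ones, which is how it will be used in later sections.
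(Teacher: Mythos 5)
Your outline is, in skeleton, the same as the paper's proof: use \texttt{LM4} to write $\CF_{n-i}$ as the image of $(\Lambda_{i,R}/\CF_i)^\vee\to\Lambda_{n-i,R}$, identify $\bigwedge^n_R\CF_{n-i}$ with $\bigwedge^{2n}\Lambda_{n-i,R}\otimes\bigwedge^n_R\CF_i$, and reduce to showing that the induced duality matches $W^{r,s}_{\pm1}\cap W(\Lambda_i)$ with $W^{r,s}_{\pm1}\cap W(\Lambda_{n-i})$; the paper packages the line identification as a Hodge star operator $\star\colon\bigwedge^nV\to\CD\otimes\bigwedge^nV$. However, two of your steps do not go through as written. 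The first is the swap you record in step (1): the identification of the lines is \emph{not} induced by the Gram pairing on $W$ coming from \sform alone (under a pairing the partner of a line is a hyperplane, not a line); it is the composition of two dualities, namely the wedge pairing $\bigwedge^nV\times\bigwedge^nV\to\bigwedge^{2n}V$ followed by the \sform-identification $V^*\simeq V$. Each of these exchanges $W^{r,s}\leftrightarrow W^{s,r}$ (the wedge pairing because $W^{r,s}$ pairs nontrivially only against $W^{s,r}$; the form because it dualizes $V_\pi$ against $V_{-\pi}$), so the composite \emph{preserves} $W^{r,s}$: there is no net swap and no ``$(-1)^s$ versus $(r,s)\leftrightarrow(s,r)$'' consistency to arrange. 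This matters: if a residual swap were really there, your step (4) could not conclude, since the proposition asserts the same $L^{n-1,1}_{-1}$-condition on both sides and $W^{n-1,1}\neq W^{1,n-1}$ (they even impose different Kottwitz conditions), so this is not a sign that can be absorbed but a missing second duality in your bookkeeping.

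The second issue is your step (2), which you yourself flag as the main obstacle and leave unresolved: whether the duality preserves $W_{\pm1}$ or exchanges them, possibly depending on the parity of $n$. The paper avoids the basis-and-sign computation entirely: once the map is recognized as the Hodge star attached to a generator of $\bigwedge^{2n}\Lambda_{\CO_F}$, the stability $\star(W_{\pm1})\subset\CD\otimes W_{\pm1}$ follows from the intrinsic characterization of $W_{\pm1}$ in \cite[\S 7.1.3]{PR2009} (on the split basis $\star$ sends $f_S$ to a multiple of $f_{S^\perp}$, compatibly with the operator $a$), with no case analysis in $n$. Your worry that $W_{\pm1}$ might pair with $W_{\mp1}$ for some parities concerns the Gram pairing on $W$, not the star; and had the answer genuinely depended on $n$, the proposition could not hold uniformly. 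Your steps (3) and (4) are fine, and once (1) and (2) are repaired along these lines your argument becomes essentially the paper's proof.
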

\begin{proof}
We denote by $(-)^*$ the dual space (resp. lattice) of $V$ (resp. $\Lambda\subset V$), and by $\Lambda^\perp$ the symmetric dual of $\Lambda$. The symmetric form $(\,,\,)$ induces an isomorphism $V^*\xrightarrow{\sim}V$ (resp. $\Lambda^*\xrightarrow{\sim} \Lambda^\perp$).

    Consider the short exact sequence:
\begin{equation*}
    0\rightarrow\CF_{n-i}\rightarrow\Lambda_{n-i,R}\rightarrow\Lambda_{n-i,R}/\CF_{n-i}\rightarrow 0.
\end{equation*}
Taking the $n$-th wedge product, we obtain the following identification:
\begin{equation}\label{moduli_ss:isom}
    \bigwedge^n\CF_{n-i}\cong\bigwedge^{2n}\Lambda_{n-i,R}\otimes \bigwedge^{n}(\Lambda_{n-i,R}/\CF_2)^*\xrightarrow{\sim}\bigwedge^{2n}\Lambda_{n-i,R}\otimes \bigwedge^n \CF_i,
\end{equation}
where the second isomorphism is induced by the isomorphism $\Lambda_{n-i}^*\xrightarrow{\sim}\Lambda_{n-i}^\perp$. 
This identification arises from the Hodge star operator. More precisely, given a lattice $\Lambda\subset F^n$, the base change $\Lambda_{\CO_F}\subset V$ determines an unordered basis of $V$. Choosing a generator $\omega\in\bigwedge^{2n}\Lambda_{\CO_F}=\CD$, we define the Hodge star operator
\begin{equation*}
    \star:\bigwedge^n V\xrightarrow{\sim}\bigwedge^n V^*\xrightarrow{\sim}
    \CD\otimes \bigwedge^n V,
\end{equation*}
where the first map is induced by the perfect pairing $\wedge^nV\times \wedge^n V\rightarrow \wedge^{2n}V$, and the second map is induced by the symmetric form $\sform$.
 
The identification \eqref{moduli_ss:isom} is constructed by first restricting the Hodge star operator $\star$ to the lattices $\wedge^n\Lambda\subset \wedge^n V$, then base changing the lattices from $\CO_F$ to $R$, and finally restricting to the corresponding filtrations.
Therefore, it suffices to show that the Hodge star operator, upon restriction, induces the following isomorphism:
\begin{equation}\label{moduli_ss:isom2}
    \star:  W^{r,s}_{\pm 1}\cap W(\Lambda)\xrightarrow{\sim} \CD\otimes\left(W^{r,s}_{\pm 1}\cap W(\Lambda^\perp)\right).
\end{equation}
If this holds, then by definition \eqref{moduli_ss:L}, we obtain a natural isomorphism:
\begin{equation*}
    L^{r,s}_{\pm}(\Lambda)(R)\simeq \bigwedge^{2n}\Lambda_{R}\otimes L^{r,s}_{\pm 1}(\Lambda^\perp)(R),
\end{equation*}
Combined with \eqref{moduli_ss:isom}, this completes the proof of the proposition.

To confirm the isomorphism \eqref{moduli_ss:isom2}, first note that the perfect pairing $\wedge^n\Lambda_{\CO_F}\times\wedge^n\Lambda_{\CO_F}\rightarrow\wedge^{2n}\Lambda_{\CO_F}$ induces an isomorphism
\begin{equation*}
    \star:W(\Lambda)\xrightarrow{\sim} W(\Lambda^*)\xrightarrow{\sim}\CD\otimes W(\Lambda^\perp).
\end{equation*}
Next, recall that $W^{r,s}=\bigwedge^r V_{-\pi}\otimes \bigwedge^s V_\pi$ is dual to $W^{s,r}$ with respect to both pairings. Therefore, we have $\star(W^{r,s})\subset\CD\otimes W^{r,s}$.
Finally, by \cite[\S 7.1.3]{PR2009}, the submodules $W_\pm$ are defined via the action of  $\CD$, and in particular, are stable under the $\star^{-1}$. Therefore, we have $\star(W_\pm)\subset \CD\otimes W_{\pm}$.
Combining these three observations, we obtain the desired isomorphism.
\end{proof}
		
\subsection{The local model, topological flatness and the coherence conjecture}\label{moduli_fc}
We recall the definition of the ramified unitary local model from \cite[\S 11]{PR2008}. 
We will later show that it coincides with the Pappas-Zhu model, cf.\ Proposition \ref{application_nonsplit:PZ-vs-loc}.
\begin{definition}
	The local model $M_I^\loc$ is defined to be the scheme-theoretic closure of the open embedding $M_{I,\eta}^\naive\hookrightarrow M_I^\naive$.
\end{definition}
By \cite[Proposition 14.14]{GW2020}, a scheme over a Dedekind domain is flat if and only if it agrees with the scheme-theoretic closure of its generic fiber. 
It follows that the local model $M^\loc_I$ is flat over $\CO_E$.     

There exists a closed embedding of the special fiber of the local model into an affine flag variety: 
\begin{equation}\label{equ:ebd-lm-to-flag}
M^\loc_{I,s}\hookrightarrow L\GU/L^+\CP_I^\circ,
\end{equation}
see \cite[\S 11]{PR2008}.
The coherence conjecture \cite[\S 4]{PR2009}, now a theorem due to Zhu \cite{Zhu2014}, predicts that the image of $M^\loc_{I,s}$ coincides with the union of Schubert cells indexed by the ``admissible set'', whose precise definition is not needed in this paper; cf.\ \cite[Theorem 11.3]{PR2008} and \cite[Theorem 8.1]{Zhu2014}.
By the theory of affine Schubert varieties in \cite{PR2008}, we have:
\begin{theorem}\label{moduli_fc:coh}
For $p\geq 3$, the special fiber $M^\loc_{I,s}$ of the local model is reduced, and each irreducible component is normal, Cohen-Macaulay, and 
compatibly Frobenius-split as a subvariety of the affine flag variety $L\GU/L^+\CP_I^\circ$.
\end{theorem}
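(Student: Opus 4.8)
The plan is to identify the special fiber $M^\loc_{I,s}$ with an admissible union of affine Schubert varieties in the twisted affine flag variety, and then to read off all the asserted geometric properties from the known structure theory of those Schubert varieties. First I would recall from \cite[\S 11]{PR2008} the closed immersion $M^\loc_{I,s}\hookrightarrow \Fl_I:=L\GU/L^+\CP_I^\circ$, together with the flatness of $M^\loc_I$ over $\CO_E$ (it is the scheme-theoretic closure of its generic fiber). Since $M^\loc_{I,\eta}$ is geometrically the partial flag variety for the minuscule coweight $\mu$ of signature $(n-1,1)$, specializations of its points are $\mu$-admissible, so at the level of underlying topological spaces $M^\loc_{I,s}$ lies inside the admissible union $\CA_I:=\bigcup_{w\in\Adm_I(\mu)}\ov{\mathcal{S}}_w\subset\Fl_I$, where $\ov{\mathcal{S}}_w$ denotes the affine Schubert variety attached to $w$.

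Next I would upgrade this to an equality of topological spaces $|M^\loc_{I,s}|=|\CA_I|$ (``topological flatness''), which is \cite[Theorem 11.3]{PR2008}; in general this rests on the coherence conjecture of \cite[\S 4]{PR2009}, established by Zhu \cite[Theorem 8.1]{Zhu2014}, which computes $\dim_k H^0(\CA_I,\CL^{\otimes N})$ for $N\gg 0$ and a natural ample line bundle $\CL$ on $\Fl_I$ and matches it with the dimension of global sections of the corresponding line bundle on the generic fiber; flatness of $M^\loc_I$ over $\CO_E$ together with semicontinuity then forces the two special fibers to have the same support.

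The third step is to invoke the structure theory of affine Schubert varieties over a field of characteristic $p$. Here the hypothesis $p\geq 3$ enters: it guarantees that $p$ is not a torsion prime for the root datum of the ramified unitary group (whose local Dynkin diagram is of type $B$, $C$, or $BC$), so the theory of \cite[\S 8]{PR2008} — building on Faltings, Littelmann, Mathieu, and G\"ortz — applies and produces a Frobenius splitting of $\Fl_I$ compatibly splitting every $\ov{\mathcal{S}}_w$. From this I would deduce, first, that the reduced subscheme $\CA_I$ is a union of compatibly split subvarieties, hence itself Frobenius split and in particular reduced; and second, that each individual $\ov{\mathcal{S}}_w$ is normal and Cohen-Macaulay. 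Combining the three steps: $M^\loc_{I,s}\hookrightarrow\CA_I$ is a closed immersion of schemes with the same underlying space while $\CA_I$ is reduced, so $M^\loc_{I,s}=\CA_I$ scheme-theoretically; thus $M^\loc_{I,s}$ is reduced, its irreducible components are precisely the $\ov{\mathcal{S}}_w$ with $w$ maximal in $\Adm_I(\mu)$, and each such component is normal, Cohen-Macaulay, and Frobenius split.

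The main obstacle is entirely concentrated in the second step, the coherence conjecture / topological flatness: this is the one deep input, and everything else is either formal or a direct citation. The secondary point to watch is that $p\geq 3$ really does place us inside the Frobenius-splitting machinery for the \emph{twisted} affine flag variety $\Fl_I$ of the ramified unitary group — this is precisely where $p=2$ fails, since the Bruhat--Tits group scheme $\CP_I^\circ$ then involves wild ramification and $M^\loc_{I,s}$ need not be reduced.
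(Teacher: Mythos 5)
Your overall strategy --- identify $M^\loc_{I,s}$ with the admissible union $\CA_I=\bigcup_{w\in\Adm^I(\mu)}\ov{\CS}_w$ inside $\Fl_I$ via the coherence conjecture, and then quote normality, Cohen--Macaulayness and Frobenius splitting of the twisted affine Schubert varieties from \cite{PR2008} for $p\geq 3$ --- is exactly the route the paper intends, and your third step is fine. The gap is in how you extract reducedness. From the coherence conjecture you keep only the topological statement $|M^\loc_{I,s}|=|\CA_I|$ and then argue that ``$M^\loc_{I,s}\hookrightarrow\CA_I$ is a closed immersion with the same underlying space into a reduced scheme, hence an equality.'' No such closed immersion is available a priori: both are closed subschemes of $\Fl_I$, and the only natural inclusion goes the other way, $\CA_I=(M^\loc_{I,s})^{\red}\subset M^\loc_{I,s}$, because $\CA_I$ is reduced with the same support. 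Having $M^\loc_{I,s}\subset\CA_I$ as subschemes of $\Fl_I$ would mean that the radical of the ideal of $M^\loc_{I,s}$ is contained in that ideal, which is precisely the reducedness you are trying to prove; the step is circular. Indeed, equality of underlying spaces can never by itself yield reducedness --- this is exactly why Smithling's results \cite{Smithling2011,Smithling2014} on $M_I^\spin$ are only statements about topological flatness.

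The repair is to use the coherence conjecture at the level at which it is formulated: for a natural ample line bundle $\CL$ on $\Fl_I$ one has $\dim_k H^0(\CA_I,\CL^{\otimes N})=\dim H^0(M^\loc_{I,\eta},\CL^{\otimes N})$ for $N\gg 0$ (\cite[Theorem 8.1]{Zhu2014}, \cite[Theorem 11.3]{PR2008}). Flatness of $M^\loc_I$ over $\CO_E$ together with Serre vanishing gives $\dim_k H^0(M^\loc_{I,s},\CL^{\otimes N})=\dim H^0(M^\loc_{I,\eta},\CL^{\otimes N})$ as well. Now use the correctly oriented closed immersion $\CA_I\subset M^\loc_{I,s}$: twisting $0\to\CI\to\CO_{M^\loc_{I,s}}\to\CO_{\CA_I}\to 0$ by $\CL^{\otimes N}$, Serre vanishing makes the restriction map on $H^0$ surjective for $N\gg0$, so the equality of dimensions forces $H^0(M^\loc_{I,s},\CI\otimes\CL^{\otimes N})=0$; since $\CI\otimes\CL^{\otimes N}$ is globally generated for $N\gg 0$, we get $\CI=0$, i.e.\ $M^\loc_{I,s}=\CA_I$ scheme-theoretically. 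This yields reducedness, and the assertions about the irreducible components (the $\ov{\CS}_w$ for $w$ maximal in $\Adm^I(\mu)$) then follow from the Frobenius-splitting theory for $p\geq 3$ exactly as you state.
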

To use the Frobenius-splitting property, we require that $p\neq 0$.
In \cite{PR2008}, the authors consider only the case where the group $G$ splits over a tamely ramified extension, which is why we assume $p\geq 3$. 
When $p=2$, the unitary group may not be tamely ramified, and the local model can fail to be reduced; cf.\ \cite[\S 2.6]{HPR}. We will prove the reducedness of the special fiber when $p=0$ in \S \ref{general_char-0}.
		
Since the wedge, spin (\cite[7.2.2]{PR2009}) and strengthened spin conditions (\cite[\S 2.5]{Smithling2015}) hold over the generic fiber $M^\naive_{I,\eta}$, we have a chain of closed embeddings, which are equalities when restricting to the generic fibers:
\begin{equation*}\label{moduli_fc:closed-immersions}
	M_I^\loc \subset M_I \subset M_I^\spin \subset M_I^\wedge \subset M_I^\naive.
\end{equation*}
Smithling \cite{Smithling2015} conjectured that $M_I^\loc(r,s) = M_I(r,s)$ for any $(r,s)$ and $I$. He showed that this is true topologically:
\begin{theorem}[\cite{Smithling2011,Smithling2014}]\label{moduli_fc:top-flat}
The scheme $M^\spin_I$ is topologically flat over $\CO_F$; in other words, the underlying topological spaces of $M^\spin_I$ and $M_I^\loc$ coincide.	
\end{theorem}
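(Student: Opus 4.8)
The plan is to reduce the assertion to a combinatorial statement about Schubert cells in the affine flag variety and then prove that statement by analyzing the reduction of the spin condition modulo $\pi$. I would first carry out the topological reduction. Since $M_I^\loc$ is by definition the scheme-theoretic closure of the open dense generic fiber $M^\naive_{I,\eta}=(M_I^\loc)_\eta$, its underlying space is the closure of $(M_I^\loc)_\eta$ inside $M_I^\naive$, hence also inside $M_I^\spin$ (as $M_I^\loc\subset M_I^\spin\subset M_I^\naive$ are closed immersions that are isomorphisms on generic fibers, so $(M_I^\spin)_\eta=(M_I^\loc)_\eta$ and $M_I^\loc$ contains it as a dense open). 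Therefore the underlying space of $M_I^\spin$ is $M_I^\loc\cup(M_I^\spin)_s$, so the theorem is equivalent to the set-theoretic inclusion $(M_I^\spin)_s\subseteq(M_I^\loc)_s$ (the reverse being automatic from $M_I^\loc\subset M_I^\spin$), which can be checked on $\bar k$-points.

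Next I would pass to the partial affine flag variety $L\GU/L^+\CP_I^\circ$. A $\bar k$-point $(\sF_i)_i$ of $(M_I^\naive)_s$ is the same datum as a lattice chain as in {\fontfamily{cmtt}\selectfont LM1}--{\fontfamily{cmtt}\selectfont LM4}, hence determines a point of $L\GU/L^+\CP_I^\circ$ lying in a unique Schubert cell $S_w$, and the Kottwitz condition {\fontfamily{cmtt}\selectfont LM5} forces $w$ to be $\mu$-\emph{permissible} by the combinatorics of Kottwitz--Rapoport and G\"ortz. On the other hand, by the coherence conjecture --- now a theorem of Zhu \cite[Theorem 8.1]{Zhu2014}, cf.\ \cite[Theorem 11.3]{PR2008} --- the special fiber $(M_I^\loc)_s$ is, as a set, the union $\bigcup_{w\in\Adm(\mu)}S_w$ over the $\mu$-admissible set. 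Since $\Adm(\mu)\subseteq\mathrm{Perm}(\mu)$, with the two sets in general distinct for the ramified unitary group, the theorem reduces to the claim: if $w\in\mathrm{Perm}(\mu)\setminus\Adm(\mu)$, then the stratum $S_w$ contains no $\bar k$-point of $(M_I^\spin)_s$, i.e.\ no lattice chain in $S_w$ satisfies both the wedge condition {\fontfamily{cmtt}\selectfont LM6} and the spin condition {\fontfamily{cmtt}\selectfont LM7}.

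To prove this I would first enumerate $\mathrm{Perm}(\mu)\setminus\Adm(\mu)$ explicitly from the combinatorics of the extended affine Weyl group of the ramified unitary group --- bearing in mind that the two parities of $n$ genuinely differ, their local Dynkin diagrams being of different type, cf.\ \eqref{intro_back:local-dynkin}. Then, parametrizing the lattices in a given stratum $S_w$ and computing the Pl\"ucker coordinates of the line $\bigwedge^n_{\bar k}\sF_i\subset W(\Lambda_i)\otimes\bar k$ in the basis $\{f_S\}$, I would exhibit for each offending $w$ a subset $S\subset\{1,\dots,2n\}$ of cardinality $n$ whose $f_S$-coordinate is nowhere zero on $S_w$ while its $f_{S^\perp}$-coordinate vanishes identically (or conversely). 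Since every vector in the image of $W(\Lambda_i)_{(-1)^s}\otimes\bar k$ in $W(\Lambda_i)\otimes\bar k$ is an eigenvector for the sign $(-1)^s$ under the involution $a\colon f_{S'}\mapsto\sgn(\sigma_{S'})f_{(S')^\perp}$ --- so that, with signs computed by Lemma \ref{moduli_PR:sign-sigma-S}, its $f_{S^\perp}$-coordinate equals $(-1)^s\sgn(\sigma_S)$ times its $f_S$-coordinate --- such a line cannot lie there, whence {\fontfamily{cmtt}\selectfont LM7} fails identically on $S_w$; for the handful of remaining strata the wedge condition {\fontfamily{cmtt}\selectfont LM6} already fails.

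The main obstacle is exactly this combinatorial step: identifying $\mathrm{Perm}(\mu)\setminus\Adm(\mu)$ and, for each of its elements, pinning down a Pl\"ucker coordinate on $S_w$ that the wedge and spin conditions constrain incompatibly --- all of it uniformly in the parahoric index $I$ and in both parities of $n$, which fall under different local Dynkin types and produce genuinely different patterns. This is the reason the argument is spread over the two papers \cite{Smithling2011, Smithling2014}.
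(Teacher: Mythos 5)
The paper does not prove this statement itself: it is imported verbatim from \cite{Smithling2011,Smithling2014}, with Remark \ref{moduli_topflat:general} noting only that the core of Smithling's proof is a group-theoretic comparison of permissible-type sets with the $\mu$-admissible set. Your opening reduction does match the skeleton of that cited argument: topological flatness is equivalent to the set-theoretic inclusion $(M^\spin_I)_s\subseteq (M^\loc_I)_s$, checked on $\bar k$-points inside the affine flag variety, where the Kottwitz condition confines the relevant strata to the $\mu$-permissible set and \cite[Theorem 8.1]{Zhu2014} (or the easier lifting direction of \cite{PR2008}) identifies $(M^\loc_I)_s$ with the union of admissible strata; what remains is to show that no stratum indexed by $\mathrm{Perm}(\mu)\setminus\Adm(\mu)$ meets $M^\spin_{I,s}$.

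The mechanism you propose for that last step, however, does not work. The operator $a$ does not preserve the lattice $W(\Lambda_i)$ of \eqref{moduli_PR:lattice-in-W}, so it induces no involution on $W(\Lambda_i)\otimes\bar k$, and the elements $f_S$ are not an $\CO_F$-basis of $W(\Lambda_i)$, so ``$f_S$- and $f_{S^\perp}$-coordinates'' of a line in $W(\Lambda_i)\otimes\bar k$ are not even defined; in particular the image of $W(\Lambda_i)_{(-1)^s}\otimes\bar k$ in $W(\Lambda_i)\otimes\bar k$ is \emph{not} cut out by the sign relations you invoke. Because $W_{\pm1}\cap W(\Lambda_i)$ rescales the two members of a pair $\{S,S^\perp\}$ by different powers of $\pi$ before reduction, the image mod $\pi$ contains unpaired basis vectors (compare Corollary \ref{equ_ss-comp:basis}(iv),(vii) and the worst-term analysis of \S\ref{equ_ss-comp}), so exhibiting an $S$ whose coordinate is nonvanishing on a stratum while the $S^\perp$-coordinate vanishes yields no contradiction with {\fontfamily{cmtt}\selectfont LM7}. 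The correct route — and the actual content of the two cited papers — is to compute the lattice $W(\Lambda_i)_{(-1)^s}$ explicitly, evaluate the wedge and spin conditions at distinguished (torus-fixed) representatives of each permissible stratum to isolate the ``spin-permissible'' elements, and then prove combinatorially that these coincide with $\Adm(\mu)$, uniformly in $I$ and in both parities of $n$. Your proposal both substitutes an invalid pointwise obstruction for this computation and defers the combinatorial identification, which is the heart of the theorem; as it stands it therefore has a genuine gap.
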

\begin{remark}\label{moduli_topflat:general}
In his paper, Smithling proved the topological flatness under the assumption that $F/F_0$ is a ramified extension of $p$-adic fields with $p\geq 3$. 
But his result also holds in the set up of \S \ref{moduli_setup_notation}: The key point of the proof is the comparison between different various notions of permissible sets and $\mu$-admissible sets, which is purely group-theoretic. 
Note that the Bruhat-Tits theory used in Smithling's (e.g., the Iwahori–Weyl group) remains valid in the context of \S \ref{moduli_setup_notation}.\end{remark}

The following reduction strategy is well-known among experts.
\begin{corollary}\label{moduli_fc:reduced}
The following are equivalent:
\begin{altenumerate}
\item The closed embedding $\iota:M_I^\loc\hookrightarrow M_I$ is an equality;
\item The special fiber $M_{I,s}$ is reduced.
\end{altenumerate}
\end{corollary}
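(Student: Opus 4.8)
The implication (i) $\Rightarrow$ (ii) is immediate: if $\iota$ is an isomorphism then $M_{I,s}\cong M^\loc_{I,s}$, which is reduced by Theorem \ref{moduli_fc:coh}. The substance is (ii) $\Rightarrow$ (i), and here the plan is to analyze the ideal sheaf $\CI\subset\CO_{M_I}$ cutting out the closed subscheme $M^\loc_I\subseteq M_I$ and show it vanishes. I would record three properties, where $\pi$ denotes a uniformizer of $\CO_E$ (so $E=F$ in our signature): (a) since $M^\loc_I$ and $M_I$ agree over the generic fiber, $\CI$ is a coherent sheaf supported on the special fiber $M_{I,s}$; (b) since $\CO_{M_I}/\CI=\CO_{M^\loc_I}$ is flat over $\CO_E$, multiplication by $\pi$ is injective on $\CO_{M_I}/\CI$; and (c) $M_I$ is topologically flat over $\CO_E$, so $\CI$ is contained in the nilradical of $\CO_{M_I}$. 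For (c): applying $|\cdot|$ to the chain of closed immersions $M^\loc_I\subseteq M_I\subseteq M^\spin_I$ gives $|M^\loc_I|\subseteq|M_I|\subseteq|M^\spin_I|$, and $|M^\spin_I|=|M^\loc_I|$ by Theorem \ref{moduli_fc:top-flat}, whence $|M_I|=|M^\loc_I|$.

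Next I would reduce to a local computation: by (a), $\mathrm{Supp}(\CI)$ is a closed subscheme of $M_{I,s}$, which is of finite type over $k$ and hence Jacobson, so it suffices to show $\CI_x=0$ at every closed point $x$ of $M_{I,s}$. Fixing such an $x$, I set $A:=\CO_{M_I,x}$ — a noetherian local ring with $\pi\in\mathfrak m_A$, since $x$ lies in the special fiber — and $J:=\CI_x\subseteq A$. By (c), $J\subseteq\mathrm{nil}(A)$, so its image in $A/\pi A=\CO_{M_{I,s},x}$ lies in $\mathrm{nil}(A/\pi A)$, which is zero because $M_{I,s}$ is reduced; hence $J\subseteq\pi A$. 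For $b\in J$ write $b=\pi b'$ with $b'\in A$; by (b), $\pi$ is a non-zero-divisor on $A/J$, and $\pi b'\equiv b\equiv 0$ modulo $J$, so $b'\in J$. Therefore $J=\pi J$, and Nakayama's lemma (valid since $A$ is noetherian local, $\pi\in\mathfrak m_A$, and $J$ is finitely generated) forces $J=0$. Hence $\CI=0$ and $\iota$ is an equality.

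The only genuinely non-formal ingredient is the topological flatness input (c), i.e.\ Theorem \ref{moduli_fc:top-flat} for $M^\spin_I$: it is precisely what rules out $M_I$ carrying ``vertical'' irreducible components — components contained in $M_{I,s}$ and disjoint from $M^\loc_I$ — which could be reduced on their own and still keep $\iota$ from being surjective. (Reducedness of $M_{I,s}$ alone does not suffice, as the toy example $M_I=\Spec(\CO_E\times k[t])$ illustrates.) Beyond that, the argument is routine commutative algebra; the one point I would take care to justify is the reduction to closed points of $M_{I,s}$, which is legitimate because a nonzero coherent sheaf supported on the finite-type $k$-scheme $M_{I,s}$ has nonzero stalk at some closed point of $M_{I,s}$.
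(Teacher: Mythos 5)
Your proof is correct and rests on exactly the same ingredients as the paper's: reducedness of $M^\loc_{I,s}$ (Theorem \ref{moduli_fc:coh}) for (i)$\Rightarrow$(ii), and for (ii)$\Rightarrow$(i) the combination of topological flatness (Theorem \ref{moduli_fc:top-flat}), flatness of $M^\loc_I$, and a flatness criterion over a discrete valuation ring. The only difference is presentational: where the paper cites \cite[Proposition 14.17]{GW2020} and concludes via flatness of $M_I$, you unwind that criterion into a direct stalkwise argument ($\CI\subseteq\mathrm{nil}$, $\CI_x\subseteq\pi\CO_{M_I,x}$, then $\CI_x=\pi\CI_x=0$ by Nakayama), which is a fine, self-contained substitute.
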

\begin{proof}
By Theorem \ref{moduli_fc:coh} the special fiber $M^\loc_{I,s}$ is reduced.
If the embedding $\iota$ is an equality, then $M_{I,s}$ is also reduced.
Conversely, suppose that $M_{I,s}$ is reduced. 
Recall that for a locally noetherian scheme $X$ defined over a discrete valuation ring, if the special fiber $X_s$ is reduced, and all maximal points of $X_s$ lift to the scheme-theoretic closure of the generic fiber $X_\eta$, then $X$ is flat; cf.\ \cite[Proposition 14.17]{GW2020}.
Thus, it suffices to verify that all closed points of $M_{I,s}$ lift to $M_{I,s}^\loc$. This is a purely topological statement and follows from the topological flatness established in Theorem \ref{moduli_fc:top-flat}.
\end{proof} 
		
\section{Defining equations at the strongly non-special parahoric subgroup}\label{equ}
The main goal of this and the next section is to prove Theorem \ref{intro_back:max}. 
In this section, we determine the defining equations of the strengthened spin model $M_I$ for signature is $(n-1,1)$ and index set $I=\{\kappa\}$, assuming $\{\kappa\}$ is is strongly non-special.
Note that \S \ref{equ_chart} and \S \ref{equ_nw} apply to all except the $\pi$-modular one. The computations of the strengthened spin condition in \S \ref{equ_ss-comp} are valid for all maximal parahoric level.
		
\subsection{Affine chart of the worst point}\label{equ_chart}
Recall from \eqref{equ:ebd-lm-to-flag} that $M_{I,s}^\loc$ can be identified with a union of Schubert varieties in the affine flag varieties. The geometric special fibers of $M^\loc_I$ and $M_I$ topologically contain the same Schubert cells, including the unique closed Schubert cell, known as the ``worst point''.
\begin{lemma}\label{equ_chart:wp}
    The following $k$-point lies in $M_{\{\kappa\}}$:
\begin{equation*}
	\CF_\kappa=(\pi\otimes 1)(\Lambda_\kappa\otimes k)\subset \Lambda_\kappa\otimes k,\quad
	\CF_{n-\kappa}=(\pi\otimes 1)(\Lambda_{n-\kappa}\otimes k)\subset \Lambda_{n-\kappa}\otimes k.
\end{equation*}
This point is stabilized by the action of $P_{\{\kappa\}}$. Therefore, it represents the worst point.
\end{lemma}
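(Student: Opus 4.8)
The plan is to verify directly that the family obtained by setting $\CF_i := (\pi\otimes 1)(\Lambda_i\otimes_{\CO_{F_0}}k)$ for all $i\in\pm\{\kappa\}+n\BZ$ satisfies the axioms {\fontfamily{cmtt}\selectfont LM1}--{\fontfamily{cmtt}\selectfont LM8}, then to observe that it is fixed by $P_{\{\kappa\}}$, and finally to deduce that it must be the worst point from the known topology of $M_{\{\kappa\},s}$.

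\emph{Axioms {\fontfamily{cmtt}\selectfont LM1}--{\fontfamily{cmtt}\selectfont LM5}.} Since $\CO_F\otimes_{\CO_{F_0}}k\simeq k[\pi]/(\pi^2)$ and $\Lambda_i\otimes_{\CO_{F_0}}k$ is free of rank $n$ over this ring, $\CF_i=(\pi\otimes 1)(\Lambda_i\otimes_{\CO_{F_0}}k)$ is the image of multiplication by $\pi$; as $(\pi\otimes 1)^2=0$ this image equals the kernel, so $\CF_i$ is a rank-$n$ $k$-direct summand, automatically stable under $\CO_F\otimes_{\CO_{F_0}}k$ --- this is {\fontfamily{cmtt}\selectfont LM1}. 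The transition maps of {\fontfamily{cmtt}\selectfont LM2} and the periodicity isomorphism of {\fontfamily{cmtt}\selectfont LM3} are $\CO_F\otimes_{\CO_{F_0}}k$-linear, hence commute with multiplication by $\pi$, so {\fontfamily{cmtt}\selectfont LM2} and {\fontfamily{cmtt}\selectfont LM3} hold. For {\fontfamily{cmtt}\selectfont LM4}: the pairing $(\Lambda_i\otimes k)\times(\Lambda_{-i}\otimes k)\to k$ is perfect and $\CF_i$ is a rank-$n$ summand, so it suffices to check $\CF_{-i}\subseteq\CF_i^\perp$, i.e.\ $\langle\pi x,\pi y\rangle\in\mathfrak{m}_{F_0}$ for $x\in\Lambda_i$, $y\in\Lambda_{-i}$; this holds because $\langle\pi x,\pi y\rangle=\tfrac12\tr_{F/F_0}(-\pi\,\phi(x,y))$, while $\phi(\Lambda_i,\Lambda_{-i})\subseteq\CO_F$ (the $\langle\,,\,\rangle$-pairing is $\CO_{F_0}$-valued, so $\pi^{-1}\phi(\Lambda_i,\Lambda_{-i})$ lies in the inverse different $\pi^{-1}\CO_F$) and $\tr_{F/F_0}(\pi\CO_F)\subseteq\mathfrak{m}_{F_0}$. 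Finally $\pi\otimes 1$ acts as $0$ on $\CF_i$, so its characteristic polynomial on $\CF_i$ is $T^n=(T-\pi)^s(T+\pi)^r$ modulo $\mathfrak{m}_F$, which is {\fontfamily{cmtt}\selectfont LM5}. Thus $(\CF_i)$ is a $k$-point of $M^\naive_{\{\kappa\}}$.

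\emph{Axiom {\fontfamily{cmtt}\selectfont LM8}.} By Remark \ref{moduli_ss:imply-kott} and Corollary \ref{general:wedge-from-ss}, {\fontfamily{cmtt}\selectfont LM8} implies {\fontfamily{cmtt}\selectfont LM6}, and since $W^{r,s}_{(-1)^s}\subseteq W_{(-1)^s}$ it also implies {\fontfamily{cmtt}\selectfont LM7}; so it is enough to check {\fontfamily{cmtt}\selectfont LM8}. Multiplication by $\pi$ on $F^n$ is a similitude of $(V,\sform)$ which preserves the eigenspaces $V_{\pm\pi}$, preserves (rather than interchanges) the subspaces $W_{\pm 1}$, and carries the lattice $W(\Lambda_i)$ onto $W(\Lambda_{i-n})$; hence it identifies $W(\Lambda_i)^{r,s}_{\pm 1}$ with $W(\Lambda_{i-n})^{r,s}_{\pm 1}$, so by the periodicity {\fontfamily{cmtt}\selectfont LM3} together with Proposition \ref{moduli_ss:dual} it suffices to verify {\fontfamily{cmtt}\selectfont LM8} at the single index $i=\kappa$. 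From the $\CO_F$-basis $\pi^{-1}e_1,\dots,\pi^{-1}e_\kappa,e_{\kappa+1},\dots,e_n$ of $\Lambda_\kappa$ (cf.\ \eqref{moduli_setup:lattice}) one reads off that $\CF_\kappa$ is spanned over $k$ by the reductions of $e_1,\dots,e_\kappa,\pi e_{\kappa+1},\dots,\pi e_n$, so the line $\bigwedge_k^n\CF_\kappa\subseteq W(\Lambda_\kappa)\otimes_{\CO_F}k$ is generated by the reduction of
\[
\omega:=e_1\wedge\dots\wedge e_\kappa\wedge\pi e_{\kappa+1}\wedge\dots\wedge\pi e_n\in W(\Lambda_\kappa).
\]
It remains to show that this reduction lies in $L^{r,s}_{(-1)^s}(\Lambda_\kappa)(k)$: one expands $\omega$ in the split basis $f_1,\dots,f_{2n}$ of \eqref{moduli_PR:even-basis} (resp.\ \eqref{moduli_PR:odd-basis}), uses Lemma \ref{moduli_PR:sign-sigma-S} for the signs, and checks that its class lies in $W^{r,s}_{(-1)^s}\cap W(\Lambda_\kappa)$ --- since $(r,s)=(n-1,1)$ the relevant subspace is $\bigwedge^{n-1}_F V_{-\pi}\otimes_F V_\pi$, so this is a finite linear-algebra computation, carried out separately for even and odd $n$ because the split bases differ.

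\emph{Conclusion.} The step I expect to be the real obstacle is this last computation with the split basis and the attendant sign bookkeeping. It can, however, be bypassed entirely: under the closed embedding $M^\loc_{\{\kappa\},s}\hookrightarrow L\GU/L^+\CP^\circ_{\{\kappa\}}$ of \cite[\S 11]{PR2008} the base point corresponds to exactly the filtration $\CF_i=(\pi\otimes 1)(\Lambda_i\otimes k)$, so this point already lies in $M^\loc_{\{\kappa\},s}\subseteq M_{\{\kappa\},s}$ and {\fontfamily{cmtt}\selectfont LM8} is automatic. For the remaining assertions: every $g\in P_{\{\kappa\}}$ preserves each $\Lambda_i\otimes k$ and is $\CO_F\otimes k$-linear, hence commutes with $\pi\otimes 1$, so $g\CF_i=(\pi\otimes 1)(g(\Lambda_i\otimes k))=\CF_i$; the point is $P_{\{\kappa\}}$-fixed. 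Finally, by Theorem \ref{moduli_fc:top-flat} (applied to $M^\spin_{\{\kappa\}}$, together with $M^\loc_{\{\kappa\}}\subseteq M_{\{\kappa\}}\subseteq M^\spin_{\{\kappa\}}$) the special fiber $M_{\{\kappa\},s}$ has the same underlying space as $M^\loc_{\{\kappa\},s}$, which by \cite[\S 11]{PR2008} is the union of the finitely many admissible Schubert cells and possesses a unique closed one, the worst point; a closed point fixed by $P_{\{\kappa\}}$ must lie in this unique closed $P_{\{\kappa\}}$-orbit, so our point is the worst point.
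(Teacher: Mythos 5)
Your verification of {\fontfamily{cmtt}\selectfont LM1}--{\fontfamily{cmtt}\selectfont LM5} and the closing orbit argument are fine, but the actual content of the lemma --- that the line $\bigwedge^n_k \CF_\kappa$ satisfies the strengthened spin condition {\fontfamily{cmtt}\selectfont LM8} --- is never established. Your direct route stops exactly where the work begins: ``one expands $\omega$ in the split basis and checks'' is not a proof, and it is also not quite the right statement. The decomposable vector $\omega=e_1\wedge\dots\wedge e_\kappa\wedge \pi e_{\kappa+1}\wedge\dots\wedge\pi e_n$ is built from the standard basis, not from $\pi$-eigenvectors, so it does not lie in $W^{n-1,1}$; what must be produced is an element of the $\CO_F$-lattice $W^{n-1,1}_{-1}\cap W(\Lambda_\kappa)$ whose reduction modulo $\pi$ spans the line, i.e.\ an integral lift inside $L^{n-1,1}_{-1}(\Lambda_\kappa)(k)$. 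This is precisely what the paper supplies: it exhibits the explicit element $2(-1)^{\kappa+1}\pi^{n-\kappa}g_S$, a rescaled wedge of eigenvectors, and the order-of-vanishing bookkeeping needed to see that it lies in $W^{n-1,1}_{-1}(\Lambda_\kappa)$ and reduces to the line is the content of Proposition \ref{equ_ss-comp:g_S}.

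Your proposed bypass does not close this gap. From the existence of the closed embedding $M^\loc_{\{\kappa\},s}\hookrightarrow \Fl_{\{\kappa\}}$ you conclude that, since the point in question is the distinguished point of the ambient affine flag variety, ``this point already lies in $M^\loc_{\{\kappa\},s}$''. That is a non sequitur: $M^\loc_{\{\kappa\}}$ is the flat closure of the generic fiber, and whether this particular $P_{\{\kappa\}}$-fixed point of $M^\naive_{\{\kappa\},s}$ survives in $M^\loc_{\{\kappa\},s}$ is essentially the statement being proved; being a distinguished point of $\Fl_{\{\kappa\}}$ confers no membership in a closed subscheme of it. Nor is the membership automatic for abstract reasons: in the $\pi$-modular case the analogous point lies in $M^\naive_s$ and $M^\wedge_s$ but not in the local model (cf.\ Remarks \ref{max_red:recover}(iv) and \cite[Remark 7.4]{PR2009}), so a case-specific check is genuinely required. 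A correct version of your bypass exists and is the paper's alternative argument: the point visibly satisfies the wedge condition (both $\pi\otimes 1\pm 1\otimes\pi$ act as zero on $\CF_\kappa$ over $k$), hence lies in $M^\wedge_{\{\kappa\},s}$, and Smithling's topological flatness of the wedge model in the strongly non-special case \cite{Smithling2011,Smithling2014} then puts it in $M^\loc_{\{\kappa\},s}\subset M_{\{\kappa\},s}$. In your write-up, topological flatness (Theorem \ref{moduli_fc:top-flat}) is invoked only at the very end, to identify the point as the worst point, and does not supply the missing membership.
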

\begin{proof}
Let 
\begin{equation*}
g_S:=\left(\bigwedge_{i=2}^n (e_i\otimes 1-\pi e_i\otimes \pi^{-1})\right) \wedge \frac{1}{2}(e_1\otimes 1+\pi e_1\otimes \pi^{-1}).
\end{equation*}
One can check that $2(-1)^{\kappa+1}\pi^{n-\kappa}g_S\in W_{-1}^{n-1,1}(\Lambda_\kappa)$, lifting the line $\bigwedge^n (\pi\otimes 1)(\Lambda_\kappa\otimes k)\subset W_{-1}^{n-1,1}(\Lambda_\kappa\otimes k)$; cf.\ Proposition \ref{equ_ss-comp:g_S} for explicit calculations. 
By definition in \S \ref{moduli_setup_alg-group}, this subspace is stable under the action  of $P_{\{\kappa\}}$. The same argument applies to $\CF_{n-\kappa}$.

Alternatively, the lemma can be verified using the wedge local model $M_{\{\kappa\}}^\wedge$, which is topologically flat in the strongly non-special case: for odd $n$, this follows directly from \cite{Smithling2011}; for even $n$, it follows from \cite[Proposition 7.4.7]{Smithling2014}, see also \cite[Proposition 3.4]{HLS-basic}.
\end{proof}

The worst point corresponds to the minimal element in both the admissible and permissible sets. As a result, any open neighborhood of the worst point intersects every Schubert cell. Therefore, to study the geometry of $M_{I,s}$, it suffices to restrict to an open affine neighborhood of the worst point.

Notice that $M_I=M_{\{\kappa\}}$ can be embed into $\Gr(n,\Lambda_\kappa\otimes_{\CO_{F_0}}\CO_F)$ as a closed subscheme: by periodicity conditions({\fontfamily{cmtt}\selectfont LM3}), all the algebraic relations in the strengthened spin models can be expressed in terms of $\CF_{\kappa}$ and $\CF_{n-\kappa}$;
the algebraic relations in $\CF_{n-\kappa}$ can be expressed in terms of $\CF_\kappa$ by orthogonality conditions ({\fontfamily{cmtt}\selectfont LM4}).
		
Recall that we have chosen the \emph{standard basis} in \S \ref{moduli_setup:lattice}:
\begin{align}
\begin{split}\label{equ_chart:standard-basis}
	\Lambda_{\kappa,\CO_F}:  &
	\pi^{-1}e_1\otimes 1,\cdots,\pi^{-1}e_{\kappa}\otimes 1,e_{\kappa+1}\otimes 1,\cdots ,e_n\otimes 1;\\ 
	& e_1\otimes 1,\cdots,e_{\kappa}\otimes 1,\pi e_{\kappa+1}\otimes 1,\cdots,\pi e_n\otimes 1.\\
	\Lambda_{n-\kappa,\CO_F}:&
	\pi^{-1}e_1\otimes 1,\cdots,\pi^{-1}e_{n-\kappa}\otimes 1,e_{n-\kappa+1}\otimes 1,\cdots , e_n\otimes 1;\\
    &e_1\otimes 1,\cdots,e_{n-\kappa}\otimes 1,\pi e_{n-\kappa+1}\otimes 1,\cdots,\pi e_n\otimes 1.
\end{split}
\end{align}
The worst point lies in the affine chart $U$ of the Grassmannian defined by the standard basis. 
We will translate local model axioms into linear algebra relations, and use them to obtain defining equations of the strengthened spin model in $U$.
We denote the intersection of $U$ with the moduli functors $M_I^\square$ by $U_I^\square:=M_I^\square\cap U$.
		
A point in the chart $U$ can be presented by a $2n\times n$ matrix $\left(\begin{matrix}\CX\\I_n\end{matrix}\right)$ with respect to the standard basis, and the worst point corresponds to $\CX=0$. 
We further subdivide the matrix into blocks\footnote{Not the same $\bA,\bB$ as in Theorem \ref{intro_back:max}, see \S \ref{max_red}}:
\begin{equation}\label{equ_chart:standard-X}
	\CX=
\begin{tikzpicture}[>=stealth,thick,baseline]
	\matrix [matrix of math nodes,left delimiter=(,right delimiter=)](A){ 
					D		&	M	&	C\\
					F	&	X_4		&	E\\
					B		&	L	&	A\\
				};
	\filldraw[purple] (2.2,0.5) circle (0pt) node [anchor=east]{\fontsize{7}{8} $\kappa$};
	\filldraw[purple] (2.6,0) circle (0pt) node [anchor=east]{\fontsize{7}{8} $n-2\kappa$};
	\filldraw[purple] (2.2,-0.5) circle (0pt) node [anchor=east]{\fontsize{7}{8} $\kappa$};
	\filldraw[purple] (-0.45,1.2) circle (0pt) node [anchor=east]{\fontsize{7}{8} $\kappa$};
	\filldraw[purple] (0.6,1.2) circle (0pt) node [anchor=east]{\fontsize{7}{8} $n-2\kappa$};
	\filldraw[purple] (0.7,1.2) circle (0pt) node [anchor=center]{\fontsize{7}{8} $\kappa$};
\end{tikzpicture}.
\end{equation}
Similarly, we can embed $M_{\{\kappa\}}$ into $\Gr(n,\Lambda_{n-\kappa}\otimes_{\CO_{F_0}}\CO_F)$.
The corresponding standard affine open in $\Gr(n,\Lambda_{n-\kappa}\otimes_{\CO_{F_0}}\CO_F)$ consists of points represented by $2n\times n$ matrices $\left(\begin{matrix}\CY\\ I_n\end{matrix}\right)$.

To simplify the computation, we also choose the following reordered basis,
\begin{align}\label{equ_chart:reordered-basis}
	\begin{split}
		\Lambda_{\kappa,\CO_F}:  &
		e_{n-\kappa+1}\otimes 1,\cdots, e_n\otimes 1,	
		\pi^{-1}e_1\otimes 1,\cdots,\pi^{-1}e_{\kappa}\otimes 1;	
		e_{\kappa+1}\otimes 1,\cdots, e_{n-\kappa}\otimes 1,\\
		&   
		\pi e_{n-\kappa+1}\otimes 1,\cdots, \pi e_n\otimes 1,
		e_1\otimes 1,\cdots,  e_{\kappa}\otimes 1; 	
		\pi e_{\kappa+1}\otimes 1,\cdots,\pi e_{n-\kappa}\otimes 1.\\
		\Lambda_{n-\kappa,\CO_F}:&
		e_{n-\kappa+1}\otimes 1,\cdots, e_n\otimes 1,	
		\pi^{-1}e_1\otimes 1,\cdots,\pi^{-1}e_{\kappa}\otimes 1;
		\pi^{-1} e_{\kappa+1}\otimes 1,\cdots,\pi^{-1} e_{n-\kappa}\otimes 1;\\
		&   
		\pi e_{n-\kappa+1}\otimes 1,\cdots, \pi e_n\otimes 1,
		e_1\otimes 1,\cdots,  e_{n}\otimes 1;	
		e_{\kappa+1}\otimes 1,\cdots, e_{n-\kappa}\otimes 1.
	\end{split}
\end{align}
The points in the open affine charts of $\Gr(n,\Lambda_\kappa\otimes_{\CO_{F_0}}\CO_F)$ and $\Gr(n,\Lambda_{n-\kappa}\otimes_{\CO_{F_0}}\CO_F)$ with respect to this basis can be represented by 
$\left(\begin{matrix}
	X\\I_n
\end{matrix}\right)$, resp.,\ $\left(\begin{matrix}
	Y\\I_n
\end{matrix}\right)$.
Since the two chosen bases differ by a permutation, the corresponding affine charts in $\Gr(n,\Lambda_\kappa\otimes_{\CO_{F_0}}\CO_F)$ are naturally identified.
The worst point corresponds to $X=0$, or equivalently, $Y=0$.
After reordering the basis, the partition \eqref{equ_chart:standard-X} now becomes
\begin{equation}\label{equ_chart:reordered-X}
	X=
	\left(\begin{array}{c|c}
		X_1&X_2\\
		\hline
		X_3&X_4
	\end{array}\right)=	
	\begin{tikzpicture}[>=stealth,thick,baseline]
		\matrix [matrix of math nodes,left delimiter=(,right delimiter=)](A){ 
					A		&	B	&	L\\
					C	&	D		&	M\\
					E		&	F	&	X_4\\
				};
		\filldraw[purple] (2.0,0.5) circle (0pt) node [anchor=east]{\fontsize{7}{8} $\kappa$};
		\filldraw[purple] (2.0,0) circle (0pt) node [anchor=east]{\fontsize{7}{8} $\kappa$};
		\filldraw[purple] (2.4,-0.5) circle (0pt) node [anchor=east]{\fontsize{7}{8} $n-2\kappa$};
		\filldraw[purple] (-0.4,1.1) circle (0pt) node [anchor=east]{\fontsize{7}{8} $\kappa$};
		\filldraw[purple] (0.15,1.1) circle (0pt) node [anchor=east]{\fontsize{7}{8} $\kappa$};
		\filldraw[purple] (0.65,1.1) circle (0pt) node [anchor=center]{\fontsize{7}{8}	$n-2\kappa$};
		\draw (-1,-0.225) -- (1,-0.225);
		\draw (0.25,-0.75) -- (0.25,0.75);
	\end{tikzpicture}.
\end{equation}
We subdivide the matrices $\CY$ and $Y$ into blocks of the same size as $\CX$ and $X$, respectively, with their entries specified in \eqref{equ_nw:Y}. 
For the remaining of the paper, we will mainly work with $\Gr(n,\Lambda_\kappa\otimes_{\CO_{F_0}}\CO_F)$.

\subsection{Naive model and wedge condition}\label{equ_nw}
Conditions {\fontfamily{cmtt}\selectfont LM1}-{\fontfamily{cmtt}\selectfont LM6} produce closed conditions on $U$. We will translate all of them, except {\fontfamily{cmtt}\selectfont LM5}, into algebraic relations in terms of $X$ (cf.\ Remark \ref{moduli_ss:imply-kott}).
With the chosen reordered basis, the transition maps:
$A_{\kappa}: \Lambda_{\kappa,R}\rightarrow \Lambda_{n-\kappa,R}$
and
$A_{n-\kappa}: \Lambda_{n-\kappa,R}\rightarrow \Lambda_{n+\kappa,R},$
in {\fontfamily{cmtt}\selectfont LM2} are represented by the matrices
\begin{equation}\label{equ_nw:trans}
	A_{\kappa}=\left(\begin{matrix}
	I_{2\kappa}    &          &          &   \\
	&          &          &   \pi_0\cdot I_{n-2\kappa}\\
	&          &   I_{2\kappa}&   \\
	&   I_{n-2\kappa}  &          &   
\end{matrix}\right),
\quad
A_{n-\kappa}=\left(
\begin{matrix}
	&          &   \pi_0 \cdot I_{2\kappa}      &   \\
	&   I_{n-2\kappa}  &          &   \\
	I_{2\kappa}    &          &          &   \\
	&          &          &I_{n-2\kappa}
\end{matrix}\right).
\end{equation}
The symmetric pairing $\sform_R:\Lambda_{n-\kappa,R}\times \Lambda_{\kappa,R}\rightarrow R$ in {\fontfamily{cmtt}\selectfont LM4} is represented by the matrix
\begin{equation}\label{equ_nw:M}
	M=\left(\begin{matrix}
	&      &J_{2\kappa}&  \\
	&      &       &-H_{n-2\kappa}\\
	-J_{2\kappa}   &      &       &\\
	&H_{n-2\kappa} &       &
    \end{matrix}\right),
\end{equation}
where 
\begin{equation*}
	H_{l}=
    \left(
    \begin{array}{ccc}
	&        &   1\\
	&\iddots &   \\
	1   &        & 
    \end{array}\right),
	\quad
	J_{2l}=
	\left(\begin{matrix}
		&   H_l\\
		-H_l &
		\end{matrix}\right).
\end{equation*}
We will omit the lower indices of these matrices for simplicity.
By {\fontfamily{cmtt}\selectfont LM4}, we have $\CF_{n-\kappa}=\CF_{\kappa}^\perp$. Using the represented matrix \eqref{equ_nw:M}, we have
\begin{equation}\label{equ_nw:Y}
	Y=
	\left(\begin{matrix}
	Y_1 &   Y_2 \\
	Y_3 &   Y_4 \\
\end{matrix}\right)=
\left(\begin{matrix}
	-J_{2\kappa}   &   \\
	&   H_{n-2\kappa}  
\end{matrix}\right)
    X^t
    \left(\begin{matrix}
	J_{2\kappa}    &       \\
	&   H_{n-2\kappa} 
	\end{matrix}\right)
	=
	\left(\begin{matrix}
	-JX_1^t J &   -JX_3^t H  \\
	H X_2^t J   &   HX_4^t H
		\end{matrix}\right).
\end{equation}

Recall from {\fontfamily{cmtt}\selectfont LM1} the $\pi\otimes 1$-stability:
$(\pi\otimes 1)\CF_{\kappa}\subset\CF_{\kappa}$,
and
$(\pi\otimes 1)\CF_{n-\kappa}\subset\CF_{n-\kappa}$.
For the first inclusion, we obtain
$
(\pi\otimes 1) \left(\begin{matrix} X\\I_n \end{matrix}\right)
=\left(\begin{matrix} \pi_0 I_n\\X \end{matrix}\right)
=\left(\begin{matrix} X\\I_n \end{matrix}\right)T,	
$
for some $T$. Hence $T=X$ and $XT=\pi_0 I_n$. This implies $X^2=\pi_0 I_0$, i.e.,
\begin{equation*}
	\left(\begin{matrix}
		X_1^2+X_2X_3    &   X_1X_2+X_2X_4\\
		X_3X_1+X_4X_3   &   X_3X_2+X_4^2
		\end{matrix}\right)
	=
	\left(\begin{matrix}
		\pi_0 I_{2\kappa}    &   \\
		&\pi_0 I_{n-2\kappa}
	\end{matrix}\right).	
\end{equation*}
	The $(\pi\otimes 1)$-stability on $\CF_{n-\kappa}$ follows from the $(\pi\otimes 1)$-stability of $\CF_{\kappa}$ by \eqref{equ_nw:Y}.
			
The condition {\fontfamily{cmtt}\selectfont LM2} requires that the transition map $A_\kappa$ sends $\CF_\kappa$ into $\CF_{n-\kappa}$. This is equivalent to $(A_\kappa \CF_\kappa,\CF_\kappa)=0$, i.e., $\left(\begin{matrix} X\\I_n \end{matrix}\right)^t A_{\kappa}^t M \left(\begin{matrix} X\\I_n \end{matrix}\right)=0.$
This implies
\begin{equation*}
	\left(\begin{matrix}
		-JX_1+X_3^t H X_3 + X_1^t J    &   -J X_2+X_3^t H X_4\\
		X_2^t J+X_4^t H X_3  &   X_4^t H X_4 -\pi_0 H
	\end{matrix}\right)
	=0.
\end{equation*}
Similarly, the condition that the transition map $A_{n-\kappa}$ sends $\CF_{n-\kappa}$ to $\CF_{n+\kappa}$ is equivalent to $(\CF_\kappa,A_{n-\kappa} \CF_{n-\kappa})=0$. This gives 
\begin{equation*}
	\left(\begin{matrix}
		X_1 J X_1^t -\pi_0 J  &   X_1 J X_3^t-X_2 H\\
		X_3 J X_1^t + HX_2^t    &   X_3 JX_3^t-X_4 H+HX_4^t
	\end{matrix}\right)=0.	
\end{equation*}
Here we have secretly identified $\Lambda_{n+\kappa}$ with $\Lambda_\kappa$ by {\fontfamily{cmtt}\selectfont LM3}.

Finally, the wedge condition {\fontfamily{cmtt}\selectfont LM6} for $(r,s)=(n-1,1)$ implies:
\begin{equation*}
    \begin{matrix}
		\bigwedge^2 (\pi\otimes 1+1\otimes \pi\mid \CF_\kappa)=0,	&	\bigwedge^n (\pi\otimes 1-1\otimes \pi\mid \CF_\kappa)=0;\\
		\bigwedge^2 (\pi\otimes 1+1\otimes \pi\mid \CF_{n-\kappa})=0,	&	\bigwedge^n (\pi\otimes 1-1\otimes \pi\mid \CF_{n-\kappa})=0.
	\end{matrix}
\end{equation*}
The wedge condition on $\CF_{n-\kappa}$ is implied by that on $\CF_\kappa$ via the relation \eqref{equ_nw:Y}.
In the special fiber, where $1\otimes \pi=0$, the axiom {\fontfamily{cmtt}\selectfont LM6} simplifies to
\begin{equation*}
	\bigwedge^2 X=0.
\end{equation*}		
We summarize what we have obtained over the special fiber:
\begin{proposition}\label{equ_nw:summary}
    The coordinate ring of the affine chart over the special fiber $U^\wedge_{\{\kappa\},s}\subset M^\wedge_{\{\kappa\},s}$ is isomorphic to the polynomial ring $k[X]$ modulo the entries of the following matrices:
\begin{altitemize}
    \item[\fontfamily{cmtt}\selectfont LM1.] $X_1^2+X_2X_3,
    \,X_1X_2+X_2X_4,
    \,X_3X_1+X_4X_3,
    \,X_3X_2+X_4^2$,
				
    \item[\fontfamily{cmtt}\selectfont LM2-1.] $-JX_1+X_3^t HX_3+X_1^t J,-JX_2+X_3^tHX_4,X_2^tJ+x_4^tHX_3,X_4^tHX_4$,
				
    \item[\fontfamily{cmtt}\selectfont LM2-2.] $X_1JX_1^t,X_1JX_3^t-X_2H,
    \,X_3JX_1^t+HX_2^t,
    \,X_3JX_3^t-X_4H+HX_4^t$,

    \item[\fontfamily{cmtt}\selectfont LM5.] $\det(tI_n-X)=(t-\pi)^{n-1}(t+\pi)$.
    
    \item[\fontfamily{cmtt}\selectfont LM6.] $\bigwedge^2X$.
\end{altitemize} 
\end{proposition}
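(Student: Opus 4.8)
Here is how I would approach the proof.

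The plan is to read off each of the defining axioms \texttt{LM1}--\texttt{LM6} as an explicit system of polynomial equations in the entries of the coordinate matrix $X$ on the chart $U$, and then base change to the special fiber. The one structural fact I would set up first is that, by the periodicity {\fontfamily{cmtt}\selectfont LM3} and the duality {\fontfamily{cmtt}\selectfont LM4}, the scheme $M^\naive_{\{\kappa\}}$, and hence the closed subscheme $M^\wedge_{\{\kappa\}}\subset M^\naive_{\{\kappa\}}$, embeds as a closed subscheme of $\Gr(n,\Lambda_\kappa\otimes_{\CO_{F_0}}\CO_F)$ via $(\CF_i)_i\mapsto\CF_\kappa$: the index set $\pm\kappa+n\BZ$ has, up to the $n$-shift of {\fontfamily{cmtt}\selectfont LM3}, only the two residues $\kappa$ and $n-\kappa$, and {\fontfamily{cmtt}\selectfont LM4} recovers $\CF_{n-\kappa}$ as $\CF_\kappa^\perp$, explicitly through the matrix formula \eqref{equ_nw:Y}. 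On $U$, in the reordered basis \eqref{equ_chart:reordered-basis}, the universal submodule $\CF_\kappa$ is the column span of $\binom{X}{I_n}$ and the worst point is $X=0$.

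Next I would process the axioms in turn. On $U$ the ``$R$-direct summand of rank $n$'' clause of {\fontfamily{cmtt}\selectfont LM1} is automatic, so {\fontfamily{cmtt}\selectfont LM1} contributes only that $\CF_\kappa$ be $(\pi\otimes 1)$-stable; writing $(\pi\otimes 1)\binom{X}{I_n}=\binom{X}{I_n}T$ forces $T=X$ and $X^2=\pi_0 I_n$, hence the four block relations, and the corresponding stability for $\CF_{n-\kappa}$ then follows from \eqref{equ_nw:Y}. Condition {\fontfamily{cmtt}\selectfont LM2}, which by {\fontfamily{cmtt}\selectfont LM3} need only be imposed for the two transitions $A_\kappa$ and $A_{n-\kappa}$ occurring in one period (this uses $2\kappa<n$, valid in the strongly non-special range), says $(A_\kappa\CF_\kappa,\CF_\kappa)=0$ and $(\CF_\kappa,A_{n-\kappa}\CF_{n-\kappa})=0$; substituting the matrices $A_\kappa,A_{n-\kappa}$ from \eqref{equ_nw:trans} and the matrix $M$ from \eqref{equ_nw:M} yields the block identities {\fontfamily{cmtt}\selectfont LM2-1} and {\fontfamily{cmtt}\selectfont LM2-2}. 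Axioms {\fontfamily{cmtt}\selectfont LM3} and {\fontfamily{cmtt}\selectfont LM4} have been consumed in the embedding and give no new equation. Axiom {\fontfamily{cmtt}\selectfont LM5} is the characteristic polynomial condition for $\pi\otimes 1$ acting on $\CF_\kappa$, i.e.\ for $X$, namely $\det(tI_n-X)=(t-\pi)^{n-1}(t+\pi)$, and {\fontfamily{cmtt}\selectfont LM6} for $(r,s)=(n-1,1)$ reads $\bigwedge^2(\pi\otimes 1+1\otimes\pi\mid\CF_\kappa)=0$ together with $\bigwedge^n(\pi\otimes 1-1\otimes\pi\mid\CF_\kappa)=0$, the versions on $\CF_{n-\kappa}$ again being implied by \eqref{equ_nw:Y}.

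Finally I would base change along $\CO_F\to k$, i.e.\ set $\pi=\pi_0=0$: the coordinate ring of $U^\wedge_{\{\kappa\}}$ is $\CO_F[X]$ modulo the ideal generated by the integral forms of the relations above, so that of $U^\wedge_{\{\kappa\},s}$ is $k[X]$ modulo the reductions of those same generators. Reducing mod $\pi$: {\fontfamily{cmtt}\selectfont LM1} becomes $X^2=0$; {\fontfamily{cmtt}\selectfont LM2-1} and {\fontfamily{cmtt}\selectfont LM2-2} specialize with the $\pi_0$-terms deleted (so e.g.\ $X_4^t H X_4=0$); {\fontfamily{cmtt}\selectfont LM5} reduces to $\det(tI_n-X)=t^n$; and in {\fontfamily{cmtt}\selectfont LM6}, since $1\otimes\pi$ acts as $0$ over a $k$-algebra while $\pi\otimes 1$ acts as $X$, both wedge conditions collapse to $\bigwedge^2 X=0$, the companion relation $\bigwedge^n X=\det X=0$ being a consequence of it since $n\ge3$. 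Collecting these reproduces exactly the list in the statement. I do not anticipate a genuine obstacle: the whole argument is the faithful translation of the axioms plus the bookkeeping of which relations are new and which are already forced by \eqref{equ_nw:Y} and {\fontfamily{cmtt}\selectfont LM3}, the fiddliest point being the matrix computation behind {\fontfamily{cmtt}\selectfont LM2} with its $J$- and $H$-blocks; completeness of the list is automatic, since {\fontfamily{cmtt}\selectfont LM1}--{\fontfamily{cmtt}\selectfont LM6} are by definition all the conditions cutting out $M^\wedge_{\{\kappa\}}$.
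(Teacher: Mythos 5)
Your proposal is correct and follows essentially the same route as the paper: embed via {\fontfamily{cmtt}\selectfont LM3}/{\fontfamily{cmtt}\selectfont LM4} into the Grassmannian of $\Lambda_\kappa$, translate {\fontfamily{cmtt}\selectfont LM1} into $X^2=\pi_0 I_n$, translate {\fontfamily{cmtt}\selectfont LM2} into $(A_\kappa\CF_\kappa,\CF_\kappa)=0$ and $(\CF_\kappa,A_{n-\kappa}\CF_{n-\kappa})=0$ with the matrices \eqref{equ_nw:trans} and \eqref{equ_nw:M}, note that the conditions on $\CF_{n-\kappa}$ are forced by \eqref{equ_nw:Y}, and then reduce mod $\pi$ so that {\fontfamily{cmtt}\selectfont LM6} collapses to $\bigwedge^2X=0$. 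Your extra remarks (why two transitions suffice, and why the $\bigwedge^n$ half of the wedge condition is redundant on the special fiber) are minor elaborations of what the paper leaves implicit.
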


One see that the Kottwitz condition is more complicated than the other axioms. However, thanks to Remark \ref{moduli_ss:imply-kott}, it can be omitted in the strengthened spin model.
  
\subsection{Strengthened spin condition: set-ups}\label{equ_ss-setup}	
Next, we want to translate the strengthened spin conditions into linear algebra relations. 
While our approach is inspired by the discussions in \cite[\S 4.1]{Smithling2015} and \cite[\S 1.4.2]{Yu2019}, our setting involves additional complications. In this subsection, we first present the basic definitions and then outline our strategy.
		
\subsubsection{Another basis for $V$}
We choose another basis for $V=F^n\otimes_{F_0}F$. Let $g_1,\cdots,g_{2n}$ be the following ordered $F$-basis for $V$,
\begin{equation}
	e_1\otimes 1-\pi e_1\otimes \pi^{-1},\cdots,e_n\otimes 1-\pi e_n\otimes \pi^{-1};\frac{e_1\otimes 1+\pi e_1\otimes \pi^{-1}}{2},\cdots,\frac{e_n\otimes 1+\pi e_n\otimes \pi^{-1}}{2}.	
\end{equation}
It splits and separates the eigenspaces $V_{-\pi}$ and $V_\pi$.
This is also a split orthogonal ordered basis for the symmetric form $(\,,\,)$, whose transformation matrix to \eqref{moduli_PR:odd-basis} has determinant $1$. 
Hence, we find a basis for the eigenspaces:
\begin{equation*}\label{equ_ss-setup:decomp}
	W_{\pm 1}=
	\Span_F\{g_S\pm \sgn(\sigma_S)g_{S^\perp}\mid \#S=n\},
\end{equation*}
where $g_S\in W$ is defined to be the wedge product with respect to $g_i$ for the ordered index set $S$.

\subsubsection{Types and weights}
We recall some definitions and properties in \cite[\S 4.2, \S 4.3]{Smithling2015}.
		
\begin{definition} 
We say that a subset $S \subset \{1,\cdots,2n\}$ has \emph{type $(r,s)$} if
\[
\#(S\cap \{1,\cdots,n\}) = r
\quad\text{and}\quad
\#(S\cap\{n+1,\cdots,2n\}) = s.
\]
Suppose from now on, $r + s = n$, then the elements $g_S$ defined in \S \ref{equ_ss-setup} for varying $S$ of type $(r,s)$ form a basis for $W^{r,s}$.
\end{definition}
It is easy to verify that $S$ and $S^\perp$ are of the same type, where $S^{\perp}$ is defined in \S \ref{moduli_setup_notation}. We have
\begin{lemma}[\protect{\cite[Lemma 4.2]{Smithling2015}}]\label{equ_ss-setup:span}
	$W_{\pm 1}^{r,s} = \Span_F\{\, g_S \pm \sgn(\sigma_S) g_{S^\perp} \mid S \text{ is of type } (r,s)\,\}$.\qed
\end{lemma}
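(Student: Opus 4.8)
The plan is to deduce the statement directly from the decomposition $W = \bigoplus_{r+s=n} W^{r,s}$ together with the description $W_{\pm 1} = \Span_F\{\, g_S \pm \sgn(\sigma_S) g_{S^\perp} \mid \#S = n\,\}$ already recorded above (the display preceding the definition of \emph{type}). The only form-specific input is the observation, also noted above, that $S$ and $S^\perp$ have the same type: the involution $i \mapsto i^* = 2n+1-i$ interchanges $\{1,\dots,n\}$ and $\{n+1,\dots,2n\}$, so passing from $S$ to $S^*$ swaps the two block-counts, and then passing to the complement $S^\perp = \{1,\dots,2n\}\setminus S^*$ swaps them back; hence $\#(S^\perp\cap\{1,\dots,n\}) = \#(S\cap\{1,\dots,n\})$, and likewise for the other block. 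I would begin by recording this, and by recording that the $g_T$ with $T$ of type $(r,s)$ form a basis of $W^{r,s}$ (already stated after the definition of type), which is what ties the $g$-indexing to the grading.

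Granting this, I would argue as follows. Write $U^{r,s}_{\pm 1}$ for the right-hand side span in the statement. For the inclusion $U^{r,s}_{\pm 1}\subseteq W^{r,s}_{\pm 1}$: if $S$ has type $(r,s)$ then so does $S^\perp$, so both $g_S$ and $g_{S^\perp}$ lie in $W^{r,s}$; hence $g_S \pm \sgn(\sigma_S) g_{S^\perp} \in W^{r,s}\cap W_{\pm 1} = W^{r,s}_{\pm 1}$. For the reverse inclusion, the previous point gives $U^{r,s}_{\pm 1}\subseteq W^{r,s}$ for every partition $r+s=n$, while $W_{\pm 1} = \sum_{r+s=n} U^{r,s}_{\pm 1}$ because every spanning vector $g_S \pm \sgn(\sigma_S) g_{S^\perp}$ of $W_{\pm 1}$ has $S$ of some definite type. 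Since the subspaces $W^{r,s}$ are in direct sum, this sum is automatically direct, so $W_{\pm 1} = \bigoplus_{r+s=n} U^{r,s}_{\pm 1}$; intersecting with $W^{r,s}$ and using directness of $W = \bigoplus W^{r,s}$ yields $W^{r,s}_{\pm 1} = W_{\pm 1}\cap W^{r,s} = U^{r,s}_{\pm 1}$, which is the assertion.

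I do not expect a serious obstacle: the content is formal bookkeeping with direct sums, and the one ``geometric'' ingredient — that $S$ and $S^\perp$ share a type, so that the eigenvalue-$\pm 1$ spanning vectors respect the grading $W = \bigoplus W^{r,s}$ — was already isolated in the text. The only step deserving a careful line is the passage ``$W_{\pm 1} = \sum U^{r,s}_{\pm 1}$ with $U^{r,s}_{\pm 1}\subseteq W^{r,s}$ and the $W^{r,s}$ independent, hence $W_{\pm 1}\cap W^{r,s} = U^{r,s}_{\pm 1}$'', since a priori an element of $W_{\pm 1}\cap W^{r,s}$ is only known to be a combination of spanning vectors of mixed types; but uniqueness of the decomposition in $W = \bigoplus W^{r,s}$ kills the off-type contributions at once.
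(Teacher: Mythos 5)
Your proof is correct: the inclusion $\Span\{g_S \pm \sgn(\sigma_S)g_{S^\perp}\} \subseteq W^{r,s}\cap W_{\pm 1}$ follows from $S$ and $S^\perp$ sharing a type, and the reverse inclusion follows by expanding any element of $W_{\pm 1}$ in the spanning vectors, grouping by type, and invoking uniqueness in $W=\bigoplus_{r'+s'=n}W^{r',s'}$ to kill the off-type pieces. The paper itself offers no proof — it cites Smithling (Lemma 4.2), where the argument is essentially this same formal type-grading bookkeeping — so your write-up matches the intended route; the only hypothesis worth flagging (which you implicitly use and the paper has already verified) is that the $g$-basis is a split basis obtained from the fixed $f$-basis by a determinant-one transformation, so the description of $W_{\pm 1}$ with the labels $\pm 1$ carries over to the $g_S$'s.
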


\begin{remark}\label{equ_ss-setup:sigma-compute}
	We will be interested in $S$ of type $(n-1,1)$. Such an $S$ is of the form
	\[
	S = \left\{1,\cdots, \wh{j}, \cdots, n, n+i\right\}
	\]
	for some $i$, $j \leq n$.  By Lemma \ref{moduli_PR:sign-sigma-S}, we have
	\[
	\sgn(\sigma_S) = (-1)^{\Sigma S+\frac{n(n+1)}{2}} = (-1)^{\frac{n(n+1)}{2} + \frac{n(n+1)} 2 -j + n+i} = (-1)^{n+ i + j }.
	\]
\end{remark}
		
Next, to find a basis for $W(\Lambda_\kappa)_{-1}^{n-1,1}$, we need to determine when a linear combination of elements of the form $g_S-\sgn(\sigma_S)g_{S^\perp}$ lies in $W(\Lambda_\kappa)$. The following definition will help with the bookkeeping.
\begin{definition}
	Let $S \subset \{1,\cdots,2n\}$.  
	The \emph{weight vector $\mathbf{w}_S$} attached to $S$ is the element in $\BN^n$ whose $i$th entry is $\#(S \cap \{i,n+i\})$.
\end{definition}

\begin{example}\label{equ_ss-setup:possible-weights}
If $S$ is of type $(n-1,1)$, then its weight vector $\mathbf{w}_S$ falls into one of two cases.

In the first case, there exist distinct indices $i\neq j$ such that the $i$th entry of $\mathbf{w}_S$ is $2$, the $j$th entry of $\mathbf{w}_S$ is $0$, and all the other entries are $1$. In this situation,  $S = \{1,\cdots, \wh{j},\cdots, n, n+i\}$ can be uniquely recovered from its weight vector.

In the second case, the weight vector is $\mathbf{w}_S = (1,\cdots,1)$. Then all we can say is that $S = \{1,\cdots, \wh{i}, \cdots, n, n+ i\}$ for some $i \in \{1,\cdots, n\}$.
\end{example}
The reason we introduce the weight vector is the following lemma:
\begin{lemma}[\protect{\cite[Lemma 4.7]{Smithling2015}}]\label{equ_ss-setup:weight-compute}
    For any subset $S\subset \{1,\cdots,2n\}$ of cardinality $n$, write
	\[
	g_S=\sum_{S'}c_{S'}e_{S'},\quad c_{S'}\in F.
	\]
Then, every $S'$ with $c_{S'}\neq 0$ has the same weight as $S$. 
\end{lemma}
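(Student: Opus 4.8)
The plan is to realize $W$ as a multigraded $F$-vector space in which both monomial bases $\{e_{S'}\}$ and $\{g_S\}$ are homogeneous, with degree equal to the attached weight vector; the lemma then follows by comparing degrees.

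First I will record the coordinate decomposition of $V$. For $i\in\{1,\dots,n\}$ let $V_i\subset V$ be the image of $Fe_i\otimes_{F_0}F$ in $V=F^n\otimes_{F_0}F$; it is the two-dimensional $F$-subspace spanned by $e_i\otimes 1$ and $\pi e_i\otimes\pi^{-1}$, and $V=\bigoplus_{i=1}^{n}V_i$. By their defining formulas the $g$-basis vectors $g_i$ and $g_{n+i}$ both lie in $V_i$, and the $e$-basis vectors $e_i$ and $e_{n+i}$ both lie in $V_i$ as well — this is exactly the pairing of indices $i$ and $n+i$ built into $\mathbf{w}_S$. The standard decomposition of an exterior power of a direct sum then gives
\[
W=\sideset{}{_F^n}\bigwedge V=\bigoplus_{\mathbf{a}}W_{\mathbf{a}},\qquad W_{\mathbf{a}}:=\sideset{}{_F^{a_1}}\bigwedge V_1\otimes_F\cdots\otimes_F\sideset{}{_F^{a_n}}\bigwedge V_n,
\]
the sum running over $\mathbf{a}\in\BN^n$ with $\sum_i a_i=n$ (only $a_i\le 2$ contribute, since $\dim_F V_i=2$), where $W_{\mathbf{a}}\hookrightarrow W$ sends $\omega_1\otimes\cdots\otimes\omega_n$ to $\omega_1\wedge\cdots\wedge\omega_n$.

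Next I will verify that each monomial $e_{S'}=\bigwedge_{j\in S'}e_j$ is homogeneous of degree $\mathbf{w}_{S'}$: each factor $e_j$ lies in $V_{c(j)}$, where $c(j):=j$ for $j\le n$ and $c(j):=j-n$ for $j>n$, and for fixed $i$ the number of factors landing in $V_i$ is $\#(S'\cap\{i,n+i\})=(\mathbf{w}_{S'})_i$; regrouping the wedge by coordinate block — at the cost of a sign — exhibits $e_{S'}\in W_{\mathbf{w}_{S'}}$. Since the $e_{S'}$ form a basis of $W$, each lies in exactly one summand, and $W=\bigoplus_{\mathbf{a}}W_{\mathbf{a}}$, it follows that for each $\mathbf{a}$ the $e_{S'}$ with $\mathbf{w}_{S'}=\mathbf{a}$ form a basis of $W_{\mathbf{a}}$. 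The identical computation with the $g_j$'s replacing the $e_j$'s shows $g_S\in W_{\mathbf{w}_S}$. Expanding this element of $W_{\mathbf{w}_S}$ in the basis $\{e_{S'}\}$ can therefore only involve $e_{S'}$ with $\mathbf{w}_{S'}=\mathbf{w}_S$, which is the claim.

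There is no genuine obstacle: the argument is just the multigrading of an exterior power of a direct sum. The only points demanding attention are keeping track of the sign when a wedge of basis vectors is regrouped into coordinate blocks, and the immediate check that $g_i,g_{n+i},e_i,e_{n+i}\in V_i$ for every $i$.
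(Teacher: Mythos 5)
Your argument is correct, and it is essentially the standard one (the paper only cites Smithling's Lemma 4.7, whose proof rests on the same observation): for each $i$ the vectors $g_i,g_{n+i}$ and $e_i,e_{n+i}$ span the same coordinate plane $V_i\subset V$, so the expansion of $g_S$ respects the multigrading of $\bigwedge^n V=\bigoplus_{\mathbf a}\bigwedge^{a_1}V_1\otimes\cdots\otimes\bigwedge^{a_n}V_n$, forcing every $e_{S'}$ occurring in $g_S$ to have weight $\mathbf w_S$. Your packaging of this as homogeneity for the weight multigrading is a clean formalization of that expansion argument, not a different route.
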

		
\subsubsection{The worst terms}		
Let $A$ be a finite dimensional $F$-vector space, and let $\CB$ be an $F$-basis of $A$.
		
\begin{definition}\label{equ_ss-setup:wt-def}
	Let $x=\sum_{b\in \CB}c_b b\in A$, with $c_b\in F$. 
	We say that $c_b b$ is a \emph{worst term} of $x$ if it is nonzero, and satisfies
	\begin{equation*}
		\ord_\pi(c_b)\leq \ord_\pi (c_{b'})\text{ for all }b'\in \CB \text{ such that } c_{b'}\neq 0.
	\end{equation*}
	We then define the \emph{worst term part} of $x$ as
	\begin{equation*}
		\WT_\CB(x)=\WT(x)=\sum_{\substack{b \in \CB\\ c_b b \text{ is a worst}\\ \text{term for } x}}c_b b.
	\end{equation*}
\end{definition}
		
Let $\Lambda$ be the $\CO_F$-span of $\CB$ in $A$. A nonzero element $x\in A$ lies in $\Lambda$ if and only if one (hence all) of its worst term lies in $\Lambda$.
When this is the case, and $R$ is a $k$-algebra, the image of $x$ under the natural map
\begin{equation*}
	\Lambda\rightarrow \Lambda\otimes_{\CO_F}R
\end{equation*} 
coincides with the image of $\WT_\CB(x)$.

\subsubsection{Strategy for computing the strengthened spin condition}
By Proposition \ref{moduli_ss:dual}, it suffices to compute the strengthened spin condition on $\CF_{\kappa}$. The procedure consists of the following three steps:
\begin{altenumerate}
	\item Expressing each $g_S$ in terms of the basis elements $e_{S'}$,
	\item Computing $g_S\pm\sgn(\sigma_S)g_{S^\perp}$ and their linear combinations in terms of $e_{S'}$,
	\item Translating these expressions into equations that reflect the strengthened spin condition.
\end{altenumerate}
Working out the algebraic relations for the strengthened spin condition over  $\CO_F$ is more subtle, since taking the $n$th wedge product introduces many terms with higher valuation.
This is why we focus on the special fiber and make use of the notion of the ``worst term''.
		
However, since our final goal is to compute the worst term of the sum $\sum_S g_S-\sgn(\sigma_S)g_{S^\perp}$, as in Corollary \ref{equ_ss-comp:basis}, it is not sufficient to determine only the worst term of each $g_S$.

Therefore, we also need to compute certain higher order terms of $g_S$.
In practice, we find that including the ``second-worst'' terms is sufficient.

Once the defining equations over the special fiber are obtained, similar computations allow us to lift them to defining equations over the ring of integers. This will be carried out in Section \ref{intequ}.

\subsection{Strengthened spin condition: computation}\label{equ_ss-comp}
The next proposition computes the worst term of $g_S$, similar to \cite[Lemma 4.9]{Smithling2015} and \cite[Lemma 1]{Yu2019}.
To simplify notation and save space, we use the following non-standard notation:
\begin{equation}\label{equ_ss-comp:not}
    e_{[i,\wh{n+j}]}:=e_{\{i,n+1,\cdots,\wh{n+j},\cdots,2n\}}=e_i\wedge e_{n+1}\wedge\cdots\wedge\wh{e_{n+j}}\wedge\cdots \wedge e_{2n}.
\end{equation}
\begin{proposition}\label{equ_ss-comp:g_S}
For any $S\subset \{1,\cdots,2n\}$ of type $(n-1,1)$, we have:
\begin{altenumerate}
\item When $S=\{1,\cdots,\hat{i},\cdots,n,n+i\}$ for some $1\leq i\leq \kappa$,
\begin{equation*}
g_S=\frac{1}{2}(-1)^{\kappa+i}\pi^{-(n-\kappa)}\Bigl[
e_{\{n+1,\cdots,2n\}}+\pi\Bigl( 2(-1)^{i+1}e_{[i,\wh{n+i}]}+\sum_{\sigma=1}^n(-1)^\sigma e_{[\sigma,\wh{n+\sigma}]}\Bigr)\Bigr]+o(\pi^{-(n-\kappa-1)}).
\end{equation*}

\item When $S=\{1,\cdots,\hat{i},\cdots,n,n+i\}$ for some $\kappa+1\leq i\leq n$,
\begin{equation*}
g_S=\frac{1}{2}(-1)^{\kappa+i+1}\pi^{-(n-k)}\Bigl[
e_{\{n+1,\cdots,2n\}}+\pi   
\Bigl(2(-1)^{i+1} e_{[i,\wh{n+i}]}+\sum_{\sigma=1}^n(-1)^\sigma e_{[\sigma,\wh{n+\sigma}]}\Bigr)\Bigr]+o(\pi^{-(n-\kappa-1)})
\end{equation*}
		
\item When $S=\{1,\cdots,\hat{j},\cdots,n,n+i\}$ for some $i,j\leq \kappa, i\neq j$,
		\begin{equation*}
			g_S=(-1)^{\kappa+1}\pi^{-(n-\kappa-1)} e_{[i,\wh{n+j}]}+o(\pi^{-(n-\kappa-1)}).
		\end{equation*}
\item When $S=\{1,\cdots,\hat{j},\cdots,n,n+i\}$ for some $i\leq \kappa<j$,
		\begin{equation*}
			g_S=(-1)^{\kappa}\pi^{-(n-\kappa-2)} e_{[i,\wh{n+j}]}+o(\pi^{-(n-\kappa-2)}).
		\end{equation*}
\item When $S=\{1,\cdots,\hat{j},\cdots,n,n+i\}$ for some $j\leq \kappa<i$,
		\begin{equation*}
			g_S=(-1)^{\kappa+1}\pi^{-(n-\kappa)} e_{[i,\wh{n+j}]}+o(\pi^{-(n-\kappa)}).
		\end{equation*}
\item When $S=\{1,\cdots,\hat{j},\cdots,n,n+i\}$ for some $\kappa+1\leq i,j\leq n,i\neq j$,
		\begin{equation*}
			g_S=(-1)^{\kappa}\pi^{-(n-\kappa-1)} e_{[i,\wh{n+j}]}+o(\pi^{-(n-\kappa-1)}).
		\end{equation*}
		\end{altenumerate}
Here $o(\pi^k)$ denotes terms of valuation strictly greater than $k$.
\end{proposition}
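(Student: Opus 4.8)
The proof will be a direct expansion of the wedge product $g_S$ in the standard $\CO_F$-basis $(e_{S'})$ of $W(\Lambda_\kappa)$, carried out as in \cite[Lemma 4.9]{Smithling2015} and \cite[Lemma 1]{Yu2019}, with the one new feature that we retain an additional $\pi$-adic order. The reason for that extra order is the one indicated before the statement: the ultimate target (Corollary \ref{equ_ss-comp:basis}) is the worst term of the antisymmetric combinations $g_S-\sgn(\sigma_S)g_{S^\perp}$, in which the honest worst terms of the individual $g_S$ cancel, so what one really needs is the ``second-worst'' layer of each $g_S$.

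First I would rewrite each of the $2n$ vectors $g_\ell$ in terms of the standard basis of $\Lambda_{\kappa,\CO_F}$ (the reordered basis of \eqref{equ_chart:reordered-basis}). Using $\pi^2=\pi_0\in\CO_{F_0}$, each $g_\ell$ becomes an explicit $F$-combination of exactly two standard basis vectors, one of $\pi$-order $0$ and one of $\pi$-order $1$, the precise orders and signs depending only on which of the three ``scale blocks'' $\{1,\dots,\kappa\}$, $\{\kappa+1,\dots,n-\kappa\}$, $\{n-\kappa+1,\dots,n\}$ contains the index $\ell$ (resp.\ $\ell-n$); this is exactly where the level $\kappa$ enters. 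So each $g_\ell$ is recorded as $g_\ell=(\text{leading term})+(\text{order-}\pi\text{ correction})$.

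Next, for $S$ of type $(n-1,1)$, I would write $g_S=\pm\,g_1\wedge\dots\wedge\widehat{g_j}\wedge\dots\wedge g_n\wedge g_{n+i}$, expand multilinearly, and collect by $\pi$-order. By Lemma \ref{equ_ss-setup:weight-compute} every monomial $e_{S'}$ that occurs has weight vector $\mathbf{w}_S$, so by Remark \ref{equ_ss-setup:possible-weights} the expansion is supported on $e_{\{n+1,\dots,2n\}}$ together with the monomials $e_{[\sigma,\widehat{n+\sigma}]}$ in the weight-$(1,\dots,1)$ case (parts (i)--(ii)), and on the single monomial $e_{[i,\widehat{n+j}]}$ in the case when $\mathbf{w}_S$ has a $2$ and a $0$ (parts (iii)--(vi)); only a bounded set of coefficients then has to be tracked. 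The leading contribution to $g_S$ comes from taking the leading term of every factor, which yields $\WT(g_S)$ and hence the displayed leading term in all six cases; for parts (i)--(ii) a second pass, replacing exactly one factor by its order-$\pi$ correction and leaving the rest at leading order, produces the correction $2(-1)^{i+1}e_{[i,\widehat{n+i}]}+\sum_{\sigma=1}^n(-1)^\sigma e_{[\sigma,\widehat{n+\sigma}]}$ with the stated scalar prefactor. The six cases are precisely the possible positions of $i$ and $j$ relative to $\kappa$, which control the relevant $\pi$-orders and signs; throughout, $\sgn(\sigma_S)$ is handled via Lemma \ref{moduli_PR:sign-sigma-S} as in Remark \ref{equ_ss-setup:sigma-compute}.

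The main obstacle is the sign- and order-bookkeeping in the weight-$(1,\dots,1)$ cases (i) and (ii): there the monomial $e_{\{n+1,\dots,2n\}}$ receives a contribution from the leading term of every one of the $n$ factors, and the order-$\pi$ layer involves the $n$ competing monomials $e_{[\sigma,\widehat{n+\sigma}]}$; keeping the signs of the $n$-fold wedge straight and verifying the collection that leaves the coefficient $2$ in front of $e_{[i,\widehat{n+i}]}$---and only a single power of $\pi$ overall---is the delicate part. Parts (iii)--(vi) are comparatively routine: the target monomial $e_{[i,\widehat{n+j}]}$ is forced, essentially one product of leading coefficients must be evaluated, and the only care needed is to read off the correct exponent of $\pi$ from the scales attached to the indices $i$ and $j$.
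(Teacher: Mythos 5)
Your proposal is correct and follows essentially the same route as the paper's proof in Section \ref{Appendix_first-proof}: write each $g_\ell$ as a two-term combination of standard lattice basis vectors whose $\pi$-orders depend on whether the index lies in $\{1,\dots,\kappa\}$ or not, expand the $n$-fold wedge multilinearly, keep the worst layer (and, in the weight-$(1,\dots,1)$ cases (i)--(ii), the next layer obtained by swapping exactly one factor for its higher-order part), and split into cases by the position of $i,j$ relative to $\kappa$. The only cosmetic differences are that the paper additionally uses the small identity $g_i\wedge g_{n+i}=(e_i\otimes 1)\wedge(\pi e_i\otimes\pi^{-1})$ of Lemma \ref{equ_ss-comp:two-wedges} to streamline the repeated-index cases, and that the sign $\sgn(\sigma_S)$ plays no role in this proposition (it only enters in Proposition \ref{equ_ss-comp:g_S-dual}).
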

The proof of Proposition \ref{equ_ss-comp:g_S} is lengthy and tedious, and contributes little to the understanding of the later sections. Therefore, we postpone it to \S \ref{Appendix_first-proof}.

Note that $(S^\perp)^\perp=S$, and $g_S-\sgn(\sigma_S)g_{S^\perp}=\pm (g_{S^\perp}-\sgn(\sigma_{S^\perp})g_S)$. This symmetry motivates the following definition:
\begin{definition}
Let $S$ is of type $(n-1,1)$, so that $S\cap\{n+1,\cdots,n\}$ consists of a single element, denoted $i_S$.
We say that $S$ \emph{balanced} if $i_S\leq i_{S^\perp}$.
\end{definition}

This notion allows us to describe a basis of $W_{-1}^{n-1,1}$:
\begin{proposition}[\protect{\cite[Proposition 4.5]{Smithling2015}}]\label{equ_ss-comp:g_S-g-S-perp}
	The elements $g_S-\sgn(\sigma_S)g_{S^\perp}$ for balanced $S$ form a basis for $W^{n-1,1}_{-1}.$\qed
\end{proposition}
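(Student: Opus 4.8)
The plan is to derive the proposition from Lemma~\ref{equ_ss-setup:span} by a combinatorial bookkeeping argument; Proposition~\ref{equ_ss-comp:g_S} plays no role here, and the only external input is the sign formula of Lemma~\ref{moduli_PR:sign-sigma-S}. Recall that the $g_S$, for $S$ ranging over the subsets of $\{1,\dots,2n\}$ of type $(n-1,1)$, form an $F$-basis of $W^{n-1,1}$, and that Lemma~\ref{equ_ss-setup:span} exhibits the vectors $v_S := g_S - \sgn(\sigma_S)\,g_{S^\perp}$, $S$ of type $(n-1,1)$, as a spanning set for $W^{n-1,1}_{-1}$. The first task is to cut this spanning set down to the balanced $S$; the second is to check that what remains is linearly independent.

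For the first task I would record the identity $\sgn(\sigma_S) = \sgn(\sigma_{S^\perp})$: since $\#S = n$, one has $\sum S^* = \sum_{i\in S}(2n+1-i) = n(2n+1) - \sum S$ and $\sum S^\perp = \bigl(\sum_{i=1}^{2n} i\bigr) - \sum S^* = n(2n+1) - \sum S^*$, hence $\sum S^\perp = \sum S$, and Lemma~\ref{moduli_PR:sign-sigma-S} gives the claim. Together with $(S^\perp)^\perp = S$ this yields $v_{S^\perp} = g_{S^\perp} - \sgn(\sigma_S)\,g_S = -\sgn(\sigma_S)\,v_S$, so the vectors attached to the two members of a pair $\{S,S^\perp\}$ are proportional. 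Next, writing $S = \{1,\dots,\widehat{j},\dots,n,n+i\}$, one checks that the involution $S \mapsto S^\perp$ corresponds to $(i,j)\mapsto(n+1-j,\,n+1-i)$, so $i_S = i_{S^\perp}$ holds precisely when $i+j = n+1$, i.e.\ precisely when $S = S^\perp$; hence each pair $\{S,S^\perp\}$ with $S \neq S^\perp$ contains a unique balanced representative, while a self-dual $S$ is automatically balanced. Therefore $\{\,v_S : S \text{ balanced}\,\}$ already spans $W^{n-1,1}_{-1}$.

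For the second task, the $v_S$ with $S$ balanced have pairwise disjoint support in the basis $\{g_T\}$ (their supports lie in the disjoint pairs $\{S,S^\perp\}$), so it suffices to see that each is nonzero, which is clear unless $S = S^\perp$; in that case $v_S = (1-\sgn(\sigma_S))\,g_S$ and the point to verify is $\sgn(\sigma_S) \neq 1$. But self-duality forces $i+j = n+1$, so $\sum S = \tfrac{n(n+1)}{2} + n + (i+j) = \tfrac{n(n+1)}{2} + 2n + 1$, and Lemma~\ref{moduli_PR:sign-sigma-S} gives $\sgn(\sigma_S) = (-1)^{\sum S + n(n+1)/2} = (-1)^{n(n+1) + 2n + 1} = -1$ (as $n(n+1)$ is even), so indeed $v_S = 2 g_S \neq 0$. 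Hence the balanced $v_S$ are linearly independent and span, i.e.\ form a basis of $W^{n-1,1}_{-1}$; as a consistency check they number $\tfrac{1}{2}(n^2-n) + n = \binom{n+1}{2}$. The only step requiring care --- and the only one where the restriction to signature $(n-1,1)$ is essential --- is the computation $\sgn(\sigma_S) = -1$ for self-dual $S$, which is exactly what prevents a balanced vector from degenerating to $0$; the remainder is routine bookkeeping.
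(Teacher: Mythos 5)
Your proof is correct; the paper gives no argument here (it simply cites \cite[Proposition 4.5]{Smithling2015}), and your route --- spanning from Lemma \ref{equ_ss-setup:span}, one balanced representative per pair $\{S,S^{\perp}\}$ via $\sgn(\sigma_{S^\perp})=\sgn(\sigma_S)$ and $g_{S^\perp}-\sgn(\sigma_{S^\perp})g_S=-\sgn(\sigma_S)\bigl(g_S-\sgn(\sigma_S)g_{S^\perp}\bigr)$, then disjoint supports plus $\sgn(\sigma_S)=-1$ for self-dual $S$ --- is exactly the standard argument behind the cited result. One harmless slip: for self-dual $S=\{1,\dots,\wh{j},\dots,n,n+i\}$ one has $\sum S=\frac{n(n+1)}{2}+n+i-j$, not $\frac{n(n+1)}{2}+n+(i+j)$; since $i-j\equiv i+j\pmod 2$, the parity of the exponent, and hence the conclusion $\sgn(\sigma_S)=(-1)^{n+i+j}=-1$, is unaffected.
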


Now we come to the main result of this subsection, similar to \cite[Lemma 4.10]{Smithling2015} and \cite[Lemma 2]{Yu2019}.
\begin{proposition}\label{equ_ss-comp:g_S-dual}
	For a balanced subset $S=\{1,\cdots,\hat{j},\cdots,n,n+i\}$, we distinguish the following cases. In each cases, the term $g_S$ is computed as follows:
	
\noindent$\bullet$ \textbf{Case 1.} For $S$ satisfying $S=S^\perp$ and $i\neq j$.
	\begin{altenumerate}
	\item When $i\leq \kappa$, we have
	\[
	g_S-\sgn(\sigma_S)g_{S^\perp}
	=2(-1)^\kappa \pi^{-(n-\kappa-2)} e_{[i,\wh{n+j}]}+o(\pi^{-(n-\kappa-2)}).
	\]
				
	\item When $\kappa<i\leq n-\kappa,i\neq j$, we have
	\[
	g_S-\sgn(\sigma_S)g_{S^\perp}
	=2(-1)^{\kappa+1} \pi^{-(n-\kappa-1)} e_{[i,\wh{n+j}]}+o(\pi^{-(n-\kappa-1)}).
	\]
				
	\item When $i>n-\kappa$, we have 
	\[
	g_S-\sgn(\sigma_S)g_{S^\perp}
	=2(-1)^{\kappa+1} \pi^{-(n-\kappa)} e_{[i,\wh{n+j}]}+o(\pi^{-(n-\kappa)}).
	\]
\noindent $\bullet$ \textbf{Case 2.} For $S$ satisfying $i=j$.
						
	\item When $i=j\leq \kappa$, we have
	\[
	g_S-\sgn(\sigma_S)g_{S^\perp}
	=(-1)^{\kappa+1}\pi^{-(n-\kappa-1)}
	\Bigl( e_{[i,\wh{n+i}]}
	+(-1)^n e_{[i,\wh{n+i^\vee}]}\Bigr)+o(\pi^{-(n-\kappa-1)}).
	\]
				
	\item When $\kappa<i=j\leq m$, we have
	\begin{align*}
		&g_{S_i}-\sgn(\sigma_{S_i})g_{S_i^\perp}=(-1)^{\kappa+i+1}\pi^{-(n-\kappa)}\\
		&\qquad\Bigl[e_{\{n+1,\cdots,2n\}}+\pi\Bigl(
		(-1)^{i+1} e_{[i,\wh{n+i}]}
		+(-1)^{i^\vee+1} e_{[i^\vee,\wh{n+i^\vee}]}
		+\sum_{\sigma=1}^n(-1)^\sigma  e_{[\sigma,\wh{n+\sigma}]}\Bigr)\Bigr]+o(\pi^{-(n-\kappa-1)}).
	\end{align*}
				
	\item When $i=j=m+1$ (this will only happen when $n$ is odd), then
	\begin{align*}
		&g_S-\sgn(\sigma_S)g_{s^\perp}=(-1)^{\kappa+m}\pi^{-(n-\kappa)}\\
		&\qquad\Bigl[e_{\{n+1,\cdots,2n\}}+\pi\Bigl(-2(-1)^{m+1}e_{[m+1,\wh{n+m+1}]}+
		\sum_{\sigma=1}^n (-1)^\sigma e_{[\sigma,\wh{n+\sigma}]}\Bigr)\Bigr]+o(\pi^{-(n-\kappa-1)}).
	\end{align*}
\noindent$\bullet$ \textbf{Case 3.} For $S$ satisfying $S \neq S^\perp$ and $i\neq j$.
				
	\item When $i<j^\vee\leq \kappa$, we have
 \begin{equation*}
     g_S-\sgn(\sigma_S)g_{s^\perp}=(-1)^\kappa\pi^{-(n-\kappa-2)}
	\left(
	e_{[i,\wh{n+j}]}-(-1)^{n+i+j}e_{[j^\vee,\wh{n+i^\vee}]}
	\right)+o(\pi^{-(n-\kappa-2)}).
 \end{equation*}
							
	\item When $i\leq \kappa<j^\vee<n-\kappa+1$, we have
\begin{equation*}
    g_S-\sgn(\sigma_S)g_{S^\perp}=(-1)^{n+k+1+i+j}\pi^{-(n-k-1)}e_{[j^\vee,\wh{n+i^\vee}]}+o(\pi^{-(n-k-1)}).
\end{equation*}
				
	\item When $i\leq \kappa,j^\vee\geq n-\kappa+1$, we have
	\[
	g_S-\sgn(\sigma_S)g_{S^\perp}=(-1)^{\kappa+1}\pi^{-(n-\kappa-1)}
	\left(
	e_{[i,\wh{n+j}]}+(-1)^{n+i+j}e_{[j^\vee,\wh{n+i^\vee}]}
	\right)+o(\pi^{-(n-\kappa-1)}).
	\]
				
	\item When $\kappa<i<j^\vee<n-\kappa+1$, we have
	\[
	g_S-\sgn(\sigma_S)g_{S^\perp}=
	(-1)^\kappa\pi^{-(n-\kappa-1)}
	\left( 
		e_{[i,\wh{n+j}]}-(-1)^{n+i+j}e_{[j^\vee,\wh{n+i^\vee}]}
		\right)+o(\pi^{-(n-\kappa-1)}).
	\]
				
	\item When $\kappa<i<n-\kappa+1\leq j^\vee$, we have
	\[
		g_S-\sgn(\sigma_S)g_{S^\perp}=(-1)^{\kappa+1}\pi^{-(n-\kappa)}e_{[i,\wh{n+j}]}+o(\pi^{-(n-\kappa)}).
	\]
				
	\item When $n-\kappa+1\leq i<j^\vee$, we have
	\[
		g_S-\sgn(\sigma_S)g_{S^\perp}=(-1)^{\kappa+1}\pi^{-(n-\kappa)}\left(
		e_{[i,\wh{n+j}]}-(-1)^{n+i+j}e_{[j^\vee,\wh{n+i^\vee}]}
		\right)+o(\pi^{-(n-\kappa)}).
	\]
	\end{altenumerate}
\end{proposition}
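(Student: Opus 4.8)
The plan is to obtain the statement as a corollary of Proposition \ref{equ_ss-comp:g_S}: for each balanced $S$ of type $(n-1,1)$ one substitutes the known expansions of $g_S$ and of $g_{S^\perp}$ into the difference $g_S-\sgn(\sigma_S)g_{S^\perp}$ and reads off the worst term of the result. The organizing step is to express $S^\perp$ in coordinates. Writing $S=\{1,\dots,\wh j,\dots,n,n+i\}$ and unwinding the definitions of $(-)^{*}$ and $(-)^{\perp}$ from \S\ref{moduli_setup_notation}, one gets $S^\perp=\{1,\dots,\wh{i^\vee},\dots,n,n+j^\vee\}$; that is, passing to $S^\perp$ replaces the pair $(i,j)$ by $(j^\vee,i^\vee)$. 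Hence $S=S^\perp$ exactly when $j=i^\vee$, and $S$ is balanced exactly when $i\le j^\vee$, which is precisely the trichotomy of the statement: Case~1 ($S=S^\perp$, $i\ne j$) is $j=i^\vee$; Case~2 is $i=j$; Case~3 ($S\ne S^\perp$, $i\ne j$) is $i<j^\vee$ strictly. Using that $\kappa<n-\kappa$ for strongly non-special $\kappa$, so that $i^\vee$ and $j^\vee$ automatically land in the ranges complementary to those of $i$ and $j$, one determines in each subcase which of the six items of Proposition \ref{equ_ss-comp:g_S} computes $g_S$ and which computes $g_{S^\perp}$. The sign bookkeeping is supplied by Remark \ref{equ_ss-setup:sigma-compute}, $\sgn(\sigma_S)=(-1)^{n+i+j}$, together with the consistency $\sgn(\sigma_{S^\perp})=\sgn(\sigma_S)$ forced by $g_S-\sgn(\sigma_S)g_{S^\perp}=-\sgn(\sigma_S)\bigl(g_{S^\perp}-\sgn(\sigma_{S^\perp})g_S\bigr)$.

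For the subcases with $i\ne j$ (Cases~1 and~3) the weight vector of $S$ has a $2$ in slot $i$ and a $0$ in slot $j$ (Remark \ref{equ_ss-setup:possible-weights}), so by Lemma \ref{equ_ss-setup:weight-compute} the worst term of $g_S$ is supported on $e_{[i,\wh{n+j}]}$ and that of $g_{S^\perp}$ on $e_{[j^\vee,\wh{n+i^\vee}]}$. When $S\ne S^\perp$ these are distinct basis vectors, so no cancellation between worst terms occurs, and $g_S-\sgn(\sigma_S)g_{S^\perp}$ is, up to its worst term, either a single monomial (when $g_S$ and $g_{S^\perp}$ have worst terms of different $\pi$-order, so only the lower-order one survives) or a two-term combination $e_{[i,\wh{n+j}]}\mp(-1)^{n+i+j}e_{[j^\vee,\wh{n+i^\vee}]}$ (when the orders agree); running through the six positions of $i$ and $j^\vee$ relative to $\kappa$ and $n-\kappa+1$ and adding the corresponding pair of items from Proposition \ref{equ_ss-comp:g_S} yields (vii)--(xii). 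When $S=S^\perp$ the difference collapses to $(1-\sgn(\sigma_S))g_S=2g_S$, since there $\sgn(\sigma_S)=(-1)^{n+i+i^\vee}=(-1)^{2n+1}=-1$, and substituting the pertinent item of Proposition \ref{equ_ss-comp:g_S} (items (iv), (vi), (v) according as $i\le\kappa$, $\kappa<i\le n-\kappa$, or $i>n-\kappa$) gives (i)--(iii).

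The genuinely delicate case is $i=j$ (subcases (iv)--(vi)): now the weight vector of $S$, hence that of $S^\perp$, is $(1,\dots,1)$, so by Lemma \ref{equ_ss-setup:weight-compute} both $g_S$ and $g_{S^\perp}$ are supported entirely on the weight-$(1,\dots,1)$ monomials $e_{\{n+1,\dots,2n\}}$ and $e_{[\sigma,\wh{n+\sigma}]}$, and items (i)--(ii) of Proposition \ref{equ_ss-comp:g_S}, which record the worst term \emph{and} the next ($\pi$-order) term, are exactly what is needed. Depending on the subcase the leading $e_{\{n+1,\dots,2n\}}$-terms of $g_S$ and $\sgn(\sigma_S)g_{S^\perp}$ either reinforce (subcases (v)--(vi), where they already give the stated worst term) or cancel (subcase (iv), where one must pass to the $\pi$-order terms); in either situation one then checks that the common sum $\sum_\sigma(-1)^\sigma e_{[\sigma,\wh{n+\sigma}]}$ cancels in the difference, leaving precisely the two distinguished monomials contributed by $g_S$ and by $g_{S^\perp}$ with the asserted constants. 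Splitting $\{i=j\}$ into $i\le\kappa$, $\kappa<i\le m$, and $i=m+1$ (the last possible only for odd $n$, when $2i=n+1$ can hold) produces (iv), (v), (vi).

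I expect this $i=j$ case to be the main obstacle. Because the leading terms can cancel, the argument genuinely uses the second-order information in Proposition \ref{equ_ss-comp:g_S}(i)--(ii), and the computation must simultaneously verify the cancellation (or reinforcement) of the $e_{\{n+1,\dots,2n\}}$-terms, the cancellation of the $e_{[\sigma,\wh{n+\sigma}]}$-sums, and the precise constants and signs of the surviving expressions. All the remaining subcases reduce, once $S^\perp$ is identified, to reading off and adding two entries of Proposition \ref{equ_ss-comp:g_S} and are routine.
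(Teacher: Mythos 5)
Your proposal follows essentially the same route as the paper's own proof in \S\ref{Appendix_second-proof}: identify $S^\perp$ as the set with parameters $(j^\vee,i^\vee)$, use $\sgn(\sigma_S)=(-1)^{n+i+j}$, reduce Case 1 to $2g_S$, reduce Case 3 to comparing the worst terms supplied by Proposition \ref{equ_ss-comp:g_S}, and treat $i=j$ using the second-order terms in Proposition \ref{equ_ss-comp:g_S}(i)--(ii). One concrete slip to fix: your description of the outcome for $\kappa<i=j$ (subcases (v)--(vi)) is incorrect. There $\sgn(\sigma_S)g_{S^\perp}$ enters with the \emph{same} sign as $g_S$, so nothing cancels: the $e_{\{n+1,\cdots,2n\}}$-term and the full sum $\sum_{\sigma}(-1)^\sigma e_{[\sigma,\wh{n+\sigma}]}$ both survive (doubled), exactly as displayed in the stated formulas (v)--(vi); the claim that ``the common sum cancels, leaving precisely the two distinguished monomials'' is correct only in subcase (iv), and in (vi) there is in any case only one such monomial since $S=S^\perp$ there. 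This is a misprediction of the answer rather than a flaw in the method --- adding the two item-(ii) expansions of Proposition \ref{equ_ss-comp:g_S} with the sign $(-1)^n$, as you plan, does produce the stated result, but as written your sentence contradicts the very formulas you are proving. A minor further observation: in Case 1(ii) doubling item (vi) of Proposition \ref{equ_ss-comp:g_S} gives the coefficient $2(-1)^{\kappa}$ rather than the printed $2(-1)^{\kappa+1}$; this discrepancy is internal to the paper (its own proof argues exactly as you do, and Case 3(x) uses $(-1)^\kappa$), and it is harmless downstream since only the span of each basis element matters in Corollary \ref{equ_ss-comp:basis}.
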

We postpone the proof to \S \ref{Appendix_second-proof}.
		
\subsubsection{The lattice $W(\Lambda_\kappa)_{-1}^{n-1,1}$}
Next, we will write the basis of the lattice $W(\Lambda_\kappa)_{-1}^{n-1,1}$ in terms of $e_S$, using Proposition \ref{equ_ss-comp:g_S-dual}.
As mentioned in Example \ref{equ_ss-setup:possible-weights}, when the weight vector $\bw_S\neq (1,1,\cdots,1)$, the set $S$ is uniquely determined by $\bw_S$. 
The most delicate case arises when the weight vector is $(1,1,\cdots,1)$ and we introduce additional notation to handle this situation.
We denote $S_i:=\{1,\cdots,\wh{i},\cdots,n,n+i\}$, and write $a_i$ for $a_{S_i}$. Define
\begin{equation*}
	M:=\lfloor \frac{n+1}{2}\rfloor=\left\{\begin{array}{cc} m+1 & \mbox{ when }n\mbox{ is odd};\\ m & \mbox{ when }n\mbox{ is even}.\end{array}\right.
\end{equation*}
Then $S_1,S_2,\cdots, S_M$ are precisely the balanced subsets with weight vector $(1,1,\cdots,1)$.
		
The next proposition is parallel to \cite[Proposition 4.12]{Smithling2015} and \cite[Proposition 2]{Yu2019}.
		
\begin{proposition}\label{equ_ss-comp:lattice}
	Let $w \in W_{-1}^{n-1,1}$, and write
\begin{equation}\label{equ_ss-comp:lattice-summation}
    w = \sum_{S \text{ \emph{balanced}}}a_S\bigl(g_S - \sgn(\sigma_S) g_{S^\perp}\bigr),
			\quad a_S \in F.
\end{equation}
Then $w$ lies in $W(\Lambda_\kappa)_{-1}^{n-1,1}$ if and only if it satisfies the following:
\begin{altenumerate}			
	\item
	if $S=\{1,\cdots,\wh{i^\vee},\cdots,n,n+i\}$ for some $i\leq \kappa$ then $\ord_\pi(a_S)\geq n-\kappa-2$;
	\item
	if $S=\{1,\cdots,\wh{i^\vee},\cdots,n,n+i\}$ for some $k<i\leq n-\kappa,i\neq i^\vee$, then $\ord_\pi(a_S)\geq n-\kappa-1$;
	\item
	if $S=\{1,\cdots,\wh{i^\vee},\cdots,n,n+i\}$ for some $n-\kappa<i$, then $\ord_\pi(a_S)\geq n-\kappa$;
	\item
	if $S=\{1,\cdots,\wh{j},\cdots,n,n+i\}$ for some $i<j^\vee\leq \kappa$, then $\ord_\pi(a_S)\geq n-\kappa-2$;
	\item
	if $S=\{1,\cdots,\wh{j},\cdots,n,n+i\}$ for some $i\leq \kappa\leq j^\vee<n-\kappa+1$, then $\ord_\pi(a_S)\geq n-\kappa-1$;
	\item
	if $S=\{1,\cdots,\wh{j},\cdots,n,n+i\}$ for some $i\leq \kappa,j^\vee\geq n-\kappa+1$, then $\ord_\pi(a_S)\geq n-\kappa-1$;
	\item
	if $S=\{1,\cdots,\wh{j},\cdots,n,n+i\}$ for some $\kappa<i<j^\vee< n-\kappa+1$, then $\ord_\pi(a_S)\geq n-\kappa-1$;
	\item
	if $S=\{1,\cdots,\wh{j},\cdots,n,n+i\}$ for some $k<i<n-\kappa+1\neq j^\vee$, then $\ord_\pi(a_S)\geq n-\kappa$;
	\item
	if $S=\{1,\cdots,\wh{j},\cdots,n,n+i\}$ for some $n-\kappa+1\leq i<j^\vee$, then $\ord_\pi(a_S)\geq n-\kappa$;
	\item
	if $S=\{1,\cdots,\wh{i},\cdots,n,n+i\}$ for some $i\leq \kappa$, then $\ord_\pi(a_S)\geq n-\kappa$;
	\item
	if $\sum_{i=\kappa+1}^M(-1)^ia_i= 0$, then $\ord_\pi(a_i)\geq n-\kappa-1$ for those $i$;
	\item
	if $\sum_{i=\kappa+1}^M(-1)^ia_i\neq 0$, and
    \begin{altenumerate2}
        \item if $\ord(\sum_{i=\kappa+1}^M(-1)^ia_i)={\displaystyle\min_{\kappa+1\leq i\leq M}\{\ord(a_i)\}}$, then
        ${\displaystyle\min_{\kappa+1\leq i\leq M}\{\ord(a_i)\}}\geq n-\kappa$;
        \item if $\ord(\sum_{i=\kappa+1}^M(-1)^ia_i)\geq{\displaystyle\min_{\kappa+1\leq i\leq M}+1}$, then
    ${\displaystyle\min_{\kappa+1\leq i\leq M}\{\ord(a_i)\}}\geq n-\kappa-1$.
    \end{altenumerate2}
\end{altenumerate}
\end{proposition}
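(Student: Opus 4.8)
The plan is to follow the template of \cite[Proposition 4.12]{Smithling2015} (see also \cite[Proposition 2]{Yu2019}), carrying out the extra bookkeeping forced by a general $\kappa$. The key structural input is that the weight grading on $W$ is compatible with the lattice $W(\Lambda_\kappa)$: since $W(\Lambda_\kappa)$ is the $\CO_F$-span of the monomials $e_{S'}$, each of which is weight-homogeneous (cf.\ Lemma \ref{equ_ss-setup:weight-compute}), one has $W(\Lambda_\kappa)=\bigoplus_{\mathbf{w}}\bigl(W(\Lambda_\kappa)\cap W_{\mathbf{w}}\bigr)$, where $W_{\mathbf{w}}\subset W$ denotes the weight-$\mathbf{w}$ subspace. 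Moreover $g_S$ is supported in weight $\mathbf{w}_S$ and $g_{S^\perp}$ in weight $\mathbf{w}_{S^\perp}$. Hence, projecting \eqref{equ_ss-comp:lattice-summation} onto each weight, it suffices to decide, separately for each $\mathbf{w}$, when the $\mathbf{w}$-component of $w$ lies in $W(\Lambda_\kappa)$. The argument then splits into the cases $\mathbf{w}\neq(1,\cdots,1)$ and $\mathbf{w}=(1,\cdots,1)$.

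For a weight $\mathbf{w}$ having a single entry $2$, in position $p$, and a single entry $0$, in position $q$ (so $p\neq q$), the space $W_{\mathbf{w}}\cap W^{n-1,1}$ is one-dimensional, spanned by the single monomial $e_{[p,\wh{n+q}]}$, and among the balanced sets of type $(n-1,1)$ exactly one, say $S$, satisfies $\mathbf{w}_S=\mathbf{w}$ or $\mathbf{w}_{S^\perp}=\mathbf{w}$ --- which of the two occurs being dictated by the balancedness inequality $i_S\leq i_{S^\perp}$. Thus the $\mathbf{w}$-component of $w$ equals $a_S$ times a nonzero multiple of $e_{[p,\wh{n+q}]}$, and its $\pi$-order is the order of the worst term of $g_S-\sgn(\sigma_S)g_{S^\perp}$, which is recorded in Proposition \ref{equ_ss-comp:g_S-dual} (cases (i)--(iii) when $S=S^\perp$, cases (vii)--(xii) otherwise). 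Demanding that this component be integral, as $\mathbf{w}$ runs over all such weights, is precisely the list of conditions (i)--(ix); conversely those conditions make all such components integral.

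The essential case is $\mathbf{w}=(1,\cdots,1)$. Here the balanced contributing sets are exactly $S_i=\{1,\cdots,\wh{i},\cdots,n,n+i\}$ for $1\leq i\leq M$, so $w_{(1,\cdots,1)}=\sum_{i=1}^{M}a_i\bigl(g_{S_i}-\sgn(\sigma_{S_i})g_{S_i^\perp}\bigr)$. One expands this in the monomials of lowest $\pi$-order occurring in the $g_{S_i}$, namely $e_{\{n+1,\cdots,2n\}}$ and the $e_{[l,\wh{n+l}]}$ for $1\leq l\leq n$ (the remaining monomials occur at strictly higher order and give weaker constraints), using Proposition \ref{equ_ss-comp:g_S}, which supplies the worst \emph{and} the second-worst terms of each $g_{S_i}$. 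One finds: for $i\leq\kappa$ the worst ($\pi^{-(n-\kappa)}$) terms of $g_{S_i}$ and of $g_{S_i^\perp}$ cancel, so the effective worst terms of $g_{S_i}-\sgn(\sigma_{S_i})g_{S_i^\perp}$ appear at level $\pi^{-(n-\kappa-1)}$ among the $e_{[l,\wh{n+l}]}$ (cf.\ Proposition \ref{equ_ss-comp:g_S-dual}(iv)), whereas for $\kappa<i\leq M$ a term $(\mathrm{unit})\cdot\pi^{-(n-\kappa)}e_{\{n+1,\cdots,2n\}}$ survives. Summing, the coefficient of $e_{\{n+1,\cdots,2n\}}$ in $w_{(1,\cdots,1)}$ is, up to a unit and terms of strictly higher order, $\pi^{-(n-\kappa)}\sum_{i=\kappa+1}^{M}(-1)^i a_i$, while the coefficients of the $e_{[l,\wh{n+l}]}$ become linear expressions in the $a_i$ ($i\leq\kappa$) and in $\Sigma:=\sum_{i=\kappa+1}^{M}(-1)^i a_i$. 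One then locates a worst term of $w_{(1,\cdots,1)}$: if $\ord_\pi(\Sigma)=\min_{\kappa+1\leq i\leq M}\ord_\pi(a_i)$ (no cancellation), the monomial $e_{\{n+1,\cdots,2n\}}$ realizes it and integrality forces the bound $n-\kappa$ of (xii)(a); if $\ord_\pi(\Sigma)$ is strictly larger (partial cancellation, or $\Sigma=0$), the worst term sits among the $e_{[l,\wh{n+l}]}$ and integrality forces only the weaker bound $n-\kappa-1$ of (xi) and (xii)(b), together with the separate bound (x) coming from the $e_{[l,\wh{n+l}]}$ with $l\leq\kappa$. A final check shows these conditions are also sufficient, so the $\mathbf{w}=(1,\cdots,1)$ component is integral if and only if (x)--(xii) hold, and combining with the previous paragraph gives the proposition.

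The main obstacle is the weight $(1,\cdots,1)$ analysis. Because the worst terms of the individual $g_{S_i}$ cancel in the sum, one must descend to their second-worst terms --- this is exactly why Proposition \ref{equ_ss-comp:g_S} is stated with an error of the form $o(\pi^{-(n-\kappa-1)})$ rather than $o(\pi^{-(n-\kappa)})$ --- and then decide which monomial carries the worst term of the full sum, a decision that hinges on the vanishing behaviour of $\Sigma$; this produces the case split in (xi)--(xii) and the separate bound (x). The numerous sub-cases (here, and in matching (i)--(ix) against Proposition \ref{equ_ss-comp:g_S-dual}), compared with the previously treated cases $\kappa\in\{m-1,m\}$ where several of the ranges $i\leq\kappa$, $\kappa<i\leq n-\kappa$, $i>n-\kappa$ degenerate, are bookkeeping and introduce no new idea.
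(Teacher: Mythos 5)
Your proposal follows essentially the same route as the paper's proof: decompose $w$ by weight vectors via Lemma \ref{equ_ss-setup:weight-compute}, read off conditions (i)--(ix) from the unique balanced set in each weight $\mathbf{w}\neq(1,\cdots,1)$ using Proposition \ref{equ_ss-comp:g_S-dual}, and for $\mathbf{w}=(1,\cdots,1)$ split into the blocks $i\leq\kappa$ and $\kappa<i\leq M$, tracking worst and second-worst terms and casing on $\Sigma=\sum_{i=\kappa+1}^M(-1)^ia_i$ to obtain (x)--(xii). Your coarser two-case split on $\ord_\pi(\Sigma)$ (no cancellation versus cancellation) collapses the paper's finer subcases but yields the same bounds, so the argument is the same in substance.
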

\begin{proof}
By Lemma \ref{equ_ss-setup:weight-compute}, an element $w$ lies in $W(\Lambda_\kappa)_{-1}^{n-1,1}$ if and only if, for each possible weight vector $\mathbf{w}$ (as classified in Example \ref{equ_ss-setup:possible-weights}), we have
\begin{equation*}
	w_{\bw}:=\sum_{\substack{S \text{ balanced}\\ \text{with weight }\mathbf{w}}}a_S\bigl(g_S - \sgn(\sigma_S) g_{S^\perp}\bigr) \in W(\Lambda_\kappa)_{-1}^{n-1,1}.
\end{equation*}
Thus, we reduce to analyzing the partial summation $w_\bw$ of a weight vector, that is, the $\bw$-component of the total sum \eqref{equ_ss-comp:lattice-summation} in $W(\Lambda_\kappa)_{-1}^{n-1,1}$.
			
When $\mathbf{w}\neq (1,1,\cdots,1)$, Example \ref{equ_ss-setup:possible-weights} shows that there is at most one subset $S$ of type $(n-1,1)$ with weight $\mathbf{w}$. These cases correspond to case 1: (i)-(iii) and case 3: (vii)-(xii) in Proposition \ref{equ_ss-comp:g_S-dual}. This establishes (i)-(ix).
			
When $\mathbf{w} = (1,1,\cdots,1)$, the terms corresponding to $S_1,\cdots, S_M$ appear in the summation. Write
\begin{equation*}
	w_{(1,1,\cdots,1)}=\sum_{i=1}^M a_i(g_{S_i}-\sgn(S_i)g_{S_i^\perp})=\sum_{i=1}^\kappa a_i(g_{S_i}-\sgn(S_i)g_{S_i^\perp})+\sum_{i=\kappa+1}^M a_i(g_{S_i}-\sgn(S_i)g_{S_i^\perp}).
\end{equation*}
By Proposition \ref{equ_ss-comp:g_S-dual}, for $1\leq i\leq \kappa$, each difference $g_{S_i}-\sgn(S_i)g_{S_i^\perp}$ has a distinct worst term.
In contrast, for $\kappa+1\leq i\leq M$, the worst terms of each $g_{S_i}-\sgn(S_i)g_{S_i^\perp}$ is the same, namely $e_{\{n+1,\cdots,2n\}}$.
We first consider the partial summation of the form $\sum_{i=\kappa+1}^M a_i(g_{S_i}-\sgn(S_i)g_{S_i^\perp})$. It equals
\begin{align*}
    &(-1)^{\kappa+1}\pi^{(n-\kappa)}\Bigl(\sum_{i=\kappa+1}^M(-1)^ia_i\Bigr)\Bigl[
    e_{\{n+1,\cdots,2n\}}+\pi \Bigl(\sum_{\sigma=1}^n(-1)^\sigma e_{[\sigma,\wh{n+\sigma}]}\Bigr)\Bigr]\\
    &\qquad +(-1)^{\kappa+1}\pi^{(n-\kappa-1)}\sum_{i=\kappa+1}^M (-1)^i a_i\left((-1)^{i+1}e_{[i,\wh{n+i}]}+(-1)^{i^\vee+1}e_{[i^\vee,\wh{n+i^\vee}]}\right)
    +\sum_{i=\kappa+1}^M o(\pi^{(n-\kappa-1)} a_i).
\end{align*}
Let $\Delta:=\{i\mid \kappa+1\leq i\leq M, \ord(a_i)\text{ is minimal}\}$. We distinguish the following cases:
\begin{altitemize}
\item 
If $\sum_{i=\kappa+1}^M (-1)^ia_i=0$, then the summation becomes
\begin{equation*}
    (-1)^{\kappa+1}\pi^{(n-\kappa-1)}\sum_{i=\kappa+1}^M (-1)^i a_i\left((-1)^{i+1}e_{[i,\wh{n+i}]}+(-1)^{i^\vee+1}e_{[i^\vee,\wh{n+i^\vee}]}\right)
    +\sum_{i=\kappa+1}^M o(\pi^{(n-\kappa-1)} a_i).
\end{equation*}
The worst terms are 
\begin{equation*}
    (-1)^{\kappa+1}\pi^{(n-\kappa-1)}\sum_{i\in\Delta} (-1)^i a_i\left((-1)^{i+1}e_{[i,\wh{n+i}]}+(-1)^{i^\vee+1}e_{[i^\vee,\wh{n+i^\vee}]}\right).
\end{equation*}
Therefore, the summation is defined over the ring of integers if and only if $\ord_{i\in\Delta}(a_i)\geq n-\kappa-1$.
\item 
If $\sum_{i=\kappa+1}^M(-1)^ia_i\neq 0$, we further distinguish the following three cases: 
\begin{altenumerate2}
\item When $\ord\left(\sum_{i=\kappa+1}^M (-1)^i a_i\right)=\ord_{i\in\Delta}(a_i)$.
\item When $\ord\left(\sum_{i=\kappa+1}^M (-1)^i a_i\right)=\ord_{i\in\Delta}(a_i)+1$.
\item When $\ord\left(\sum_{i=\kappa+1}^M (-1)^i a_i\right)\geq\ord_{i\in\Delta}(a_i)+2$.
\end{altenumerate2}

\noindent Case (a): Suppose $\ord\Bigl(\sum_{i=\kappa+1}^M (-1)^i a_i\Bigr)=\ord_{i\in\Delta}(a_i)$. Then the worst terms in the partial summation are
$$(-1)^{\kappa+1}\pi^{-(n-\kappa)}\Bigl(\sum_{i=\kappa+1}^M(-1)^ia_i\Bigr)e_{\{n+1,\cdots,2n\}}.$$
Therefore, the partial summation is defined over the ring of integers if and only if $\ord\left(\sum_{i=\kappa+1}^M (-1)^i a_i\right)=\ord_{i\in\Delta}(a_i)\geq n-\kappa$.

\noindent Case (b): Suppose $\ord\left(\sum_{i=\kappa+1}^M (-1)^i a_i\right)=\ord_{i\in\Delta}(a_i)+1$. Then the worst terms in the partial summation are
\begin{equation*}
    (-1)^{\kappa+1}\pi^{(n-\kappa)}\Bigl[\Bigl(\sum_{i=\kappa+1}^M(-1)^ia_i\Bigr)e_{\{n+1,\cdots,2n\}}+\pi\sum_{i\in\Delta} (-1)^i a_i\left((-1)^{i+1}e_{[i,\wh{n+i}]}+(-1)^{i^\vee+1}e_{[i^\vee,\wh{n+i^\vee}]}\right)\Bigr].
\end{equation*}
Therefore, the partial summation is defined over the ring of integers if and only if $\ord_{i\in\Delta}(a_i)\geq n-\kappa-1$.

\noindent Case (c): Suppose $\ord\left(\sum_{i=\kappa+1}^M (-1)^i a_i\right)\geq\ord_{i\in\Delta}(a_i)+2$. Then the worst terms in the partial summation are
\begin{equation*}
    (-1)^{\kappa+1}\pi^{(n-\kappa-1)}\sum_{i\in\Delta} (-1)^i a_i\left((-1)^{i+1}e_{[i,\wh{n+i}]}+(-1)^{i^\vee+1}e_{[i^\vee,\wh{n+i^\vee}]}\right).
\end{equation*}
Therefore, the partial summation is defined over the ring of integers if and only if $\ord_{i\in\Delta}(a_i)\geq n-\kappa-1$.
\end{altitemize}

By computing the worst terms in the partial summation
$\sum_{i=\kappa+1}^M a_i(g_{S_i}-\sgn(S_i)g_{S_i^\perp})$,
we observe that as long as the worst term lies in the lattice, the coefficients of the terms 
$e_{[i,\wh{n+i}]}$ for $1\leq i\leq \kappa$ or $1\leq i^\vee\leq \kappa$ lie in the ring of integers.

On the other hand, for $1\leq i\leq \kappa$, the worst terms of  $g_{S_i}-\sgn(S_i)g_{S_i^\perp}$ are precisely the terms of $e_{[i,\wh{n+i}]}$ and $e_{[i^\vee,\wh{n+i^\vee}]}$. 
The partial summation $\sum_{i=1}^\kappa a_i(g_{S_i}-\sgn(S_i)g_{S_i^\perp})$ lies in $W(\Lambda_\kappa)_{-1}^{n-1,1}$ if and only if (x) holds.

Moreover, the two partial summations in the expression $w_{(1,1,\cdots,1)}=\sum_{i=1}^\kappa+\sum_{i=\kappa+1}^M$ have distinct worst terms, and their worst terms computations are independent.
Hence, (x),(xi), and (xii) follow.
\end{proof}

Combining the computation of the worst terms with the proof of Proposition \ref{equ_ss-comp:lattice}, we obtain the following corollary, which is analogous to \cite[Corollary 4.14]{Smithling2015} and \cite[Proposition 2.]{Yu2019}. 
Recall the definition of $L^{r,s}_{(-1)^s}(\Lambda)(R)$ in \eqref{moduli_ss:L} and that of definition of $e_{[i,\wh{n+j}]}$ in \eqref{equ_ss-comp:not}.
		
\begin{corollary}\label{equ_ss-comp:basis}
Let $R$ be a $k$-algebra. Then the submodule $L^{n-1,1}_{-1}(\Lambda_\kappa)(R)\subset W_{-1}^{n-1,1}(R)$ is a free $R$-module with an $R$-basis given by:
\begin{altenumerate}
\item $e_{\{n+1,\cdots,2n\}}$;
\item $e_{[i,\wh{n+i^\vee}]}$ for $i\neq i^\vee$;
\item $e_{[i,\wh{n+j}]}-(-1)^{n+i+j}e_{[j^\vee,\wh{n+i^\vee}]}$ for $i<j^\vee\leq \kappa,i\neq j$;
\item $e_{[j^\vee,\wh{n+i^\vee}]}$ for $i\leq \kappa<j^\vee<n-\kappa+1$;
\item $e_{[i,\wh{n+j}]}+(-1)^{n+i+j}e_{[j^\vee,\wh{n+i^\vee}]}$ for $i\leq \kappa,j^\vee\leq n-\kappa+1,i\neq j$;
\item $e_{[i,\wh{n+j}]}-(-1)^{n+i+j}e_{[j^\vee,\wh{n+i^\vee}]}$ for $\kappa<i<j^\vee<n-\kappa+1,i\neq j$;
\item  $e_{[i,\wh{n+j}]}$ for $\kappa<i<n-k+1\leq j^\vee,i\neq j$;
\item $e_{[i,\wh{n+j}]}-(-1)^{n+i+j}e_{[j^\vee,\wh{n+i^\vee}]}$ for $n-\kappa+1\leq i<j^\vee,i\neq j$;
\item  $e_{[i,\wh{n+i}]}+(-1)^ne_{[i^\vee,\wh{n+i^\vee}]}$ for $i\leq \kappa$;
\item  $e_{[i,\wh{n+i}]}+(-1)^ne_{[i^\vee,\wh{n+i^\vee}]}$ for $\kappa< i\leq M$;
\item Let $w=\sum_{i=1}^M c_i e_{[i,\wh{n+i}]}\in W(\Lambda_\kappa)\otimes R$, Then $w$ lies in the image if and only if 
    \begin{altenumerate2}
	   \item When $n=2m$, we have $\sum_{i=\kappa}^m (-1)^ic_i=0$;
	   \item When $n=2m+1$, we have $\sum_{i=\kappa}^m (-1)^ic_i+\frac{1}{2}(-1)^{m+1}c_{m+1}=0$.\\
	All elements $w$ of the forms (a) and (b) generate a free submodule, whose basis can be completed to a basis of $L_{-1}^{n-1,1}(\Lambda_\kappa)(R)$ by the elements (i)-(x).
    \end{altenumerate2}
\end{altenumerate}
\end{corollary}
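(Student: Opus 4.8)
The plan is to identify $L^{n-1,1}_{-1}(\Lambda_\kappa)(R)$ with the $R$-span of the images modulo $\pi$ of an $\CO_F$-basis of the integral lattice $W(\Lambda_\kappa)_{-1}^{n-1,1}$, the images being computed by the worst-term calculus of \S\ref{equ_ss-setup}. By definition $L^{n-1,1}_{-1}(\Lambda_\kappa)(R)$ is the image of $W(\Lambda_\kappa)_{-1}^{n-1,1}\otimes_{\CO_F}R\to W(\Lambda_\kappa)\otimes_{\CO_F}R$; since $W(\Lambda_\kappa)_{-1}^{n-1,1}=W(\Lambda_\kappa)\cap W_{-1}^{n-1,1}$ is saturated in $W(\Lambda_\kappa)$, this map is injective for a $k$-algebra $R$, and the image of a lattice vector $x$ equals the image of its worst term $\WT(x)$ in the monomial basis $\{e_{S'}\}$ of $W(\Lambda_\kappa)$. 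Thus the computation reduces to (a) writing down an explicit $\CO_F$-basis of $W(\Lambda_\kappa)_{-1}^{n-1,1}$ and (b) reading off the worst terms of its members.

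For (a): by Proposition \ref{equ_ss-comp:g_S-g-S-perp} the vectors $b_S:=g_S-\sgn(\sigma_S)g_{S^\perp}$, for $S$ balanced of type $(n-1,1)$, are an $F$-basis of $W_{-1}^{n-1,1}$, and Proposition \ref{equ_ss-comp:lattice} describes $W(\Lambda_\kappa)_{-1}^{n-1,1}$ inside their $F$-span by valuation bounds on the coefficients $a_S$ together with the coupled congruences (xi)--(xii) linking the coefficients $a_{\kappa+1},\dots,a_M$ attached to the weight vector $(1,\dots,1)$. For balanced $S$ outside this coupled family the constraint is a single inequality $\ord_\pi(a_S)\ge d_S$, so a suitable $\pi$-power times $b_S$ is a basis vector; for the coupled coordinates one checks directly from (xi)--(xii) that the cut-out sublattice of $\bigoplus_{i=\kappa+1}^M F\,b_{S_i}$ is free over $\CO_F$, a basis being given by $\pi$-scaled ``difference'' vectors in the $b_{S_i}$ together with one distinguished $\pi$-scaled multiple of $\sum_{i=\kappa+1}^M(-1)^ib_{S_i}$. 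Collecting these yields an $\CO_F$-basis $\mathcal B$ of $W(\Lambda_\kappa)_{-1}^{n-1,1}$.

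For (b): run each member of $\mathcal B$ through Proposition \ref{equ_ss-comp:g_S-dual} to expand it in $\{e_{S'}\}$-coordinates and extract its worst term. For $S$ outside the coupled family each worst term is, up to an $\CO_F$-unit, one of the one- or two-monomial expressions in the list of the corollary. For the coupled family, the $b_{S_i}$ ($\kappa<i\le M$) all share the leading monomial $e_{\{n+1,\dots,2n\}}$, so the distinguished vector contributes $e_{\{n+1,\dots,2n\}}$, whereas in each difference vector the leading monomial cancels and the surviving sub-leading part is an $\CO_F$-combination of the $e_{[i,\widehat{n+i}]}$; a final $\CO_F$-linear change of basis among the difference vectors turns these into the vectors of the form $\sum c_i e_{[i,\widehat{n+i}]}$ appearing in item (xi), and the linear relation $\sum_{i=\kappa}^m(-1)^ic_i=0$ for even $n$ (resp.\ $\sum_{i=\kappa}^m(-1)^ic_i+\tfrac12(-1)^{m+1}c_{m+1}=0$ for odd $n$, the factor $\tfrac12$ originating from the $i=m+1$ term of Proposition \ref{equ_ss-comp:g_S-dual}(vi)) is exactly the span condition these sub-leading terms impose. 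Since over a $k$-algebra the image of a lattice vector is the image of its worst term, the reductions of $\mathcal B$ span $L^{n-1,1}_{-1}(\Lambda_\kappa)(R)$ and equal the elements (i)--(xi); freeness follows because, grouped by weight vector, the worst terms are supported on pairwise disjoint sets of monomials, each weight $\ne(1,\dots,1)$ contributing a single member, while for the weight $(1,\dots,1)$ the listed elements have unipotent (up to permutation) transition matrix to the monomial basis $\{e_{\{n+1,\dots,2n\}}\}\cup\{e_{[i,\widehat{n+i}]}\}_i$ of the corresponding free summand of $W(\Lambda_\kappa)\otimes_{\CO_F}R$.

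The main obstacle is the coupled weight-$(1,\dots,1)$ family in steps (a) and (b): because the coefficients $a_{\kappa+1},\dots,a_M$ are not independently constrained, one cannot rescale the $b_{S_i}$ individually but must choose an $\CO_F$-basis of the coupled sublattice so that, after expansion in $\{e_{S'}\}$-coordinates, the common leading monomial $e_{\{n+1,\dots,2n\}}$ cancels in all but one basis vector; one must then track the $o(\pi^{-(n-\kappa-1)})$ tails in Proposition \ref{equ_ss-comp:g_S-dual} accurately enough to identify the surviving sub-leading contributions, in particular to pin down the coefficient $\tfrac12$ in (xi)(b) and the precise form of the linear relation. Everything outside this family is a direct, if lengthy, transcription of the worst-term formulas of Proposition \ref{equ_ss-comp:g_S-dual}.
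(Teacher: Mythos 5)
Your proposal is correct and follows essentially the same route as the paper: both reduce the statement to Proposition \ref{equ_ss-comp:lattice} (the integral description of $W(\Lambda_\kappa)^{n-1,1}_{-1}$ in the balanced basis $g_S-\sgn(\sigma_S)g_{S^\perp}$) combined with the worst-term formulas of Proposition \ref{equ_ss-comp:g_S-dual}, using that over a $k$-algebra the image of a lattice vector equals the image of its worst term, and both isolate the coupled weight-$(1,\dots,1)$ family, where the constraint $\sum_{i=\kappa+1}^{M}(-1)^i a_i=0$ translates into the linear relation of item (xi), with the factor $\tfrac12$ coming from the $i=m+1$ term. Your write-up merely makes explicit the intermediate choice of an $\CO_F$-basis of the lattice, which the paper leaves implicit in the proof of Proposition \ref{equ_ss-comp:lattice}.
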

\begin{remark}
	If we drop the assumption that $S$ is balanced and retain the symmetry, then $c_i=c_{i^\vee}$. In this case, the condition in (xi) can be rewritten as $\sum_{i=\kappa+1}^{n-\kappa}(-1)^ic_i=0$.	
\end{remark}
	
\begin{proof}
Part (i)-(x) follow immediately from Proposition \ref{equ_ss-comp:lattice} and its proof.
For part (xi), we again use Proposition \ref{equ_ss-comp:lattice}.
When $\kappa+1\leq \sigma\leq m$, we have $\sigma\neq \sigma^\vee$, and 
\begin{equation*}
	c_\sigma=(-1)^{\sigma}\pi^{-(n-\kappa-1)}\sum_{\substack{\kappa+1\leq i \leq M \\ i\neq \sigma}} (-1)^{\kappa+i+1} a_i=(-1)^\kappa\pi^{-(n-\kappa-1)}a_\sigma.
\end{equation*}
When $\sigma=m+1$, we have
\begin{align*}
	c_{m+1}={}&(-1)^{\kappa+m}\pi^{-(n-\kappa-1)}\Bigl(\sum_{i=\kappa+1}^M (-1)^i a_i\Bigr)+2(-1)^\kappa\pi^{-(n-\kappa-1)}a_{m+1},\\
	={}&	2\cdot (-1)^{\kappa+1}\pi^{-(n-\kappa-1)}a_{m+1}.
\end{align*}		
Applying the requirement that $\sum_{i=\kappa+1}^{M}(-1)^ia_i= 0$, we get (xi).
\end{proof}

\subsection{Coordinate ring of $U_{\{\kappa\}}$ over special fiber}\label{equ_coord}
Next, we translate the strengthened spin condition in Corollary \ref{equ_ss-comp:basis} into algebraic relations on the affine chart $U\subset\Gr(n,\Lambda_\kappa\otimes_{\CO_{F_0}}\CO_F)$. Combined with the computations in \S \ref{equ_nw}, we obtain the coordinate rings of $U_{\{\kappa\},s}$.
		
\subsubsection{Translation of the strengthened spin relations}\label{equ_coord_translation}
In this part, we return to the standard basis \eqref{equ_chart:standard-basis} of $\CF_\kappa$.
Let $R$ be a $k$-algebra. For $\CF_\kappa\in U_{\{\kappa\},s}(R)$, write
$\bigwedge^n_R \mathcal{F}_\kappa=\bigwedge^n_R  \left(\begin{matrix} X\\I_n \end{matrix}\right)=\sum_S c_S e_S\in W(\Lambda_k)\otimes_{\CO_F} R$.
By Example \ref{equ_ss-setup:possible-weights}, for those $S$ of type $(1,n-1)$ whose weight vector is not equal to $(1,1,\cdots,1)$, the coefficient $c_S$ is determined by a unique entry of $\CX$ (or $X$).
To be more precise, if we write $\CX:=(z_{ij})$, then the coefficient $c_{\{i,n+1,\cdots,\wh{n+j},\cdots,2n\}}$ of the term $e_{\{i,n+1,\cdots,\wh{n+j},\cdots,2n\}}$ in $\bigwedge^n_R \mathcal{F}_\kappa$ is given by $(-1)^{j-1}z_{ij}$.
		
\begin{proposition}\label{equ_coord_translation:not-show-up}
The following basis elements $e_S$ do not appear among the terms in the basis of $L^{n-1,1}_{-1}(\Lambda_\kappa)(R)\subset W_{-1}^{n-1,1}(R)$ listed in Corollary \ref{equ_ss-comp:basis}:
\begin{altenumerate}
	\item $e_S$ with $\#(S\cap{1,\cdots,n})\geq 2$;
	\item $e_S$ for $S=\{i,n+1,\cdots,\wh{n+j},\cdots,n\}$, where $\kappa+1\leq j\leq n-\kappa$,  and ($1\leq i\leq \kappa$ or $n-\kappa+1\leq i\leq n$). 
\end{altenumerate}
In particular, this implies the following equalities hold in $U_{\{\kappa\},s}$:
\begin{equation*}
\bigwedge^2 X=0,\quad\text{and}\quad E=F=0
\end{equation*}
See the notation in \eqref{equ_chart:standard-X}.
\end{proposition}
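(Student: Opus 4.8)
\emph{Strategy.} The proposition is essentially a bookkeeping consequence of the explicit basis of $L^{n-1,1}_{-1}(\Lambda_\kappa)(R)$ recorded in Corollary \ref{equ_ss-comp:basis} together with the strengthened spin condition {\fontfamily{cmtt}\selectfont LM8}. First I would establish the combinatorial half: every basis vector listed in Corollary \ref{equ_ss-comp:basis} --- $e_{\{n+1,\dots,2n\}}$ in (i), the vectors $e_{[i,\wh{n+j}]}$ and the signed pairs $e_{[i,\wh{n+j}]}\pm(-1)^{n+i+j}e_{[j^\vee,\wh{n+i^\vee}]}$ in (ii)--(x), and the linear combinations of the $e_{[i,\wh{n+i}]}$ in (xi) --- is supported only on index sets $S$ with $\#(S\cap\{1,\dots,n\})\le1$; this gives the first assertion. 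For the second, I would attach to each $e_{[i,\wh{n+j}]}$ occurring in Corollary \ref{equ_ss-comp:basis} its pair $(i,j)$ of row/column indices and check, case by case against the inequalities labelling the families (i)--(xi) there, that no occurring $e_{[i,\wh{n+j}]}$ has $i\in\{1,\dots,\kappa\}\cup\{n-\kappa+1,\dots,n\}$ together with $j\in\{\kappa+1,\dots,n-\kappa\}$; note this is genuinely asymmetric in $i$ and $j$, e.g.\ item (iv) does contribute an $e_{[i,\wh{n+j}]}$ with $i$ in the middle block and $j$ outside it.

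\emph{From the combinatorics to the relations.} By {\fontfamily{cmtt}\selectfont LM8}, for any $k$-algebra $R$ and $\CF_\kappa\in U_{\{\kappa\},s}(R)$ the line $\bigwedge^n_R\CF_\kappa=\sum_S c_S e_S$ lies in $L^{n-1,1}_{-1}(\Lambda_\kappa)(R)$, which is free over $R$ on the basis of Corollary \ref{equ_ss-comp:basis}; hence $c_S=0$ for every $S$ not on that list --- in particular for the $S$ described in the proposition. Now I invoke the dictionary recalled just before the statement: the coefficient of $e_S$ with $\#(S\cap\{1,\dots,n\})=\ell$ is, up to sign, an $\ell\times\ell$ minor of $\CX$, and for $\ell=1$ with $S=\{i,n+1,\dots,\wh{n+j},\dots,2n\}$ it equals $(-1)^{j-1}z_{ij}$. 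The $\ell=2$ case gives the vanishing of all $2\times2$ minors of $\CX$, hence $\bigwedge^2 X=0$ (as $X$ and $\CX$ differ by permutations of rows and columns); and the $\ell=1$ case gives $z_{ij}=0$ for precisely the entries filling the two off-diagonal blocks of $\CX$ singled out in part (ii) of the statement. This also re-derives the special-fibre wedge condition {\fontfamily{cmtt}\selectfont LM6} directly from {\fontfamily{cmtt}\selectfont LM8}, compatibly with Remark \ref{moduli_ss:imply-kott}.

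\emph{Main obstacle.} There is no conceptual difficulty; the only real work is the case analysis in the combinatorial half, where one must match the index families (i)--(xi) of Corollary \ref{equ_ss-comp:basis} with the nine blocks of $\CX$ in \eqref{equ_chart:standard-X}, carefully tracking the two involutions $i\mapsto i^\vee$ on $\{1,\dots,n\}$ and $S\mapsto S^\perp$ on subsets, and the \emph{balanced} normalization under which Corollary \ref{equ_ss-comp:basis} is phrased. Once that is laid out both assertions follow immediately.
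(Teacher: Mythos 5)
Your proposal is correct and takes essentially the same route as the paper: the paper likewise treats the proposition as an immediate consequence of Corollary \ref{equ_ss-comp:basis}, merely organizing the case-by-case check of which pairs $(i,j)$ occur by a diagram, and then applies the same coefficient dictionary $c_{\{i,n+1,\cdots,\wh{n+j},\cdots,2n\}}=(-1)^{j-1}z_{ij}$ (and $2\times 2$ minors for $\#(S\cap\{1,\dots,n\})=2$) to get the vanishing statements. Your remark that item (iv) (and likewise (vii)) does contribute terms with $i$ in the middle block and $j$ outside it is the correct reading of the asymmetry: the entries forced to vanish are exactly those with $i$ outer and $j$ middle, i.e.\ the blocks $L$ and $M$ of \eqref{equ_chart:standard-X} (equivalently $X_2=0$, which is what Theorem \ref{equ_coord:ss-red} actually uses), so the phrase ``$E=F=0$'' in the statement should be read as a slip of block labels rather than a gap in your argument.
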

\begin{proof}
This follows directly from Corollary \ref{equ_ss-comp:basis}.
For the reader's convenience, we illustrate Corollary  \ref{equ_ss-comp:basis} with the following figure.
The entry at position $(i,j)$ corresponds to the coefficient of the term $e_{\{i,n+1,\cdots,\wh{n+j},\cdots,2n\}}$. 
We indicate in the figure which entries correspond to basis elements appearing in Corollary \ref{equ_ss-comp:basis}.

Moreover, the dotted line  highlights the terms described in (ii) of Corollary \ref{equ_ss-comp:basis}, while the dashed line corresponds to those in (ix).
			
\usetikzlibrary{positioning}
\begin{center}
\begin{tikzpicture}
    \draw[help lines] (0,0) grid (6,6);
    \filldraw[yellow!6](0,6) -- (2,6) -- (2,4) --cycle;
    \filldraw[yellow!6](0,4) -- (0,6) -- (2,4) --cycle;
    \filldraw[yellow!6](4,0) -- (4,2) -- (6,0) --cycle;
    \filldraw[yellow!6](4,2) -- (6,2) -- (6,0) --cycle;
    \filldraw[cyan!6](0,0) -- (2,0) -- (2,2) --cycle;
	\filldraw[cyan!6](0,0) -- (0,2) -- (2,2) --cycle;
	\filldraw[cyan!6](4,4) -- (6,4) -- (6,6) --cycle;
	\filldraw[cyan!6](4,4) -- (4,6) -- (6,6) --cycle;
	\filldraw[magenta!4] (2,4) rectangle (4,6);
	\filldraw[magenta!4] (2,0) rectangle (4,2);
	\filldraw[black!2] (0,2) rectangle (2,4);
	\filldraw[black!2] (4,2) rectangle (6,4);
	\filldraw[green!3] (2,2) rectangle (4,4);
	\draw[black] (0,0) -- (0,6);
	\draw[black] (2,0) -- (2,6);
	\draw[black] (4,0) -- (4,6);
	\draw[black] (6,0) -- (6,6);
	\draw[black] (0,0) -- (6,0);
	\draw[black] (0,2) -- (6,2);
	\draw[black] (0,4) -- (6,4);
	\draw[black] (0,6) -- (6,6);
	\draw[dashed] (0,6) -- (6,0);
	\draw[densely dotted] (0,0) -- (6,6);
	\filldraw[black] (0,0) circle (1pt) node [anchor=east]{$1$};
	\filldraw[black] (0,2) circle (1pt) node [anchor=east]{$\substack{\kappa+1\\\kappa}$};
	\filldraw[black] (0,4) circle (1pt) node [anchor=east]{$\substack{n-\kappa+1\\n-\kappa}$};
	\filldraw[black] (0,6) circle (1pt) node [anchor=east]{$n$};
	\filldraw[black] (0,0) circle (1pt) node [anchor=north]{$1$};
	\filldraw[black] (2,0) circle (1pt) node [anchor=north]{\tiny $\kappa,\kappa+1$};
	\filldraw[black] (4,0) circle (1pt) node [anchor=north]{\tiny $n-\kappa,n-\kappa+1$};
	\filldraw[black] (6,0) circle (1pt) node [anchor=north]{\tiny $n$};
	\draw[black] (1.33,0.66) circle (0pt) node[cyan][anchor=center]{(v)};
	\draw[black] (0.66,1.33) circle (0pt) node[cyan][anchor=center]{(v)};
	\draw[black] (4.66,5.33) circle (0pt) node[cyan][anchor=center]{(v)};
	\draw[black] (5.33,4.66) circle (0pt) node[cyan][anchor=center]{(v)};
	\draw[black] (3,1) circle (0pt) node[magenta][anchor=center]{(vii)};
	\draw[black] (3,5) circle (0pt) node[magenta][anchor=center]{(iv)};
	\draw[black] (4.66,0.66) circle (0pt) node[orange][anchor=center]{(viii)};
	\draw[black] (5.33,1.33) circle (0pt) node[orange][anchor=center]{(viii)};
	\draw[black] (0.66,4.66) circle (0pt) node[orange][anchor=center]{(iii)};
	\draw[black] (1.33,5.33) circle (0pt) node[orange][anchor=center]{(iii)};
	\draw[black] (3,2.33) circle (0pt) node[red][anchor=center]{(vi)};
	\draw[black] (2.33,3) circle (0pt) node[red][anchor=center]{(vi)};
	\draw[black] (3.66,3) circle (0pt) node[blue][anchor=center]{(i)};
	\draw[black] (3,3.66) circle (0pt) node[blue][anchor=center]{(i)};
\end{tikzpicture}
\end{center}
Now part (ii) follows directly from the figure and implies that $z_{ij}=0$ for $\kappa+1\leq j\leq n-\kappa$, and ($1\leq i\leq \kappa$ or $n-\kappa+1\leq i\leq n$), hence $E=F=0$.
Part (i) is straightforward without the figure since only the terms of type $(n,0)$ and $(n-1,1)$ appear; this gives $\bigwedge^2 X=0$.
\end{proof}

Next, we translate Corollary \ref{equ_ss-comp:basis} (iii), (v), (vi), (viii), (ix), (x), and (xi) into algebraic relations in $\CX$. The following lemma, which can be verified by direct computation, will be useful in the remaining part of the subsection.
\begin{lemma}\label{equ_coord:trans-ad}
	Suppose $M=(m_{ij})$ and $N=(n_{ij})$ are two $\kappa\times \kappa$ matrices. Then $m_{ij}=n_{\kappa+1-j,\kappa+1-i}$ for any $i,j$ is equivalent to $M=HN^tH:=N^{\ad}.$\qed
\end{lemma}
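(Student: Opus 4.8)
The plan is to verify the equivalence by computing matrix entries directly, since the anti-diagonal unit matrix $H$ acts on indices in an elementary way. I would first record that $H = H_\kappa$ has entries $H_{ij} = \delta_{i,\,\kappa+1-j}$ and satisfies $H = H^t = H^{-1}$; concretely, multiplying on the left by $H$ reverses the rows of a matrix and multiplying on the right by $H$ reverses its columns.

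The computation is then a single line: for any $i,j \in \{1,\dots,\kappa\}$,
\[
(HN^tH)_{ij} = \sum_{k,l} H_{ik}\,(N^t)_{kl}\,H_{lj}
= \sum_{k,l} \delta_{i,\,\kappa+1-k}\; n_{lk}\; \delta_{l,\,\kappa+1-j}
= n_{\kappa+1-j,\,\kappa+1-i}.
\]
Hence the matrix identity $M = HN^tH$ holds if and only if, entry by entry, $m_{ij} = n_{\kappa+1-j,\,\kappa+1-i}$ for all $i,j$, which is exactly the asserted equivalence. Equivalently, one may observe that $HN^tH = (HNH)^t$ is the anti-transpose of $N$ — the reflection of $N$ across the anti-diagonal — which realizes precisely the index substitution $(i,j)\mapsto(\kappa+1-j,\kappa+1-i)$.

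There is no genuine obstacle here; the only point requiring care is coordinating the row-reversal, column-reversal, and transpose, which is why I would keep all summation indices explicit in the displayed computation rather than manipulating $H$ and $N^t$ abstractly.
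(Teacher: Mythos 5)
Your computation is correct and is exactly the ``direct computation'' the paper itself invokes (the lemma is stated with no written proof beyond that remark). The explicit index check $(HN^tH)_{ij}=n_{\kappa+1-j,\kappa+1-i}$, together with the observation that $HN^tH=(HNH)^t$ is the anti-transpose, settles the equivalence as intended.
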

		
\subsubsection{}\label{equ_coord_translation:BC}
By Corollary \ref{equ_ss-comp:basis} (iii), the condition $i<j^\vee\leq \kappa$ implies that $i\leq \kappa,j\geq n-\kappa+1$, $j^\vee\leq k$ and $i^\vee\geq n-\kappa+1$.
In this case, we have 
$$z_{ij}=c_{\bi\bj}\quad\text{and}\quad z_{j^\vee i^\vee}=c_{\bar\bj\bar\bi},$$
where the indices are defined as follows:
$$\bi=i,\quad \bj=j-(n-2\kappa),\quad \ov\bi=i^\vee-(n-2\kappa)=\kappa+1-\bi,\quad \ov\bj=j^\vee=\kappa+1-\bj.$$
Here we reorder the entries using $\bi,\bj,\bar{\bi}$ and $\bar{\bj}$ so that $C=(c_{\bi\bj})$ with $1\leq \bi,\bj\leq \kappa$. This reordered notation will be used again later for similar purposes.
We have
\begin{equation*}
	c_{\{i,n+1,\cdots,\wh{n+j},\cdots,2n\}}=(-1)^{n+i+j+1}c_{\{j^\vee,n+1,\cdots,\wh{n+i^\vee},\cdots,2n\}}.
\end{equation*}
Hence
\begin{equation}\label{maximal_affine:c-adjoint}
	c_{\bi\bj}=z_{ij}=z_{j^\vee i^\vee}=c_{\bar\bj\bar\bi}.
\end{equation}
By Lemma \ref{equ_coord:trans-ad}, the relation in \eqref{maximal_affine:c-adjoint} from Corollary \ref{equ_ss-comp:basis}(iii)  is equivalent to $C=C^{\ad}$. Similarly, Corollary \ref{equ_ss-comp:basis} (viii) is equivalent to $B=B^{\ad}$.

\subsubsection{}\label{equ_coord_translation:AD}
Next, the basis in Corollary \ref{equ_ss-comp:basis} (v) and (ix) can be reformulated into the basis
\begin{equation*}
	e_{\{i,n+1,\cdots,\wh{n+j},\cdots,2n\}}+(-1)^{n+i+j}e_{\{j^\vee,n+1,\cdots,\wh{n+i^\vee},\cdots,2n\}}\quad\text{for}\quad i\leq \kappa\text{ and }j^\vee\geq n-\kappa+1.
\end{equation*}
In this case, we have $i\leq \kappa,j\leq \kappa$, $j^\vee\geq n-\kappa+1$ and $i^\vee\geq n-\kappa+1$.
Therefore, 
$$
z_{ij}=d_{\bi\bj}\quad\text{and}\quad z_{j^\vee i^\vee}=a_{\bar\bj\bar\bi},
$$
where the indices are defined as follows:
$$
\bi=i,\quad \bj=j,\quad \ov\bi=i^\vee-(n-2\kappa)=k+1-\bi,\quad  \ov\bj=j^\vee=k+1-\bj
$$
Hence
\begin{equation*}
	c_{\{i,n+1,\cdots,\wh{n+j},\cdots,2n\}}=(-1)^{n+i+j}c_{\{j^\vee,n+1,\cdots,\wh{n+i^\vee},\cdots,2n\}}.
\end{equation*}
Hence
\begin{equation*}
	d_{\bi\bj}=z_{ij}=-z_{j^\vee i^\vee}=-a_{\bar\bj\bar\bi}.
\end{equation*}
By Lemma \ref{equ_coord:trans-ad}, we have $D=-A^{\ad}$.
		
\subsubsection{}\label{equ_coord_translation:X4}
The basis in Corollary \ref{equ_ss-comp:basis} (vi) and (viii) can be rewritten as the basis of the form
\begin{equation*}
	e_{\{i,n+1,\cdots,\wh{n+j},\cdots,2n\}}-(-1)^{n+i+j}e_{\{j^\vee,n+1,\cdots,\wh{n+i^\vee},\cdots,2n\}},\text{ for } \kappa<i<j^\vee<n-\kappa+1\text{ and }i+j^\vee\leq n+1.
\end{equation*}
In this case, we have $\kappa<i<n-\kappa+1,\kappa<j<n-\kappa+1$, $\kappa<j^\vee<n-\kappa+1$ and $\kappa<i^\vee<n-\kappa+1$. 
Therefore, 
$$
z_{ij}=x^{(4)}_{\bi\bj}\quad\text{and}\quad z_{j^\vee i^\vee}=x^{(4)}_{\ov\bj\ov\bi},
$$
where the indices are defined as follows:
$$
\bi=i-\kappa,\quad \bj=j-\kappa,\quad \ov\bi=i^\vee-i=(n-2\kappa+1)-\bi,\quad \ov\bj=j^\vee-\kappa=(n-2\kappa+1)-\bj.
$$
Then we obtain
\begin{equation*}
	c_{\{i,n+1,\cdots,\wh{n+j},\cdots,2n\}}=(-1)^{n+i+j+1}c_{\{j^\vee,n+1,\cdots,\wh{n+i^\vee},\cdots,2n\}}.
\end{equation*}
Hence
\begin{equation*}
	x_{\bi\bj}^{(4)}=z_{ij}=z_{j^\vee i^\vee}=x^{(4)}_{\bar\bj\bar\bi}.
\end{equation*}
By Lemma \ref{equ_coord:trans-ad}, Corollary \ref{equ_ss-comp:basis} (vi) and (x) are equivalent to claiming that $X_4=X_4^{\ad}.$
		
\subsubsection{}\label{equ_coord_translation:trace}
For  Corollary \ref{equ_ss-comp:basis} (xi), when $\kappa<i\leq M$, we have 
$$c_i=c_{\{i,n+1,\cdots,\wh{n+i},\cdots,2n\}}=(-1)^{i-1} x_{ii}.$$
Therefore, when $n=2m$, the identity $\sum_{i=\kappa+1}^m (-1)^i c_i=0$ is equivalent to $\sum_{i=\kappa+1}^m x_{ii}=0$, and further equivalent to $\sum_{i=\kappa+1}^{n-\kappa+1}x_{ii}=0$, since $X_4=X_4^{\ad}$.

When $n=2m+1$, the relation $\sum_{i\geq \kappa+1}^m (-1)^i c_i+\frac{1}{2}(-1)^{m+1}c_{m+1}=0$, is again equivalent to
$\sum_{i=\kappa+1}^{n-\kappa+1}x_{ii}=0$, for the same reason.

Combining these two cases, we deduce that the description of (xi) in Corollary \ref{equ_ss-comp:basis} is equivalent to requiring that $\tr(X_4)=0$.
		
To summarize, what we have shown is
\begin{theorem}\label{equ_coord:ss-red}
For the affine chart over the special fiber $U_s\subset \Gr(n,\Lambda_\kappa\otimes_{\CO_{F_0}}\CO_F/(\pi))$, using the notation from \eqref{equ_chart:reordered-X}, the strengthened spin condition is equivalent to the following:
\begin{altenumerate}
	\item $\bigwedge^2X=0,\, X_2=0$,
	\item $C=C^{\ad},\, B=B^{\ad},\, D=-A^{\ad}$,
	\item $X_4=X_4^{\ad},\,\tr(X_4)=0$.
\end{altenumerate}
\end{theorem}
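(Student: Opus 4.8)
\noindent\emph{Proof strategy.} As set up in \S\ref{equ_chart} and by Proposition \ref{moduli_ss:dual}, on the chart $U\subset\Gr(n,\Lambda_\kappa\otimes_{\CO_{F_0}}\CO_F)$ the entire strengthened spin condition on an $R$-point reduces to the single requirement that the line $\bigwedge^n_R\CF_\kappa\subset W(\Lambda_\kappa)\otimes_{\CO_F}R$ lie in $L^{n-1,1}_{-1}(\Lambda_\kappa)(R)$, for which Corollary \ref{equ_ss-comp:basis} supplies an explicit $R$-basis. The plan is to write $\bigwedge^n_R\CF_\kappa=\sum_S c_S e_S$ in Plücker coordinates and compare it term-by-term with that basis. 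Since the bottom $n$ rows of the representing matrix $\left(\begin{smallmatrix}X\\ I_n\end{smallmatrix}\right)$ form $I_n$, the coordinate $c_S$ equals $1$ for $S=\{n+1,\dots,2n\}$ (type $(n,0)$), equals $\pm$ a single entry of $\CX$ for $S$ of type $(n-1,1)$ (in the standard basis, $c_{\{i,n+1,\dots,\wh{n+j},\dots,2n\}}=(-1)^{j-1}z_{ij}$), and equals $\pm$ a $k\times k$ minor of $X$ for $S$ of type $(n-k,k)$ with $k\ge2$.

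Two kinds of relations then emerge. First, vanishing: Corollary \ref{equ_ss-comp:basis} lists no basis vector involving an $e_S$ of type $(n-k,k)$ with $k\ge2$, which forces every $2\times2$ minor of $X$ to vanish, i.e.\ $\bigwedge^2X=0$; and the type-$(n-1,1)$ vectors $e_{[i,\wh{n+j}]}$ whose index falls in the range $\kappa+1\le j\le n-\kappa$ with $i$ in a corner range are absent too, forcing the off-diagonal block $X_2=0$ (equivalently $E=F=0$ in the standard coordinates). This is Proposition \ref{equ_coord_translation:not-show-up}. Second, pairing: a surviving type-$(n-1,1)$ vector $e_S$ whose weight vector is not $(1,\dots,1)$ occurs in the basis only glued to its mate $e_{S^\perp}$ inside a vector $e_{[i,\wh{n+j}]}\pm(-1)^{n+i+j}e_{[j^\vee,\wh{n+i^\vee}]}$ (or, in a few corner cases, alone), so membership of $\bigwedge^n_R\CF_\kappa$ in the span of the basis forces the relation $c_{\{i,\dots\}}=\pm(-1)^{n+i+j}c_{\{j^\vee,\dots\}}$ between the corresponding entries of $\CX$. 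After reindexing each relevant $\kappa\times\kappa$ corner block by shifts such as $\bi=i$, $\bj=j-(n-2\kappa)$, $\bar\bi=\kappa+1-\bi$, $\bar\bj=\kappa+1-\bj$ and invoking Lemma \ref{equ_coord:trans-ad} — which reads $M=HN^tH=N^{\ad}$ off the entrywise identity $m_{ij}=n_{\kappa+1-j,\kappa+1-i}$ — these turn into $C=C^{\ad}$, $B=B^{\ad}$ (items (iii), (viii)), $D=-A^{\ad}$ (items (v), (ix)), and $X_4=X_4^{\ad}$ (items (vi), (x)). Finally, for the weight vector $(1,\dots,1)$ (item (xi)) the one remaining relation is the linear one $\sum_{i=\kappa}^m(-1)^ic_i=0$, with an extra factor $\tfrac12$ on $c_{m+1}$ when $n=2m+1$; since $c_i=(-1)^{i-1}z_{ii}$ and $X_4=X_4^{\ad}$ makes the diagonal of $X_4$ palindromic, this collapses precisely to $\tr(X_4)=0$. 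For the converse, if $X$ obeys (i)--(iii) then each Plücker coordinate of $\bigwedge^n_R\CF_\kappa$ either vanishes or is matched to its partner as prescribed, so $\bigwedge^n_R\CF_\kappa$ visibly lies in the $R$-span of the basis of Corollary \ref{equ_ss-comp:basis}, hence in $L^{n-1,1}_{-1}(\Lambda_\kappa)(R)$.

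I expect the main difficulty to be organizational rather than conceptual. One must match each of the many index regimes in Corollary \ref{equ_ss-comp:basis} ($i<j^\vee\le\kappa$; $i\le\kappa<j^\vee<n-\kappa+1$; $\kappa<i<j^\vee<n-\kappa+1$; $n-\kappa+1\le i<j^\vee$; and the diagonal cases) without error to the blocks $A,B,C,D,L,M,E,F,X_4$ of \eqref{equ_chart:reordered-X}, while at the same time reconciling three competing sign conventions — the $(-1)^{j-1}$ in the Plücker coordinate, the $(-1)^{n+i+j}$ in the basis vector, and the reflection built into $(-)^{\ad}$ — so that the symmetric-versus-antisymmetric-under-adjoint pattern comes out exactly as stated. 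The subtlest single point is the diagonal-weight item (xi): one must carry along the half-weight of the middle coordinate when $n$ is odd, and see precisely how $X_4=X_4^{\ad}$ folds the alternating sum before reading off $\tr(X_4)=0$.
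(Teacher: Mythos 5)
Your proposal is correct and follows essentially the same route as the paper: expand $\bigwedge^n_R\CF_\kappa$ in Pl\"ucker coordinates ($c_{\{i,n+1,\dots,\wh{n+j},\dots,2n\}}=(-1)^{j-1}z_{ij}$), compare regime by regime with the explicit basis of Corollary \ref{equ_ss-comp:basis} so that absent vectors give $\bigwedge^2X=0$ and $X_2=0$, paired vectors give the $\ad$-relations via Lemma \ref{equ_coord:trans-ad}, and the weight-$(1,\dots,1)$ relation (with the $\tfrac12$ on the middle coordinate for odd $n$) folds to $\tr(X_4)=0$ using $X_4=X_4^{\ad}$ — exactly the argument of \S\ref{equ_coord_translation}. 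One small correction: in the standard coordinates the vanishing block $X_2$ means $L=M=0$, not $E=F=0$ (the blocks $E$ and $F$ are precisely the surviving singletons (iv) and (vii) of Corollary \ref{equ_ss-comp:basis}); your parenthetical repeats a mislabeling already present in the statement of Proposition \ref{equ_coord_translation:not-show-up}, while the substantive claim $X_2=0$ is the right one.
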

\begin{proof}
Equations in (i) follow from Proposition \ref{equ_coord_translation:not-show-up}.
Equations in (ii) follow from \S \ref{equ_coord_translation:BC} and \S \ref{equ_coord_translation:AD}.
Equations in (iii) follow from \S \ref{equ_coord_translation:X4} and \S \ref{equ_coord_translation:trace}.
\end{proof}

\subsubsection{Coordinate ring of $U_{\{\kappa\}}$}
Combined with Proposition \ref{equ_nw:summary} and Theorem \ref{equ_coord:ss-red}, we deduce the defining equations of $U_{\{\kappa\},s}$.
\begin{theorem}\label{equ_coord:ss-main}
The special fiber $U_{\{\kappa\},s}$ of the affine chart defined in \S \ref{equ_chart} is isomorphic to the spectrum of the ring $k[X]/\CI$, where $\CI$ is the ideal generated by the entries of the following matrices:
\begin{altitemize}
	\item[\fontfamily{cmtt}\selectfont LM1.] $X_1^2+X_2X_3,\, X_1X_2+X_2X_4,\,
	X_3X_1+X_4X_3,X_3X_2+X_4^2$,
				
	\item[\fontfamily{cmtt}\selectfont LM2-1.] $-JX_1+X_3^t HX_3+X_1^t J,\, -JX_2+X_3^tHX_4,\, X_2^tJ+X_4^tHX_3,\, X_4^tHX_4$,
				
	\item[\fontfamily{cmtt}\selectfont LM2-2.] $X_1JX_1^t,\, X_1JX_3^t-X_2H,\, X_3JX_1^t+HX_2^t,\, X_3JX_3^t-X_4H+HX_4^t$,
				
	\item[\fontfamily{cmtt}\selectfont LM8-1.] $\bigwedge^2 X,\, X_2$,
				
	\item[\fontfamily{cmtt}\selectfont LM8-2.] $C-C^{\ad},\, B-B^{\ad},\, D+A^{\ad}$,\, $X_4-X_4^{\ad},\, \tr(X_4)$.\qed
\end{altitemize}
\end{theorem}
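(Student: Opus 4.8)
The plan is to assemble the translations already carried out in \S\ref{equ_nw} and \S\ref{equ_coord_translation} and to observe that the Kottwitz and wedge conditions may be discarded from the presentation. By construction $U_{\{\kappa\},s}=M_{\{\kappa\},s}\cap U$, and $M_{\{\kappa\}}$ is the closed subscheme of $M_{\{\kappa\}}^{\naive}$ on which the wedge, spin, and strengthened spin conditions {\fontfamily{cmtt}\selectfont LM6}, {\fontfamily{cmtt}\selectfont LM7}, {\fontfamily{cmtt}\selectfont LM8} all hold. First I would note that {\fontfamily{cmtt}\selectfont LM8} already subsumes {\fontfamily{cmtt}\selectfont LM5}, {\fontfamily{cmtt}\selectfont LM6}, {\fontfamily{cmtt}\selectfont LM7}: the implication {\fontfamily{cmtt}\selectfont LM8}$\Rightarrow${\fontfamily{cmtt}\selectfont LM5} is Remark \ref{moduli_ss:imply-kott} (i.e.\ \cite[Lemma 5.1]{Smithling2015}); the implication {\fontfamily{cmtt}\selectfont LM8}$\Rightarrow${\fontfamily{cmtt}\selectfont LM7} is immediate from the inclusion $W(\Lambda)^{n-1,1}_{(-1)^s}\cap W(\Lambda)\subseteq W(\Lambda)_{(-1)^s}\cap W(\Lambda)$, so that $L^{n-1,1}_{(-1)^s}(\Lambda)(R)$ lands in the image appearing in {\fontfamily{cmtt}\selectfont LM7}; and over the special fiber the equation imposed by {\fontfamily{cmtt}\selectfont LM6} is just $\bigwedge^2 X=0$, which is part of {\fontfamily{cmtt}\selectfont LM8-1} by Proposition \ref{equ_coord_translation:not-show-up}(i). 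Hence $M_{\{\kappa\}}$ is the closed subscheme of the Grassmannian $\Gr(n,\Lambda_\kappa\otimes_{\CO_{F_0}}\CO_F)$ cut out by the naive axioms {\fontfamily{cmtt}\selectfont LM1}--{\fontfamily{cmtt}\selectfont LM4} together with {\fontfamily{cmtt}\selectfont LM8}; recall from \S\ref{equ_chart} that {\fontfamily{cmtt}\selectfont LM3} and {\fontfamily{cmtt}\selectfont LM4} are precisely what lets us embed $M_{\{\kappa\}}$ into this single Grassmannian, since they recover all $\CF_i$ from $\CF_\kappa$.

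Next I would unwind each family of axioms on the chart $U$ over the special fiber. The conditions {\fontfamily{cmtt}\selectfont LM1} and {\fontfamily{cmtt}\selectfont LM2} were already treated in \S\ref{equ_nw}: the $(\pi\otimes 1)$-stability coming from {\fontfamily{cmtt}\selectfont LM1} gives $X^2=\pi_0 I_n$, hence over $k$ the relations labelled {\fontfamily{cmtt}\selectfont LM1} in Proposition \ref{equ_nw:summary}; the requirement that the transition maps $A_\kappa$ and $A_{n-\kappa}$ of {\fontfamily{cmtt}\selectfont LM2} carry $\CF_\kappa$ into $\CF_{n-\kappa}$ and $\CF_{n-\kappa}$ into $\CF_{n+\kappa}$ gives the relations labelled {\fontfamily{cmtt}\selectfont LM2-1} and {\fontfamily{cmtt}\selectfont LM2-2}. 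For {\fontfamily{cmtt}\selectfont LM8}, Proposition \ref{moduli_ss:dual} reduces the condition to the single filtered piece $\CF_\kappa$, Corollary \ref{equ_ss-comp:basis} furnishes an explicit $R$-basis of $L^{n-1,1}_{-1}(\Lambda_\kappa)(R)$, and matching the coefficients $c_S$ of $\bigwedge^n_R\CF_\kappa$ against that basis is exactly the case analysis of \S\ref{equ_coord_translation}, whose upshot is Theorem \ref{equ_coord:ss-red}: namely $\bigwedge^2 X=0$ and $X_2=0$ (these are {\fontfamily{cmtt}\selectfont LM8-1}), together with $C=C^{\ad}$, $B=B^{\ad}$, $D=-A^{\ad}$, $X_4=X_4^{\ad}$, $\tr(X_4)=0$ (these are {\fontfamily{cmtt}\selectfont LM8-2}).

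Combining, $U_{\{\kappa\},s}$ is the closed subscheme of $U=\Spec k[X]$ cut out by the entries of the matrices listed under {\fontfamily{cmtt}\selectfont LM1}, {\fontfamily{cmtt}\selectfont LM2-1}, {\fontfamily{cmtt}\selectfont LM2-2}, {\fontfamily{cmtt}\selectfont LM8-1}, {\fontfamily{cmtt}\selectfont LM8-2}, which is the assertion. I do not expect a genuine obstacle: the statement is a repackaging of Proposition \ref{equ_nw:summary} and Theorem \ref{equ_coord:ss-red}, with the one substantive input being that {\fontfamily{cmtt}\selectfont LM5} (and {\fontfamily{cmtt}\selectfont LM6}, {\fontfamily{cmtt}\selectfont LM7}) may be dropped. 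The only place calling for care is bookkeeping: one must keep straight the standard ordering \eqref{equ_chart:standard-basis} versus the reordered basis \eqref{equ_chart:reordered-basis}, and the two block subdivisions \eqref{equ_chart:standard-X} and \eqref{equ_chart:reordered-X}, when passing between the form in which {\fontfamily{cmtt}\selectfont LM8} is most cleanly stated (standard basis) and the form in which {\fontfamily{cmtt}\selectfont LM1}--{\fontfamily{cmtt}\selectfont LM2} were recorded (reordered basis). This translation is handled by Lemma \ref{equ_coord:trans-ad} together with the entrywise identifications of \S\ref{equ_coord_translation_BC}--\S\ref{equ_coord_translation_trace}, which I would invoke rather than re-derive.
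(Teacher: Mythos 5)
Your proposal is correct and follows essentially the same route as the paper, which obtains Theorem \ref{equ_coord:ss-main} precisely by combining Proposition \ref{equ_nw:summary} with Theorem \ref{equ_coord:ss-red} and discarding the Kottwitz condition via Remark \ref{moduli_ss:imply-kott} (with the wedge condition absorbed into $\bigwedge^2 X=0$ on the special fiber). The extra observations you make (that {\fontfamily{cmtt}\selectfont LM8} implies {\fontfamily{cmtt}\selectfont LM7}, and the standard-versus-reordered basis bookkeeping handled by Lemma \ref{equ_coord:trans-ad}) are consistent with the paper's treatment.
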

		
\section{Reducedness for the strongly non-special parahoric subgroup}\label{max}
In this section, we first simplify the ideal $\CI$ defining $U_{\{\kappa\}}$ in Theorem \ref{equ_coord:ss-main}, and then prove that the scheme is reduced.
		
\subsection{Simplification}\label{max_simp}
In this subsection, we simplify the coordinate ring of $U_{\{\kappa\}}$. The simplification will be carried out in several steps.

\bigskip
\noindent\textbf{Step 1.}
In the first step, since $X_2=0$ modulo $\CI$, it follows that $\CI$ is generated by the entries of the following matrices:
\begin{altenumerate}\label{max_simp-step-1}
    \item $X_1^2,\, X_3X_1+X_4X_3,\, X_4^2$,
			
    \item $-JX_1+X_3^t HX_3+X_1^t J,\, X_3^tHX_4,\, X_4^tHX_3,\, X_4^tHX_4$,
			
    \item $X_1JX_1^t,\, X_1JX_3^t,\, X_3JX_1^t,\, X_3JX_3^t-X_4H+HX_4^t$,
			
    \item $\bigwedge^2 X,\, X_2$,
			
    \item $C-C^{\ad},\, B-B^{\ad},\, D+A^{\ad},\, X_4-X_4^{\ad},\, \tr(X_4)$.
\end{altenumerate}
		
We will show that the matrices in (i),(ii), and (iii) can be further simplified, leading to the following proposition.
\begin{proposition}\label{max_simp:step1}
The ideal $\CI$ is generated by the entries of the following matrices:
\begin{altitemize}
	\item $-JX_1+X_1^tJ+X_3^tHX_3,\, X_3X_1$.
	\item $\bigwedge^2 X,\, X_2$,
	\item $C-C^{\ad},\, B-B^{\ad},\, D+A^{\ad},\, X_4-X_4^{\ad},\, \tr(X_4)$,
\end{altitemize}
\end{proposition}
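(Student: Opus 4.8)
The plan is to prove the two inclusions $\CJ\subseteq\CI$ and $\CI\subseteq\CJ$, where $\CJ$ denotes the ideal generated by the matrices listed in the statement. For $\CJ\subseteq\CI$, every listed generator except the entries of $X_3X_1$ already occurs among the Step~1 generators (i)--(v) of $\CI$. For $X_3X_1$ I would use
\begin{equation*}
X_3X_1=(X_3X_1+X_4X_3)-X_4X_3,\qquad X_4X_3=H\,(X_4^tHX_3)+(X_4-X_4^{\ad})X_3,
\end{equation*}
where $X_4^{\ad}=HX_4^tH$ and $H^2=I$; the entries of $X_3X_1+X_4X_3$, of $X_4^tHX_3$, and of $X_4-X_4^{\ad}$ all lie among the Step~1 generators of $\CI$, so $X_3X_1\in\CI$.

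For the reverse inclusion I would work modulo $\CJ$, where $X_2=0$, all $2\times2$ minors of $X$ vanish, $X_3X_1=0$, $X_4=X_4^{\ad}$, $\tr X_4=0$, and $B=B^{\ad}$, $C=C^{\ad}$, $D=-A^{\ad}$. Two elementary facts are used repeatedly. (a) If a matrix $M$ has all $2\times2$ minors zero, then $M_{ik}M_{jl}=M_{il}M_{jk}$ for all indices; hence $M^2=(\tr M)M$ when $M$ is square, and $M\Sigma M^t=-M\Sigma M^t$ for any constant $\Sigma$ with $\Sigma^t=-\Sigma$, so $M\Sigma M^t=0$ since $\charac k\neq2$. (b) The blocks $X_1$, $X_3$, $X_4$ are submatrices of $X$, while $\left(\begin{smallmatrix}X_1\\X_3\end{smallmatrix}\right)$ and $(X_3\ X_4)$ are a set of columns, resp.\ rows, of $X$; hence all of these also have vanishing $2\times2$ minors.

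Then I would dispatch the remaining Step~1 generators in turn. From $D=-A^{\ad}$ and $\tr A^{\ad}=\tr A$ one gets $\tr X_1=\tr A+\tr D=0$; with (a)--(b) this gives $X_1^2=(\tr X_1)X_1\equiv0$ and $X_4^2=(\tr X_4)X_4\equiv0$. Applying (a) to $(X_3\ X_4)$ yields $X_4X_3=(\tr X_4)X_3\equiv0$, so $X_3X_1+X_4X_3\equiv0$ (the first summand being a generator of $\CJ$). Skew-symmetry of $J$ with (a)--(b) kills $X_1JX_1^t$, $X_3JX_3^t$, $X_1JX_3^t$, and $X_3JX_1^t\equiv-(X_1JX_3^t)^t\equiv0$ by transposition. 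Finally $X_4=X_4^{\ad}$ gives $X_4^t=HX_4H$, whence $X_4^tHX_3\equiv HX_4X_3\equiv0$, $X_3^tHX_4\equiv(X_4^tHX_3)^t\equiv0$, $X_4^tHX_4\equiv HX_4^2\equiv0$, and $X_4H-HX_4^t=HX_4^tHH-HX_4^t=0$, so $X_3JX_3^t-X_4H+HX_4^t\equiv0$; the generator $-JX_1+X_1^tJ+X_3^tHX_3$ is itself in $\CJ$. This covers (i)--(iii), and (iv)--(v) are visibly in $\CJ$.

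The argument is essentially bookkeeping once the rank-one identity (a) and the $\charac k\neq2$ skew-symmetrization trick are available; the one place that requires a genuine observation rather than mechanical manipulation is extracting $\tr X_1=0$ from the symmetry relations, without which $X_1^2$ would not vanish modulo $\CJ$.
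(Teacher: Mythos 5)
Your proposal is correct, and its skeleton (double inclusion, reduction modulo the new ideal, the rank-one identities coming from $\bigwedge^2X=0$ together with a trace) matches the paper's proof; in particular your derivation of $X_1^2\equiv X_4^2\equiv X_4X_3\equiv 0$ from $\tr X_1=\tr X_4=0$ is exactly the paper's Lemma about a block row $(X\ Y)$ with $\tr X=0$ and vanishing $2$-minors, including the observation that $\tr X_1=0$ must be extracted from $D=-A^{\ad}$. You diverge from the paper in two places, both legitimately. First, for $X_3X_1\in\CI$ you use the explicit identity $X_4X_3=H(X_4^tHX_3)+(X_4-X_4^{\ad})X_3$, whereas the paper gets $X_4X_3\in\CI$ from the trace lemma; either argument suffices. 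Second, and more substantively, for the generators $X_1JX_1^t$, $X_1JX_3^t$, $X_3JX_1^t$, $X_3JX_3^t$ the paper first proves (its Lemma \ref{max_simp:step1_ad}) that the relations $B=B^{\ad}$, $C=C^{\ad}$, $D=-A^{\ad}$ are equivalent to $X_1=JX_1^tJ$ and then combines this with $X_1^2=0$ and $X_3X_1=0$, while you kill all four blocks at once via the skew-symmetrization identity $MJM^t=-MJM^t$ (char $k\neq2$) applied to $M=\left(\begin{smallmatrix}X_1\\X_3\end{smallmatrix}\right)$, which needs only $\bigwedge^2X=0$ and $J^t=-J$ and not the symmetry relations at all. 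A concrete benefit of your route is that it also disposes of $X_3JX_3^t$ explicitly, and hence of the generator $X_3JX_3^t-X_4H+HX_4^t$ (using $X_4H=HX_4^t$ from $X_4=X_4^{\ad}$), a generator the paper's written proof passes over in silence; the trade-off is that your ``(a)'' should be stated for the cross-minors of the stacked matrix (as your ``(b)'' indicates) rather than for a single square block, but as written the intended application is clear and valid.
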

For the remaining part of the Step 1, we provide a proof of the Proposition \ref{max_simp:step1}. The following Lemma will be useful in the argument.
\begin{lemma}
Let $M=\left(\begin{matrix} X&Y\end{matrix}\right)$, where $X$ is a square matrix.
If 
 $\tr(X)=\lambda$ and $\bigwedge^2 M=0$, then it follows that $XY=\lambda Y$.   
\end{lemma}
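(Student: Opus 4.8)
The plan is to argue entrywise, extracting from the vanishing of a single well-chosen $2\times 2$ minor of $M$ an identity that telescopes against $\tr(X)$ after summation. Write $X=(x_{il})_{1\le i,l\le n}$ and $Y=(y_{lj})$, so that $M=(X\mid Y)$ has its first $n$ columns given by the columns of $X$ and the remaining columns given by those of $Y$. The hypothesis $\bigwedge^2 M=0$ says exactly that every $2\times 2$ minor of $M$ vanishes.

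First I would record the key identity. Fix indices $i,j,l$ and consider the $2\times 2$ submatrix of $M$ cut out by rows $i$ and $l$ and by the $l$-th column of $X$ together with the $j$-th column of $Y$; its determinant
\[
\det\begin{pmatrix} x_{il} & y_{ij}\\ x_{ll} & y_{lj}\end{pmatrix}=x_{il}y_{lj}-x_{ll}y_{ij}
\]
vanishes (trivially so when $i=l$). Hence $x_{il}y_{lj}=x_{ll}y_{ij}$ for all $i,j,l$.

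Then I would sum over $l$:
\[
(XY)_{ij}=\sum_{l=1}^n x_{il}y_{lj}=\Bigl(\sum_{l=1}^n x_{ll}\Bigr)\,y_{ij}=\tr(X)\,y_{ij}.
\]
Since $\tr(X)=0$ by hypothesis, this gives $(XY)_{ij}=0$ for all $i,j$, i.e.\ $XY=0$.

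I do not expect a genuine obstacle here: the only point that needs care is the choice of the minor — one must pair the $l$-th column of $X$ with a column of $Y$ and use rows $i$ and $l$, so that the diagonal entry $x_{ll}$ appears and the sum over $l$ reproduces the trace. The argument uses nothing about the base ring and is purely formal, so no flatness or reducedness input is required.
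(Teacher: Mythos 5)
Your proof is correct and coincides with the paper's own argument: both extract the same $2\times 2$ minor $x_{il}y_{lj}-x_{ll}y_{ij}=0$ (noting it holds trivially when $i=l$) and sum over $l$ to get $(XY)_{ij}=\tr(X)\,y_{ij}=0$. Nothing further is needed.
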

\begin{proof}
Write $X=(x_{ij})_{m\times m}$ and $Y=(y_{ij})_{m\times n}$.
Then the $(i,j)$-entry of the product $XY$ is given by $\sum_\sigma x_{i\sigma}y_{\sigma j}$.
			
The condition $\bigwedge^2 M=0$ implies that 
$$x_{i\sigma }y_{\sigma j}-x_{\sigma\sigma}y_{ij}=0$$
for all $i,\sigma$ (including $i=\sigma$).
Since $\tr(X)=\lambda$, the entries of the matrix $XY$ are
\begin{equation*}
	\sum_\sigma x_{i\sigma}y_{\sigma j}=\sum_{\sigma=1}^m x_{\sigma \sigma}y_{ij}=y_{ij}\sum x_{\sigma\sigma}=\lambda y_{ij}.\qedhere
\end{equation*}
\end{proof}

\begin{corollary}\label{max_simp:zeros}
	We have $X_4X_3=0$, $X_4^2=0$ and $X_1^2=0$.
\end{corollary}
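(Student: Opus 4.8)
The plan is to derive all three identities from the Lemma just established, so the first step is to record which relations we shall use. Modulo $\CI$ — equivalently, among the generators from Step~1 listed above — the ideal contains the entries of $\bigwedge^2 X$, the single entry $\tr(X_4)$, and the entries of $D+A^{\ad}$ (i.e.\ $D=-A^{\ad}=-HA^tH$); these are the only relations I would invoke, everything else being a bare application of the Lemma.

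\emph{Step 1: $X_4^2=0$ and $X_4X_3=0$.} I would apply the Lemma to the $(n-2\kappa)\times(2n-2\kappa)$ matrix $N:=(X_4\mid X_3\mid X_4)$, whose leading block $X_4$ is square with $\tr(X_4)\equiv 0$ modulo $\CI$. The key observation is that every $2\times 2$ minor of $N$ is, up to swapping its two columns, either a $2\times 2$ minor of $X$ — hence in $\CI$ since $\bigwedge^2 X=0$ — or a minor having two identical columns (when the two chosen columns are the same column of $X$, taken from the two copies of the $X_4$-block), which vanishes identically. Thus $\bigwedge^2 N\equiv 0$, and the Lemma applied with the square block $X_4$ and $Y=(X_3\mid X_4)$ gives $X_4\cdot(X_3\mid X_4)\equiv 0$, that is, $X_4X_3\equiv 0$ and $X_4^2\equiv 0$ modulo $\CI$.

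\emph{Step 2: $X_1^2=0$.} Here the one extra ingredient is that $\tr(X_1)$ vanishes modulo $\CI$. Writing $X_1=\left(\begin{smallmatrix}A&B\\ C&D\end{smallmatrix}\right)$ as in \eqref{equ_chart:reordered-X} and using $D\equiv -HA^tH$ together with $H^2=I$ and the cyclicity of the trace, one gets $\tr(D)\equiv -\tr(HA^tH)=-\tr(A^t)=-\tr(A)$, so $\tr(X_1)=\tr(A)+\tr(D)\equiv 0$. Now I would repeat Step~1 with $N':=(X_1\mid X_1)$: again $\bigwedge^2 N'\equiv 0$ since each of its $2\times 2$ minors is either a minor of $X$ or has a repeated column, and the Lemma applied to $N'$ yields $X_1^2\equiv 0$.

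I do not anticipate a genuine obstacle. The only step that is not an immediate invocation of the Lemma is the observation $\tr(X_1)\equiv 0$, which is not one of the defining relations of $\CI$ but follows formally from $D+A^{\ad}\in\CI$ because conjugation by the anti-diagonal $H$ preserves the trace. Everything else rests on the trivial remark that repeating some columns of a matrix introduces no new nonzero $2\times 2$ minors beyond those of $X$.
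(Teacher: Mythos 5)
Your proof is correct and follows essentially the same route as the paper: apply the preceding Lemma using $\tr(X_4)\in\CI$ and $\bigwedge^2X\in\CI$ to get $X_4X_3=X_4^2=0$, and deduce $\tr(X_1)\equiv 0$ from $D+A^{\ad}\in\CI$ to get $X_1^2=0$. Your explicit column-duplication matrices $(X_4\mid X_3\mid X_4)$ and $(X_1\mid X_1)$ merely make precise what the paper leaves implicit when invoking the Lemma.
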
  
\begin{proof}
Given that $\tr(X_4)=0$ and $\bigwedge^2 X=0$, it follows that $X_4X_3=0$ and $X_4^2=0$.
$D=-A^{\ad}$ implies $\tr(X_1)=0$, hence $X_1^2=0$.
\end{proof}
By Corollary \ref{max_simp:zeros}, the matrices in \eqref{max_simp-step-1}(i) simplify into $X_3X_1=0$.
		
The relation $X_3^tHX_4=0$ is equivalent to $X_4^tHX_3=0$, which in turn is equivalent to $X_4^{\ad}X_3=HX_4^tHX_3=0$.
Hence, it follows from $X_4=X_4^{\ad}$ and $X_3X_4=0$.
Similarly, the relation $X_4^tHX_4=0$ follows from $X_4=X_4^{\ad}$ and $X_4^2=0$.
Therefore, the relations in \eqref{max_simp-step-1}(ii) are equivalent to the single equation $-JX_1+X_3^tHX_3+X_1^tJ=0$.

\begin{lemma}\label{max_simp:step1_ad}
	The relations $C=C^{\ad}, B=B^{\ad},D=-A^{\ad}$ are equivalent to $JX_1=-X_1^tJ$.
\end{lemma}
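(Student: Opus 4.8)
The proof will be a direct block computation, so I do not expect a genuine obstacle — the only point requiring care is keeping track of transposes block by block. Recall from \eqref{equ_chart:reordered-X} that $X_1$ decomposes into $\kappa\times\kappa$ blocks as $X_1=\left(\begin{smallmatrix} A & B \\ C & D\end{smallmatrix}\right)$, that $J=J_{2\kappa}=\left(\begin{smallmatrix} 0 & H \\ -H & 0\end{smallmatrix}\right)$ with $H=H_\kappa$, and that $H^t=H$ and $H^2=I_\kappa$. I will also use the identity $N^{\ad}=HN^tH$ of Lemma \ref{equ_coord:trans-ad}; note that $\ad$ is an involution, since $(N^{\ad})^{\ad}=H(HN^tH)^tH=N$.

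First I would multiply out both sides. A short computation gives $JX_1=\left(\begin{smallmatrix} HC & HD \\ -HA & -HB\end{smallmatrix}\right)$ and, since $X_1^t=\left(\begin{smallmatrix} A^t & C^t \\ B^t & D^t\end{smallmatrix}\right)$, also $-X_1^tJ=\left(\begin{smallmatrix} C^tH & -A^tH \\ D^tH & -B^tH\end{smallmatrix}\right)$. Comparing the four blocks, the equality $JX_1=-X_1^tJ$ is equivalent to the four equations $HC=C^tH$, $HD=-A^tH$, $-HA=D^tH$, $HB=B^tH$; multiplying each on the left by $H$ and using $H^2=I_\kappa$, these become $C=C^{\ad}$, $D=-A^{\ad}$, $A=-D^{\ad}$, $B=B^{\ad}$.

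It remains to notice that the third of these, $A=-D^{\ad}$, is redundant: applying $\ad$ to $D=-A^{\ad}$ and using that $\ad$ is an involution gives $D^{\ad}=-A$. Hence $JX_1=-X_1^tJ$ is equivalent to the three relations $C=C^{\ad}$, $B=B^{\ad}$, $D=-A^{\ad}$, which is the assertion. (One may also phrase the statement conceptually: since $J^t=-J$, the identity $JX_1=-X_1^tJ$ says precisely that $JX_1$ is symmetric, i.e.\ that $X_1$ belongs to the Lie algebra $\mathfrak{sp}(J)$; the block computation above is just the explicit form of that condition.)
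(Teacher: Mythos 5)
Your proof is correct and is essentially the same block computation as the paper's: the paper first rewrites $JX_1=-X_1^tJ$ as $X_1=JX_1^tJ$ and expands $JX_1^tJ$ in blocks, while you expand $JX_1$ and $-X_1^tJ$ directly, which amounts to the same calculation. Your explicit remark that $A=-D^{\ad}$ is redundant (via $\ad$ being an involution) is a nice touch that the paper leaves implicit, but it does not change the argument.
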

\begin{proof}
	Since $J^2=I$, the equality $JX_1=-X_1^tJ$ is equivalent to $X_1=JX_1^tJ$. A quick calculation shows that
\begin{equation*}
	JX_1^tJ=\left(\begin{matrix}-HD^tH&HB^tH\\HC^tH&-HA^tH\end{matrix}\right)=
	\left(\begin{matrix}
		-D^\ad  &   B^\ad\\
		C^\ad   &   -A^\ad
	\end{matrix}\right).\qedhere
\end{equation*}
\end{proof}
In particular, the proof shows that $X_1=JX_1^tJ$. 
It follows that $X_1JX_1^t=0$ is a consequence of $X_1^2=0$.
Moreover, the relation $X_1JX_3^t=0$ is equivalent to $X_3JX_1^t=0$, and both follow from $X_3X_1=0$. This finishes the proof of Proposition \ref{max_simp:step1}.\qed

\bigskip
\noindent\textbf{Step 2.}
Applying Lemma \ref{max_simp:step1_ad} to the matrix $-JX_1+X_3^t HX_3+X_1^t J$ in $\CI$, we find that the entries of the matrix
\[
X_1+\frac{1}{2}JX_3^tHX_3
\]
also lie in the ideal $\CI$.
We will use this matrix to further simplify $\CI$.
\begin{proposition}\label{max_simp:simp-step2}
	The ideal $\CI$ is equal to the ideal generated by the entries of the following matrices:
\begin{equation}\label{max_simp:simp-step2-ideal-generator}
	\bigwedge^2 X,\quad X_2,\quad X_4-X_4^{\ad},\quad \tr(X_4),\quad  X_1+\frac{1}{2}JX_3^tHX_3.
\end{equation}
\end{proposition}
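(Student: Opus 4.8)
The plan is to check the two inclusions between $\CI$ and the ideal $\CJ$ generated by the entries of the five matrices in \eqref{max_simp:simp-step2-ideal-generator}. The inclusion $\CJ\subseteq\CI$ is immediate: four of the generators, $\bigwedge^2 X$, $X_2$, $X_4-X_4^{\ad}$ and $\tr(X_4)$, already appear among the generators of $\CI$ in Proposition~\ref{max_simp:step1}, while the entries of $X_1+\tfrac12 JX_3^tHX_3$ were shown to lie in $\CI$ in the paragraph preceding the statement. Concretely, one combines the $\CI$-generator $-JX_1+X_1^tJ+X_3^tHX_3$ with $JX_1+X_1^tJ\in\CI$ --- which comes from the generators $C-C^{\ad}$, $B-B^{\ad}$, $D+A^{\ad}$ via Lemma~\ref{max_simp:step1_ad} --- to get $-2JX_1+X_3^tHX_3\in\CI$, and then left-multiplies by $\tfrac12 J$, using that $J$ is skew-symmetric and orthogonal so that $J^2=-I$.

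For the reverse inclusion $\CI\subseteq\CJ$ it suffices to place into $\CJ$ the generators of $\CI$ from Proposition~\ref{max_simp:step1} that are not visibly generators of $\CJ$: the entries of $-JX_1+X_1^tJ+X_3^tHX_3$, of $X_3X_1$, and of $C-C^{\ad}$, $B-B^{\ad}$, $D+A^{\ad}$. Working modulo $\CJ$ I would replace $X_1$ by $-\tfrac12 JX_3^tHX_3$; using $J^t=-J=J^{-1}$ and $H^t=H$, a one-line computation gives
\begin{equation*}
JX_1\equiv\tfrac12 X_3^tHX_3,\qquad X_1^tJ\equiv-\tfrac12 X_3^tHX_3\pmod{\CJ},
\end{equation*}
so that $JX_1+X_1^tJ\equiv 0$ and $-JX_1+X_1^tJ+X_3^tHX_3\equiv 0$ modulo $\CJ$. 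By Lemma~\ref{max_simp:step1_ad} the first congruence puts the entries of $C-C^{\ad}$, $B-B^{\ad}$, $D+A^{\ad}$ into $\CJ$, and the second disposes of $-JX_1+X_1^tJ+X_3^tHX_3$.

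The one remaining generator is $X_3X_1$, and here a small genuine observation is needed. Modulo $\CJ$ one has $X_3X_1\equiv -\tfrac12\,(X_3JX_3^t)\,HX_3$, so it is enough to show that the entries of $X_3JX_3^t$ lie in $(\bigwedge^2 X)\subseteq\CJ$. Averaging the sum defining an entry of $X_3JX_3^t$ with its image under interchanging the two summation indices and using $J_{cd}=-J_{dc}$ gives
\begin{equation*}
2\,(X_3JX_3^t)_{ab}=\sum_{c,d}J_{cd}\bigl((X_3)_{ac}(X_3)_{bd}-(X_3)_{ad}(X_3)_{bc}\bigr),
\end{equation*}
and each bracket is, up to sign, a $2\times 2$ minor of the submatrix $X_3$ of $X$, hence an entry of $\bigwedge^2 X$; since $2$ is invertible in $k$ we conclude $(X_3JX_3^t)_{ab}\in(\bigwedge^2 X)$, so $X_3X_1\in\CJ$. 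This gives $\CI\subseteq\CJ$ and hence $\CI=\CJ$. I expect essentially all the content to be this last identity exploiting the skew-symmetry of $J$; the rest is the bookkeeping of tracking which of the relations $J^2=-I$, $J^t=-J$ and the block-adjoint identities of Lemma~\ref{max_simp:step1_ad} have been used to eliminate which generator, rather than any new idea --- so the main ``obstacle'' here is organizational.
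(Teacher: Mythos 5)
Your proof is correct and follows essentially the same route as the paper: both inclusions come down to substituting $X_1\equiv-\tfrac12 JX_3^tHX_3$, recovering the adjoint relations $B=B^{\ad}$, $C=C^{\ad}$, $D=-A^{\ad}$, and killing $X_3X_1$ using the $2$-minors of $X_3$. Your key identity that the entries of $X_3JX_3^t$ lie in $(\bigwedge^2 X)$ is precisely the paper's Lemma \ref{max_simp:simp-step2-ideal-generator-lem} in disguise (there $X_3JX_3^t=EHF^t-FHE^t$ is computed blockwise), so the two arguments coincide up to presentation.
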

		
\begin{proof}
Recall that $X_3=\left(\begin{matrix}E&F\end{matrix}\right)$, where $E$ and $F$ are matrices of the size $(n-2\kappa)\times \kappa$. Then we compute:
\begin{equation*}
	-\frac{1}{2}JX_3^tHX_3=-\frac{1}{2}J\left(\begin{matrix}E^t\\F^t\end{matrix}\right)H\left(\begin{matrix}E&F\end{matrix}\right)
	=-\frac{1}{2}\left(\begin{matrix}F^{\ad}E&F^{\ad}E\\-E^{\ad}E&-E^{\ad}F\end{matrix}\right).
\end{equation*}
Therefore, the equality $X_1=-\frac{1}{2}JX_3^tHX_3$ implies
\begin{equation*}
	A=-\frac{1}{2}F^{\ad}E,\quad B=-\frac{1}{2}F^{\ad}F, \quad C=\frac{1}{2}E^{\ad}E,\quad D=\frac{1}{2}E^{\ad}F.
\end{equation*}
Since $(X^{\ad}Y)^{\ad}=Y^{\ad}X$, we deduce $C=C^{\ad},B=B^{\ad},D=-A^{\ad}$ from $X_1=-\frac{1}{2}JX_3^tHX_3$.
	
Next, we claim that $X_3X_1=0$ follows from the relation $\bigwedge^2 X_3=0$, modulo the ideal generated by the entries of \eqref{max_simp:simp-step2-ideal-generator}. Indeed, we ahve:
\begin{align*}
	X_3X_1={}&\left(\begin{matrix}E&F\end{matrix}\right)\left(\begin{matrix}F^{\ad}E&F^{\ad}F\\-E^{\ad}E&-E^{\ad}F\end{matrix}\right),\\
	={}&\left(\begin{matrix}EF^{\ad}E-FE^{\ad}E&EF^{\ad}F-FE^{\ad}F\end{matrix}\right),\\
	={}&\left(\begin{matrix}
		EHF^tHE-FHE^tHE&
		EHF^tHF-FHE^tHF
	\end{matrix}\right),\\
	={}&\left(\begin{matrix}
		(EHF^t-FHE^t)HE&
		(EHF^t-FHE^t)HF
	\end{matrix}\right).
\end{align*}
The proposition now follows from the Lemma \ref{max_simp:simp-step2-ideal-generator-lem} below.
\end{proof}
\begin{lemma}\label{max_simp:simp-step2-ideal-generator-lem}
The relation $\bigwedge^2 X_3=0$ implies $EHF^t-FHE^t=0$.
\end{lemma}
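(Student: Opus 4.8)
The plan is to unwind both $EHF^t$ and $FHE^t$ into entrywise sums and to recognize every resulting term as a $2\times 2$ minor of $X_3$. Write $X_3=\begin{pmatrix}E&F\end{pmatrix}$ with $E=(e_{ab})$ and $F=(f_{ab})$, where $1\le a\le n-2\kappa$ and $1\le b\le\kappa$, and recall that in this formula $H$ denotes the $\kappa\times\kappa$ anti-diagonal unit matrix, so $H_{bc}=\delta_{b,\,\kappa+1-c}$. First I would record the two identities
\begin{equation*}
	(EHF^t)_{aa'}=\sum_{b=1}^{\kappa}e_{ab}\,f_{a',\,\kappa+1-b},
	\qquad
	(FHE^t)_{aa'}=\sum_{b=1}^{\kappa}e_{a'b}\,f_{a,\,\kappa+1-b},
\end{equation*}
the second one after the reindexing $b\mapsto\kappa+1-b$. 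Subtracting, the $(a,a')$ entry of $EHF^t-FHE^t$ is $\sum_{b=1}^{\kappa}\bigl(e_{ab}f_{a',\kappa+1-b}-e_{a'b}f_{a,\kappa+1-b}\bigr)$.

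The next step is the observation that each summand is precisely the $2\times 2$ minor of $X_3$ on rows $a,a'$ and on the two columns of $X_3$ numbered $b$ and $2\kappa+1-b$ — the former being the $b$-th column of $E$, the latter the $(\kappa+1-b)$-th column of $F$ (these are distinct since $b\le\kappa<\kappa+1\le 2\kappa+1-b$). The hypothesis $\bigwedge^2 X_3=0$ says exactly that all such $2\times 2$ minors vanish in the ambient ring; hence every summand vanishes and $(EHF^t-FHE^t)_{aa'}=0$ for all $a,a'$, which is the asserted identity $EHF^t-FHE^t=0$ (equivalently, $EHF^t$ is symmetric, using $H^t=H$).

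I do not expect a genuine obstacle here: the content is a short index manipulation, and the only point demanding care is the bookkeeping of the anti-diagonal $H$, namely that the column index $b$ of $E$ must be paired with the column index $\kappa+1-b$ of $F$, which under the numbering $1,\dots,2\kappa$ of the columns of $X_3$ becomes the pair $(b,\,2\kappa+1-b)$. Since all coefficients involved are integers, the identity holds in any commutative ring satisfying the hypothesis, so in particular inside $k[X]/\CI$ and, as needed to complete the proof of Proposition \ref{max_simp:simp-step2}, modulo the relations in \eqref{max_simp:simp-step2-ideal-generator}.
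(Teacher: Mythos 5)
Your proof is correct and follows essentially the same route as the paper: expand the $(a,a')$ entry of $EHF^t-FHE^t$ through the anti-diagonal $H$ and observe that each summand $e_{ab}f_{a',\kappa+1-b}-e_{a'b}f_{a,\kappa+1-b}$ is a $2\times 2$ minor of $X_3$ (on columns $b$ and $2\kappa+1-b$), hence vanishes by hypothesis. Your version is just slightly more explicit than the paper's about which minors occur and why the two columns are distinct.
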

\begin{proof}
Write $E=(e_{ij})$ and $F=(f_{ij})$. both of size $(n-2k)\times k$. 
We have
$$
HF^{t}=H(f_{ji})=(f_{k+1-j,i})\quad\text{and similarly}\quad HE^{t}=H(e_{ji})=(e_{k+1-j,i}).
$$
Thus, the $(i,j)$-entry of the matrix $EHF^t-FHE^t$ is
\begin{equation*}
	\sum_{s=1}^\kappa e_{is}f_{j,k+1-s} - \sum_{t=1}^\kappa e_{js}f_{i,k+1-s},
\end{equation*}
which vanishes due to the relation $\bigwedge^2 X_3=0$.
\end{proof}
		
\begin{remark}\label{max_simp:simp-step2_comp}
Before we proceed, we note that the relations $X_2=0$ and $\bigwedge^2 X=0$ have important consequences. 
In particular, consider $2$-minors in $X$ of the form $st$, where $s$ is an entry of $X_1$ and $t$ is an entry of $X_4$. This indeed corresponds to a $2$-minor, since:
\begin{equation*}
    \left(\begin{array}{c|c}
		X_1&0\\
		\hline
		X_3&X_4
	\end{array}\right)=
	\begin{tikzpicture}[>=stealth,thick,baseline]
		\matrix [matrix of math nodes,left delimiter=(,right delimiter=)](A){ 
		*		&	\cdots	&	*		&			&			&			\\
		\vdots 	&	{\color{magenta}s}		&	\vdots 	&			&			&			\\
		*		&	\cdots	&	*		&			&			&			\\
		*		&	\cdots	&	*		&	*		&	\cdots	&	*		\\
		\vdots 	&	*		&	\vdots 	&	\vdots	&	{\color{magenta}t}		&	\vdots	\\
		*		&	\cdots	&	*		&	*		&	\cdots	&	\\					
		};
		\draw (-2,0) -- (2,0);
		\draw (0,-2) -- (0,2);
		\draw[cyan] (-0.7,0.7) -- (0.75,-0.75);
		\draw[cyan] (-0.7,-0.7) -- (0.75,0.75);
	\end{tikzpicture}\rightsquigarrow \text{2-minors } s\cdot t-*\cdot0=st.
\end{equation*}
It is natural to \emph{guess} that $\Spec k[X]/\CI$ has two irreducible components, corresponding to the loci $X_1=0$ and $X_4=0$ respectively.
In light of Proposition \ref{max_simp:simp-step2}, these two ``components'' correspond to the vanishing of the following sets of generators:
\begin{altitemize}
	\item $\CI_1$: $X_4, X_2, \bigwedge^2 (X_1,X_3), X_1+\frac{1}{2}JX_3^tHX_3$,
    \item $\CI_2$: $X_1,X_2,\bigwedge^2(X_3,X_4), X_4-X_4^{\ad},\tr(X_4),X_3^tHX_3$.
\end{altitemize}
In Proposition \ref{max_red:irr-red}, we will verify that these two ideals define reduced and irreducible components.
For now, we show that this is true at least at the topological level.
\end{remark}

\begin{lemma}\label{max_simp:simp-step2_comp-radical}
Keep the notations in Remark \ref{max_simp:simp-step2_comp}, we have $\sqrt{\CI}=\sqrt{\CI_1}\cap\sqrt{\CI_2}$.
\end{lemma}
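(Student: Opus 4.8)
The goal is the set-theoretic identity $\sqrt{\CI}=\sqrt{\CI_1}\cap\sqrt{\CI_2}$, equivalently $V(\CI)=V(\CI_1)\cup V(\CI_2)$ as closed subsets of $\Spec k[X]$. The plan is to establish the two inclusions separately. The easy inclusion is $\CI\subset\CI_1$ and $\CI\subset\CI_2$, which gives $V(\CI_1)\cup V(\CI_2)\subset V(\CI)$: using the description of $\CI$ from Proposition \ref{max_simp:step1} (or Proposition \ref{max_simp:simp-step2}), one checks that modulo $X_4=0$ the generator $X_1+\tfrac12 JX_3^tHX_3$ together with $\bigwedge^2(X_1,X_3)$ and $X_2$ already forces all generators of $\CI$ (the relations $C=C^{\ad}$, etc., follow as in the proof of Proposition \ref{max_simp:simp-step2}, and $\bigwedge^2 X=0$ follows because $X_4=X_2=0$ leaves only the $(X_1,X_3)$-block), so $\CI\subset\CI_1$; symmetrically $\CI\subset\CI_2$, using that $X_1=0$ kills $X_3X_1$ and reduces $-JX_1+X_1^tJ+X_3^tHX_3=0$ to $X_3^tHX_3=0$.

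The substantive inclusion is $\sqrt{\CI}\supset\sqrt{\CI_1}\cap\sqrt{\CI_2}$, i.e.\ $V(\CI)\subset V(\CI_1)\cup V(\CI_2)$. Let $\mathfrak p$ be a prime containing $\CI$; I must show $\mathfrak p\supset\CI_1$ or $\mathfrak p\supset\CI_2$. The key input is Remark \ref{max_simp:simp-step2_comp}: modulo $\CI$ we have $X_2=0$, hence $\bigwedge^2 X=0$ produces, for every entry $s$ of $X_1$ and every entry $t$ of $X_4$, the $2$-minor $st\in\CI\subset\mathfrak p$. Since $\mathfrak p$ is prime, either every entry of $X_1$ lies in $\mathfrak p$, or every entry of $X_4$ lies in $\mathfrak p$. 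In the first case $X_1\in\mathfrak p$; combined with the generators of $\CI$ (Proposition \ref{max_simp:step1}: $X_3X_1=0$ is automatic, and $-JX_1+X_1^tJ+X_3^tHX_3=0$ gives $X_3^tHX_3\in\mathfrak p$, while $\bigwedge^2 X=0$ with $X_1=X_2=0$ gives $\bigwedge^2(X_3,X_4)=0$, and $X_4=X_4^{\ad}$, $\tr X_4=0$ are generators of $\CI$) one reads off every generator of $\CI_2$, so $\mathfrak p\supset\CI_2$. In the second case $X_4\in\mathfrak p$; then $\bigwedge^2 X=0$ with $X_4=X_2=0$ gives $\bigwedge^2(X_1,X_3)=0$, and from Proposition \ref{max_simp:simp-step2} the generator $X_1+\tfrac12 JX_3^tHX_3$ lies in $\mathfrak p$, so $\mathfrak p$ contains all generators of $\CI_1$. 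Either way $\mathfrak p\supset\CI_1\cap\CI_2$, hence $\mathfrak p\supset\sqrt{\CI_1}\cap\sqrt{\CI_2}$ is vacuous—rather, we conclude $V(\CI)\subset V(\CI_1)\cup V(\CI_2)$, which is the desired inclusion of radicals.

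I expect the only delicate point to be the careful bookkeeping in the case analysis: one must make sure that when $X_1$ (resp.\ $X_4$) is put into the prime, \emph{all} the remaining generators listed for $\CI_2$ (resp.\ $\CI_1$) in Remark \ref{max_simp:simp-step2_comp} are genuinely consequences of the generators of $\CI$ together with that vanishing, not just most of them. In particular for $\CI_2$ one should double-check that $X_3^tHX_3\in\mathfrak p$ really does follow (it comes from the single matrix relation $-JX_1+X_1^tJ+X_3^tHX_3$ once $X_1\in\mathfrak p$), and for $\CI_1$ that $\bigwedge^2(X_1,X_3)=0$ follows from $\bigwedge^2 X=0$ after discarding the $X_4$-block. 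No further ideas are needed; this is a clean ``prime-splitting'' argument of the type used to show a product ideal has the expected minimal primes.
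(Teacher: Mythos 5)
Your proposal is correct, and it is essentially the paper's own argument in pointwise form: the paper sets $I,J$ to be the ideals generated by the entries of $X_1$, resp.\ $X_4$, and $K$ the ideal of the remaining common generators, notes $\CI=IJ+K$, $\CI_1=I+K$, $\CI_2=J+K$, and takes radicals of the sandwich $IJ+K^2\subset(I+K)(J+K)\subset IJ+K$ --- which is exactly your prime-splitting case analysis packaged as a one-line ideal-theoretic computation. The key inputs coincide: the $2$-minor observation of Remark \ref{max_simp:simp-step2_comp} giving $IJ\subset\CI$, together with the verification that the remaining generators of $\CI_1$ and $\CI_2$ lie in (or follow from) $\CI$.
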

\begin{proof}
Let $K\subset k[X]$ be the ideal generated by the entries of 
\begin{equation*}
	\bigwedge^2 (X_3,X_4), \bigwedge^2(X_1,X_3), X_2, X_4-X_4^{\ad}, \tr(X_4), X_1+\frac{1}{2}JX_3^t HX_3.
\end{equation*}
Let $I, J$ be the ideals of $k[X]$ generated by the entries of $X_1$ and $X_4$ respectively. 
Then we have $\CI_1=I+K$ and $\CI_2=J+K$, and the whole ideal $\CI$ is generated by $IJ+K$.
Consider the inclusion
\begin{equation*}
    IJ+K^2\subset (I+K)(J+K)\subset IJ+K.
\end{equation*}
Taking radicals on both sides, we get $\sqrt{\CI}=\sqrt{\CI_1\CI_2}=\sqrt{\CI_1}\cap\sqrt{\CI_2}$.
\end{proof}

\noindent\textbf{Step 3.}
In the final step, we will show that
\begin{theorem}\label{sm:simp_step3_ring}
	There is an isomorphism of $k$-algebras
\begin{equation*}
	\frac{k[X]}{\bigwedge^2 X, X_2, X_4-X_4^{\ad},\tr(X_4), X_1+\frac{1}{2}JX_3^tHX_3}\xrightarrow{\sim} \frac{k[X_3,X_4]}{\bigwedge^2 (X_3,X_4), X_4-X_4^{\ad},\tr(X_4)}.
\end{equation*}
The map is defined by replacing the entries of $X_1$ by $-\frac{1}{2}JX_3^tHX_3$ and the entries of $X_2$ by $0$.
\end{theorem}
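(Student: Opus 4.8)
The plan is to realize both quotient rings through the substitution homomorphism $\varphi\colon k[X]\to k[X_3,X_4]$ of the statement: it sends the entries of $X_1$ to those of $X_1':=-\tfrac12 JX_3^tHX_3$, the entries of $X_2$ to $0$, and fixes $X_3,X_4$. Then $\varphi$ is a surjection of polynomial rings whose kernel is generated by the entries of $X_2$ and of $X_1+\tfrac12 JX_3^tHX_3$. Write $\fka\subset k[X]$ and $\fkb\subset k[X_3,X_4]$ for the two ideals appearing in the statement. Since $\ker\varphi\subseteq\fka$, and since $\fkb$ maps into $\fka$ under the inclusion $k[X_3,X_4]\hookrightarrow k[X]$ (the $2\times2$ minors of $(X_3\ X_4)$ are among the $2\times2$ minors of $X$, and $X_4-X_4^{\ad},\tr X_4$ are generators of $\fka$), it is enough to prove the single containment $\varphi(\fka)\subseteq\fkb$. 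Granting this, $\varphi$ and the inclusion descend to maps between $k[X]/\fka$ and $k[X_3,X_4]/\fkb$ which are mutually inverse: the composite $k[X_3,X_4]\to k[X]\to k[X_3,X_4]$ is the identity, while the composite the other way sends $X_1\mapsto -\tfrac12 JX_3^tHX_3\equiv X_1$ and $X_2\mapsto 0\equiv X_2$ modulo $\fka$. As $\varphi$ kills $X_2$ and $X_1+\tfrac12 JX_3^tHX_3$ and fixes $X_4-X_4^{\ad}$ and $\tr X_4$, everything reduces to
\[
\varphi\Bigl(\textstyle\bigwedge^2 X\Bigr)\subseteq\fkb .
\]

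First I would analyze $\varphi(X)=\left(\begin{smallmatrix}X_1'&0\\ X_3&X_4\end{smallmatrix}\right)$ by sorting its $2\times2$ minors according to the two rows used. A minor on two bottom rows is literally a $2\times2$ minor of $(X_3\ X_4)$, hence already lies in $\fkb$. A minor that uses at least one top row and only columns among the first $2\kappa$ is a $2\times2$ minor of $\left(\begin{smallmatrix}X_1'\\ X_3\end{smallmatrix}\right)$; using the factorization $\left(\begin{smallmatrix}X_1'\\ X_3\end{smallmatrix}\right)=\left(\begin{smallmatrix}-\tfrac12 JX_3^tH\\ I\end{smallmatrix}\right)X_3$ together with the fact that the ideal of $2\times2$ minors of a product $PQ$ is contained in that of $Q$ (Cauchy--Binet), all such minors lie in $\bigwedge^2 X_3\subseteq\bigwedge^2(X_3,X_4)\subseteq\fkb$. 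Every other minor is either zero, because the upper right block of $\varphi(X)$ vanishes, or, up to sign, a product $(X_1')_{ia}(X_4)_{\ell k}$ arising from one top row, one bottom row, and one column from each half.

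The hard part will be this last family of products. Since $J$ and $H$ are signed anti-diagonal matrices, each entry of $X_1'=-\tfrac12 JX_3^tHX_3$ has the form $\pm\tfrac12\sum_p(X_3)_{p,a'}(X_3)_{p^\vee,a}$ with $p^\vee:=(n-2\kappa)+1-p$, and I would expand $(X_1')_{ia}(X_4)_{\ell k}$ accordingly. Rewriting each summand $(X_3)_{p^\vee,a}(X_4)_{\ell k}$ as the $2\times2$ minor of $(X_3\ X_4)$ on rows $p^\vee,\ell$ and the two relevant columns, plus $(X_3)_{\ell a}(X_4)_{p^\vee,k}$, and summing over $p$, one sees that $(X_1')_{ia}(X_4)_{\ell k}$ equals an element of $\bigwedge^2(X_3,X_4)$ plus $\pm\tfrac12(X_3)_{\ell a}(X_3^tHX_4)_{a',k}$. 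Finally, modulo $X_4-X_4^{\ad}$ one has $X_3^tHX_4=(X_4X_3)^tH$, and modulo $\bigwedge^2(X_3,X_4)$ together with $\tr X_4$ one has $X_4X_3=0$ — this is precisely the lemma used in the proof of Corollary~\ref{max_simp:zeros}, applied to the block matrix $(X_4\ X_3)$, whose $2\times2$ minors generate the same ideal as $\bigwedge^2(X_3,X_4)$. Hence the leftover term lies in $\fkb$, so $(X_1')_{ia}(X_4)_{\ell k}\in\fkb$; this completes $\varphi(\bigwedge^2 X)\subseteq\fkb$ and therefore the theorem. What remains is purely the bookkeeping of signs and indices in the expansion of $X_1'$, carried out over a ring in which $2$ is invertible, so the factor $-\tfrac12$ is harmless.
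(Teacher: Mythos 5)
Your proposal is correct and follows essentially the same route as the paper: both reduce to showing that the $2\times 2$ minors of the substituted matrix lie in the ideal $\bigl(\bigwedge^2(X_3,X_4),\,X_4-X_4^{\ad},\,\tr(X_4)\bigr)$, with the same classification of minors into those of $(X_3\ X_4)$, those of the left column block $\binom{X_1'}{X_3}$, and the mixed products of an $X_1$-entry with an $X_4$-entry. Your only deviations are cosmetic and valid: you handle the column block by Cauchy--Binet on the factorization $\binom{X_1'}{X_3}=\binom{-\frac{1}{2}JX_3^tH}{I}X_3$ where the paper uses a two-sided matrix identity, and you dispatch the mixed products by invoking the trace lemma (the one preceding Corollary \ref{max_simp:zeros}) applied to $(X_4\ X_3)$ instead of redoing the paper's explicit index manipulation.
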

\begin{proof}
We only need to show that the entries of $\bigwedge^2 X$ lies in the ideal $\CI'\subset k[X]$ generated by the entries of the following matrices
\[
\bigwedge^2 (X_3,X_4),\quad X_2,\quad X_4-X_4^{\ad},\quad \tr(X_4),\quad X_1+\frac{1}{2}JX_3^tHX_3.
\]
Notice that the entries of $\bigwedge^2 X$ not already contained in $\bigwedge^2(X_3,X_4)$ fall into the following two cases:
\begin{altenumerate}
	\item The monomials of the form $xy$ where $x$ is an entry of  $X_1$ and $y$ is an entry of $X_4$, see Remark \ref{max_simp:simp-step2_comp}.
	\item The entries of $\bigwedge^2(X_1,X_3)$.
\end{altenumerate}
			
In case (i), to show that $xy\in\CI'$, we express $x$ in terms of the entries of $X_3$ using the relation $X_1=-\frac{1}{2}JX_3^tHX_3$.
Recall that $X_1=\left(\begin{matrix}A&B\\C&D\end{matrix}\right)$; 
we will focus on the case where $x$ is an entry of the block $A=(a_{ij})$.
The cases when $x$ lies in $B, C$, or $D$ follow from analogous computations.
			
From $X_1=-\frac{1}{2}JX_3^tHX_3$ and $F^{\ad}=HF^tH$, we have
\[
A=-\frac{1}{2}F^{\ad}E=\Bigl(-\frac{1}{2}\sum_{s=1}^{n-2\kappa} f_{n-2\kappa+1-s,\kappa+1-i}e_{sj}\Bigr).
\]
Write $X_4:=Y=(y_{ij})$ for now. Then the product $xy$ can be rewritten as
\[
xy=y_{pq}\sum_{s=1}^{n-2\kappa}f_{n-2\kappa+1-s,\kappa+1-i}e_{sj},
\]
for some indices $p, q, i, j$. Recall that we are assuming $x=a_{ij}$.
Applying $\bigwedge^2(X_3,X_4)=0$ and $X_4=X_4^{\ad}$, we have
\begin{align*}
	xy={}&\sum_{s=1}^{n-2\kappa}f_{n-2\kappa+1-s,\kappa+1-i}y_{sq}e_{pj},&(e_{sj}y_{pq}=e_{pj}y_{sq})\\
	={}&e_{pj}\sum_{s=1}^{n-2\kappa}f_{n-2\kappa+1-s,\kappa+1-i}y_{sq},&\\
	={}&e_{pj}\sum_{s=1}^{n-2\kappa}f_{n-2\kappa+1-s,\kappa+1-i}y_{n-2\kappa+1-q,n-2\kappa+1-s},&(X_4=X_4^{\ad})\\
	={}&e_{pj}\sum_{s=1}^{n-2\kappa}f_{n-2\kappa+1-q,\kappa+1-i}y_{n-2\kappa+1-s,n-2\kappa+1-s},&\\
	={}&e_{pj}f_{n-2\kappa+1-q,\kappa+1-i}\sum_{n=1}^{n-2\kappa}y_{n-2\kappa+1-s,n-2\kappa+1-s}.
\end{align*}
For the last equality, we use the identity
\[
	f_{n-2\kappa+1-s,\kappa+1-i}y_{n-2\kappa+1-q,n-2\kappa+1-s}-f_{n-2\kappa+1-q,\kappa+1-i}y_{n-2\kappa+1-s,n-2\kappa+1-s}=0,
\]
which follows from the relation $\tr(X_4)=0$. 
Therefore, we conclude that $xy=0$ follows from $\bigwedge^2(X_3,X_4)=0,X_4=X_4^{\ad}, \tr(X_4)=0$ and $X_1=-\frac{1}{2}JX_3^t HX_3$, i.e., $xy\in\CI'$.

In case (ii), we want to show that the entries of $\bigwedge^2(X_1, X_3)$ lie in the ideal $\CI'$. 
Substitute $X_1=-\frac{1}{2}JX_3^t HX_3$, and notice that
\begin{equation*}
	\left(\begin{matrix} -\frac{1}{2}J&\\&I_{n-2k}\end{matrix}\right)\left(\begin{matrix} X_3^tH X_3&X_3\end{matrix}\right)=\left(\begin{matrix}0&X_3\end{matrix}\right)\left(\begin{matrix} I&\\X_3^tH&I\end{matrix}\right).
\end{equation*}
Therefore, the ideal generated by the entries of $\bigwedge^2(X_1,X_3)=\bigwedge^2\left(\begin{matrix}-\frac{1}{2}JX_3^tHX_3	&	X_3\end{matrix}\right)$ is equal to the ideal generated by the entries of $\bigwedge^2\left(\begin{matrix}0&X_3\end{matrix}\right)$, which is contained in $\CI'$.
\end{proof}

\begin{remark}\label{max_simp:simp-step2-simp-comp}
Using the same computation, we can show that the ideals $\CI_1$ and $\CI_2$ of the ``irreducible components'' defined in Remark \ref{max_simp:simp-step2_comp} can be simplified into the ideals in $k[X_3, X_4]$, generated by the entries of following matrices:
\begin{equation*}
    I_1:\bigwedge^2 X_3, X_4,
    \quad\text{and}\quad
    I_2: \bigwedge^2(X_3,X_4),X_4-X_4^\ad,\tr(X_4),X_3^tHX_3.
\end{equation*}
\end{remark}
		
\subsection{Reducedness of the strengthened spin model for a strongly non-special parahoric subgroup}\label{max_red}
For the simplicity of notation, we substitute 
\begin{equation}\label{max_red:sub}
    \bA=X_4 H,\quad\text{and}\quad \bB=X_3.
\end{equation}
Note that this differs from the $A, B$ we defined in \eqref{equ_chart:reordered-X}. The goal of this and next subsections is to prove the following theorem:
\begin{theorem}\label{max_red:reduced_sym}
Suppose $n\geq 5$ and $I=\{\kappa\}$ is strongly non-special. Consider the affine scheme $\Spec R_s=\Spec \frac{k[\bA,\bB]}{\bigwedge^2(\bA,\bB),\bA-\bA^t,\tr(\bA H)}$. Define the following closed subschemes of $\Spec R_s$:
\begin{equation*}
		\Spec R_{s,1}:=\Spec\frac{k[\bA,\bB]}{\bA,\bigwedge^2 \bB},
		\quad\text{and}\quad
		\Spec R_{s,2}:=\Spec \frac{k[\bA,\bB]}{\bigwedge^2(\bA,\bB),\bA-\bA^t,\tr(\bA H),\bB^tH\bB},
\end{equation*}
and their scheme-theoretic intersection:
\begin{equation*}
		\Spec R_{s,12}:=\Spec \frac{k[\bA,\bB]}{\bA,\bigwedge^2(\bB),\bB^tH\bB}.
\end{equation*}
Define the worst point $\{*\}$ to be the closed point defined by $\bA=0,\bB=0$. Then
\begin{altenumerate}
    \item The affine scheme $\Spec R_s$ is reduced and Cohen-Macaulay of dimension $n-1$, with the singular locus $\Spec R_{s,12}$. It is Gorenstein when $n=4\kappa+2$ (but not clear for the other cases).
    
    \item The affine schemes $\Spec R_{s,1}$ and $\Spec R_{s,2}$ are the irreducible components of $\Spec R_s$. Each is normal and Cohen-Macaulay of dimension $n-1$, with singular locus the worst point.
    \item The affine scheme $\Spec R_{s,12}$ is normal and Cohen-Macaulay of dimension $n-2$, with singular locus the worst point.
\end{altenumerate}

\end{theorem}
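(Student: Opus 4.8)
The plan is to study the three ideals through the lens of classical determinantal and symmetric-determinantal variety theory, reducing everything to known Cohen--Macaulayness and normality statements (the ``symmetric determinantal varieties'' of Conca, Kutz, Goto, and others cited in the introduction). Write $\bA$ for a symmetric $(n-2\kappa)\times(n-2\kappa)$ matrix of indeterminates subject to $\tr(\bA H)=0$, and $\bB$ for a generic $(n-2\kappa)\times 2\kappa$ matrix; set $N:=n-2\kappa$. The ring $R_s$ is $k[\bA,\bB]$ modulo the $2\times 2$ minors of the $N\times(N+2\kappa)$ matrix $(\bA\mid\bB)$, together with $\bA=\bA^t$ and the trace-type linear form $\tr(\bA H)$. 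I would first handle $\Spec R_{s,1}$: the ideal $(\bA,\bigwedge^2\bB)$ presents it as (affine space in the entries of $\bB$ cut by $2\times 2$ minors) $\times$ (a point), i.e.\ a cone over the Segre variety $\mathbb P^{N-1}\times\mathbb P^{2\kappa-1}$ up to the linear factor; such generic determinantal varieties of $2\times 2$ minors are well known to be normal, Cohen--Macaulay, of dimension $N+2\kappa-1=n-1$, with singular locus exactly the vertex $\bB=0$ — which is the worst point. For $\Spec R_{s,12}$, the ideal is $(\bA,\bigwedge^2\bB,\bB^tH\bB)$: here $\bB$ has rank $\le 1$, so $\bB=uv^t$ for vectors $u\in k^N$, $v\in k^{2\kappa}$, and the extra relation $\bB^tH\bB=0$ becomes $(u^tHu)\,vv^t=0$, i.e.\ $u$ lies on the quadric $u^tHu=0$. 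Thus $\Spec R_{s,12}$ is (up to the $v$-direction, a cone) the affine cone over (quadric hypersurface in $\mathbb P^{N-1}$) $\times\,\mathbb P^{2\kappa-1}$. I would prove normality and Cohen--Macaulayness of $R_{s,12}$ by an explicit Gröbner basis / initial-ideal computation as the paper announces, exhibiting the initial ideal as squarefree (hence reduced) and computing that it is Cohen--Macaulay of the right dimension $N+2\kappa-2=n-2$, and that the singular locus is the vertex $\bB=0$ = the worst point (the quadric $u^tHu=0$ is smooth away from $0$ since $H$ is nondegenerate and $\charac k\ne 2$, and $N=n-2\kappa\ge 2$ in the strongly non-special range).

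Next I would treat $\Spec R_{s,2}$, given by $(\bigwedge^2(\bA,\bB),\bA-\bA^t,\tr(\bA H),\bB^tH\bB)$. On this scheme the $N\times(N+2\kappa)$ matrix $(\bA\mid\bB)$ has rank $\le 1$ with $\bA$ symmetric; writing $(\bA\mid\bB)=w\,(\,x^t\mid y^t\,)$ forces, by symmetry of $\bA$, that $w$ and $x$ are proportional, so $\bA=\lambda\,ww^t$ for a scalar $\lambda$, $\bB=w\,y^t$. The condition $\tr(\bA H)=0$ becomes $\lambda\,(w^tHw)=0$ and $\bB^tH\bB=0$ becomes $(w^tHw)\,yy^t=0$; so generically $w^tHw=0$, i.e.\ $w$ lies on the smooth quadric $\{w^tHw=0\}\subset\mathbb P^{N-1}$, and then $\lambda$ and $y$ are free. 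This identifies $\Spec R_{s,2}$, at least on its dense open locus, with a cone over (smooth quadric in $\mathbb P^{N-1}$) $\times\,\mathbb P^{2\kappa}$ (the extra $\mathbb P$-direction absorbing $\lambda$ and $y$), of dimension $(N-2)+(2\kappa+1)+1=n-1$. To make this rigorous I would again pass to an initial ideal via a term order and show it is squarefree and Cohen--Macaulay of dimension $n-1$; normality then follows from Serre's criterion once I check that the singular locus is codimension $\ge 2$, namely that it is exactly $\bA=\bB=0$ (again the worst point), using that the quadric is smooth away from the origin because $\charac k\ne 2$.

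Finally, for part (i): $\sqrt{\CI}=\sqrt{\CI_1}\cap\sqrt{\CI_2}$ is already recorded in Lemma~\ref{max_simp:simp-step2_comp-radical}, which in the $\bA,\bB$ notation says $\sqrt{(\bigwedge^2(\bA,\bB),\bA-\bA^t,\tr(\bA H))}$ equals the intersection of the radicals of $(\bA,\bigwedge^2\bB)$ and the ideal of $R_{s,2}$. Since $R_{s,1}$ and $R_{s,2}$ are reduced (just shown) and both have dimension $n-1$, $R_s$ has dimension $n-1$; to get that $R_s$ is itself reduced and Cohen--Macaulay I would show $R_s$ is Cohen--Macaulay directly — e.g.\ by a Gröbner degeneration of the defining ideal of $R_s$ to a squarefree monomial ideal and checking Cohen--Macaulayness of the associated simplicial complex, or by invoking Conca's results on symmetric $2\times 2$-minor ideals together with a generic-hyperplane (the linear form $\tr(\bA H)$) argument — and then apply Serre's $(S_1)+(R_0)$ criterion: genericity/smoothness on the complement of the worst point gives $(R_0)$, Cohen--Macaulayness gives $(S_1)$, so $R_s$ is reduced, and its singular locus is $\Spec R_{s,1}\cap\Spec R_{s,2}=\Spec R_{s,12}$ (the two components meet exactly there, and each component is singular only at the worst point). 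The Gorenstein claim when $n=4\kappa+2$, i.e.\ $N=2\kappa+2$, would be checked by computing the canonical module of $R_s$ via the squarefree degeneration (or via the known canonical modules of the relevant symmetric determinantal rings) and verifying it is free of rank one in that numerical case. \textbf{The main obstacle} I anticipate is the Cohen--Macaulayness and normality of $R_{s,2}$ (and of $R_{s,12}$): the simultaneous presence of the rank-one condition, the symmetry $\bA=\bA^t$, the linear trace relation, and the quadratic relation $\bB^tH\bB=0$ means these are not textbook determinantal rings, so the Gröbner basis computation — choosing the right term order so that the initial ideal is squarefree and visibly Cohen--Macaulay, and then pinning down the singular locus — is the delicate technical heart of the argument.
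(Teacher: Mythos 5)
Your plan for part (i) is essentially the paper's: Cohen--Macaulayness of $R_s$ comes from Conca's results on the symmetric $2$-minor ring $R_0=k[\bA,\bB]/(\bigwedge^2(\bA,\bB),\bA-\bA^t)$ (Theorem \ref{max_red:Conca}) by cutting with $\tr(\bA H)$ -- which is a nonzerodivisor simply because $R_0$ is a domain, no genericity needed -- and reducedness then follows from Serre's criterion; the Gorenstein statement for $n=4\kappa+2$ also drops out of Conca's numerical criterion for $R_0$, so no canonical-module or degeneration computation is required. Where you genuinely diverge is in parts (ii)--(iii). You want to prove Cohen--Macaulayness and normality of $R_{s,2}$ and $R_{s,12}$ by full Gr\"obner degenerations to squarefree initial ideals. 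The paper takes a lighter route: irreducibility and smoothness away from the worst point are obtained by the explicit rank-one chart substitution $x_{ij}/x_{pq}=T_iS_j$ (Proposition \ref{max_red:irr}), which is the rigorous form of your $\bA=\lambda ww^t$, $\bB=wy^t$ parametrization; the Gr\"obner basis is computed only for $(\bigwedge^2\bB,\bB^tH\bB)$ and only to show that $\overline{b_{11}}$ is a nonzerodivisor, giving $(S_1)$ and hence integrality of $R_{s,12}$ (Proposition \ref{max_red:int}); integrality of $R_{s,2}$ is then deduced with no Gr\"obner computation at all, by a short ideal-theoretic argument using the radicality of the full ideal from part (i), the primality of the ideal of $R_{s,12}$, and irreducibility (Proposition \ref{max_red:irr-red}); finally Cohen--Macaulayness of the components is \emph{imported from geometry} via the affine Schubert variety theory (Theorem \ref{moduli_fc:coh}), and Cohen--Macaulayness of $R_{s,12}$ follows from G\"ortz's intersection criterion \cite[Lemma 4.22]{Gortz2001}. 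Your route would be self-contained commutative algebra; the paper's buys the hardest homological facts for free from the Frobenius-splitting results and keeps the computations minimal.

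The step you yourself flag as the delicate heart is where your proposal is genuinely incomplete, and it is harder than producing a Gr\"obner basis: a squarefree initial ideal gives radicality, but Cohen--Macaulayness would additionally require showing the associated Stanley--Reisner complex is Cohen--Macaulay (e.g.\ shellable), which you do not indicate how to do -- and for $R_{s,2}$ you do not even exhibit a candidate Gr\"obner basis, while the interaction of the symmetric-ladder minors with $\bB^tH\bB$ and the linear form $\tr(\bA H)$ makes this nontrivial; without it, neither your CM claims nor your Serre-criterion normality for $R_{s,2}$, $R_{s,12}$ goes through, nor does your canonical-module approach to the Gorenstein claim. Two smaller inaccuracies: the singular locus of $\Spec R_s$ is all of $\Spec R_{s,12}$ (codimension $1$), so ``smoothness on the complement of the worst point gives $(R_0)$'' should be replaced by regularity at the generic points of the two components, which is what Corollary \ref{max_red:sing-locus} actually provides; and your dimension count for the cone over the quadric times projective space reads $(N-2)+(2\kappa+1)+1=n$, not $n-1$ -- the intended answer $n-1$ is correct, with $\mathbb{P}^{2\kappa}$ contributing $2\kappa$.
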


In this subsection, we verify Theorem \ref{max_red:reduced_sym}(i) using the theory of symmetric determinant varieties, as developed in a series of papers by A. Conca. 
Below, we summarize the relevant results:
\begin{theorem}\label{max_red:Conca}
Suppose $n\geq 3$ and $n-2\kappa\geq 2$. Let $R_0=\frac{k[\bA,\bB]}{\bigwedge^2(\bA,\bB),\bA-\bA^t}$, where $\bA$ and $\bB$ are matrices of indetermines of size $(n-2\kappa)\times (n-2\kappa)$, resp.,\ $(n-2\kappa)\times 2\kappa$. 
Then the ring $R_0$ is normal and Cohen-Macaulay of dimension $n$. It is Gorenstein if and only if $n=4\kappa+2$.
\end{theorem}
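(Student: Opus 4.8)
The plan is to recognize $R_0$ as the coordinate ring of a \emph{mixed} symmetric determinantal variety and then import A.\ Conca's structural results. First I would eliminate the linear relations $\bA-\bA^{t}$ by setting $a_{ji}=a_{ij}$ for $i<j$, so that $R_0$ is the quotient of a polynomial ring in the $\binom{n-2\kappa+1}{2}+(n-2\kappa)(2\kappa)$ variables $\{a_{ij}\}_{i\le j}\cup\{b_{ik}\}$ by the ideal $I_2$ of all $2\times2$ minors of the $(n-2\kappa)\times n$ matrix $(\bA,\bB)$ --- equivalently, of the $2\times2$ minors using the first $n-2\kappa$ rows of the symmetric $n\times n$ matrix whose leading block is $\bA$ and whose upper-right block is $\bB$, a one-sided ladder in a symmetric matrix. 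Its spectrum is the variety $Z$ of rank-$\le1$ matrices of this mixed shape. Writing a nonzero rank-$\le1$ member as $v\cdot(\lambda v^{t},u^{t})$ with $v\in\BA^{n-2\kappa}\setminus\{0\}$, $\lambda\in\BA^{1}$ and $u\in\BA^{2\kappa}$ (symmetry of $\bA$ forces each column of $\bA$ to be a scalar multiple of $v$), one sees that $Z$ is the closure of the image of a morphism whose fibres over $v\neq0$ are single $\BG_m$-orbits; hence $Z$ is irreducible of dimension $(n-2\kappa)+1+2\kappa-1=n$, so $\dim R_0=n$, and once $I_2$ is known to be radical we moreover get that $R_0$ is a domain. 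The same parametrization shows that $Z\setminus\{0\}$ fibres over $\BP^{n-2\kappa-1}$ (by the column line of a rank-one matrix) with smooth fibre $\BA^{1+2\kappa}\setminus\{0\}$, locally trivially, so $Z$ is regular away from the vertex $\bA=0,\bB=0$; and since $I_2$ is generated by quadrics, the vertex is genuinely singular. Thus $\operatorname{Sing}Z=\{0\}$, of codimension $n\ge3$.

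The substantive input, which I would quote rather than reprove, is Conca's treatment of (ladder) determinantal ideals of symmetric matrices applied to the matrix above: for a suitable diagonal term order the $2\times2$ minors form a Gr\"obner basis whose initial ideal is the Stanley--Reisner ideal of a shellable simplicial complex. This gives at once that $I_2$ is radical (hence prime, by the irreducibility above) and, since a Gr\"obner degeneration cannot increase depth, that $R_0$ is Cohen-Macaulay because its initial algebra is the shellable --- hence Cohen-Macaulay --- Stanley--Reisner ring, with Hilbert series and dimension read off the complex; specializing Conca's formulas to minor size $2$ and the present block sizes recovers $\dim R_0=n$. Combining the Cohen-Macaulay (hence $S_2$) property with $\operatorname{Sing}Z=\{0\}$ of codimension $\ge2$ yields Serre's $R_1$, so $R_0$ is normal. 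For the Gorenstein assertion I would invoke Conca's computation of the divisor class group and the canonical class of these symmetric (ladder) determinantal rings: $R_0$ is Gorenstein precisely when its canonical class is trivial, and spelling this out in terms of the block sizes $n-2\kappa$ and $2\kappa$ (in the range $\kappa\ge1$ relevant here) reduces it to the single identity $n-2\kappa=2\kappa+2$, i.e.\ $n=4\kappa+2$.

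The main obstacle is the Cohen-Macaulay/normality package: it hinges entirely on having the correct ladder/straightening-law description of $I_2$ and on Conca's shellability and canonical-class theorems for it, so the real work is matching our mixed matrix $(\bA,\bB)$ to the precise hypotheses of those theorems and carrying out the numerical specialization (minor size $r=1$, symmetric block $n-2\kappa$, total width $n$). Once that dictionary is fixed, everything else --- irreducibility and the value $n$ of the dimension from the parametrization, the singular locus being exactly the vertex, and the Gorenstein arithmetic --- is routine bookkeeping.
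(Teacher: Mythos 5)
Your proposal is correct and rests on the same essential reduction as the paper: identify $R_0$ with the $2$-minor ideal of a one-sided ladder in a generic symmetric matrix and import Conca's structural results. The paper's own proof is purely a citation exercise: dimension comes from Conca's symmetric-ladder dimension formula (\emph{Symmetric Ladders}, p.~52, after correcting a typo there), Cohen--Macaulayness and the domain property from \cite[Proposition 2.5]{Conca1994-2}, and normality plus the Gorenstein criterion $2(n-2\kappa)=n+2$, i.e.\ $n=4\kappa+2$, from \cite[Theorem 2.4]{Conca1994-1}. You instead quote Conca only for the Cohen--Macaulay property and the canonical-class/Gorenstein computation, and rederive the rest yourself: the parametrization $(\bA,\bB)=(\lambda vv^{t},vu^{t})$ gives irreducibility and $\dim=n$ directly (conveniently sidestepping the typo the paper has to correct), the fibration of $Z\setminus\{0\}$ over $\BP^{n-2\kappa-1}$ identifies the singular locus as the vertex, and normality then follows from Serre's criterion (CM gives $S_2$, the codimension-$n$ singular locus gives $R_1$) rather than from Conca's normality statement. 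Your identification of the $2$-minors of $(\bA,\bB)$ with the ladder ideal is the key dictionary and it does check out (minors with a row index beyond $n-2\kappa$ but both columns $\le n-2\kappa$ coincide, via symmetry, with minors of the first $n-2\kappa$ rows), and your arithmetic $n-2\kappa=2\kappa+2\Leftrightarrow n=4\kappa+2$ agrees with the paper's specialization. One small caveat: your description of \emph{how} Conca proves Cohen--Macaulayness (Gr\"obner basis with shellable squarefree initial complex) need not match the actual arguments in the two cited papers, but since you only use the statements (CM domain; class group and canonical class), this does not affect the validity of the proof.
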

\begin{proof}
Since $n-2\kappa\geq 2$, the dimension of $\Spec R_0$ is given by \cite[Page 52]{Conca1994-2}. Note that there is a typo in Conca's formula, the correct expression for the dimension is
\begin{equation*}
	\dim R_t(Z)=(n+m+1-s-\frac{t}{2})(t-1).
\end{equation*}
Applying $m=s=n-2\kappa$, $n=n$ and $t=2$, we obtain the dimension in our setting.
			
By \cite[Proposition 2.5]{Conca1994-2}, the ring $R_0$ is a Cohen-Macaulay domain.
By \cite[Theorem 2.4]{Conca1994-1}, $R_0$ is normal.
According to the same theorem, it is Gorenstein if and only if $2(n-2\kappa)=n+2$, i.e., when $n=4\kappa+2$.
\end{proof}

\begin{corollary}\label{max_red:cm}
The ring $R_s$ is Cohen-Macaulay. When $n=4\kappa+2$, it is Gorenstein.
\end{corollary}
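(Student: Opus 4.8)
The plan is to realise $R_s$ as the quotient of the ring $R_0$ of Theorem \ref{max_red:Conca} by the single homogeneous element $\tr(\bA H)$, and to verify that this element is a nonzerodivisor on $R_0$. Granting that, the corollary is immediate from two standard facts about finitely generated graded $k$-algebras: a quotient of a Cohen--Macaulay ring by a homogeneous nonzerodivisor is again Cohen--Macaulay (of dimension one less), and a quotient of a Gorenstein ring by a homogeneous nonzerodivisor is again Gorenstein.

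First I would record that in the strongly non-special case with $n\geq 5$ one has $n-2\kappa\geq 3$: by Definition \ref{intro_back:general-max}, $\kappa\leq m-2$ when $n=2m$ and $\kappa\leq m-1$ when $n=2m+1$. In particular $n\geq 3$ and $n-2\kappa\geq 2$, so Theorem \ref{max_red:Conca} applies and tells us that $R_0=k[\bA,\bB]/(\bigwedge^2(\bA,\bB),\bA-\bA^t)$ is a Cohen--Macaulay \emph{domain} of dimension $n$, which is Gorenstein precisely when $n=4\kappa+2$. By definition $R_s=R_0/(\tr(\bA H))$, and $\tr(\bA H)=\sum_{i} a_{i,\,n-2\kappa+1-i}$ is the (homogeneous, degree one) sum of the anti-diagonal entries of $\bA$.

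The only point that needs an argument is that $\tr(\bA H)$ is a nonzerodivisor on $R_0$; since $R_0$ is a domain it suffices to show $\tr(\bA H)\neq 0$ in $R_0$, i.e.\ that this linear form does not lie in the homogeneous ideal $\CI_0:=(\bigwedge^2(\bA,\bB),\bA-\bA^t)$. As $\CI_0$ is generated by the linear forms $a_{ij}-a_{ji}$ and the quadratic $2\times 2$ minors of $(\bA,\bB)$, its degree-one part is exactly $\spann_k\{a_{ij}-a_{ji}\}$. Every element of that span changes sign under $\bA\mapsto\bA^{t}$, whereas $\tr(\bA^{t}H)=\tr(\bA H)$; since $\charac(k)\neq 2$, a nonzero symmetric linear form cannot lie in that span, so $\tr(\bA H)\notin\CI_0$. (Equivalently: modulo $\bA-\bA^{t}$ the ring $R_0$ becomes a symmetric $2\times 2$-determinantal ring $k[\Sym,\bB]/(2\times2\text{ minors})$, whose defining ideal is generated in degree $2$, and there $\tr(\bA H)$ is a nonzero linear combination of the entries of the symmetric matrix.)

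Consequently $\tr(\bA H)$ is a nonzerodivisor on the Cohen--Macaulay ring $R_0$, so $R_s=R_0/(\tr(\bA H))$ is Cohen--Macaulay of dimension $n-1$; and when $n=4\kappa+2$ the ring $R_0$ is Gorenstein, whence so is $R_s$. I do not expect any genuine obstacle here: the substance of the corollary is entirely Conca's results recalled in Theorem \ref{max_red:Conca}, together with the elementary check that $\tr(\bA H)$ is a regular element of $R_0$.
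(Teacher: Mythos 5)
Your proof is correct and follows essentially the same route as the paper: write $R_s=R_0/(\tr(\bA H))$, invoke Conca's results (Theorem \ref{max_red:Conca}) that $R_0$ is a Cohen--Macaulay domain, Gorenstein exactly when $n=4\kappa+2$, and use the standard fact that quotients by a nonzerodivisor preserve the Cohen--Macaulay and Gorenstein properties. The only difference is that you explicitly check $\tr(\bA H)\neq 0$ in $R_0$, a point the paper leaves implicit, which is a harmless (and welcome) addition.
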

\begin{proof}
Recall that a local ring $(R,\mathfrak{m})$ is Cohen-Macaulay (resp.\ Gorenstein) if and only if $R/(t)$ is Cohen-Macaulay (resp.\ Gorenstein) for some non-zero-divisor $t\in \mathfrak{m}\subset R$ in the maximal ideal; cf.\ \cite[\href{https://stacks.math.columbia.edu/tag/02JN}{02JN}]{Stacks} (resp.\ \cite[\href{https://stacks.math.columbia.edu/tag/0BJJ}{0BJJ}]{Stacks}).

The ring $R_0=\frac{k[\bA,\bB]}{\bigwedge^2(\bA,\bB),\bA-\bA^t}$ is an integral domain by Theorem \ref{max_red:Conca}, so the element $\tr(\bA H)$ is not a zero-divisor. 
It follows that the scheme $\Spec R_s=\Spec \frac{R_0}{\tr(\bA H)}$ is Cohen-Macaulay. 
Moreover, when $n=4\kappa+2$, Theorem \ref{max_red:Conca} implies that $\Spec R_0$ is Gorenstein, and hence $\Spec R_s=\Spec \frac{R_0}{\tr(\bA H)}$ also Gorenstein in this case.
\end{proof}

\begin{remark}\label{max_red:non-gorenstein}
The cited result \cite[\href{https://stacks.math.columbia.edu/tag/0BJJ}{0BJJ}]{Stacks} is purely local. For example, given a closed subscheme $Y\subset X=\Spec R_0$, it is possible for $Y$ to be Gorenstein even if $X$ is not.
\end{remark}

Denote the ideal
\begin{equation*}
		I=\Bigl(\bigwedge^2 (\bA,\bB), \bA-\bA^t,\tr(\bA H)\Bigr)\subset k[\bA,\bB].
	\end{equation*}
And recall
\begin{equation*}
    I_1=\Bigl(\bA,\bigwedge^2\bB\Bigr),
	\quad
	I_2=\Bigl(\bigwedge^2(\bA,\bB),\bA-\bA^t,\tr(\bA H),\bB^tH\bB\Bigr).
\end{equation*}
Recall from Lemma \ref{max_simp:simp-step2_comp-radical} that we have $\sqrt{I}=\sqrt{I_1}\cap\sqrt{I_2}$.

\begin{proposition}\label{max_red:irr}
\begin{altenumerate}
\item The spectra of $R_{s,1}=R_{s}/I_1$ and $R_{s,2}=R_{s}/I_2$ are irreducible of dimension $n-1$, and are smooth away from the worst point.
\item The spectrum of $R_{s,12}=R_{s}/(I_1+I_2)$ is irreducible of dimension $n-2$, and is smooth away from the worst point.
\end{altenumerate}
\end{proposition}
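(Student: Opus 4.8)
The plan is to treat all three parts uniformly. First I would give explicit set-theoretic parametrisations of $\Spec R_{s,1}$, $\Spec R_{s,2}$, $\Spec R_{s,12}$ as (closures of) images of products built from the split quadric cone, which yields irreducibility and the dimension count at once; then I would verify smoothness away from the worst point by an elementary computation in the standard ``pivot'' affine charts of the ambient matrix spaces, where the rank-one equations can be solved explicitly. For the reductions: since $I=\bigl(\bigwedge^2(\bA,\bB),\bA-\bA^t,\tr(\bA H)\bigr)$ is contained in both $I_1$ and $I_2$ (each generator of $I$ is a $2$-minor of $\bB$, or is divisible by an entry of $\bA$, or is already a generator of $I_2$), one has $R_{s,1}\cong k[\bB]/(\bigwedge^2\bB)$, $R_{s,2}\cong k[\bA,\bB]/I_2$ and $R_{s,12}\cong k[\bB]/(\bigwedge^2\bB,\ \bB^tH\bB)$. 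Irreducibility will come out geometrically, and dimension and smoothness over $k$ are insensitive to field extension, so I would base change to $\overline k$. Write $N:=n-2\kappa$; the strongly non-special hypothesis forces $N\geq 3$, and (using $\charac k\neq2$ and the invertibility of $H$) the form $q(v):=v^tHv$ on $\BA^N$ is a nondegenerate quadratic form in $N\geq 3$ variables, so its zero scheme $Q\subseteq\BA^N$ is an irreducible quadric cone of dimension $N-1$, smooth away from the origin with gradient $2Hv$.

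For irreducibility and dimension: $V(\bigwedge^2\bB)$ is the classical rank-$\leq 1$ determinantal variety, the image of $(u,c)\mapsto uc^t$, hence irreducible of dimension $N+2\kappa-1=n-1$; adjoining $\bB^tH\bB=q(u)\,cc^t=0$ restricts $u$ to $Q$, so $\Spec R_{s,12}=\{uc^t:u\in Q\}$ is irreducible of dimension $(N-1)+2\kappa-1=n-2$. For $\Spec R_{s,2}$ I would analyse $V(I_2)$ according to whether $\bA$ vanishes: if $\bA\neq0$ then $(\bA\mid\bB)$ has rank one, whence $\bA=vv^t$, $\bB=vb^t$, the trace condition forces $v\in Q$, and then $\bB^tH\bB=q(v)bb^t$ vanishes automatically; if $\bA=0$ one is reduced to $\{(0,uc^t):u\in Q\}$, and this stratum lies in the closure of the previous one, since $(0,uc^t)=\lim_{t\to0}(t^2uu^t,uc^t)$ with $tu\in Q$. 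Hence $V(I_2)$ is the closure of the image of $\phi:Q\times\BA^{2\kappa}\to\mathrm{Mat}\times\mathrm{Mat}$, $\phi(v,b)=(vv^t,vb^t)$, so it is irreducible; and $\phi$ is generically two-to-one (over $\overline k$, $\phi(v,b)=\phi(-v,-b)$ and nothing more when $v\neq0$), so $\dim V(I_2)=\dim(Q\times\BA^{2\kappa})=n-1$.

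For smoothness outside the worst point: away from that point $(\bA\mid\bB)$ (respectively $\bB$) is nonzero, so the scheme is covered by the affine charts of the matrix space in which a chosen entry is invertible, and in such a chart the $2$-minor relations solve for every entry of the matrix through one fixed row and one fixed column. For $R_{s,1}$ this identifies the chart $\{(\bB)_{pq}\neq0\}$ with $\BG_m\times\BA^{n-2}$. For $R_{s,12}$ the residual content of $\bB^tH\bB=0$ in that chart is the single equation $q(\bb_q)=0$ (with $\bb_q$ the $q$-th column of $\bB$), which cuts out a smooth hypersurface on the locus $(\bb_q)_p\neq0$, so the chart is smooth of dimension $n-2$. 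For $R_{s,2}$ I would use two families of charts: the chart $\{(\bA)_{pp}\neq0\}$, which meets every point with $\bA\neq0$ since a nonzero symmetric matrix of rank $\leq1$ has a nonzero diagonal entry, and in which the minors together with $\bA=\bA^t$ give $\bA=(\bA)_{pp}^{-1}\ba_p\ba_p^t$ and $\bB=(\bA)_{pp}^{-1}\ba_p\beta^t$ so that both residual equations $\tr(\bA H)$ and $\bB^tH\bB$ reduce to $q(\ba_p)=0$; and the chart $\{(\bB)_{pq}\neq0\}$, which meets every point with $\bA=0\neq\bB$, and in which similarly $\bA=\mu\,\bb_q\bb_q^t$ and the residual equations reduce to $q(\bb_q)=0$ (the $(q,q)$-entry of $\bB^tH\bB$). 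In each chart one is left with a single nondegenerate quadric equation on a locus where the relevant vector is nonzero, hence a smooth chart of the expected dimension; these charts cover the complement of the worst point, which proves (i) and (ii).

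I expect the main difficulty to lie in the analysis of $\Spec R_{s,2}$: the fact that the $\bA=0$ stratum, which at first sight looks like a separate component, actually sits inside the closure of the $\bA\neq0$ stratum — this is exactly what forces $V(I_2)$ to be irreducible rather than reducible — and then the smoothness of $V(I_2)$ at the $\bA=0$ points, which genuinely requires the second family of pivot charts (using an entry of $\bB$, not of $\bA$) because the image of $\phi$ itself misses precisely those points. The determinantal-variety inputs for $R_{s,1}$ and the quadric-cone facts are standard; the only real subtlety, and where the bulk of the routine work goes, is checking that in every pivot chart the symmetry, rank-one and quadratic relations conspire so that everything collapses to one nondegenerate quadric.
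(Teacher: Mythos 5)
Your proposal is correct, but it reaches the irreducibility and dimension statements by a different route than the paper. For irreducibility the paper passes to $\mathrm{Proj}$ of the homogeneous ideal and checks that every principal open $D_+(x_{pq})$ is irreducible and that any two of them meet (this is where the irreducibility of $\sum_i T_iT_{N+1-i}$, i.e.\ $N=n-2\kappa>2$, is used to rule out zero-divisors of the form $T_iS_j$); you instead exhibit $V(I_2)$ and $V(I_1+I_2)$ set-theoretically as (closures of) images of $Q\times\BA^{2\kappa}$ under $(v,b)\mapsto(vv^t,vb^t)$, resp.\ $(u,c)\mapsto uc^t$ with $u\in Q$, which gives irreducibility and the dimension count in one stroke once you observe that the $\bA=0$ stratum lies in the closure of the $\bA\neq 0$ stratum via $t\mapsto(t^2uu^t,uc^t)$ and that the parametrizing map is generically finite. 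The two key points your route must (and does) supply, which the paper gets for free from its coordinates, are: (a) the closure/degeneration argument just mentioned, since the naive stratification of $V(I_2)$ by $\bA\neq0$ versus $\bA=0$ looks reducible at first sight; and (b) the scheme-theoretic smoothness away from the worst point, which cannot be read off a set-theoretic parametrization — here your pivot-chart analysis (eliminating entries via the $2$-minors, using $\bA=\bA^t$ to collapse the remaining row to the pivot column, and checking that $\tr(\bA H)$ and all entries of $\bB^tH\bB$ localize to unit or monomial multiples of a single nondegenerate quadric $q(\ba_p)$ or $q(\bb_q)$) is in substance the same computation as the paper's charts with $T_i=x_{iq}/x_{pq}$, $S_j=x_{pj}/x_{pq}$, including the need for the second family of charts $D(b_{pq})$ to cover the $\bA=0$ locus and the covering remark that a nonzero symmetric rank-one matrix has a nonzero diagonal entry. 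What your approach buys is a cleaner conceptual proof of irreducibility and dimension (no pairwise-intersection bookkeeping on the Proj); what the paper's buys is that the same chart coordinates are reused verbatim for the singular-locus and reducedness statements elsewhere (Corollary \ref{max_red:sing-locus}), so nothing essential is lost either way. Do make sure, when writing it up, to record the two routine verifications you gesture at: that irreducibility over $\bar k$ descends to $k$ (continuous surjective image), and that in each pivot chart the localized ideal is exactly the elimination relations plus the single quadric, i.e.\ every generator of $I_2$ reduces to a multiple of $q$ modulo the eliminations.
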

\begin{proof}
For part (i), the irreducibility of $\Spec R_{s,1}$ follows directly from \cite[(2.11)(1.1)]{BU1998}.

For $\Spec R_{s,2}=\Spec k[\bA,\bB]/I_2=\Spec \frac{k[\bA,\bB]}{\bigwedge^2(\bA,\bB),\bA-\bA^t,\tr(\bA H),\bB^t H\bB}$, note that the ideal $I_2$ is homogeneous. Therefore, to show that $\Spec R_{s,2}$ is irreducible of dimension $n-1$, it suffices to prove that the associated projective scheme
\[
	\text{Proj }\frac{k[\bA,\bB]}{\bigwedge^2(\bA,\bB),\bA-\bA^t,\tr(\bA H),\bB^tH\bB},
\]
is irreducible of dimension $n-2$.

Write the entries of the matrix $\bigl(\begin{matrix}\bA&\bB\end{matrix}\bigr)$ as $x_{ij}$, where $1\leq i\leq n-2\kappa$ and $1\leq j\leq n$.
To show irreducibility, by \cite[\href{https://stacks.math.columbia.edu/tag/01OM}{01OM}]{Stacks}, it suffices to verify that each principal open subset $D_+(x_{ij})$ of this projective variety is irreducible, and that the intersection of any two such principal open subsets is nonempty.

Consider the principal open subset $D_+(x_{pq})$ for some fixed $p$ and $q$.
From the $2$-minor relation $x_{ij}x_{pq}=x_{iq}x_{pj}$, we deduce that
$
\frac{x_{ij}}{x_{pq}}=\frac{x_{iq}}{x_{pq}}\cdot\frac{x_{pj}}{x_{pq}}.
$
This identity holds even when $i=p$ or $j=q$. Write
\begin{equation*}
	T_i:=\frac{x_{iq}}{x_{pq}},\quad S_j:=\frac{x_{pj}}{x_{pq}},
\end{equation*}
then we have $\frac{x_{ij}}{x_{pq}}=T_i S_j$.

Therefore, the principal open subset $D_+(x_{ij})$ is isomorphic to the spectrum of a factor ring of the polynomial ring $k[T_i, S_j]$ for $1\leq i\leq n-2\kappa, 1\leq j\leq n$, where, by construction, we always have $T_p=S_q=1$.
Our goal is to express the ideal $I_2$ in terms of the variables $S_i$ and  $T_j$.
				
The relation $\bA=\bA^t$ translates to the identity $T_iS_j=T_jS_i$ for $1\leq i,j\leq n-2\kappa$. 
In particular, we have
\begin{equation}\label{max_red:AAt}
	S_i=T_i\cdot S_p.
\end{equation}
It is straightforward to verify that these relations exhaust all the consequences of the symmetry $\bA=\bA^t$.
				
Next, we compute the trace:
\begin{equation}\label{max_red:trAH}
	\tr(\bA H)=x_{1,n-2\kappa}+x_{2,n-2\kappa-1}+\cdots+x_{n-2\kappa,1}=T_1S_{n-2\kappa}+T_2S_{n-2\kappa-1}+\cdots+T_{n-2\kappa}S_1.
\end{equation}

Now, write $\bB=(b_{ij})$, so that $\frac{b_{ij}}{x_{pq}}=\frac{x_{i,n-2\kappa+j}}{x_{pq}}=T_iS_{n-2k+j}$. We have
\begin{equation*}
	\bB^tH\bB=\Bigl(\sum_{\alpha=1}^{n-2\kappa}b_{n-2\kappa+1-\alpha,i}b_{\alpha,j}\Bigr)_{2\kappa\times 2\kappa}.
\end{equation*}
Substituting in terms of $T_i$ and $S_j$, we get
\begin{align}
\begin{split}\label{max_red:BtHB}
	\sum_{\alpha=1}^{n-2\kappa}b_{n-2\kappa+1-\alpha,i}b_{\alpha,j}={}&\sum_{\alpha=1}^{n-2\kappa} T_{n-2\kappa+1-\alpha} S_{n-2\kappa+i} T_\alpha S_{n-2\kappa+j},\\
	={}  &(S_{n-2\kappa+i}S_{n-2\kappa+j})\cdot \Bigl(\sum_{\alpha=1}^{n-2\kappa} T_{n-2\kappa+1-\alpha}T_\alpha\Bigr).	
\end{split}
\end{align}
The next simplification depends on whether we are in case (a) $1\leq q \leq n-2\kappa$ or (b) $n-2\kappa+1\leq q\leq n$.
	
\noindent\textbf{Case (a).} When $1\leq q\leq n-2\kappa$. 
Since $S_p$ is invertible by \eqref{max_red:AAt}, the equation \eqref{max_red:trAH} is equivalent to
\begin{equation*}
	T_1T_{n-2\kappa}+T_2T_{n-\kappa}+\cdots+T_{n-2\kappa}T_1=0.
\end{equation*}
Substituting this into \eqref{max_red:BtHB}, we see that all entries of $\bB^tH\bB$ vanish. 
Therefore, the affine coordinate ring of the principal open subset $D_+(x_{pq})$ is isomorphic to the factor ring
\begin{equation*}
	\frac{k[T_i,S_j]}{T_p-1,S_q-1,S_i-T_iS_p,T_1T_{n-2\kappa}+T_2T_{n-2\kappa-1}+\cdots+T_{n-2\kappa}T_1}.
\end{equation*}
Since we assume $\kappa<m-1$ when $n$ is even and $\kappa\leq m-1$ when $n$ is odd, we have $n-2\kappa>2$, and the element $\sum_{i=1}^{n-2\kappa}T_iT_{n-2\kappa+1-i}$ is irreducible. Therefore, any element of the form $T_iS_j$ is not a zero-divisor. This shows that $D_+(x_{pq})\cap D_+(x_{ij})\neq\emptyset$ for any $i,j$.
				
Replacing each $S_i$ by $T_iS_p$, the affine coordinate ring of $D_+(x_{pq})$ becomes isomorphic to
\begin{equation*}
	\frac{k[T_1,\cdots,T_{n-2\kappa},S_{n-2\kappa+1},\cdots,S_n,\frac{1}{T_q}]}{T_p-1,T_1T_{n-2\kappa}+\cdots+T_{n-2\kappa}T_1}.
\end{equation*}	
The further simplification now divides into the following cases:
\begin{altitemize}
	\item When $p\neq q$, and $2p\neq n-2\kappa$, the factor ring is isomorphic to
	\begin{equation*}
		k[T_1,\cdots,\wh{T_p},\cdots,\wh{T_{n-2\kappa+1-p}},\cdots,T_{n-2\kappa},S_{n-2\kappa+1},\cdots,S_n,\frac{1}{T_q}].
	\end{equation*}
Hence, the principal open subset $D_+(x_{pq})$ is an open subset of an affine space of dimension $n-2$, and in particular, it is irreducible.
					
	\item When $p\neq q$ and $2p=n-2\kappa$, the factor ring is isomorphic to
	\begin{equation*}
		\frac{k[T_1,\cdots,\wh{T_p},\cdots,T_{n-2\kappa},S_{n-2\kappa+1},\cdots,S_n,\frac{1}{T_q}]}{2(T_1T_{n-2\kappa}+\cdots+T_{p-1}T_{n-2\kappa+2-p})-1}.
	\end{equation*}
	Hence, the principal open subset $D_+(x_{pq})$ is a smooth irreducible variety of dimension $n-2$.
	\item When $p=q$ and $2p\neq n-2\kappa$, the factor ring is isomorphic to
	\begin{equation*}
		k[T_1,\cdots,\wh{T_p},\cdots,\wh{T_{n-2\kappa+1-p}},\cdots,T_{n-2\kappa},S_{n-2\kappa+1},\cdots,S_n].
	\end{equation*}
	Hence, the principal open subset $D_+(x_{pq})$ is an affine space of dimension $n-2$, and in particular, it is irreducible.
					
	\item When $p=q$, and $2p=n-2\kappa$, the factor ring is isomorphic to
	\begin{equation*}
		\frac{k[T_1,\cdots,\wh{T_p},\cdots,T_{n-2k},S_{n-2\kappa+1},\cdots,S_n]}{2(T_1T_{n-2\kappa}+\cdots+T_{p-1}T_{n-2\kappa+2-p})-1}.
	\end{equation*}
	Hence, the principal open subset $D_+(x_{pq})$ is isomorphic to a smooth irreducible hypersurface of dimension $n-2$ in an affine space.
\end{altitemize}

\noindent\textbf{Case (b).} When $n-2\kappa+1\leq q\leq n$. Since $S_q=S_{(n-2\kappa+(q-(n-2\kappa))}=1$, the $(q+2\kappa-n,q+2\kappa-n)$'s entry of \eqref{max_red:BtHB} yields 
\begin{equation}\label{max_red:TTsum}
	T_1T_{n-2\kappa}+\cdots+T_{n-2\kappa}T_1=0.
\end{equation}
This identity implies both \eqref{max_red:trAH} and the vanishing of all entreies in \eqref{max_red:BtHB}. 
Therefore, the affine coordinate ring of the principal open subset $D_+(x_{pq})$ is isomorphic to the factor ring:
\begin{equation*}
	\frac{k[T_j,S_i]}{T_p-1,S_q-1, S_i-T_iS_p,T_1T_{n-2\kappa}+T_2T_{n-2\kappa-1}+\cdots+T_{n-2\kappa}T_1}.
\end{equation*}
Note that the relation $S_i-T_iS_p$ also holds when $i=p$.
Since $n-2\kappa>2$, any element of the form $T_iS_j$ is not a zero-divisor. This implies that $D_+(x_{pq})\cap D_+(x_{ij})\neq\emptyset$ for any $i,j$.
Replacing each $S_i$ by $T_iS_p$, the affine coordinate ring is isomorphic to
\begin{equation*}
	\frac{k[T_1,\cdots,T_{n-2\kappa},S_{n-2\kappa+1},\cdots,\wh{S_q},\cdots,S_n,S_p]}{T_p-1,T_1T_{n-2\kappa}+T_2T_{n-2\kappa-1}+\cdots+T_{n-2\kappa}T_1}.
\end{equation*}
Similarly, the further simplification now divides into the following cases:
\begin{altitemize}
	\item When $2p\neq n-2\kappa$, the factor ring is isomorphic to
	\begin{equation*}
		k[T_1,\cdots,\wh{T_p},\cdots,\wh{T_{n-2\kappa+1-p}},\cdots,T_{n-2\kappa},S_{n-2\kappa+1},\cdots,\wh{S_q},\cdots,S_n,S_p].
	\end{equation*}
	Hence, the principal open subset $D_+(x_{pq})$ is an irreducible affine space of dimension $n-2$.

	\item When $2p=n-2\kappa$, the factor ring is isomorphic to
	\begin{equation*}
		\frac{k[T_1,\cdots,\wh{T_p},\cdots,T_{n-2\kappa},S_{n-2\kappa+1},,\cdots,\wh{S_q},\cdots,S_n,S_p]}{2(T_1T_{n-2\kappa}+\cdots+T_{p-1}T_{n-2\kappa+2-p})-1}.
	\end{equation*}
	Hence, the principal open subset $D_+(x_{pq})$ is isomorphic to a smooth irreducible hypersurface of dimension $n-2$ in affine space.
\end{altitemize}
In all cases, the principal open subsets $D_+(x_{pq})$ are irreducible of dimension $n-2$. Therefore, $\Spec R_{s,2}$ is irreducible of dimension $n-1$.

Next, consider the open subsets $D(x_{pq})\subset \Spec R_{s,2}$ for $1\leq p\leq n-2\kappa$ and $1\leq q\leq n$. The complement of the union $\bigcup_{p,q}D(x_{pq})$ corresponds to the worst point.
Using the same computation as we did for $D_+(x_{pq})$, we can show that each affine open $D(x_{pq})$ is smooth. 
In fact, the coordinate ring of $D(x_{pq})$ is obtained from that of $D_+(x_{pq})$ by adjoining the variables ``$x_{pq}$'' and ``$\frac{1}{x_{pq}}$''.
It follows that the singular locus of $\Spec R_{s,2}$ is contained in the closed point defined by $\bA=0,\bB=0$, which is singular by the Jacobian criterion. 
This completes the proof of part (i).

For part (ii), the same argument in part (i) also works for $\Spec R_{s,12}$.
\end{proof}

\begin{corollary}\label{max_red:sing-locus}
    The affine scheme $\Spec R_s$ is smooth outside $\Spec R_{s,12}$.
\end{corollary}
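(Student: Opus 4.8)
The strategy is to realise the open locus $\Spec R_s\setminus\Spec R_{s,12}$ as the union of two opens, one an open subscheme of $\Spec R_{s,1}$ and the other an open subscheme of $\Spec R_{s,2}$, and then to invoke the fact (Proposition \ref{max_red:irr}) that $\Spec R_{s,1}$ and $\Spec R_{s,2}$ are smooth over $k$ away from the worst point.

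First I would record the inclusions $I\subseteq I_1$ and $I\subseteq I_2$: the entries of $\bigwedge^2(\bA,\bB)$, $\bA-\bA^t$ and $\tr(\bA H)$ all lie in $(\bA)+(\bigwedge^2\bB)=I_1$ (the ideal generated by the entries of $\bA$ and the $2\times2$ minors of $\bB$), while $I_2=I+(\bB^tH\bB)$ by definition. Hence $\Spec R_{s,1}$ and $\Spec R_{s,2}$ are closed subschemes of $\Spec R_s$, and by Lemma \ref{max_simp:simp-step2_comp-radical} their union is all of $\Spec R_s$; consequently
\[
\Spec R_s\setminus\Spec R_{s,12}=\bigl(\Spec R_s\setminus V(I_1)\bigr)\cup\bigl(\Spec R_s\setminus V(I_2)\bigr).
\]
Since the entries of $\bigwedge^2\bB$ already lie in $I$, the image of $I_1$ in $R_s$ is generated by the entries of $\bA$; likewise, the only generators of $I_2$ not already in $I$ are the entries of $\bB^tH\bB$. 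So $V(I_1)\cap\Spec R_s$ is the common zero locus of the entries of $\bA$ and $V(I_2)\cap\Spec R_s$ that of the entries of $\bB^tH\bB$; in particular $\Spec R_s\setminus V(I_1)$ is covered by the distinguished opens $D(a_{ij})$ with $\bA=(a_{ij})$, and $\Spec R_s\setminus V(I_2)$ by the $D\bigl((\bB^tH\bB)_{kl}\bigr)$.

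The one step requiring genuine computation — and the main obstacle — is the identity
\[
a_{ij}\cdot(\bB^tH\bB)_{kl}=0\qquad\text{in }R_s,\ \text{for all }i,j,k,l.
\]
I would prove it by repeated use of the rank-one relations $\bigwedge^2(\bA,\bB)=0$: one slides the factor $a_{ij}$ through the $2\times2$ minors to rewrite the left-hand side as an entry of $\bB$ times an entry of the product $\bA H\bB$, and then observes that $\bA H\bB=0$ in $R_s$, because — after using $\bA=\bA^t$ together with the minor relations once more — each entry of $\bA H\bB$ collapses to an entry of $\bB$ times $\tr(\bA H)=0$. The symmetry condition $\bA=\bA^t$ and the trace condition $\tr(\bA H)=0$ are precisely what drive this collapse. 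Granting the identity: on $D\bigl((\bB^tH\bB)_{kl}\bigr)$ all entries of $\bA$ vanish in $R_s$, and since $\bigwedge^2\bB\subseteq I$ already, $I$ and $I_1$ cut out the same subscheme there, so $\Spec R_s\setminus V(I_2)$ is an open subscheme of $\Spec R_{s,1}$; symmetrically, on $D(a_{ij})$ all entries of $\bB^tH\bB$ vanish in $R_s$, so $I$ and $I_2$ agree there and $\Spec R_s\setminus V(I_1)$ is an open subscheme of $\Spec R_{s,2}$.

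Finally I would note that the worst point lies in $\Spec R_{s,12}$, since both $\bA$ and $\bB$ vanish there; hence it belongs to $V(I_1)\cap V(I_2)$, so the open subschemes $\Spec R_s\setminus V(I_2)\subseteq\Spec R_{s,1}$ and $\Spec R_s\setminus V(I_1)\subseteq\Spec R_{s,2}$ both avoid the worst point and are therefore smooth over $k$ by Proposition \ref{max_red:irr}. Since the smooth locus of a finite-type $k$-scheme is open and these two opens cover $\Spec R_s\setminus\Spec R_{s,12}$, the corollary follows. Everything beyond the displayed identity is formal manipulation of the ideals $I,I_1,I_2$ together with the geometry of the two components established earlier.
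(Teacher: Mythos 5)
Your argument is correct, but it takes a genuinely different route from the paper's. The paper proves Corollary \ref{max_red:sing-locus} by direct computation: it covers $\Spec R_s\setminus\{*\}$ by the distinguished opens $D(x_{pq})$, with $x_{pq}$ running over all entries of $(\bA\ \bB)$, computes each localized ring explicitly via the substitution $x_{ij}/x_{pq}=T_iS_j$ already used in the proof of Proposition \ref{max_red:irr}, and reads off that the non-smooth points of each chart are exactly those lying in $D(x_{pq})\cap\Spec R_{s,12}$. You instead reduce to the smoothness assertion of Proposition \ref{max_red:irr} itself, by showing that away from $\Spec R_{s,12}$ the scheme $\Spec R_s$ agrees scheme-theoretically with one of its two components; the only computational input is the identity $a_{ij}(\bB^tH\bB)_{kl}=0$ in $R_s$, which does hold and by exactly the manipulations you sketch: writing $s=n-2\kappa$, the $2$-minors give $a_{ij}(\bB^tH\bB)_{kl}=b_{il}\sum_\delta a_{\delta j}b_{s+1-\delta,k}$, and symmetry of $\bA$ plus the minors once more collapse this to $b_{il}\,b_{jk}\,\tr(\bA H)=0$. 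Note that this identity is essentially the computation already carried out in the paper in the proof of Theorem \ref{sm:simp_step3_ring} (case (i): products of entries of $X_1$ with entries of $X_4$ lie in $\CI'$), translated through $\bA=X_4H$, $\bB=X_3$, $X_1=-\tfrac12 J\,\bB^tH\bB$, so you could simply cite that step instead of reproving it. As for what each approach buys: the paper's chart-by-chart analysis also produces explicit coordinates on $\Spec R_s\setminus\{*\}$ (and, in passing, reducedness there), which it exploits elsewhere; your argument is shorter, avoids redoing the $T,S$ substitution, and yields the slightly finer statement that on $D(a_{ij})$ (resp.\ $D((\bB^tH\bB)_{kl})$) the scheme $\Spec R_s$ equals the corresponding open of $\Spec R_{s,2}$ (resp.\ $\Spec R_{s,1}$), from which smoothness is immediate since these opens avoid the worst point and Proposition \ref{max_red:irr} is established beforehand, so there is no circularity.
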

\begin{proof}
Using the same argument as Proposition \ref{max_red:irr}, we can show that $\Spec R_s-\{*\}$ is reduced. 
To be more precise, we compute the affine coordinate rings of the subsets $D(x_{pq})$ for all possible $p$ and $q$. Using the substitution in \eqref{max_red:AAt} and \eqref{max_red:trAH}, we find:
\begin{altitemize}
\item When $1\leq q\leq n-2\kappa$, the open subset $D(x_{pq})$ is isomorphic to
\begin{equation*}
    \Spec\frac{k[T_1,\cdots,T_{n-2\kappa},S_p,S_{n-2\kappa+1},\cdots,S_n,x_{pq},\frac{1}{x_{pq}}]}{T_p-1,S_pT_q-1,T_1T_{n-2\kappa}+T_2T_{n-2\kappa-1}+\cdots+T_{n-2\kappa} T_1};
\end{equation*}
\item When $n-2\kappa+1\leq q\leq n$, the open subset $D(x_{pq})$ is isomorphic to
\begin{equation*}
    \Spec\frac{k[T_1,\cdots,T_{n-2\kappa},S_p,S_{n-2\kappa+1},\cdots,S_n,x_{pq},\frac{1}{x_{pq}}]}{T_p-1,S_p(T_1T_{n-2\kappa}+T_2T_{n-2\kappa-1}+\cdots+T_{n-2\kappa} T_1)}.
\end{equation*}
\end{altitemize}
By \eqref{max_red:AAt} and \eqref{max_red:BtHB}, over each $D(x_{pq})$, the relation $\bA=0$ is equivalent to $S_p=0$, and $\bB^tH\bB=0$ is equivalent to $T_1T_{n-2\kappa}+T_2T_{n-2\kappa-1}+\cdots+T_{n-2\kappa} T_1=0$.
From these isomorphisms, we see that a point in $D(x_{pq})$ is singular if and only if $n-2\kappa+1\leq q\leq n$, $S_p=0$, and $T_1T_{n-2\kappa}+T_2T_{n-2\kappa-1}+\cdots+T_{n-2\kappa} T_1$, i.e., the point lies in the locus $D(x_{pq})\cap \Spec R_{s,12}$.
Therefore, $\Spec R_s-\{*\}$ is reduced, and $\Spec R_s-\{*\}$ is smooth.
\end{proof}

\begin{corollary}\label{max_red:lm}
    The affine scheme $\Spec R_s$ is reduced.
\end{corollary}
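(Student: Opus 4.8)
The plan is to deduce reducedness of $R_s$ from Serre's criterion: a Noetherian ring is reduced if and only if it satisfies $(R_0)$ and $(S_1)$ (cf.\ \cite[031R]{Stacks}). Property $(S_1)$ is immediate: by Corollary \ref{max_red:cm} the ring $R_s$ is Cohen--Macaulay, and a Cohen--Macaulay ring satisfies $(S_k)$ for all $k$, in particular $(S_1)$ --- so $R_s$ has no embedded associated primes, and $\mathrm{Ass}(R_s)$ consists precisely of the minimal primes of $R_s$.

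It remains to check $(R_0)$, i.e.\ that $R_s$ is regular at each of its minimal primes. Recall from Lemma \ref{max_simp:simp-step2_comp-radical} that $\sqrt I=\sqrt{I_1}\cap\sqrt{I_2}$, so as topological spaces $\Spec R_s=\Spec R_{s,1}\cup\Spec R_{s,2}$, and by Proposition \ref{max_red:irr}(i) each $\Spec R_{s,i}$ is irreducible of dimension $n-1$; thus the minimal primes of $R_s$ are the two primes cutting out $\Spec R_{s,1}$ and $\Spec R_{s,2}$, with generic points $\eta_1,\eta_2$. Since $\Spec R_{s,12}$ has dimension $n-2$ by Proposition \ref{max_red:irr}(ii), while $\overline{\{\eta_i\}}=\Spec R_{s,i}$ has dimension $n-1$, neither $\eta_i$ lies on $\Spec R_{s,12}$. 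But Corollary \ref{max_red:sing-locus} says $\Spec R_s$ is smooth over $k$ --- hence regular --- away from $\Spec R_{s,12}$, so the local ring of $R_s$ at each minimal prime is a regular local ring of Krull dimension $0$, i.e.\ a field. This establishes $(R_0)$, and Serre's criterion then gives that $R_s$ is reduced.

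I do not anticipate a genuine obstacle here: the real content lies in Corollary \ref{max_red:cm} (Cohen--Macaulayness, via Conca's theory of symmetric determinantal ideals) and in Corollary \ref{max_red:sing-locus} (the explicit chart-by-chart smoothness computation), and the statement at hand is just the assembly of these through Serre's criterion. The only subtlety worth flagging is the elementary dimension count $\dim\Spec R_{s,12}=n-2<n-1=\dim\Spec R_{s,i}$, which is exactly what forces the generic points of the two components into the smooth locus.
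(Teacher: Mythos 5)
Your proposal is correct and follows essentially the same route as the paper: Serre's criterion, with $(S_1)$ supplied by Cohen--Macaulayness (Corollary \ref{max_red:cm}) and $(R_0)$ supplied by the smoothness of $\Spec R_s$ away from $\Spec R_{s,12}$ (Corollary \ref{max_red:sing-locus}). The only difference is that you make explicit the dimension count showing the generic points of the two components avoid $\Spec R_{s,12}$, which the paper leaves implicit in the phrase ``the singular locus has codimension $\geq 1$.''
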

\begin{proof}
Recall Serre's criterion, which states that a ring $R$ is reduced if and only if it satisfies condition $(R_0)$ and $(S_1)$; cf.\ \cite[\href{https://stacks.math.columbia.edu/tag/031R}{031R}]{Stacks}.
In our case, $R_s$ is Cohen-Macaulay by Corollary \ref{max_red:cm}, and thus satisfies $(S_1)$.
Therefore, it remains to verify that $R_s$ satisfies $(R_0)$, i.e., it is regular in codimension $0$. This is equivalent to showing that the singular locus has codimension $\geq 1$, which follows from Corollary \ref{max_red:sing-locus}.
\end{proof}

\subsection{Geometry of the local model with strongly non-special level}
In this subsection, we will prove the remaining part of Theorem \ref{max_red:reduced_sym}.
\begin{proposition}\label{max_red:int}
The affine scheme $\Spec R_{s,12}$ is integral.
\end{proposition}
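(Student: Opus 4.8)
The plan is to identify $\Spec R_{s,12}$ with (the spectrum of) a Segre product of two graded domains, which is automatically integral. Unwinding the definition in Theorem~\ref{max_red:reduced_sym} --- in which $\bA=0$ identically on $R_{s,12}$, after the substitution \eqref{max_red:sub} --- we have $R_{s,12}\cong k[\bB]/\bigl(\bigwedge^2\bB,\ \bB^tH\bB\bigr)$, with $\bB=(b_{ij})$ an $(n-2\kappa)\times 2\kappa$ matrix of indeterminates and $H$ the anti-diagonal unit matrix. I would then introduce the bigraded polynomial ring $k[u_1,\dots,u_{n-2\kappa},y_1,\dots,y_{2\kappa}]$ ($\deg u_i=(1,0)$, $\deg y_j=(0,1)$), set $q:=u^tHu=\sum_a u_a u_{n-2\kappa+1-a}$ and $A:=k[u,y]/(q)$. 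Since $I=\{\kappa\}$ is strongly non-special, $n-2\kappa\geq 3$, so $q$ is a nondegenerate quadratic form in at least three variables over a field of characteristic $\neq 2$; such a form is irreducible, hence $(q)$ is prime and $A$ is a domain.

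Next I would define $\psi\colon k[\bB]\to A$ by $b_{ij}\mapsto u_iy_j$. It annihilates the $2\times2$ minors of $\bB$ (which map to $u_iy_j\,u_ky_l-u_iy_l\,u_ky_j=0$) and the entries of $\bB^tH\bB$ (the $(i,j)$ entry $\sum_a b_{n-2\kappa+1-a,i}\,b_{a,j}$ maps to $y_iy_j\sum_a u_au_{n-2\kappa+1-a}=y_iy_j\,q\equiv 0$), so $\psi$ factors through a $k$-algebra map $\bar\psi\colon R_{s,12}\to A$. It then suffices to show $\bar\psi$ is injective: $R_{s,12}$ will embed into the domain $A$ and hence itself be a domain, i.e.\ $\Spec R_{s,12}$ will be integral.

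For injectivity, I would use the classical theory of determinantal rings (Bruns--Vetter, \cite{BU1998}): the ideal of $2\times2$ minors of $\bB$ is exactly the kernel of $k[\bB]\to k[u,y]$, $b_{ij}\mapsto u_iy_j$, so $\overline R:=k[\bB]/\bigwedge^2\bB$ is identified with the Segre subalgebra $k[u_iy_j]\subseteq k[u,y]$, and $\bar\psi$ becomes the composite $R_{s,12}=\overline R/\mathfrak{c}\to \overline R/\bigl(\overline R\cap qk[u,y]\bigr)\hookrightarrow A$, where $\mathfrak{c}\subseteq\overline R$ is the ideal generated by the entries of $\bB^tH\bB$, which under the identification are precisely the elements $qy_iy_j$. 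Thus the task reduces to proving $\overline R\cap qk[u,y]=\mathfrak{c}$, i.e.\ that every element of the Segre algebra divisible by $q$ in $k[u,y]$ already lies in the $\overline R$-ideal generated by the $qy_iy_j$. Writing such an element as $qg$ with $g=\sum_\gamma g_\gamma(u)y^\gamma$, the requirement $qg\in\overline R$ forces each $g_\gamma$ to be $u$-homogeneous of degree $|\gamma|-2$ (in particular $|\gamma|\geq 2$); choosing $i,j$ with $\gamma\geq e_i+e_j$ and writing $y^\gamma=y_iy_j\,y^{\gamma'}$, one checks $g_\gamma(u)y^{\gamma'}\in\overline R$ and $qg_\gamma y^\gamma=(qy_iy_j)\cdot(g_\gamma y^{\gamma'})$; summing over $\gamma$ gives the claim. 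Hence $\bar\psi$ is injective and $R_{s,12}\cong\im(\bar\psi)\subseteq A$ is a domain --- concretely, the Segre product of $k[u]/(q)$ with $k[y_1,\dots,y_{2\kappa}]$.

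The only point requiring real care is the last bookkeeping step: verifying that no relations among the $u_iy_j$ survive beyond the $2\times2$ minors and the entries of $\bB^tH\bB$; everything else is formal. As an alternative one could instead deduce reducedness from Serre's criterion --- $(R_0)$ holds because $\Spec R_{s,12}$ is smooth away from the worst point, which has codimension $n-2\geq 3$ by Proposition~\ref{max_red:irr}, and $(S_1)$ holds once $R_{s,12}$ is known to be Cohen--Macaulay (which follows, via the depth lemma, from the Mayer--Vietoris sequence $0\to R_s\to R_{s,1}\oplus R_{s,2}\to R_{s,12}\to 0$, valid since $R_s$ is reduced by Corollary~\ref{max_red:lm}, given Cohen--Macaulayness of the three rings on the left and middle) --- and then combine with irreducibility from Proposition~\ref{max_red:irr}; but the Segre-product argument above is more self-contained.
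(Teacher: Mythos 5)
Your main argument is correct, and it takes a genuinely different route from the paper. The paper already has irreducibility of $\Spec R_{s,12}$ (Proposition \ref{max_red:irr}) and proves reducedness via Serre's criterion, with $(S_1)$ supplied by an explicit Gr\"obner basis computation: it shows that the $2$-minors of $\bB$ together with the reduced entries of $\bB^tH\bB$ form a Gr\"obner basis, so the initial ideal is generated by square-free quadratic monomials and $\overline{b_{11}}$ cannot be a zero-divisor. You instead bypass both the irreducibility input and any Gr\"obner computation by exhibiting $R_{s,12}\cong k[\bB]/(\bigwedge^2\bB,\bB^tH\bB)$ as a subring of the domain $k[u,y]/(q)$: the classical Segre presentation identifies $k[\bB]/\bigwedge^2\bB$ with the subalgebra spanned by monomials of equal $u$- and $y$-degree, the entries of $\bB^tH\bB$ map to $qy_iy_j$, and your bidegree bookkeeping correctly shows that the contraction of $(q)$ to the Segre algebra is exactly the ideal generated by these elements (the strongly non-special hypothesis gives $n-2\kappa\geq 3$, so $q$ has rank $\geq 3$ and is irreducible in characteristic $\neq 2$). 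This is a clean, self-contained proof of integrality and, as a bonus, identifies $\Spec R_{s,12}$ structurally as the cone over a Segre product of a quadric with a projective space, matching the paper's description of $Z_1\cap Z_2$ as an irreducible quadratic cone; the paper's Gr\"obner route, on the other hand, produces the explicit Gr\"obner basis and non-zero-divisor $\overline{b_{11}}$, which is the kind of auxiliary information its argument is organized around. One caution on your closing aside: the Mayer--Vietoris alternative requires the exact sequence $0\to R_s\to R_{s,1}\oplus R_{s,2}\to R_{s,12}\to 0$ with the schemes defined by the ideals $\CI_1\cap\CI_2$ and $\CI_1+\CI_2$ on the nose, and at this stage of the paper only $\sqrt{\CI}=\sqrt{\CI_1}\cap\sqrt{\CI_2}$ is known; radicality of $\CI_2$ (Proposition \ref{max_red:irr-red}) is deduced \emph{from} the present proposition, so that variant risks circularity. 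Since it is only offered as an alternative, this does not affect your main proof.
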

\begin{proof}
Since $\Spec R_{s,12}$ is irreducible by Proposition \ref{max_red:irr}(ii), it remains to show that it is reduced.
We will use Gr\"obner basis methods to show this.

To simplify notation, let $s:=n-2k$ and $t:=2k$, so that $\bB$ is an $s\times t$ matrix.
We choose the following lexicographic monomial order on the entries of $\bB$: $b_{ij}<b_{pq}$ if $i<p$ or $i=p$ and $j<q$. This is a diagonal monomial order, meaning that for each $2$-minor $[ij\mid pq]:=b_{ip}b_{jq}-b_{iq}b_{jp}$ ($i<j$, $p<q$), the initial monomial is the diagonal term $b_{ip}b_{jq}$.

By \cite[Theorem 4.1.1]{BCRV2022}, the $2$-minors $[ij\mid pq]$ form a Gr\"obner basis of the ideal $(\bigwedge^2 \bB)\subset k[\bB]$, and the initial ideal $\text{in}(\bigwedge^2\bB)$ is generated by the monomials $b_{ip}b_{jq}, i<j$, $p<q$.

The $(\alpha,\beta)$-entry ($\alpha\leq \beta$) of the matrix $\bB^tH\bB$ is given by $\sum_{\delta=1}^t b_{t+1-\delta,\alpha}b_{\delta,\beta}$. 
After reduction modulo the ideal generated by the $2$-minors, the remainder term $f_{\alpha\beta}$ is:
\begin{altitemize}
\item When $t=2r$ is even,
\begin{equation*}
    f_{\alpha\beta}=2\sum_{\delta=1}^r b_{t+1-\delta,\alpha}b_{\delta,\beta}.
\end{equation*}
\item When $t=2r+1$ is odd, 
\begin{equation*}
    f_{\alpha\beta}=2\left(\sum_{\delta=1}^r b_{t+1-\delta,\alpha}b_{\delta,\beta}\right)+b_{r+1,\alpha}b_{r+1,\beta}.
\end{equation*}
\end{altitemize}

We claim that the set of elements 
$$\{[ij\mid pq]:i<j,p<q\}\cup\{f_{\alpha\beta}:\alpha\leq \beta\}\subset k[\bB]$$
forms a Gr\"obner basis for the ideal $\left(\bigwedge^2\bB,\bB^tH\bB\right)$.
We will verify this in the case $t=2r+1$, the same argument applies when $t=2r$.

Given polynomials $f$ and $g$, define their $S$-polynomial as
\begin{equation*}
    S(f,g):=\frac{\lcm(\ini(f),\ini(g))}{\ini(f)}f-\frac{\lcm(\ini(f),\ini(g))}{\ini(g)}g.
\end{equation*}
Recall the Buchberger Criterion from \cite[Theorem 1.2.13]{BCRV2022}, which states that a set of generators $(f_1,\cdots,f_n)$ of an ideal $I$ forms a Gr\"obner basis if and only if all $S$-polynomials $S(f_i,f_j)$ reduce to $0$ modulo $f_1,\cdots,f_n$.
In our setting, it suffices to consider the following two cases:
\begin{altenumerate2}
\item $f=f_{\alpha\beta}$ and $g=[ij\mid pq]$, where $\ini(f)$ and $\ini(g)$ share a  common factor.
\item $f=f_{ij}$ and $g=f_{pq}$, where $\ini(f)$ and $\ini(g)$ share a common factor.
\end{altenumerate2}
We will compute the $S$-polynomial in each case.

\noindent\textbf{Case (a).} 
When $f=f_{\alpha\beta}$ and $g=[ij\mid pq]$, where $\ini(f)$ and $\ini(g)$ share a  common factor.
Since $\ini(f)=2b_{1\beta}b_{t\alpha}$ and $\ini(g)=b_{ip}b_{jq}$, the condition that they share a common factor implies that either $b_{ip}=b_{1\beta}$ or $b_{jq}=b_{t\alpha}$, i.e. $i=1,p=\beta$ or $j=t,q=\alpha$.

Consider the case $i=1,p=\beta$. Then $\alpha<\beta=p<q$, and we have
\begin{equation*}
    S(f,g)=2b_{t\alpha} b_{j\beta}b_{1q}+b_{jq}\left(2\left(\sum_{\delta=2}^r b_{t+1-\delta,\alpha}b_{\delta,\beta}\right)+b_{r+1,\alpha}b_{r+1,\beta}\right),
    \quad
    \ini(S(f,g))=2b_{t\alpha}b_{1q}b_{j\beta}.
\end{equation*}
Note that
\begin{equation*}
    S(f,g)-b_{j\beta}f_{\alpha q}=
    \left(\sum_{\delta=2}^r 2b_{t+1-\delta,\alpha}(b_{jq}b_{\delta\beta}-b_{j\beta}b_{\delta q})\right)+b_{r+1,\alpha}(b_{jq}b_{r+1,\beta}-b_{j\beta}b_{r+1,q}),
\end{equation*}
where $\ini(b_{j\beta}f_{\alpha q})=\ini(S(f,g))$, and each term $\ini(b_{t+1-\delta,\alpha}(b_{jq}b_{\delta\beta}-b_{j\beta}b_{\delta q}))<\ini(S(f,g))$.
Therefore, the $S$-polynomial $S(f,g)$ reduces to zero modulo the chosen generators.
A Similar computation applies to the case $j=t, q=\alpha$.

\noindent\textbf{Case (b).} When $f=f_{ij}$ and $g=f_{pq}$, where $\ini(f)$ and $\ini(g)$ share a common factor.
Since $\ini(f_{ij})=2b_{ti}b_{1j}$ and $\ini(f_{pq})=2b_{tp}b_{1q}$, this assumption implies that either $i=p$ or $j=q$.
When $i=p$, we have
\begin{equation*}
    S(f_{ij},f_{pq})=2\left(\sum_{\delta=2}^r b_{t+1-\delta,p}(b_{1q}b_{\delta j}-b_{1j}b_{tq})\right)+b_{r+1,p}(b_{1q}b_{r+1,j}-b_{1j}b_{r+1,q}).
\end{equation*}
Each term in the sum lies in the ideal generated by the $2$-minors $[ij \mid pq]$.
Hence, $S(f,g)$ reduces to zero modulo the chosen geneators.
A similar computation works for the case when $j=q$.

Therefore, the set of elements $\{[ij\mid pq]:i<j,p<q\}\cup\{f_{\alpha\beta}:\alpha\leq \beta\}$ forms a Gr\"obner basis of the ideal $\left(\bigwedge^2\bB,\bB^tH\bB\right)$. In particular, the initial ideal $\ini(I)$ is generated by monomials of the form $b_{ij}b_{pq}$ with $(i,j)\neq (p,q)$. 

Now, suppose $0\neq \ov{b}\in \frac{k[\bB]}{\bigwedge^2 \bB,\bB^tH\bB}$ is a nilpotent element.
Then $\ov{b}^N=0$ for some $N$, and hence $\ini(b)^N\in \ini\left(\bigwedge^2\bB,\bB^tH\bB\right)$. This is impossible unless $N=1$ since the latter is generated by square-free quadratic monomials.
\end{proof}

\begin{proposition}\label{max_red:irr-red}
The affine schemes $\Spec R_{s,1}$ and $\Spec R_{s,2}$ are integral.
\end{proposition}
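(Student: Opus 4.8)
The plan is to treat the two schemes separately; by Proposition~\ref{max_red:irr} each is irreducible, so in both cases it remains only to prove reducedness.

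For $R_{s,1}$ this is classical. Setting $\bA=0$ identifies $R_{s,1}$ with $k[\bB]/\bigwedge^2\bB$, the coordinate ring of the variety of $(n-2\kappa)\times 2\kappa$ matrices of rank at most $1$, i.e.\ the affine cone over the Segre product $\mathbb P^{n-2\kappa-1}\times\mathbb P^{2\kappa-1}$; the ideal of $2\times 2$ minors of a generic matrix is prime, so $R_{s,1}$ is an integral domain (cf.\ \cite{BU1998,BCRV2022}, already invoked in the proof of Proposition~\ref{max_red:irr}).

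For $R_{s,2}$ I would adapt the Gr\"obner-basis computation from the proof of Proposition~\ref{max_red:int}. First, the linear relation $\bA=\bA^t$ lets one present $R_{s,2}$ as a quotient of the polynomial ring in the variables $\{a_{ij}\}_{i\le j}$ and $\{b_{ij}\}$, and the linear relation $\tr(\bA H)=\sum_i a_{i,\,n-2\kappa+1-i}=0$ (recall $2$ is invertible in $k$) lets one eliminate one further entry of $\bA$ on its anti-diagonal; after these substitutions $\CI_2$ becomes an ideal $\CI_2'$ generated by the substituted $2\times 2$ minors of $(\bA\mid\bB)$ together with the substituted entries of $\bB^tH\bB$. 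The goal is then to show that, for a suitable diagonal term order refining the one used in Proposition~\ref{max_red:int}, these generators form a Gr\"obner basis and the initial ideal $\ini(\CI_2')$ is generated by square-free quadratic monomials, each a product of two distinct variables. Since a square-free monomial ideal is radical, and an ideal is radical whenever one of its initial ideals is radical (cf.\ \cite{BCRV2022}), this forces $\CI_2'$, hence $\CI_2$, to be radical; thus $R_{s,2}$ is reduced, and being irreducible by Proposition~\ref{max_red:irr}(i) it is integral. (Alternatively, once $\ini(\CI_2')$ is square-free one checks directly that no power of a single variable lies in it, so the image of a suitable entry is not nilpotent, and then --- via the reduction of Serre's criterion to the worst point supplied by Proposition~\ref{max_red:irr}(i), exactly as in the proof of Proposition~\ref{max_red:int} --- one concludes that $R_{s,2}$ satisfies $(S_1)$ at the worst point, hence is reduced.)

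The main obstacle will be precisely this Gr\"obner-basis step for $R_{s,2}$. Unlike the situation of Proposition~\ref{max_red:int}, the generators now involve the symmetrized matrix $\bA$ and the substituted trace relation, so the $2$-minors of $(\bA\mid\bB)$ no longer all have ``diagonal'' leading terms, and one must control the $S$-polynomial reductions among the minors, between the minors and the entries of $\bB^tH\bB$, and --- the genuinely new interactions --- for the minors involving the eliminated anti-diagonal entry. I would expect this to reduce to a Buchberger-criterion verification building on Conca's Gr\"obner bases for ideals of minors of symmetric matrices (\cite{Conca1994-1,Conca1994-2}) together with the $S$-polynomial computations already carried out in the proof of Proposition~\ref{max_red:int}, but executing it is the technical heart of the argument.
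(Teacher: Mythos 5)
Your argument for $\Spec R_{s,1}$ is fine and coincides with the paper's: setting $\bA=0$ gives the generic determinantal ring $k[\bB]/(\bigwedge^2\bB)$, which is a domain. For $\Spec R_{s,2}$, however, the proposal has a genuine gap at its center: everything hinges on the claim that, after eliminating $\bA=\bA^t$ and $\tr(\bA H)=0$, the substituted $2$-minors of $(\bA\mid\bB)$ together with the substituted entries of $\bB^tH\bB$ form a Gr\"obner basis whose initial ideal is squarefree quadratic --- and you explicitly leave this verification undone. That verification is not a routine extension of Proposition~\ref{max_red:int}: there one works in $k[\bB]$ alone, where every generator of degree two is a binomial or a short sum with a clean diagonal leading term, whereas here the trace substitution turns each minor involving the eliminated anti-diagonal entry into a many-term polynomial with no evident leading-term structure, the symmetric minors (of the shape $a_{ii}a_{jj}-a_{ij}^2$) interact with the entries of $\bB^tH\bB$, and there is no a priori control that the reduced Gr\"obner basis stays in degree two or has squarefree initial terms. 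So as written you have not proved that $R_{s,2}$ is reduced; the ``technical heart'' you defer is the entire content of the statement.

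It is worth knowing that the paper sidesteps this computation completely. With the notation of Lemma~\ref{max_simp:simp-step2_comp-radical} and Remark~\ref{max_simp:simp-step2-simp-comp}, one has $R_s=k[X]/(IJ+K)$, $R_{s,2}=k[X]/(J+K)$, $R_{s,12}=k[X]/(I+J+K)$, and the only inputs are: $IJ+K$ is radical (Corollary~\ref{max_red:lm}, via Cohen--Macaulayness plus generic smoothness), $I+J+K$ is prime (Proposition~\ref{max_red:int}, where the Gr\"obner work is done in the far simpler ring with $\bA=0$), and $\sqrt{J+K}$ is prime (Proposition~\ref{max_red:irr}(i)). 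Then if $f^n\in J+K$, primality of $I+J+K$ gives $f=\alpha+\beta$ with $\alpha\in I$, $\beta\in J+K$; hence $\alpha^n\in J+K$, so $\alpha^{n+1}\in I(J+K)\subset IJ+K$, and radicality forces $\alpha\in IJ+K\subset J+K$, whence $f\in J+K$. Thus $J+K$ is radical with prime radical, hence prime. To repair your proposal you would need either to actually carry out the Buchberger verification for the full ideal $I_2$ (nontrivial and of uncertain outcome), or to replace it by an ideal-theoretic reduction of this kind, which confines the explicit Gr\"obner computation to $R_{s,12}$.
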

\begin{proof}
The affine scheme $\Spec R_{s,1}$ is integral by the theory of determinant varieties, see \cite[(2.11)]{BU1998}. 

To show that $\Spec R_{s,2}$ is integral, we retain the notation from Lemma \ref{max_simp:simp-step2_comp-radical}. 
By Remark \ref{max_simp:simp-step2-simp-comp}, we have the following isomorphisms:
\begin{equation*}
    R_{s}\simeq \frac{k[X]}{IJ+K},\quad
    R_{s,1}\simeq \frac{k[X]}{I+K},\quad
    R_{s,2}\simeq \frac{k[X]}{J+K},\quad
    R_{s,12}\simeq \frac{k[X]}{I+J+K}.
\end{equation*}
as described in Theorem \ref{sm:simp_step3_ring}. We know that:
$IJ+K$ is radical by Corollary \ref{max_red:lm},
$I+K$ is prime as shown above,
$I+J+K$ is prime by Proposition \ref{max_red:int},
$\sqrt{J+K}$ is prime by Proposition \ref{max_red:irr}(i).
We want to show that $J+K$ itself is a prime ideal. 

Suppose $f^n\in J+K\subset I+J+K$. Then $f\in I+J+K$, so we may write $f=\alpha+\beta$ with $\alpha\in I$ and $\beta\in J+K$. 
Then $f^n=\alpha^n+\beta\gamma$ for some $\gamma\in k[X]$, hence $\alpha^n\in J+K$.
Since $\alpha^{n+1}\in I(J+K)\subset IJ+K$, we have $\alpha\in IJ+K\subset J+K$.
Therefore, $f=\alpha+\beta\in J+K$ and $J+K=\sqrt{J+K}$ is a prime ideal.
\end{proof}

\begin{corollary}\label{max_red:irr-comp}
\begin{altenumerate}
    \item The affine schemes $\Spec R_{s,1}$ and $\Spec R_{s,2}$ are the irreducible components of $\Spec R_{s}$. They are normal and Cohen-Macaulay, with singular locus $\{*\}$.
    \item The affine scheme $\Spec R_{s,12}$ is the intersection of the irreducible components of $\Spec R_{s}$. It is normal and Cohen-Macaulay, with singular locus $\{*\}$.
\end{altenumerate}
\end{corollary}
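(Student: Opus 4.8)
The plan is to bootstrap the corollary from the structural results already in place. First I would identify the irreducible components: by Corollary~\ref{max_red:lm} the ideal $\CI$ is radical, and by Lemma~\ref{max_simp:simp-step2_comp-radical} it equals $\sqrt{\CI_1}\cap\sqrt{\CI_2}$, so since $\CI_1$ and $\CI_2$ are prime by Proposition~\ref{max_red:irr-red} we obtain the prime decomposition $\CI=\CI_1\cap\CI_2$. By Proposition~\ref{max_red:irr} both $\Spec R_{s,1}$ and $\Spec R_{s,2}$ are irreducible of dimension $n-1=\dim R_s$ and they are distinct, hence incomparable, so $\{\CI_1,\CI_2\}$ is exactly the set of minimal primes of $R_s$. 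Thus $\Spec R_{s,1}$ and $\Spec R_{s,2}$ are the irreducible components of $\Spec R_s$, their scheme-theoretic intersection is $\Spec R_{s,12}=\Spec k[X]/(\CI_1+\CI_2)$, and it is integral of dimension $n-2$ by Propositions~\ref{max_red:irr} and \ref{max_red:int}.

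The technical core is Cohen--Macaulayness. Write $s:=n-2\kappa$ (so $s\ge3$ since $I$ is strongly non-special) and $t:=2\kappa$. For $R_{s,1}\cong k[\bB]/\bigwedge^2\bB$ this is the classical generic determinantal ring of $2\times2$ minors, which is normal and Cohen--Macaulay of dimension $n-1$ (\cite{BU1998}, already invoked in Proposition~\ref{max_red:irr-red}). For $R_{s,12}\cong k[\bB]/(\bigwedge^2\bB,\bB^tH\bB)$ I would use the reparametrization of rank-$\le1$ matrices $\bB=uv^t$: the relation $\bB^tH\bB=0$ becomes $(u^tHu)vv^t=0$, so $\Spec R_{s,12}$ is the affine cone over the Segre embedding of $Q\times\BP^{t-1}$, where $Q=\{u^tHu=0\}\subset\BP^{s-1}$ is a smooth quadric ($\charac k\neq2$). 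One checks that $R_{s,12}$ equals the Segre product of the (normal, arithmetically Cohen--Macaulay) coordinate rings of $Q$ and $\BP^{t-1}$---for instance by a K\"unneth/Serre-vanishing argument for $\mathcal{O}(d,d)$ on $Q\times\BP^{t-1}$---so it is normal and Cohen--Macaulay of dimension $n-2$; alternatively, the Gr\"obner basis from the proof of Proposition~\ref{max_red:int} degenerates $R_{s,12}$ flatly to a Stanley--Reisner ring which one shows is Cohen--Macaulay (shellable). For $R_{s,2}=R_s/(\bB^tH\bB)$ the same substitution (with $\bA$ a scalar multiple of $uu^t$) identifies $\Spec R_{s,2}$ with the affine cone over a $\BP^{t}$-bundle over $Q$, and an analogous cohomological computation---or a squarefree Gr\"obner degeneration---shows $R_{s,2}$ is Cohen--Macaulay of dimension $n-1$.

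Granting Cohen--Macaulayness, normality and the singular-locus assertions are formal. Proposition~\ref{max_red:irr} already shows that $\Spec R_{s,1}$, $\Spec R_{s,2}$ and $\Spec R_{s,12}$ are smooth away from the worst point $\{*\}$, while each is genuinely singular at $\{*\}$: the three defining ideals are homogeneous with every minimal generator of degree $\ge2$, so all first partials vanish at the origin, yet the Krull dimension is strictly less than the number of variables, so the Jacobian criterion fails there. Hence the singular locus of each ring is exactly $\{*\}$, of codimension $n-1\ge4$ in $\Spec R_{s,1}$ and $\Spec R_{s,2}$ and of codimension $n-2\ge3$ in $\Spec R_{s,12}$; in particular each ring satisfies Serre's $(R_1)$, and combined with the $(S_2)$ supplied by Cohen--Macaulayness, Serre's criterion \cite[031R]{Stacks} gives normality. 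This establishes (i) and (ii).

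I expect the real difficulty to be precisely the Cohen--Macaulayness of $R_{s,2}$ (and of $R_{s,12}$). The purely homological route through the Mayer--Vietoris sequence $0\to R_s\to R_{s,1}\oplus R_{s,2}\to R_{s,12}\to0$ (exact because $\CI=\CI_1\cap\CI_2$), together with $R_s$ Cohen--Macaulay (Corollary~\ref{max_red:cm}), only yields $\operatorname{depth}_{\{*\}}R_{s,2}\ge n-2$, one short of what is needed; so one genuinely has to carry out the explicit Gr\"obner-basis or reparametrization computation. Everything else in the argument is bookkeeping around results already proved.
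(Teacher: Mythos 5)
Your identification of the irreducible components, and the way you deduce normality from ``Cohen--Macaulay $+$ smooth away from $\{*\}$ $+$ $\operatorname{codim}\geq 2$'' via Serre's criterion, is exactly what is needed and matches the paper's logic. The problem is the step you yourself flag as the technical core: Cohen--Macaulayness of $R_{s,2}$ (and, to a lesser extent, the scheme-theoretic identification needed for $R_{s,12}$) is not proved in your proposal, it is only sketched (``one checks\dots'', ``an analogous cohomological computation---or a squarefree Gr\"obner degeneration---shows\dots''). For $R_{s,12}$ the Segre-cone route is plausible and can be closed using results already in the paper (integrality of $R_{s,12}$ from Proposition \ref{max_red:int} forces your surjection onto the Segre product to be an isomorphism of domains of equal dimension, and a Goto--Watanabe/K\"unneth vanishing argument does give arithmetic Cohen--Macaulayness since $n-2\kappa\geq 3$ and $\kappa\geq 1$). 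But for $R_{s,2}$ the claimed identification with the cone over a $\BP^{t}$-bundle over the quadric, the projective normality of that embedding, and the required cohomology vanishing (or, alternatively, a Gr\"obner basis for the full ideal $\bigl(\bigwedge^2(\bA,\bB),\bA-\bA^t,\tr(\bA H),\bB^tH\bB\bigr)$ with a shellable squarefree initial complex) are all left unverified, and as you correctly observe the Mayer--Vietoris depth count cannot close this gap. So as written the argument for part (i) is incomplete at precisely the point where all the difficulty sits.

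The paper avoids this computation entirely: once $R_s$ is known to be reduced (Corollary \ref{max_red:lm}), the chart $\Spec R_s$ is identified with an open piece of $M^{\loc}_{\{\kappa\},s}$, whose irreducible components are normal and Cohen--Macaulay by the affine Schubert variety/Frobenius-splitting input of Theorem \ref{moduli_fc:coh}; this gives Cohen--Macaulayness of $\Spec R_{s,1}$ and $\Spec R_{s,2}$ with no further work. Cohen--Macaulayness of the intersection $\Spec R_{s,12}$ then follows from \cite[Lemma 4.22]{Gortz2001} (equivalently, the depth count in your exact sequence run in the other direction: knowing $R_s$, $R_{s,1}$, $R_{s,2}$ are Cohen--Macaulay gives $\operatorname{depth}R_{s,12}\geq n-2$), using that $R_s$ itself is Cohen--Macaulay by Corollary \ref{max_red:cm}. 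If you want a purely commutative-algebra proof independent of Theorem \ref{moduli_fc:coh}, you must actually carry out the $R_{s,2}$ computation; otherwise the efficient fix is to invoke Theorem \ref{moduli_fc:coh} for the components and G\"ortz's lemma for the intersection, after which your normality and singular-locus arguments go through unchanged.
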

\begin{proof}
\begin{altenumerate}
\item Recall from Lemma \ref{max_simp:simp-step2_comp} that $(\Spec R_{s})^\red=(\Spec R_{s,1})^\red\cup (\Spec R_{s,2})^\red$. 
By Proposition \ref{max_red:irr-red} and Corollary \ref{max_red:lm}, we may remove the ``red''. Therefore, $\Spec R_{s,1}$ and $\Spec R_{s,2}$ are the irreducible components of $\Spec R_s$. By Theorem \ref{moduli_fc:coh} they are Cohen-Macaulay. They are also normal, since their singular locus, defined by $\bA=0,\bB=0$, has dimension $0$.

\item Recall that for a noetherian scheme $Z$ with two irreducible Cohen-Macaulay components $X$ and $Y$, if the intersection $X\cap Y$ has codimension $1$ in both $X$ and $Y$, then $X\cap Y$ is Cohen-Macaulay if and only if $Z=X\cup Y$ is Cohen-Macaulay; cf.\ \cite[Lemma 4.22]{Gortz2001}.

Since $\Spec R_{s}$ is Cohen-Macaulay by Theorem \ref{max_red:reduced_sym}, and $\Spec R_{s,1}$ and $\Spec R_{s,2}$ are Cohen-Macaulay by (i), it follows from \cite[Lemma 4.22]{Gortz2001} that $\Spec R_{s,12}$ is also Cohen-Macaulay.
\end{altenumerate}
\end{proof}

Now, we have finished the proof of Theorem \ref{max_red:reduced_sym}: 
part (i) follows from Corollary \ref{max_red:sing-locus} and Corollary \ref{max_red:lm};
part (ii)(iii) follow from Corollary \ref{max_red:irr-comp}.

\begin{remark}\label{max_red:intersection}
Since the intersection of the irreducible components $\Spec R_{s,12}$ is singular with the singular point defined by $\bA=0,\bB=0$, in particular, it is not regular.
Therefore, the local model does not admit semi-stable reduction.
\end{remark}

\begin{corollary}\label{max_red:normal-cohen-macaulay}
The local model $M^\loc_{\{\kappa\}}(n-1,1)$ for strongly non-special $\{\kappa\}$ over the ring of integers is normal and Cohen-Macaulay. It is Gorenstein when $n=4\kappa+2$. 
\end{corollary}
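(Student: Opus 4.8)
The plan is to bootstrap everything from the local computation at the worst point, Theorem~\ref{max_red:reduced_sym}, using only flatness of $M^\loc_{\{\kappa\}}$ over the discrete valuation ring $\CO_F$ and the $L^+\CP_{\{\kappa\}}^\circ$-equivariance of the special fiber inside the affine flag variety. First I would record that, by Theorem~\ref{intro_back:main}, $M^\loc_{\{\kappa\}}=M_{\{\kappa\}}$, and that $M^\loc_{\{\kappa\}}$ is flat over $\CO_F$ since it is the scheme-theoretic closure of its generic fiber; hence at each point $x$ the uniformizer $\pi$ is a nonzerodivisor in $A:=\CO_{M^\loc_{\{\kappa\}},x}$, and $A/\pi A=\CO_{M^\loc_{\{\kappa\},s},x}$.

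For the Cohen--Macaulay assertion I would argue that, by \cite[02JN]{Stacks}, $A$ is Cohen--Macaulay if and only if $A/\pi A$ is, so it suffices to show that the special fiber $M^\loc_{\{\kappa\},s}$ is Cohen--Macaulay. The open affine chart $U_{\{\kappa\}}$ of \S\ref{equ_chart} has special fiber $U_{\{\kappa\},s}\cong\Spec R_s$ (Theorem~\ref{intro_back:maxco}(ii)), which is Cohen--Macaulay of dimension $n-1$ by Theorem~\ref{max_red:reduced_sym}(i). To globalize, I would use that $M^\loc_{\{\kappa\},s}$ is a finite union of Schubert varieties in $\Fl=L\GU/L^+\CP_{\{\kappa\}}^\circ$, with each Schubert cell a single $L^+\CP_{\{\kappa\}}^\circ$-orbit: translation by a group element gives, for any two points of a fixed cell, a Zariski-local isomorphism of $M^\loc_{\{\kappa\},s}$ between them, so Cohen--Macaulayness at a point depends only on the cell containing it. Since $U_{\{\kappa\},s}$ is an open neighborhood of the worst point, and the worst point is minimal in the admissible set, $U_{\{\kappa\},s}$ meets every Schubert cell; hence $M^\loc_{\{\kappa\},s}$ is Cohen--Macaulay at a point of every cell, and therefore everywhere. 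The Gorenstein statement for $n=4\kappa+2$ follows by the identical argument with \cite[0BJJ]{Stacks} in place of \cite[02JN]{Stacks} and the Gorenstein clause of Theorem~\ref{max_red:reduced_sym}(i).

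For normality I would verify Serre's criterion $(R_1)+(S_2)$ for $A$. Condition $(S_2)$ is immediate from the Cohen--Macaulay property just established. For $(R_1)$, let $\mathfrak p\subset A$ be a prime with $\operatorname{ht}\mathfrak p\le 1$. If $\pi\notin\mathfrak p$, then $A_{\mathfrak p}$ is a localization of the generic fiber $M^\loc_{\{\kappa\},\eta}=M^\naive_{\{\kappa\},\eta}$, a smooth homogeneous space under $\GU_F$, hence regular. If $\pi\in\mathfrak p$ and $\operatorname{ht}\mathfrak p=1$, then $\mathfrak p$ is a generic point of $M^\loc_{\{\kappa\},s}$; since that special fiber is reduced (Theorem~\ref{moduli_fc:coh} for $p\ge 3$, and \S\ref{general_char-0} for $p=0$), $(A/\pi A)_{\mathfrak p}$ is a field, so $\mathfrak p A_{\mathfrak p}=\pi A_{\mathfrak p}$ is principal and $A_{\mathfrak p}$ is a one-dimensional local ring with principal maximal ideal, hence regular. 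Thus $A$ satisfies $(R_1)$, so $A$ is normal for every $x$, and $M^\loc_{\{\kappa\}}$ is normal.

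The step I expect to need the most care is the globalization in the second paragraph — passing from the single chart $U_{\{\kappa\}}$ around the worst point to all of $M^\loc_{\{\kappa\}}$ — which requires making precise that the loop-group action really does reduce Cohen--Macaulayness (and Gorensteinness) of the whole special fiber to that one chart; this is the principle already invoked implicitly in \S\ref{equ_chart}. Everything else is a formal consequence of flatness over a discrete valuation ring together with the cited commutative-algebra facts. (This also gives an elementary proof of normality and Cohen--Macaulayness, properties otherwise obtainable from the group-theoretic results of \cite{PZ2013} and \cite{HR22}.)
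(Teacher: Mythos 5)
Your proposal is correct and follows essentially the same route as the paper: the Cohen--Macaulay and Gorenstein claims are reduced to the special fiber via flatness (so $\pi$ is a nonzerodivisor) and \cite[02JN,0BJJ]{Stacks}, with the special fiber handled by Theorem \ref{max_red:reduced_sym} on the chart $U_{\{\kappa\},s}$ together with the loop-group equivariance principle already invoked in \S\ref{equ_chart}. The only cosmetic difference is that for normality you verify Serre's criterion $(R_1)+(S_2)$ directly (using the Cohen--Macaulay property for $(S_2)$ and reducedness of the special fiber plus smoothness of the generic fiber for $(R_1)$), whereas the paper simply cites \cite[Proposition 9.2]{PZ2013}, whose proof is the same argument.
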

\begin{proof}
Recall that a flat scheme of finite type $Y$, over a discrete valuation ring with normal generic fiber and reduced special fiber, is normal, cf.\ \cite[Proposition 9.2]{PZ2013}.
For us, since the generic fiber of the local model is smooth, the normality follows. 
In particular, $\pi\in\CO_F$ is not a zero-divisor. Therefore, by \cite[\href{https://stacks.math.columbia.edu/tag/02JN}{02JN},\href{https://stacks.math.columbia.edu/tag/0BJJ}{0BJJ}]{Stacks}, the local model $M_I^\loc$ is Cohen-Macaulay (resp.\ Gorenstein) if and only if its special fiber $M^\loc_{I,s}$ is Cohen-Macaulay (resp.\ Gorenstein).
\end{proof}

\begin{remarks}\label{max_red:recover}
We can also recover several known results from Theorem \ref{sm:simp_step3_ring}.
\begin{altenumerate}
\item When $\kappa=0$, the matrix $X_3$ has size $0$. Hence the ring becomes $\frac{k[X_4]}{\wedge^2(X_4), X_4-X_4^{\ad},\tr(X_4)}$, this recovers Pappas's result \cite{Pappas2000} (but note that Pappas uses a different choice of hermitian form).
				
	\item When $\kappa=m$ and $n=2m+1$, the matrices $X_3$, resp.,\ $X_4$ are of size $1\times (n-1)$ and $1\times 1$. The equation $\tr(X_4)=0$ implies $X_4=0$, and the ring $R=\frac{k[X_3,X_4]}{\bigwedge^2 (X_3,X_4), X_4-X_4^{\ad},\tr(X_4)}$ is isomorphic to $k[X_3]$, this recovers Smithling's computation \cite[3.10, 3.11]{Smithling2015}.
				
	\item When $\kappa=m-1$ and $n=2m$, the equations $\tr(X_4)=0$ and $X_4=X_4^{\ad}$ are equivalent to $X_4=\left(\begin{matrix}  & * \\ * &\end{matrix}\right)$, hence the ring is isomorphic to the ring
	\begin{equation*}
		\frac{k[X_3,x_1,x_2]}{\bigwedge^2\left(X_3\quad\left(\begin{matrix}  & x_2 \\ x_1 &\end{matrix}\right)\right)}\simeq \frac{k[X_3,x_1,x_2]}{x_1x_2,\bigwedge^2X_3,\left(\begin{matrix}  x_2 & \\ & x_1\end{matrix}\right)X_3}.
	\end{equation*}
	This recovers the case studied by Yu \cite[1.4.22, 1.4.23, 1.4.24]{Yu2019}.

	\item We do not expect to recover the defining equations in the $\pi$-modular case \cite{RSZ2017} ($I=\{m\}$ for $n=2m$) since then both $X_3, X_4$ have size $0$. This is not surprising, as this is the only case where the worst point does not lie in the local model; cf.\ \cite[Remark 7.4]{PR2009}.
\end{altenumerate}
\end{remarks}

\section{Reducedness for the general parahoric subgroup}\label{general}
\subsection{Recollection of the (partial) affine flag varieties}
In this subsection, we quickly review the theory of (partial) affine flag varieties of the ramified unitary group, and its relation to the local model.
		
\subsubsection{Local Dynkin diagram of quasi-split unitary groups}\label{general_dyn}
For the remaining part of the section, we will use the local Dynkin diagram, rather than the convention \eqref{intro_back:local-dynkin} used in \cite[4.a]{PR2008}.
When $n=2m+1$, the local Dynkin diagram takes the form
\begin{equation*}
	\xy 
	(-10,0)*{s}="s1";
	(-5,0)*{\bullet};
	(0,0)*{\bullet};
	(5,0)*{\bullet};
	(10,0)*{\bullet};
	(15,0)*{s}="s2";
	(-7.5,0)*{<};
	(12.5,0)*{<};
	{\ar@2{-} "s1";(-5,0)};
	{\ar@{-} (-5,0);(0,0)};
	{\ar@{.} (0,0);(5,0)};
	{\ar@{-} (5,0);(10,0)};
	{\ar@2{-} (10,0);"s2"};
	\endxy.
\end{equation*}
We label the vertices from left to right as $0,1,\cdots,m$.
		
When $n=2m$, the local Dynkin diagram takes the form 
\begin{equation*}
	\xy 
	(-10,0)*{\bullet}="v1";
	(-5,0)*{\bullet}="v2";
	(0,0)*{\bullet}="v3";
	(5,0)*{\bullet}="v4";
	(10,0)*{\bullet}="v5";
	(15,3)*{s}="s1";
	(15,-3)*{s}="s2";
	(-7.5,0)*{>};
	{\ar@2{-} (-10,0);(-5,0)};
	{\ar@{-} (-5,0);(0,0)};
	{\ar@{.} (0,0);(5,0)};
	{\ar@{-} (5,0);(10,0)};
	{\ar@{-} (10,0);"s1"};
	{\ar@{-} (10,0);"s2"};
	\endxy.
\end{equation*}
A special vertex $m'$ corresponds to the lattice
\begin{equation*}
	\Lambda_{m'}:=\sum_{i=1}^{m-2}\pi^{-1}\CO_F e_i+\pi^{-1}\CO_F e_{m-1}+\pi^{-1}\CO_{F}e_{m+1}+\sum_{j=m+2}^n\CO_F e_{j},
\end{equation*}
using the notation from \S \ref{moduli_setup_alg-group}. The remaining vertices, ordered from left to right, are labeled by ${0,1,\cdots,m-2,m}$.
The parahoric subgroups associated with $m'$ and $m$ are isomorphic under diagram automorphism. 
Moreover, the parahoric subgroup corresponding to the index set $\{m-1,m\}$ in \S \ref{moduli_setup_alg-group} coincides with the one generated by the vertices $\{m',m\}$.
When working with the local Dynkin diagram, the associated lattices do not form a lattice chain as defined in the sense of \cite[Definition 3.1]{RZ}; see also \ \S \ref{application_nonsplit_moduli}. However, this issue is not relevant for the current section.
For the remaining part of the section, the index set $\CI$ will always refer to a non-empty subset of vertices of the local Dynkin diagram. 
We will continue to refer the associated periodic lattice as a ``lattice chain'' indexed by $\CI$. 
Besides, the various moduli functors, $M_\CI^\square$, remain well-defined in this context.

\subsubsection{Unitary affine flag varieties}
Let $F_0=k((t))$ be the field of Laurent power series with coefficients in $k$, and $F=k((u))$ with uniformizer $u$ such that $u^2=t$. We choose the split hermitian form $\phi$ as in \S \ref{moduli_setup_notation} and associate with it the unitary similitude group $G=\GU(F^n,\phi)$.
        
We briefly recall some basic facts about the unitary similitude affine flag variety. The precise details can be found in \cite[Section 3.2]{PR2009}. 
For any non-empty subset $\CI$ of vertices of the local Dynkin diagram, Pappas and Rapoport define a moduli functor $\Fl_\CI$, which is represented by an ind-scheme called \emph{unitary affine flag variety}, which parametrizes the set of periodic self-dual ``lattice chains'' subject to certain conditions.
		
Note that the functor $\Fl_\CI$ does not, in general, describe the same affine flag variety as described in \cite{PR2008}; rather, it corresponds to the ind-scheme $L\CG/L^+\CP_\CI$.

In what follows, we will apply the Frobenius-splitting properties of the Schubert varieties in the $\Fl_\CI$, which descend from those in $L\CG/L^+\CP_\CI^\circ$, cf.\ \cite[\S 3.3]{PR2009}.

Using the moduli description, the special fiber $M_{\CI,s}^\naive$ of the naive local model admits an embedding into the unitary affine flag variety $\Fl_\CI$; cf.\ \cite[\S 3.2]{PR2009}:
\begin{equation*}
	M^\loc_{\CI,s}\subset M_{\CI,s}\subset M^\naive_{\CI,s}\hookrightarrow \Fl_\CI.
\end{equation*}
This embedding is equivariant with respect to the action of the positive loop group $L^+\CP_\CI$. As a result, their reduced loci are unions of affine Schubert varieties.

This embedding arises functorially from the constructions. In other words, for any inclusion $\CI'\subset \CI$, we have a commutative diagram:
\begin{equation*}
\begin{aligned}
\xymatrix{
M^\loc_{\CI,s}\ar@{^(->}[r]\ar[d]	&	M_{\CI,s}\ar@{^(->}[r]\ar[d]	&	\Fl_\CI\ar[d]\\
M^\loc_{\CI',s}\ar@{^(->}[r]	&	M_{\CI',s}\ar@{^(->}[r]	&\Fl_{\CI'}.
}
\end{aligned}
\end{equation*}
		
The coherence conjecture implies that $M^\loc_{\CI,s}$ is in fact the union of Schubert varieties indexed by the admissible set $\Adm^\CI(\mu)$, where $\mu$ is the minuscule cocharacter corresponding to signature $(n-1,1)$; cf.\ \cite[Theorem 11.3]{PR2008} and \cite[Theorem 8.1]{Zhu2014}. In particular, $M_{\CI,s}^\loc$ is reduced; cf.\ Theorem \ref{moduli_fc:coh}.

\subsection{Flatness when $p\geq 3$}\label{general_split-case}	
Assume $p\geq 3$ in this subsection.
Fix $\CI$ a non-empty subset of vertices of the local Dynkin diagram. Consider the natural embedding
\begin{equation}\label{general_pull-back}
	M_{\CI,s}^\loc\subset M_{\CI,s}\hookrightarrow\Fl_\CI.
\end{equation}
For any $\tau\in \CI$, we define $\wt{M}_{\CI,\tau}$ to be the pull-back along the affine flag varieties
\begin{equation*}
\begin{aligned}
\xymatrix{
\wt{M}_{\CI,\tau}\ar@{}[rd]|{\square}\ar@{^(->}[r]\ar[d]_{\pi_{\CI,\{j\}}}&\Fl_\CI\ar[d]\\
M_{\{\tau\},s}\ar@{^(->}[r]&\Fl_{\{\tau\}}.
}
\end{aligned}
\end{equation*}

\begin{proposition}\label{general:intersection-of-tilde-M}
The special fiber $M_{\CI,s}$ of the strengthened spin model, viewed as a closed subscheme of $\Fl_\CI$, is equal to the scheme-theoretic intersection $\bigcap_{i\in \CI}\wt{M}_{\CI,\{i\}}\subset \Fl_\CI$.
\end{proposition}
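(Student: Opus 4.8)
The plan is to prove the equality of these two closed subschemes of $\Fl_\CI$ by comparing their functors of points. The left-hand side $M_{\CI,s}$ is closed in $\Fl_\CI$ by construction, and each $\wt M_{\CI,\{i\}}$ is closed as the base change along $\pi_{\CI,\{i\}}$ of the closed immersion $M_{\{i\},s}\hookrightarrow\Fl_{\{i\}}$, so the intersection $\bigcap_{i\in\CI}\wt M_{\CI,\{i\}}$ is again a closed subscheme of $\Fl_\CI$. Since a closed subscheme of a fixed (ind-)scheme is determined by, and its scheme-theoretic intersections are computed by, its subfunctor of points, it suffices to check that for every $k$-algebra $R$ the two subsets of $\Fl_\CI(R)$ agree. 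In particular this automatically upgrades a set-theoretic comparison to the scheme-theoretic statement.

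First I would unwind the right-hand side. By the definition of the fibre product, $x\in\Fl_\CI(R)$ lies in $\bigl(\bigcap_{i\in\CI}\wt M_{\CI,\{i\}}\bigr)(R)$ if and only if $\pi_{\CI,\{i\}}(x)\in M_{\{i\},s}(R)$ for all $i\in\CI$. Now $M_{\{i\},s}\subset\Fl_{\{i\}}$ is obtained from the affine flag variety by imposing axioms \texttt{LM1}--\texttt{LM8} on the filtration attached to the vertex $\{i\}$; but \texttt{LM1}--\texttt{LM4} are exactly the periodic, self-dual, $\pi$-stable lattice-chain structure already encoded by a point of $\Fl_{\{i\}}$ (and, by the functoriality of the embedding recorded in the commutative square above, are part of the point of $\Fl_\CI$ we started from), while \texttt{LM5}, \texttt{LM6}, \texttt{LM7} are consequences of \texttt{LM8} by Remark \ref{moduli_ss:imply-kott}, Corollary \ref{general:wedge-from-ss}, and the inclusion $M_{\{i\}}\subset M^\spin_{\{i\}}$. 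Hence $\pi_{\CI,\{i\}}(x)\in M_{\{i\},s}(R)$ amounts precisely to the strengthened spin condition \texttt{LM8} at the index $i$, i.e.\ $\bigwedge^n_R\sF_i(x)\subset L^{n-1,1}_{(-1)^s}(\Lambda_i)(R)$ in the notation of \eqref{moduli_ss:L}. Thus the right-hand side is the locus of $x$ satisfying \texttt{LM8} for every $i\in\CI$. On the other hand, $M_{\CI,s}$ is by definition the locus satisfying the same inclusion for \emph{all} indices $j$ in the periodic, duality-closed set $\pm\CI+n\BZ$. For $j=i+kn$ with $i\in\CI$ the periodicity isomorphism \texttt{LM3} identifies $\sF_j$ with $\sF_i$ compatibly with the identification $W(\Lambda_j)\cong W(\Lambda_i)$, so the condition at $j$ is equivalent to the one at $i$; and for $j\equiv -i$ (equivalently $j=n-i$ after a translation) Proposition \ref{moduli_ss:dual} shows the strengthened spin condition at $n-i$ is equivalent to the one at $i$. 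Therefore the whole family of conditions collapses to the subfamily indexed by $\CI$, the two subfunctors of $\Fl_\CI$ coincide, and the proposition follows.

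The part I expect to require the most care is the dictionary in the second paragraph: making fully precise the correspondence between a point of $\Fl_{\{i\}}$ (resp.\ $\Fl_\CI$) in its lattice-chain incarnation and the filtration $\sF_i$ to which \texttt{LM8} is applied, and checking that the conjunction over $i\in\CI$ of the single-vertex conditions \texttt{LM1}--\texttt{LM4} (boundedness, $\pi$-stability, self-duality of $(\mathcal L_i,\mathcal L_i^\perp)$) together with the chain, periodicity and global self-duality already present in $\Fl_\CI$ reconstitutes $M^\naive_{\CI,s}$ as a \emph{closed subscheme} of $\Fl_\CI$, and not merely as a set. This is routine from the explicit moduli description of the unitary affine flag variety in \cite[\S 3.2]{PR2009} and the functoriality of the embedding, so once it is in place the proof reduces to the formal functor-of-points manipulation above combined with the already established Proposition \ref{moduli_ss:dual} and the implications \texttt{LM8}$\Rightarrow$\texttt{LM5},\texttt{LM6},\texttt{LM7}.
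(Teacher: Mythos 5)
Your overall strategy---comparing the two closed subschemes of $\Fl_\CI$ through their functors of points and matching the per-vertex conditions---is the same as the paper's, but two steps as written do not hold up. The serious one is your appeal to Corollary \ref{general:wedge-from-ss} to dispose of the wedge condition \texttt{LM6}: in the paper that corollary is deduced from the flatness of the strengthened spin models (Theorem \ref{general:p-geq-3}), whose proof rests on the very Proposition \ref{general:intersection-of-tilde-M} you are proving, so your argument is circular at that point. The detour is also unnecessary: \texttt{LM5}--\texttt{LM8} are imposed vertex by vertex in the definitions of both $M_{\CI}$ and $M_{\{i\}}$, so the same conditions appear verbatim on both sides of the asserted equality and one never needs to know that \texttt{LM8} implies the others; similarly, since membership in $M_{\{i\},s}$ imposes the axioms on the whole sub-chain indexed by $\pm\{i\}+n\BZ$, and $\pm\CI+n\BZ=\bigcup_{i\in\CI}\bigl(\pm\{i\}+n\BZ\bigr)$, the index sets already match and Proposition \ref{moduli_ss:dual} is not needed either. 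This is exactly how the paper argues: it identifies $\wt{M}_{\CI,\{i\}}$ with the locus of chains in $\Fl_\CI$ whose sub-chain at the vertex $i$ satisfies the strengthened spin model axioms, and the intersection over $i\in\CI$ then literally reproduces the moduli functor of $M_{\CI,s}$, the only cross-vertex axiom \texttt{LM2} being built into the chain structure of $\Fl_\CI$.

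The second problem is your claim that \texttt{LM1}--\texttt{LM4} are ``already encoded by a point of $\Fl_\CI$.'' Self-duality, $u$-stability and the chain inclusions are indeed part of the data of $\Fl_\CI$, but the boundedness hidden in \texttt{LM1} is not: a point of $\Fl_\CI$ is an arbitrary self-dual periodic chain, and only those lattices $L_j$ with $u(\lambda_j\otimes R[[t]])\subset L_j\subset u^{-1}\lambda_j\otimes R[[t]]$ correspond to filtrations $\sF_j\subset\Lambda_j\otimes R$ at all (this is condition (i) in the paper's proof, and it is why $M^\naive_{\CI,s}$ is a proper sub-ind-scheme of $\Fl_\CI$). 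Consequently your description of the right-hand side as the locus where \texttt{LM8} holds for every $i\in\CI$, and of $M_{\CI,s}$ as the locus where \texttt{LM8} holds for all $j$, is literally false; the equality survives only because the omitted boundedness conditions occur identically on both sides. Both defects are repairable: state the per-vertex conditions in full (boundedness together with the local model axioms on the sub-chain at $i$) and observe that they coincide on the two sides, at which point your functor-of-points manipulation becomes the paper's proof. You would then still want a word about the case $\{m,m'\}\subset\CI$, where the ``chain'' is only partially ordered and the paper treats the fork separately.
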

\begin{proof}
The case where $m'\in \CI$ will be addressed at the end of the proof. For now, assume $m'\not\in \CI$.
In the pull-back diagram \eqref{general_pull-back}, the schemes $M_{\{\tau\},s}$, $\Fl_{\{\tau\}}$, and $\Fl_\CI$ all admit moduli interpretations. Using these descriptions, we identify the pull-back $\wt{M}_{\CI,\tau}$ with the moduli functor that assigns to each $k$-algebra $R$ the set of $R[[u]]$-lattice chains indexed by $\pm \CI+n\BZ$:
\begin{equation}\label{general_lattice-chain}
    \cdots\subset L_{i_1}\subset L_{i_2}\subset\cdots\subset L_{i_N}\subset u^{-1}L_{i_1}=L_{i_{1}+n}\subset\cdots
\end{equation}
in $\Fl_\CI$, such that the subchain $\{L_j\}_{j\in\pm\CI+n\BZ}$ satisfies the following conditions:
\begin{altenumerate}
\item There is a natural inclusion of lattices in $R((u))^n$:
\begin{equation*}
    u\left(\lambda_j\otimes_{k[[t]]}R[[t]]\right)\subset L_j\subset u^{-1}\lambda_j\otimes_{k[[t]]}R[[t]].
\end{equation*}

\item Via the identification 
\begin{equation*}
    u^{-1}\lambda_j\otimes_{k[[t]]}R[[t]]\otimes_{R[[t]]} R\simeq u^{-1}\lambda_j\otimes_{k[[t]]}R\simeq \Lambda_j\otimes_{\CO_{F_0}}R,
\end{equation*}
the lattices $L_j$ correspond to a chain of filtrations $\{\CF_j\subset \Lambda_j\otimes_{\CO_{F_0}}R\}_{j\in\pm\CI+n\BZ}$, satisfying the axioms of the strengthened spin model.
\end{altenumerate}

From this moduli description of $\wt{M}_{\CI,\tau}$, it follows that the scheme-theoretic intersection $\bigcap_{i\in\CI}\wt{M}_{\CI,i}$ parametrizes those lattice chains in \eqref{general_lattice-chain} that satisfy both conditions (i) and (ii). This coincides with the moduli functor of the strengthened spin model $M_{I,s}$.

Finally, consider the case when $m'\in \CI$. If $m\not\in \CI$, then the same argument applies after replacing $m'$ with $m$ via the diagram automorphism. 
When $\{m,m'\}\subset\CI$, the lattice chain takes the form:
\begin{equation*}
\xy
(-17,7)*+{\cdots\subset L_\bullet\subset L_\bullet};
(-4.5,3.5)*+{\rotatebox{-45}{$\,\, \subset\,\,$}};
(-4.5,10.5)*+{\rotatebox{45}{$\,\, \subset\,\,$}};
(0,14)*+{L_m};
(0,0)*+{L_{m'}};
(4,3.5)*+{\rotatebox{45}{$\,\, \subset\,\,$}};
(4,10.5)*+{\rotatebox{-45}{$\,\, \subset\,\,$}};
(19,7)*+{L_\bullet \subset  L_\bullet\subset\cdots.}
\endxy
\end{equation*}
Except that, the same argument follows.

\end{proof}

\begin{theorem}\label{general:p-geq-3}
    When $p\geq 3$, Theorem \ref{intro_back:main} is true.
\end{theorem}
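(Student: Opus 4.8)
The plan is to deduce Theorem~\ref{general:p-geq-3} (i.e.\ Theorem~\ref{intro_back:main} for $p\geq 3$) by combining the maximal-parahoric case, already established in Theorem~\ref{intro_back:max}(i) (equivalently Theorem~\ref{max_red:reduced_sym}(i)), with the reduction afforded by Corollary~\ref{moduli_fc:reduced} and the intersection description of Proposition~\ref{general:intersection-of-tilde-M}. By Corollary~\ref{moduli_fc:reduced} it suffices to show that the special fiber $M_{\CI,s}$ of the strengthened spin model is reduced for every non-empty index set $\CI$. Proposition~\ref{general:intersection-of-tilde-M} realizes $M_{\CI,s}$ as the scheme-theoretic intersection $\bigcap_{i\in\CI}\wt{M}_{\CI,\{i\}}$ inside the affine flag variety $\Fl_{\CI}$, where each $\wt{M}_{\CI,\{i\}}$ is the pullback of the maximal-parahoric model $M_{\{i\},s}$ along the projection $\Fl_{\CI}\to\Fl_{\{i\}}$.

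First I would record that each $M_{\{i\},s}$ is reduced: when $\{i\}$ is strongly non-special this is Theorem~\ref{max_red:reduced_sym}(i); in the remaining maximal cases ($i=0$, $i=m$, and $\{m-1,m\}$ when $n$ is even, together with their local-Dynkin avatars $m'$) it follows from \cite{Pappas2000,Smithling2015,RSZ2017,Yu2019}, as summarized in \S\ref{intro_moduli} and Remarks~\ref{max_red:recover}. Moreover, by Theorem~\ref{moduli_fc:coh} the local-model special fiber $M^\loc_{\{i\},s}$ is reduced with normal, Cohen--Macaulay, Frobenius-split irreducible components, and by topological flatness (Theorem~\ref{moduli_fc:top-flat}, Remark~\ref{moduli_topflat:general}) it has the same underlying space as $M_{\{i\},s}$; hence $M_{\{i\},s}=M^\loc_{\{i\},s}$ is a reduced union of Frobenius-split Schubert varieties in $\Fl_{\{i\}}$. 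Since the projection $\pi_{\CI,\{i\}}\colon\Fl_{\CI}\to\Fl_{\{i\}}$ is a smooth (iterated $\mathbb{P}^1$- or partial-flag) fibration, the pullback $\wt{M}_{\CI,\{i\}}=\pi_{\CI,\{i\}}^{-1}(M_{\{i\},s})$ is again reduced, and each of its irreducible components is again Frobenius-split (Frobenius-splitness of a closed subscheme is preserved under pullback along a smooth morphism, and the fibration structure is compatible with the chosen splittings of $\Fl_{\CI}$; cf.\ \cite[\S3.3]{PR2009}). Thus $M_{\CI,s}$ is, set-theoretically, an intersection inside $\Fl_{\CI}$ of Frobenius-split subvarieties whose defining ideals are compatibly split.

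The key step is then to invoke the standard fact about compatibly Frobenius-split subschemes: if $X$ is Frobenius-split by a section $s$, and $Y_1,\dots,Y_r$ are closed subschemes each compatibly split by $s$, then the scheme-theoretic intersection $Y_1\cap\cdots\cap Y_r$ is reduced, and in fact each $Y_j$ is reduced, with all scheme-theoretic intersections among them reduced (see \cite[\S1.2]{PR2008} and the references therein, or Brion--Kumar). I would apply this with $X=\Fl_{\CI}$ (or rather each relevant finite-dimensional Schubert variety in it), $s$ the canonical splitting coming from \cite[\S3.3]{PR2009}, and $\{Y_j\}$ the (finitely many) irreducible components of the various $\wt{M}_{\CI,\{i\}}$ for $i\in\CI$. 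Since $M_{\CI,s}=\bigcap_{i\in\CI}\wt{M}_{\CI,\{i\}}$ as schemes by Proposition~\ref{general:intersection-of-tilde-M}, and this intersection can be rewritten as an intersection of the compatibly split components $Y_j$ (an intersection of unions is a union of intersections, each of which is again compatibly split and hence reduced), we conclude $M_{\CI,s}$ is reduced. By Corollary~\ref{moduli_fc:reduced} this gives $M^\loc_{\CI}=M_{\CI}$, which is Theorem~\ref{intro_back:main} under the hypothesis $p\geq3$.

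The main obstacle I anticipate is the bookkeeping needed to legitimately apply the compatible-splitting machinery: one must check that the canonical splitting of $\Fl_{\CI}$ (or of each finite-type Schubert variety exhausting it) is \emph{simultaneously} compatible with all the irreducible components of all the $\wt{M}_{\CI,\{i\}}$, not merely with each separately. This requires that the Schubert subvarieties of $\Fl_{\{i\}}$ cutting out $M_{\{i\},s}$ are compatibly split in $\Fl_{\{i\}}$ (true, by \cite{PR2008}), that compatible splitting is preserved under pullback along the smooth projection $\pi_{\CI,\{i\}}$, and that the pulled-back splittings for different $i$ all coincide with (a power of) the canonical splitting of $\Fl_{\CI}$ — this compatibility across the different maximal parahorics is exactly where one leans on the uniform construction of splittings in \cite[\S3.3]{PR2009}. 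A secondary, purely formal point is to pass from the ind-scheme $\Fl_{\CI}$ to an honest finite-dimensional Schubert variety containing $M_{\CI,s}$ before quoting the splitting results; since $M_{\CI,s}$ is quasi-compact this is harmless.
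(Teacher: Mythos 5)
Your proposal follows the same skeleton as the paper's proof: reduce to reducedness of $M_{\CI,s}$ via Corollary \ref{moduli_fc:reduced}, write $M_{\CI,s}=\bigcap_{i\in\CI}\wt{M}_{\CI,\{i\}}$ inside $\Fl_\CI$ via Proposition \ref{general:intersection-of-tilde-M}, feed in the maximal-parahoric cases (Theorem \ref{max_red:reduced_sym} plus the cases from \S\ref{intro_moduli}), and conclude by Frobenius splitting. So the route is essentially the paper's.

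The one step you should repair is the transfer of splittings along $\pi_{\CI,\{i\}}$. The claim that ``Frobenius-splitness of a closed subscheme is preserved under pullback along a smooth morphism'' is not a valid general principle (splittings push forward along morphisms with $f_*\CO_X=\CO_Y$; they do not simply pull back), and your closing worry about whether the pulled-back splittings for different $i$ coincide with the canonical one is a symptom of arguing through the wrong mechanism. The paper sidesteps all of this: since the embedding $M_{\{i\},s}\hookrightarrow\Fl_{\{i\}}$ and the projection $\Fl_\CI\to\Fl_{\{i\}}$ are $L^+\CP_\CI$-equivariant, and $M_{\{i\},s}=M^\loc_{\{i\},s}$ is a union of Schubert varieties, each preimage $\wt{M}_{\CI,\{i\}}$ is itself a union of Schubert varieties in $\Fl_\CI$; then one quotes that an intersection of unions of Schubert varieties is Frobenius split, hence reduced (\cite[Corollary 2.7]{Gortz2001}, with the splitting of \cite[\S3.3]{PR2009}), with no need to compare splittings across the various $i$. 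Equivariance, not smoothness, is what makes the intersection an intersection of compatibly split subvarieties. A second, minor omission: Theorem \ref{intro_back:main} is stated for index sets $I\subset\{0,\dots,m\}$ with \eqref{intro_back:local-dynkin}, while Proposition \ref{general:intersection-of-tilde-M} works with subsets of the local Dynkin diagram; when $\{m-1,m\}\subset I$ (for even $n$) one must first replace $I$ by $I\cup\{m'\}\setminus\{m-1\}$, using the isomorphism of strengthened spin models from \cite[Remark 4.2]{PR2008} and \cite[proof of Proposition 9.12]{RSZ2018}, before running the intersection argument; you allude to the vertex $m'$ but do not carry out this reduction.
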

\begin{proof}
Since $\CP_{\{m-1,m\}}^{\circ}=\CP^{\circ}_{\{m,m'\}}$, we have a group-theoretic identification of affine flag varieties $L\CG/L^+\CP_{\{m-1,m\}}^{\circ}\cong L\CG/L^+\CP_{\{m,m'\}}^{\circ}$. By the coherent conjecture, this induces a natural isomorphism between the corresponding local models $M^{\loc}_{\{m-1,m\}}\cong M^{\loc}_{\{m',m\}}$, with both identified as unions of Schubert cells indexed by admissible sets. 
By Remark \ref{max_red:recover}, together with \cite[Proposition 9.12]{RSZ2018}, or alternatively by \cite{Yu2019}, we have $M^{\loc}_{\{m-1,m\}}=M_{\{m-1,m\}}$. 
By Proposition \ref{general:intersection-of-tilde-M}, we also have $M^{\loc}_{\{m,m'\}}=M_{\{m,m'\}}$. Hence, we obtain an abstract isomorphism $M_{\{m-1,m\}}\cong M_{\{m,m'\}}$.
It therefore suffices to prove the theorem for the index set $\CI$ interpreted as a subset of the local Dynkin diagram.

By Theorem \ref{max_red:reduced_sym} in the strongly non-special case, and \S \ref{intro_moduli} for the other cases, the special fiber $M_{\{j\},s}$ of the strengthened spin model is reduced.
Therefore, the inclusion $M_{\{j\},s}^\loc=M_{\{j\},s}\hookrightarrow\Fl_\CI$ is a union of Schubert varieties.
It follows that the pullbacks $\wt{M}_{\CI,\{j\}}\subset \Fl_\CI$ are also unions of Schubert varieties.
Since the intersection 
$M_\CI=\bigcap_{i\in \CI}\wt{M}_{\CI,\{i\}}\subset \Fl_\CI$ is an intersection of unions of Schubert varieties, it is compatiblely-Frobenius split by \cite[Corollary 2.7]{Gortz2001}, and hence reduced.
\end{proof}

\begin{corollary}\label{general:wedge-from-ss}
For the ramified unitary local model of signature $(n-1,1)$, the strengthened spin condition {\fontfamily{cmtt}\selectfont LM8} implies the wedge condition {\fontfamily{cmtt}\selectfont LM6}.
\end{corollary}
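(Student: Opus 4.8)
The plan is to reinterpret the statement functorially: let $N_\CI$ denote the closed subscheme of $M_\CI^\naive$ cut out by the strengthened spin condition {\fontfamily{cmtt}\selectfont LM8} alone (imposed on top of {\fontfamily{cmtt}\selectfont LM1}--{\fontfamily{cmtt}\selectfont LM4}). Since {\fontfamily{cmtt}\selectfont LM8} holds on $M_\CI\subset M_\CI^\wedge\subset M_\CI^\naive$ we get a closed immersion $M_\CI\hookrightarrow N_\CI$, and the corollary is equivalent to the equality $N_\CI=M_\CI$. I would prove this as follows. Over the generic fibre all of {\fontfamily{cmtt}\selectfont LM5}--{\fontfamily{cmtt}\selectfont LM8} are automatic, so $N_{\CI,\eta}=M_{\CI,\eta}^\naive=M_{\CI,\eta}$; and $M_\CI=M_\CI^\loc$ is flat over $\CO_E$ by Theorem \ref{intro_back:main}. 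Writing $\mathcal{J}\subset\mathcal{O}_{N_\CI}$ for the (coherent) ideal of $M_\CI$, equality of generic fibres makes $\mathcal{J}$ supported on the special fibre, while flatness of $M_\CI$ identifies $\mathcal{J}/\pi\mathcal{J}$ with the ideal of $M_{\CI,s}$ in $N_{\CI,s}$; so a stalkwise Nakayama argument on the special fibre reduces everything to proving $N_{\CI,s}=M_{\CI,s}$ as schemes.

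For this, I would first treat the maximal case $\CI=\{\kappa\}$. On the affine chart $U$ around the worst point, the ideal of $N_{\{\kappa\},s}\cap U$ is generated by the naive relations {\fontfamily{cmtt}\selectfont LM1}, {\fontfamily{cmtt}\selectfont LM2} together with the relations translating {\fontfamily{cmtt}\selectfont LM8}; but the computations of \S\ref{equ_coord} --- in particular Proposition \ref{equ_coord_translation:not-show-up} (which already outputs $\bigwedge^2 X$ and $X_2=0$) and Corollary \ref{equ_ss-comp:basis} --- exhibit these generators as exactly the list of Theorem \ref{equ_coord:ss-main}. Hence $N_{\{\kappa\},s}\cap U=M_{\{\kappa\},s}\cap U$, which is reduced by Theorem \ref{max_red:reduced_sym}. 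Now $N_{\{\kappa\},s}$ is cut out by intrinsic conditions, so it is closed and $L^+\CP_{\{\kappa\}}$-stable inside $\Fl_{\{\kappa\}}$, and its non-reduced locus is therefore a union of Schubert varieties; were it nonempty it would contain the closed Schubert cell, i.e.\ the worst point, contradicting reducedness on $U$. Thus $N_{\{\kappa\},s}$ is reduced, and since each of its irreducible components is a Schubert variety containing the worst point, hence meeting $U$ in a dense open contained in $M_{\{\kappa\},s}\cap U$, each component lies in the closed set $M_{\{\kappa\},s}$; together with $N_{\{\kappa\},s}\supset M_{\{\kappa\},s}$ this gives $N_{\{\kappa\},s}=M_{\{\kappa\},s}$. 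The remaining maximal cases $\{0\}$, $\{m\}$, and $\{m-1,m\}$ (for $n$ even) are handled identically using the computations recalled in \S\ref{intro_moduli}.

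For a general index set $\CI$, I would run the moduli-theoretic argument of Proposition \ref{general:intersection-of-tilde-M} verbatim with $M$ replaced by $N$, obtaining $N_{\CI,s}=\bigcap_{i\in\CI}\wt{N}_{\CI,\{i\}}$ as closed subschemes of $\Fl_\CI$, where $\wt{N}_{\CI,\{i\}}$ is the pullback of $N_{\{i\},s}\subset\Fl_{\{i\}}$. By the maximal case each $N_{\{i\},s}=M_{\{i\},s}$ is reduced, hence a union of Schubert varieties, so each $\wt{N}_{\CI,\{i\}}$ is a union of Schubert varieties in $\Fl_\CI$; their scheme-theoretic intersection is then Frobenius split, hence reduced, by \cite[Corollary 2.7]{Gortz2001}, and it equals $\bigcap_i\wt{M}_{\CI,\{i\}}=M_{\CI,s}$. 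Feeding $N_{\CI,s}=M_{\CI,s}$ back into the Nakayama reduction of the first paragraph yields $N_\CI=M_\CI\subset M_\CI^\wedge$, which is the assertion; the case $\charac k=0$ is the same, using \S\ref{general_char-0} in place of Theorem \ref{general:p-geq-3}. I expect the only genuinely substantive point to be the chart identification $N_{\{\kappa\},s}\cap U=M_{\{\kappa\},s}\cap U$ --- that is, the fact that {\fontfamily{cmtt}\selectfont LM8} by itself already forces the wedge equations $\bigwedge^2 X=0$, $X_2=0$ and indeed the entire ideal of Theorem \ref{equ_coord:ss-main}; everything else is equivariance bookkeeping in the affine flag variety and the formal Nakayama descent.
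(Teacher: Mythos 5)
Your skeleton is essentially the paper's: reformulate the claim as $N_\CI=M_\CI$ for the {\fontfamily{cmtt}\selectfont LM8}-only locus $N_\CI\subset M_\CI^\naive$, use flatness of $M_\CI=M_\CI^\loc$ (Theorem \ref{intro_back:main}) to reduce to the special fibre (your stalkwise Nakayama argument is a correct and useful expansion of the paper's one-line reduction), reduce to single vertices as in Proposition \ref{general:intersection-of-tilde-M}, and compare ideals on the worst-point chart, where Proposition \ref{equ_coord_translation:not-show-up} indeed extracts $\bigwedge^2X=0$, $X_2=0$ from {\fontfamily{cmtt}\selectfont LM8} alone, so that $N_{\{\kappa\},s}\cap U=M_{\{\kappa\},s}\cap U$. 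Up to that point you and the paper agree.

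There are, however, two genuine gaps in your treatment of the maximal case. First, the $\pi$-modular vertex ($\kappa=m$, $n=2m$), which does occur after the reduction to single vertices of the local Dynkin diagram, is not ``handled identically'': there the worst point does not lie in the local model, nor in the {\fontfamily{cmtt}\selectfont LM8}-locus (cf.\ Remarks \ref{max_red:recover}(iv) and \cite[Remark 7.4]{PR2009}), so the worst-point chart and your ``every component contains the worst point'' step are both unavailable; moreover the moduli description of \cite{RSZ2017} recalled in \S\ref{intro_moduli} is phrased via wedge and spin, not {\fontfamily{cmtt}\selectfont LM8}. The paper handles this case separately, observing that Proposition \ref{equ_ss-comp:g_S-dual} and the analogue of Corollary \ref{equ_ss-comp:basis} hold for every $\kappa$ and working on the chart of the ``best point''. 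Second, and more seriously, your globalization from $U$ to all of $N_{\{\kappa\},s}$ assumes that the non-reduced locus, and every irreducible component, of $N_{\{\kappa\},s}$ contains the worst point. That would follow if the underlying space of $N_{\{\kappa\},s}$ were contained in the admissible locus, but topological flatness (Theorem \ref{moduli_fc:top-flat}) is proved for $M_I^\spin\subset M_I^\wedge$, not for the locus cut out by {\fontfamily{cmtt}\selectfont LM8} alone, and ``$N\subset M^\wedge$'' is exactly what you are trying to prove. A nonempty closed $L^+\CP_{\{\kappa\}}$-stable subset of $N_{\{\kappa\},s}$ is only guaranteed to contain \emph{some} $L^+\CP_{\{\kappa\}}$-fixed point of $M^\naive_{\{\kappa\},s}$, and you have not shown such a point must be the worst point; a component through another fixed point need not meet $U$ at all, and your argument says nothing about it. To close this you should either argue chart-freely --- Corollary \ref{equ_ss-comp:basis} is a statement about the lattice $L^{n-1,1}_{-1}(\Lambda_\kappa)(R)$ itself, so {\fontfamily{cmtt}\selectfont LM8} forces $\bigwedge^n_R\CF_\kappa$ into the type-$(n,0)\oplus(n-1,1)$ part of $W(\Lambda_\kappa)\otimes_{\CO_F}R$ at \emph{every} $R$-point, after which the wedge relations can be checked on every Grassmannian chart, not just $U$ --- or else classify the $L^+\CP_{\{\kappa\}}$-fixed points of $M^\naive_{\{\kappa\},s}$ satisfying {\fontfamily{cmtt}\selectfont LM8} and verify the condition at each of them directly.
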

\begin{proof}
Since the strengthened spin models are flat, it suffices to verify the claim over their special fiber. By Proposition \ref{general:intersection-of-tilde-M}, this further reduces to the maximal parahoric cases.

Notice that the computations in Proposition \ref{equ_ss-comp:g_S-dual} hold for all indices $0\leq \kappa\leq m$. Therefore, a similar basis as in Corollary \ref{equ_ss-comp:basis} can be constructed for self-dual, $\pi$-modular, and almost $\pi$-modular cases.
When $\kappa$ is self-dual or almost $\pi$-modular, we recover the same basis as in the Corollary \ref{equ_ss-comp:basis}. In the almost $\pi$-modular case, this agrees with \cite[Corollary 4.14]{Smithling2015}.
In particular, all basis elements $e_S$ correspond to $S$ of type $(1,n-1)$ or $(0,n)$. 
Thus, in the affine chart centered at the worst point, all $2\times 2$-minors of $X$ vanish, and the wedge condition is automatically satisfied; cf.\ Proposition \ref{equ_coord_translation:not-show-up}.

In the $\pi$-modular case, only case (iv) of Proposition \ref{equ_ss-comp:g_S-dual} occurs, so the element $e_{\{n+1,\cdots,2n\}}$ does not appear in the basis. As a result, the worst point does not lie in the local model; cf.\ \cite[Remark 7.4]{PR2009}.
Nevertheless, the remaining elements in Corollary \ref{equ_ss-comp:basis} still form a basis.
In this case, one considers the open chart around the ``best point'', cf.\ \cite[\S 5.3]{PR2009}.
Since all basis elements $e_S$ are of type $(1,n-1)$, the wedge condition again follows from the strengthened spin condition.
\end{proof}
		
From Proposition \ref{general:intersection-of-tilde-M} one can deduce
\begin{corollary}\label{general:lm-vertex}
The local model is equal to the following scheme-theoretical intersection
\begin{equation*}
    M^\loc_{\CI,s}=\bigcap_{i\in \CI}\pi^{-1}_{\CI,\{i\}}(M^\loc_{\{i\},s}),
\end{equation*} 
where $\pi_{\CI,\{i\}}:M^\loc_{\CI,s}\rightarrow M^\loc_{\{i\},s}$ is the natural projection.\qed
\end{corollary}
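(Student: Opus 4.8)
The plan is to deduce this directly from Proposition \ref{general:intersection-of-tilde-M} together with the now-established equality $M_{\CI}^\loc = M_{\CI}$ (Theorem \ref{intro_back:main}, proved via Theorem \ref{general:p-geq-3} for $p\geq 3$ and via the char-$0$ argument of \S \ref{general_char-0} in the remaining case, so that in all cases $M_{\CI,s}^\loc = M_{\CI,s}$ as closed subschemes of $\Fl_\CI$, and likewise for each vertex $\{i\}$). First I would record that, by definition of the pull-back $\wt M_{\CI,\{i\}}$ and functoriality of the embeddings into the affine flag varieties, one has the scheme-theoretic identity $\wt M_{\CI,\{i\}} = \pi_{\CI,\{i\}}^{-1}\bigl(M_{\{i\},s}\bigr)$ inside $\Fl_\CI$, where $\pi_{\CI,\{i\}}\colon \Fl_\CI \to \Fl_{\{i\}}$ is the natural projection (and by restriction the projection $M_{\CI,s}\to M_{\{i\},s}$). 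Then Proposition \ref{general:intersection-of-tilde-M} gives
\begin{equation*}
	M_{\CI,s} = \bigcap_{i\in\CI} \wt M_{\CI,\{i\}} = \bigcap_{i\in\CI} \pi_{\CI,\{i\}}^{-1}\bigl(M_{\{i\},s}\bigr).
\end{equation*}

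Next I would substitute the equalities $M_{\CI,s} = M_{\CI,s}^\loc$ and $M_{\{i\},s} = M_{\{i\},s}^\loc$ for each $i\in\CI$; since the projection $\pi_{\CI,\{i\}}$ restricts to the natural projection $M_{\CI,s}^\loc \to M_{\{i\},s}^\loc$ on the local-model side, one obtains precisely
\begin{equation*}
	M_{\CI,s}^\loc = \bigcap_{i\in\CI} \pi_{\CI,\{i\}}^{-1}\bigl(M_{\{i\},s}^\loc\bigr),
\end{equation*}
which is the asserted identity. One small point to address carefully: the statement involves $\pi_{\CI,\{i\}}$ as a map $M_{\CI,s}^\loc \to M_{\{i\},s}^\loc$, so the preimage should be interpreted inside $M_{\CI,s}^\loc$ (or equivalently inside $\Fl_\CI$, the two agreeing after intersecting with $M_{\CI,s}^\loc$); I would note that intersecting the displayed equation in $\Fl_\CI$ with any one of the $\wt M_{\CI,\{i\}}$ makes no difference, so the two readings coincide.

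The only genuine content here is already packaged in Proposition \ref{general:intersection-of-tilde-M} and in Theorem \ref{intro_back:main}; the residual work — and the step I would be most careful about — is the bookkeeping identifying $\wt M_{\CI,\{i\}}$ with $\pi_{\CI,\{i\}}^{-1}(M_{\{i\},s})$ scheme-theoretically rather than merely topologically. This is immediate from the definition of $\wt M_{\CI,\{i\}}$ as a fiber product (the square is cartesian by construction), so there is no real obstacle, but it is worth stating explicitly so the reduction from the strengthened-spin-model statement to the local-model statement is clean.
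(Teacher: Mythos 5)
Your proposal is correct and follows essentially the same route as the paper: the paper deduces this corollary directly from Proposition \ref{general:intersection-of-tilde-M} (the identity $M_{\CI,s}=\bigcap_{i\in\CI}\wt{M}_{\CI,\{i\}}$, with $\wt{M}_{\CI,\{i\}}=\pi_{\CI,\{i\}}^{-1}(M_{\{i\},s})$ by the cartesian definition) combined with the equalities $M_{\CI,s}=M^\loc_{\CI,s}$ and $M_{\{i\},s}=M^\loc_{\{i\},s}$ furnished by the main flatness theorem. Your explicit bookkeeping of the fiber-product identification and of where the preimage is taken is exactly the implicit content of the paper's one-line deduction.
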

		
Since the special fiber $M^\loc_{\CI,s}$ of the local model is identified with the union of Schubert varieties indexed by $\text{Adm}^\CI(\mu)$, we obtain the following group-theoretic relation:
		
\begin{corollary}\label{general:vertex-wise}
We have the following identification:
\begin{equation*}
	\text{\emph{Adm}}^\CI(\mu)=\bigcap_{i\in \CI}\pi_{\CI,\{i\}}^{-1}(\text{\emph{Adm}}^{\{i\}}(\mu)).\eqno\qed
\end{equation*}
\end{corollary}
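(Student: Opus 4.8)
The plan is to convert the scheme-theoretic identity of Corollary~\ref{general:lm-vertex} into the asserted combinatorial identity by bookkeeping affine Schubert cells. Recall from \S\ref{general_split-case} that $M^\loc_{\CI,s}\subset\Fl_\CI$ and each $M^\loc_{\{i\},s}\subset\Fl_{\{i\}}$ are closed subschemes stable under the respective positive loop groups $L^+\CP_\CI$ and $L^+\CP_{\{i\}}$, and that by the coherence conjecture (\cite[Theorem~11.3]{PR2008}, \cite[Theorem~8.1]{Zhu2014}) the set of affine Schubert cells they contain is, by definition, $\Adm^\CI(\mu)$, resp.\ $\Adm^{\{i\}}(\mu)$. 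Since any subset of $\Fl_\CI$ stable under $L^+\CP_\CI$ is a union of Schubert cells, it is determined by the set of cells it contains; so it suffices to compare the cell sets of the two sides of the claimed equality.

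First I would analyze how cells behave under the transition map $\pi_{\CI,\{i\}}\colon\Fl_\CI\to\Fl_{\{i\}}$ for $i\in\CI$. This morphism is surjective and $L^+\CP_\CI$-equivariant for the inclusion $L^+\CP_\CI\subset L^+\CP_{\{i\}}$, and therefore it carries the Schubert cell $C_w$ of $\Fl_\CI$ into the Schubert cell $C_{\bar w}$ of $\Fl_{\{i\}}$, where $w\mapsto\bar w$ is the natural map of index sets — precisely the map we also denote $\pi_{\CI,\{i\}}$ on admissible sets. Consequently, for any union of Schubert varieties $Z\subset\Fl_{\{i\}}$ the preimage $\pi_{\CI,\{i\}}^{-1}(Z)$ is closed and $L^+\CP_\CI$-stable, hence a union of Schubert varieties (and in fact reduced, since $\pi_{\CI,\{i\}}$ is flat with smooth fibers), and $C_w\subset\pi_{\CI,\{i\}}^{-1}(Z)$ if and only if $C_{\bar w}\subset Z$. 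Applying this with $Z=M^\loc_{\{i\},s}$ shows that the cell set of $\pi_{\CI,\{i\}}^{-1}(M^\loc_{\{i\},s})$ is exactly $\pi_{\CI,\{i\}}^{-1}(\Adm^{\{i\}}(\mu))$.

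Next, the underlying set of a finite scheme-theoretic intersection of unions of Schubert varieties in $\Fl_\CI$ is the intersection of the underlying sets; hence $\bigcap_{i\in\CI}\pi_{\CI,\{i\}}^{-1}(M^\loc_{\{i\},s})$ contains precisely the cells $C_w$ with $w\in\bigcap_{i\in\CI}\pi_{\CI,\{i\}}^{-1}(\Adm^{\{i\}}(\mu))$. But by Corollary~\ref{general:lm-vertex} this intersection equals $M^\loc_{\CI,s}$, whose cell set is $\Adm^\CI(\mu)$. Equating the two descriptions of the cell set yields $\Adm^\CI(\mu)=\bigcap_{i\in\CI}\pi_{\CI,\{i\}}^{-1}(\Adm^{\{i\}}(\mu))$.

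The one point that requires care — indeed, the only substance beyond invoking Corollary~\ref{general:lm-vertex} — is the compatibility used in the second paragraph: that the scheme morphism $\pi_{\CI,\{i\}}$ induces on index sets the combinatorial double-coset map defining $\pi_{\CI,\{i\}}^{-1}$ on admissible sets, that it sends each Schubert cell into a single Schubert cell, and that preimages of Schubert-closed loci are again Schubert-closed. All of this is standard for affine flag varieties, following from equivariance and surjectivity of $\pi_{\CI,\{i\}}$ together with the orbit decomposition of $\Fl_\CI$; but I would take a moment to check that these inputs persist in the Pappas--Rapoport setting $\Fl_\CI$ of \S\ref{general_split-case}, in particular when the special vertex $m'$ occurs in $\CI$ — the same configuration already dealt with in the proof of Proposition~\ref{general:intersection-of-tilde-M}. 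With that in hand, the corollary is a purely formal consequence of Corollary~\ref{general:lm-vertex}.
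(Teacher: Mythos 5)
Your proposal is correct and follows the same route as the paper: the paper states the corollary with no written proof, treating it as immediate from Corollary~\ref{general:lm-vertex} together with the identification (via the coherence conjecture) of $M^\loc_{\CI,s}$ and $M^\loc_{\{i\},s}$ with the unions of Schubert varieties indexed by $\Adm^\CI(\mu)$ and $\Adm^{\{i\}}(\mu)$. What you have done is spell out the Schubert-cell bookkeeping (equivariance of $\pi_{\CI,\{i\}}$, cells mapping into cells, preimages of Schubert-closed loci being Schubert-closed) that the paper leaves implicit, which is a faithful filling-in of the same argument.
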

This proves the vertex-wise conjecture in \cite[\S 4.2]{PR2009} for the ramified unitary group of signature $(n-1,1)$. Haines and He have proven the conjecture for general groups using purely combinatorial methods, cf.\ \cite{HH2017}.
It is worth noting that one of the original motivations for the vertex-wise conjecture was to prove Corollary \ref{general:lm-vertex}; cf.\ \cite[Proposition 4.5]{PR2009} and \cite[Remark 8.3]{HH2017}.

\subsection{Flatness of the local model when $p=0$}\label{general_char-0}
In this subsection, we further show that the strengthened spin model $M_I$ is flat when residue field $k$ has characteristic zero. We follow the strategy of G\"ortz \cite[Theorem 4.18]{Gortz2001}.
We construct a standard model that universally represents the special fiber of  the strengthened spin model $M_I$.

Let $\BZ':=\BZ[1/2]$. 
For each $0\leq i\leq n$, define $\Lambda_i$ to be the free $\BZ'$-module with basis $e^{[i]}_1,\cdots,e^{[i]}_n,f^{[i]}_1,\cdots,f^{[i]}_n$.
We identify $\Lambda_n$ with $\Lambda_0$ throughout.
For each $i$, we define a nilpotent operator
\begin{equation*}
    \Pi:\Lambda_i\longrightarrow \Lambda_i,
    \quad
    e^{[i]}_j\mapsto f^{[i]}_j, f^{[i]}_j\mapsto 0, \text{ for any } 1\leq j\leq n.
\end{equation*}
For any $i\leq j$, we define transition maps
\begin{equation*}
    \varphi_{ij}:\Lambda_i\longrightarrow \Lambda_j,
    \quad\text{and}\quad
    \varphi_{ji}:\Lambda_j\longrightarrow\Lambda_i,
\end{equation*}
given by the following matrices:
\begin{equation*}
    \varphi_{ij}=\left(\begin{matrix}
    I_i&0&0&0&0&0\\
    0&0&0&0&0&0\\
    0&0&I_{j^\vee}&0&0&0\\
    0&0&0&I_i&0&0\\
    0&I_{j-i}&0&0&0&0\\
    0&0&0&0&0&I_{j^\vee}
    \end{matrix}\right),
    \quad\text{and}\quad
    \varphi_{ji}=\left(\begin{matrix}
    0&0&0&0&0&0\\
    0&I_{j-i}&0&0&0&0\\
    0&0&0&0&0&0\\
    I_i&0&0&0&0&0\\
    0&0&0&0&I_{j-i}&0\\
    0&0&I_{j^\vee}&0&0&0
    \end{matrix}\right).
\end{equation*}
Note that $\varphi_{ij}$ also represents the transition maps $\Lambda_{i,k}\rightarrow \Lambda_{j,k}$ in \S \ref{moduli_setup_notation}. Similarly, $\varphi_{ji}$ corresponds to  the transition maps $\Lambda_{j,k}\rightarrow\Lambda_{i+n,k}$ in \S \ref{moduli_setup_notation}.
We define $\varphi_{0n}$ and $\varphi_{n0}$ to be the identity map.

For $0\leq i\leq n/2$, we define a perfect ``symmetric'' pairing $\sform:\Lambda_{n-i}\times \Lambda_{i}\rightarrow \BZ'$ by setting
$$(e^{[n-i]}_\tau, f^{[i]}_{n+1-\tau} )=1\quad\text{and}\quad( f^{[n-i]}_\tau, e^{[i]}_{n+1-\tau} )=1$$
for all $1\leq \tau\leq n$, and zero on all other pairs.

For any $0\leq i\leq n/2$, let $W(\Lambda_{i}):=\bigwedge^n\Lambda_{i}$ denote the $n$-th wedge product. We define $L_i\subset W(\Lambda_i)$ to be the free submodule generated by the basis given in Corollary \ref{equ_ss-comp:basis}.

For any nonempty subset $I\subset \{0,1,\cdots,m\}$ satisfying \eqref{intro_back:local-dynkin}, we define the \emph{standard model (of the special fiber)} $\BM^s_I$ as a projective scheme over $\Spec \BZ'$. It represents the moduli functor that assigns to each $\BZ'$-algebra $R$ the set of families:
\begin{equation*}
    (\sF_i \subset \Lambda_{i,R})_{i\in I\sqcup (n-I)},
\end{equation*}
satisfying the following conditions:
\begin{altenumerate}		
\item For all $i$, $\sF_i$ is a $\Pi$-stable submodule, and a $R$-direct summand of $\Lambda{i,R}$of rank $n$.
\item For all $i,j$, the natural map $\varphi_{ij,R}:\Lambda_{i,R} \to \Lambda_{j,R}$ carries $\sF_i$ into $\sF_j$.
\item For all $i$, the $R$-bilinear pairing
\begin{equation*}
\Lambda_{i,R} \times \Lambda_{n-i,R}
\xra{\sform \otimes R} R
\end{equation*}
identifies $\sF_i^\perp$ with $\sF_{n-i}$ inside $\Lambda_{n-i,R}$.
\item  For any $i\in I$, the line $\bigwedge^n\CF_i\subset W(\Lambda_i)_R$ lies in the submodule $L_{i,R}$.
\end{altenumerate}

Given any strengthened spin model $M_{I}$ with the special fiber $M_{I,s}$ over $k$,
we define the following morphism: 
\begin{align*}
    f_s:M_{I,s}\longrightarrow {}&\BM_I^s\times_{\Spec\BZ'}\Spec k=:\BM^s_{I,k},\\
    (\CF_i\subset \Lambda_{i,R})_{\pm I+n\BZ}\mapsto{}&(\CF_i\subset \Lambda_{i,R})_{I \sqcup (n-I)}.
\end{align*}
Here, the $\pi\otimes 1$-action on the LHS is replaced by the $\Pi$-action on the RHS. 
Since the notations on both sides are compatible, the map $f_s$ is well-defined.
By construction of the standard model, it is clear that $f_s$ is an isomorphism.

By Theorem \ref{general:p-geq-3}, when $\text{char}(k)=p\geq 3$, the fiber $\BM^s_{I,k}\simeq M_{I,s}$ is reduced.
Applying the same arguments as in \cite[Theorem 4.18]{Gortz2001}, we have:
\begin{proposition}\label{general_char-0_generic}
The generic fiber $\BM^s_{I,\eta}$ of the standard model is reduced.\qed
\end{proposition}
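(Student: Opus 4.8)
The plan is to deduce reducedness of $\BM^s_{I,\eta}$ from the reducedness of the special fibers $\BM^s_{I,\BF_p}$ at all sufficiently large primes, following the strategy of \cite[Theorem 4.19]{Gortz2001}. Write $\BM:=\BM^s_I$; it is projective over $\Spec\BZ'$, being a closed subscheme of a product of Grassmannians over $\BZ'$, and I equip it with the restriction of the natural relatively ample line bundle. The first step is generic flatness: there is a cofinite open $U=\Spec\BZ'\setminus\{p_1,\dots,p_r\}$ (each $p_i\geq 3$) over which $\BM$ is flat over $\BZ'$. Since $U$ is cofinite I may fix a prime $p$ with $p\geq 3$ and $(p)\in U$, so that $\BM_{(p)}:=\BM\times_{\Spec\BZ'}\Spec\BZ_{(p)}$ is projective and flat over the discrete valuation ring $\BZ_{(p)}$.

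The second step identifies the special fiber. By the construction of the standard model — the isomorphism $f_s$ preceding the proposition — the fiber $\BM_{\BF_p}$ is isomorphic to the special fiber $M_{I,s}$ of the strengthened spin model over a discrete valuation ring of residue characteristic $p$; since $p\geq 3$, Theorem \ref{general:p-geq-3} shows $M_{I,s}$, hence $\BM_{\BF_p}$, is reduced. The third step is to propagate reducedness from the special fiber to $\BM_{(p)}$, and thence to its open subscheme $\BM_\eta=\BM^s_{I,\eta}$: flatness over $\BZ_{(p)}$ makes $p$ a nonzerodivisor on $\CO_{\BM_{(p)}}$, so a square-zero local section $a$, being annihilated modulo $p$ in the reduced ring $\CO_{\BM_{\BF_p}}$, lies in $p\CO_{\BM_{(p)}}$ and, by induction using that $p$ is a nonzerodivisor, in $\bigcap_n p^n\CO_{\BM_{(p)}}$; this intersection vanishes because $\BM_{(p)}$ is proper over $\BZ_{(p)}$, so $a=0$. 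Hence $\BM_{(p)}$, and therefore $\BM^s_{I,\eta}$, is reduced. (One then obtains reducedness of $M_{I,s}$ over any characteristic-$0$ field — for instance $\BM^s_{I,\BC}=\BM^s_{I,\eta}\otimes_\BQ\BC$ is reduced since $\BQ$ is perfect of characteristic $0$ — which by Corollary \ref{moduli_fc:reduced} completes Theorem \ref{intro_back:main} in characteristic $0$.)

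The main obstacle is the third step: the justification that $\bigcap_n p^n\CO_{\BM_{(p)}}=0$, equivalently that flatness at a good prime together with a reduced special fiber forces the total space to be reduced in the projective setting. The cleanest rigorous incarnation is Görtz's Hilbert-polynomial argument rather than the $p$-adic-separation sketch above: let $\BM^{\mathrm{fl}}\subset\BM$ be the scheme-theoretic closure of $\BM_\eta$, which is flat over $\BZ'$ and hence has constant Hilbert polynomial $P(\BM_\eta)$ in every fiber; it then suffices to verify $P(\BM_{\BF_p})=P(\BM_\eta)$ for one good prime, for then the closed immersion $\BM^{\mathrm{fl}}_{\BF_p}\hookrightarrow\BM_{\BF_p}$ is an equality, $\BM$ is flat over $\BZ_{(p)}$, and the DVR argument goes through. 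That equality of Hilbert polynomials reduces to the assertion that $\BM^{\mathrm{fl}}$ recovers the full special fiber, i.e. that every generic point of $M_{I,s}$ lifts to the flat closure — which is exactly topological flatness (Theorem \ref{moduli_fc:top-flat} and Remark \ref{moduli_topflat:general}, valid in the present generality) together with the identification of $M^\loc_{I,s}$ with the union of affine Schubert varieties indexed by $\Adm^I(\mu)$. Since this description and its associated Hilbert polynomial are combinatorial, hence independent of $p$, one gets $P(\BM_{\BF_p})=P(\BM_{\BF_q})$ for all $p,q\geq 3$; comparing with the dense open of flatness pins this common value to $P(\BM_\eta)$, which closes the argument.
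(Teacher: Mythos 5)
Your first route --- generic flatness over $\BZ'$, choosing a prime $p\geq 3$ in the flat locus, identifying $\BM^s_{I,\BF_p}$ with $M_{I,s}$ for a discrete valuation ring of residue characteristic $p$ and invoking Theorem \ref{general:p-geq-3}, then passing from the reduced special fibre to the generic fibre over $\BZ_{(p)}$ --- is the G\"ortz-style reduction that the paper invokes via \cite[Theorem 4.19]{Gortz2001}, and it does prove the proposition. However, your justification of the last step is incorrect as stated: it is not true that $\bigcap_n p^n\Gamma(U,\CO)$ vanishes on affine opens of a proper flat $\BZ_{(p)}$-scheme (already $U=\Spec\BQ$ inside $X=\Spec\BZ_{(p)}$ is a counterexample; an affine open can carry nonzero sections supported entirely over the generic point, where $p$ is invertible). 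Properness enters differently: since the structure morphism is closed, every point of $\BM^s_I\times_{\Spec\BZ'}\Spec\BZ_{(p)}$ specializes to a point $x$ of the special fibre; at such an $x$ the local ring is noetherian, $p$ lies in its maximal ideal and is a non-zero-divisor (flatness), so your square-zero computation together with Krull's intersection theorem shows $\CO_{X,x}$ is reduced, and reducedness at every other point follows because its local ring is a localization of some such $\CO_{X,x}$. With this repair the argument is complete, and one good prime, supplied by generic flatness, is all that is needed.

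The ``cleanest rigorous incarnation'' you then propose contains a genuine gap. Theorem \ref{moduli_fc:top-flat} is a statement about $M^\spin_I$ over $\CO_F$: it lifts points of $M_{I,s}$ into the closure of $M_{I,\eta}$, the generic fibre of the local model. It says nothing about the $\BZ'$-direction of the standard model, whose generic fibre is a characteristic-zero incarnation of the \emph{special} fibre, not of $M_{I,\eta}$; so the assertion that every maximal point of $\BM^s_{I,\BF_p}$ lifts into the flat closure $\BM^{\mathrm{fl}}$ is not ``exactly topological flatness'', and the claimed characteristic-independence of the Hilbert polynomials of the relevant unions of Schubert varieties is itself a nontrivial theorem (flatness of Schubert schemes over the integers, i.e.\ the normality/coherence circle of ideas), not a formal combinatorial observation. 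Moreover, the step ``equal special and generic fibres of $\BM^{\mathrm{fl}}\subset\BM$ imply $\BM$ is flat over $\BZ_{(p)}$'' is true but unproved in your write-up (the torsion ideal $T$ satisfies $T\subset p\CO$ and $T=pT$, hence vanishes by Nakayama), and flatness of $\BM$ itself is not needed anyway, since one could run the DVR argument on $\BM^{\mathrm{fl}}$. In short: keep your first three steps with the corrected use of properness, and drop the Hilbert-polynomial detour.
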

As a consequence, we obtain the following result:
\begin{theorem}
\begin{altenumerate}
\item For $p\neq 2$, the special fiber $M_{I,s}$ of the strengthened spin model is reduced.
\item For $p\neq 2$, the strengthened spin model $M_I$ is flat, and thus represents the local model $M_I^\loc$.
\end{altenumerate}
\end{theorem}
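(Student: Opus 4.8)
The plan is to deduce both statements from the standard model $\BM^s_I$ over $\BZ'=\BZ[1/2]$, using the isomorphism $f_s$ that identifies $M_{I,s}$ with $\BM^s_{I,k}$ together with the two facts already in hand: the reducedness of $M_{I,s}$ when $\mathrm{char}\,k=p\geq 3$ (Theorem \ref{general:p-geq-3}) and the reducedness of the generic fibre $\BM^s_{I,\eta}$ (Proposition \ref{general_char-0_generic}). Concretely, for part (i) I would observe that it suffices to prove $\BM^s_{I,k}$ is reduced for every field $k$ with $\mathrm{char}\,k\neq 2$, and then split into two cases according to the characteristic of $k$.

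When $\mathrm{char}\,k=p\geq 3$ there is nothing new: $M_{I,s}\cong\BM^s_{I,k}$ is reduced by Theorem \ref{general:p-geq-3} (equivalently, because $M_I^\loc=M_I$ in that case and $M^\loc_{I,s}$ is reduced by Theorem \ref{moduli_fc:coh}). When $\mathrm{char}\,k=0$, the structure morphism $\BZ'\to k$ factors through $\BQ$, so by transitivity of base change $\BM^s_{I,k}=\BM^s_{I,\eta}\times_{\Spec\BQ}\Spec k$; note that no flatness of $\BM^s_I$ over $\BZ'$ is needed here. By Proposition \ref{general_char-0_generic} the $\BQ$-scheme $\BM^s_{I,\eta}$ is reduced, and a scheme of finite type over a field of characteristic zero (hence perfect) is reduced if and only if it is geometrically reduced; therefore $\BM^s_{I,k}$ is reduced for every field extension $k/\BQ$, and in particular $M_{I,s}$ is reduced. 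This settles (i) for all $p\neq 2$.

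For part (ii) I would run the argument from the proof of Corollary \ref{moduli_fc:reduced} verbatim: given that $M_{I,s}$ is reduced by (i), topological flatness (Theorem \ref{moduli_fc:top-flat}, valid in the present set-up by Remark \ref{moduli_topflat:general}) shows that every maximal point of $M_{I,s}$ lifts to the scheme-theoretic closure $M_I^\loc$ of $M_{I,\eta}$; then \cite[Proposition 14.17]{GW2020} forces $M_I$ to be flat over $\CO_E$, and \cite[Proposition 14.14]{GW2020} together with the equality $M_{I,\eta}=M^\loc_{I,\eta}$ gives $M_I=M_I^\loc$. I expect no genuine obstacle here: the one substantive ingredient is Proposition \ref{general_char-0_generic}, whose proof is the spreading-out/semicontinuity argument of \cite[Theorem 4.19]{Gortz2001} — once the char-$p\geq 3$ fibres of $\BM^s_I$ are known to be reduced, the locus of reduced fibres is large enough to reach the generic point. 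Everything downstream of that (passing from reduced to geometrically reduced in characteristic zero, and extracting flatness from reducedness of the special fibre plus topological flatness) is purely formal, so the write-up should be short.
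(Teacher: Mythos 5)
Your proposal is correct and follows essentially the same route as the paper: part (i) splits into $\mathrm{char}\,k=p\geq 3$ (Theorem \ref{general:p-geq-3}) and $\mathrm{char}\,k=0$, where reducedness is pulled from the generic fibre of the standard model $\BM^s_I$ via Proposition \ref{general_char-0_generic}, and part (ii) is extracted from topological flatness exactly as in Corollary \ref{moduli_fc:reduced}. Your write-up is in fact slightly more careful than the paper's (you make the base change $\BM^s_{I,k}=\BM^s_{I,\eta}\times_{\Spec\BQ}\Spec k$ and the passage from reduced to geometrically reduced over a characteristic-zero field explicit, where the paper only says the special fibre ``embeds into'' the generic fibre of the standard model), but the mathematical content is identical.
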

\begin{proof}
\begin{altenumerate}
\item When $p\geq 3$, this follows from Theorem \ref{general:p-geq-3}. 
When $p=0$, the special fiber $M_{I,s}$ embeds into the generic fiber of the standard model. Its reducedness then follows from Proposition \ref{general_char-0_generic}.
\item This follows from the topological flatness established in Theorem \ref{moduli_fc:top-flat}.
\end{altenumerate}
\end{proof}
\begin{remark}
By the same arguments, Corollaries \ref{general:lm-vertex} – \ref{general:vertex-wise} also hold when $p=0$.
\end{remark}
		
\section{Consequences and applications}\label{application}
\subsection{Moduli description of the general unitary local model}\label{application_nonsplit}
In this subsection, we provide a moduli description of the Pappas–Zhu local model for any ramified unitary group with signature $(n-1,1)$.
In particular, this includes cases when the hermitian form is not split, and thus may yield a non-quasi-split unitary group.
Even in the split case, this description is intrinsic — it does not rely on a choice of basis for the vector space or on a choice of uniformizer in the ramified quadratic extension.
As a result, it offers greater flexibility for applications.
While the strategy used here is well known and has appeared in the work of various experts, we include a detailed exposition due to the lack of explicit references.
		
\subsubsection{Hermitian forms over a local field}
Suppose $F/F_0$ is a ramified extension of $p$-adic fields ($p\neq 2$) with uniformizers $\pi\in F$ and $\pi_0\in F_0$, respectively, such that $\pi^2=\pi_0$. 
Let $V$ be a $F/F_0$-hermitian space of dimension $\dim_F V\geq 3$, equipped with a hermitian form $\phi:V\times V\rightarrow F$. We keep this notation throughout.
		
For each dimension $n$, there are two isomorphism classes of hermitian spaces, denoted $V^+$ and $V^-$, distinguished by their Hasse invariant.
The split hermitian space $V^+=F^n$ is equipped with the hermitian form defined by $h_n(x e_i,y e_j)=\bar{x}y\delta_{i,n+1-j}$. The associated unitary similitude group $\GU(V^+,h_n)$ is quasi-split.

The non-split space is $V^{-}=F^{n-2}\oplus F^2$, equipped with the non-split hermitian form $h_{n}^-=h_{n-2}\oplus h_\alpha$, where $h_{n-2}$ is the split hermitian form on $F^{n-2}$, and $h_\alpha((x_1,x_2),(y_1,y_2))=\overline{x_1}y_1- \alpha\overline{x_2}y_2$ for some $\alpha\in F_0^\times$ representing the nontrivial class in $F_0^\times/\text{Norm}(F^\times)\simeq \BZ/2\BZ$.

When $n=2m$ is even, the non-split hermitian space $(V^-,h_n^-)$ gives rise to a non-quasi-split unitary similitude group $\GU(V^-,h_n^-)$. In this case, the hermitian space does not contain any $\pi$-modular vertex lattice. The vertex lattice $\Lambda_{m-1}$ of type $2m-2$ defines a special parahoric subgroup.

When $n$ is odd, the groups $\GU(V^+,h_n)$ and $\GU(V^-,h_n^-)$ are isomorphic and quasi-split.

In both cases, the hermitian form becomes splits over a finite unramified extension.
Therefore, the associated unitary similitude group becomes quasi-split after passing to such an extension.

\subsubsection{Pappas-Zhu local model}
Let $(H,\phi)$ be an arbitrary hermitian space over $F/F_0$, and let $G := \GU(H, \phi)$ be the associated unitary similitude group.

Fix a local model triple $(G,\{\mu\},K)$ (cf.\ \cite[\S 2.1]{HPR}), where $\{\mu\}$ is a geometric conjugacy class of cocharacter of $G$, corresponding to signature $(n-1,1)$ (the reflex field is $F$ since $n\geq 3$); cf.\ \cite[\S 2.4]{PR2009} or \S \ref{intro_moduli}. Here, $K\subset G(F_0)$ is a parahoric subgroup, with associated smooth group scheme $\CK$ defined over $\CO_{F_0}$.
Associated to this local model triple, we obtain the Pappas–Zhu local model $M_K^\loc(G,\{\mu\})$, defined over $\CO_F$ and equipped with an action of $\CK_{\CO_F}$; cf.\ \cite[Theorem 2.5]{HPR}.

Let $L_0/F_0$ be a finite unramified extension such that the base change of the hermitian form $\phi_{L_0}$ becomes splits. 
Set $L:=FL_0$.
By choosing a splitting basis, we obtain an isomorphism
\begin{equation}\label{application_nonsplit:local-model-triple-isom}
    G_{L_0}=\GU(H,\phi)_{L_0}\simeq \GU(V^+,h_n).
\end{equation}
Under this isomorphism, the parahoric subgroup $\CK(\CO_{L_0}) = K_{L_0} \subset G(L_0)$ becomes conjugate to $P_I^\circ$ for some index set $I$ as described in \S\ref{moduli_setup}.
We say that $K$ is \emph{strongly non-special} if the corresponding $P_I^\circ$ is strongly non-special.
Therefore, we obtain an isomorphism of the local model triples:
\begin{equation}\label{application_nonsplit:local-model-isom}
    (G,\{\mu\},K_{L_0})_{L_0}:=(G_{L_0},\{\mu_{L_0}\},K_{L_0})\simeq (\GU(V^+,h_n),\{\mu\},P_I^\circ).
\end{equation}
By \cite[Theorem 2.5, Proposition 2.7]{HPR}, this induces an isomorphism of Pappas-Zhu models
\begin{equation*}
    M_K^\loc(G,\{\mu\})\otimes_{\CO_F}\CO_L\xrightarrow{\sim}M_{K_{L_0}}^\loc(G_{L_0},\{\mu_{L_0}\})\xrightarrow{\sim} M^\loc_{P_{I}^\circ}(\GU(V^+,h_n),\{\mu\}).
\end{equation*}
These isomorphisms are equivariant with respect to the group actions of $\CK_{L_0}$ and $\CP^\circ_{I}$, respectively.

The following proposition is well-known to experts.
\begin{proposition}\label{application_nonsplit:PZ-vs-loc}
The Pappas-Zhu model $M^\loc_{P_{I}^\circ}(\GU(V^+,h_n),\{\mu\})$ associated with the local model triple $(\GU(V^+,h_n),\{\mu\},P_I^\circ)$ admits a natural closed embedding into the naive model $M_I^\naive$. 
Under this closed embedding, it is identified with the local model $M^\loc_I$ defined in \S \ref{moduli_fc}.
\end{proposition}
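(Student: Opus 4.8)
The plan is to reduce the statement to the standard comparison between Pappas--Zhu's group-theoretic local model and the lattice-theoretic (naive) model, combined with the fact that a flat scheme over a discrete valuation ring is the scheme-theoretic closure of its generic fiber. Since $p\neq 2$, the quasi-split group $\GU(V^+,h_n)$ splits over a tamely ramified extension of $F_0$, so the construction of \cite{PZ2013} applies and produces the Pappas--Zhu model $M^\loc_{P_I^\circ}(\GU(V^+,h_n),\{\mu\})$ as a flat projective $\CO_F$-scheme (recall $E=F$ here), realised inside the twisted affine Grassmannian $\Gr_{\CP_I^\circ}$ over $\Spec\CO_F$ as the scheme-theoretic closure of its generic fiber. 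In particular, by \cite[Proposition 14.14]{GW2020} it agrees with the scheme-theoretic closure of its own generic fiber.

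First I would recall the closed embedding into the naive model. The affine Grassmannian $\Gr_{\CP_I^\circ}$ parametrises periodic self-dual lattice chains of the prescribed type, and the forgetful morphism to $\prod_i \Gr(n,\Lambda_i\otimes_{\CO_{F_0}}\CO_F)$, sending such a chain to the tuple $(\CF_i)_i$, is a locally closed immersion. Restricted to $M^\loc_{P_I^\circ}$, the parahoric and self-duality data encode the conditions {\fontfamily{cmtt}\selectfont LM1}--{\fontfamily{cmtt}\selectfont LM4}, while the boundedness of the $\CO_F$-points by the minuscule cocharacter $\{\mu\}$ forces the Kottwitz identity {\fontfamily{cmtt}\selectfont LM5}; hence the morphism factors through $M_I^\naive$ and is a closed immersion there. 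This is exactly \cite[Lemma 4.1]{HPR}. Write $\iota\colon M^\loc_{P_I^\circ}(\GU(V^+,h_n),\{\mu\})\hookrightarrow M_I^\naive$ for the resulting closed immersion.

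It then remains to identify the image of $\iota$ with $M_I^\loc$. Over $\Spec F$ one has $F\otimes_{F_0}F\simeq F\times F$, and under this splitting $\pi\otimes 1$ acts as $(\pi,-\pi)$; a short check shows that {\fontfamily{cmtt}\selectfont LM1}--{\fontfamily{cmtt}\selectfont LM5} then cut out of $\prod_i\Gr(n,\Lambda_{i,F})$ exactly the minuscule partial flag variety attached to $\{\mu\}$, which is the generic fiber of the Pappas--Zhu model; thus $\iota_\eta$ is an isomorphism onto $M_{I,\eta}^\naive$. Since $M^\loc_{P_I^\circ}(\GU(V^+,h_n),\{\mu\})$ is flat over $\CO_F$, it equals the scheme-theoretic closure of its generic fiber, so transporting this closure along the closed immersion $\iota$ identifies $M^\loc_{P_I^\circ}(\GU(V^+,h_n),\{\mu\})$ with the scheme-theoretic closure of $M_{I,\eta}^\naive$ inside $M_I^\naive$, which is by definition $M_I^\loc$.

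The step I expect to require the most care is the middle one: verifying that $\iota$ is a closed immersion \emph{with image contained in} $M_I^\naive$, i.e.\ matching the abstract parahoric group scheme $\CP_I^\circ$ and the bound by $\{\mu\}$ against the explicit linear-algebra conditions {\fontfamily{cmtt}\selectfont LM1}--{\fontfamily{cmtt}\selectfont LM5}, while keeping track of the possible discrepancy between the full stabiliser of the lattice chain and its neutral component $\CP_I^\circ$. For the purposes of this paper I would not reprove this but simply invoke \cite{PZ2013} and \cite[Lemma 4.1]{HPR}, which establish precisely this compatibility; the only genuinely new input is then the soft flatness argument sketched above, identifying a flat model with the correct generic fiber with the scheme-theoretic closure defining $M_I^\loc$.
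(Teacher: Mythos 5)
Your proposal is correct and follows essentially the same route as the paper: the closed embedding is taken from \cite[8.2.4]{PZ2013} and \cite[Lemma 4.1]{HPR}, and the identification then follows because both the Pappas--Zhu model and $M_I^\loc$ are scheme-theoretic closures of the (common) generic fiber inside $M_I^\naive$. Your extra remarks spelling out the generic-fiber isomorphism and the flatness/closure equivalence via \cite[Proposition 14.14]{GW2020} are just a more explicit rendering of the same two-line argument the paper gives.
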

\begin{proof}
By the discussion in \cite[8.2.4]{PZ2013} (see also \cite[Lemma 4.1]{HPR}), the Pappas-Zhu model admits a closed embedding $M^\loc_{P_{I}^\circ}(\GU(V^+,h_n),\{\mu\})\hookrightarrow M^\naive_I$. 
Since both the Pappas-Zhu model and the local model $M^\loc_I$ are defined as the scheme-theoretic closure over the generic fiber, the identification follows.
\end{proof}
		
\subsubsection{Moduli description}\label{application_nonsplit_moduli}
For a parahoric subgroup $K\subset G(F_0)$, we choose a polarized self-dual lattice chain  $\CL$ in $H$, in the sense of \cite[Definition 3.14]{RZ}, such that the connected component of its stabilizer is $\CK$; see \cite{RZ} or \cite[\S 5]{PZ2013} for an overview.

When $n$ is even, and the corresponding facet of the parahoric subgroup contains two special vertices $m'$ and $m$ (cf.\ \S \ref{general_dyn}), the naive filtration in the Bruhat-Tits building takes the form:
\begin{equation*}
\xy
(-17,7)*+{\cdots\subset\Lambda_\bullet\subset \Lambda_\bullet};
(-4.5,3.5)*+{\rotatebox{-45}{$\,\, \subset\,\,$}};
(-4.5,10.5)*+{\rotatebox{45}{$\,\, \subset\,\,$}};
(0,14)*+{\Lambda_m};
(0,0)*+{\Lambda_{m'}};
(4,3.5)*+{\rotatebox{45}{$\,\, \subset\,\,$}};
(4,10.5)*+{\rotatebox{-45}{$\,\, \subset\,\,$}};
(19,7)*+{\Lambda_\bullet \subset  \Lambda_\bullet\subset\cdots,}.
\endxy
\end{equation*}
which is not a lattice chain in the sense of \cite[Definition 3.14]{RZ}. However, it can be modified into one by replacing $\Lambda_{m'}$ with $\Lambda_{m-1}$; cf.\ \cite[4.b]{PR2008}.

The base change $\CL_{L_0}$ of the lattice chain to $\CO_{L_0}$ in $H_{L_0}$ corresponds to the parahoric subgroup $K_{L_0}\subset G(L_0)$.
When $(H,\phi)=(V^+,h_n)$, the lattice chain is, up to conjugation, the one defined in \S \ref{moduli_setup_notation}.

Among those data, we define a moduli functor $\CM^\naive(G,\CL)$ over $\Spec \CO_F$, in the sense of \cite[Definition 3.27]{RZ}, which we refer to as the naive model.
		
We now formulate the strengthened spin condition. 
Recall from Remark \ref{moduli_ss:non-split-form} that, since the symmetric form on $V=H\otimes_{F_0}F$ is always splits, there is a natural decomposition $W=\bigwedge^n V=W_1\oplus W_{-1}$.
The sign $\pm 1$ here is determined via the isomorphism to the pair $(V^+_{F},h_n)$. As mentioned in \S \ref{moduli_PR_spin}, this decomposition is intrinsic, once a framing basis is fixed.	
Moreover, the decomposition $V=V_{-\pi}\oplus V_\pi$ always exists; we define $W^{r,s}:=\bigwedge^rV_{-\pi}\otimes\bigwedge^sV_{\pi}$.	
For any lattice $\Lambda\subset V$, set 
$$W(\Lambda)_{\pm 1}^{r,s} := W_{\pm 1}^{r,s} \cap W(\Lambda) \subset W\quad\text{where}\quad W(\Lambda):=\bigwedge^n(\Lambda\otimes_{\CO_{F_0}}\CO_F).$$ 
For any $\CO_F$-algebra $R$, define
\begin{equation*}
    L^{r,s}_{(-1)^s}(\Lambda)(R):=\im\bigl[ W(\Lambda_i)_{(-1)^s}^{r,s} \otimes_{\CO_F} R \to W(\Lambda_i) \otimes_{\CO_{F}}R\bigr].
\end{equation*}

We introduce the following moduli functor: 
\begin{definition}\label{application_nonsplit:defn}
Let $G$ and $\CL$ be as above.
We define a moduli functor $\CM_\CL(G,\mu)$ over $\Spec\CO_F$, which assigns to each $\CO_F$-algebra $R$ the following data:
\begin{altitemize}
	\item a functor $\Lambda\mapsto \CF_\Lambda$ from $\CL$ (viewed as a category whose morphisms are lattice inclusions in $V$) to the category of $\CO_F\otimes_{\CO_{F_0}}R$-modules; and
	\item a natural transformation of functors $j_\Lambda:\CF_\Lambda\rightarrow \Lambda\otimes R$,
\end{altitemize}
satisfying the following conditions:
\begin{altenumerate}
	\item[\fontfamily{cmtt}\selectfont LM1\&2.] For all $\Lambda\in\CL$, $\CF_\Lambda$ is a $\CO_F\otimes_{\CO_{F_0}}R$-submodule of $\Lambda\otimes_{\CO_{F_0}}R$, which is an $R$-direct summand. The morphism $j_\Lambda$ is the inclusion $\CF_\Lambda\subset\Lambda\otimes_{\CO_{F_0}}R$.
	\item[\fontfamily{cmtt}\selectfont LM3.] For all $\Lambda\in\CL$, the composition
	\begin{equation*}
		\CF_\Lambda\subset\Lambda\otimes_{\CO_{F_0}}R\xrightarrow{\pi} \pi\Lambda\otimes_{\CO_{F_0}}R 
	\end{equation*}
	identifies $\CF_\Lambda$ with $\CF_{\pi\Lambda}$.
	\item[\fontfamily{cmtt}\selectfont LM4.] For all $\Lambda\in\CL$, under the perfect pairing $(\Lambda\otimes_{\CO_{F_0}}R)\times(\wh{\Lambda}\otimes_{\CO_{F_0}}R)\rightarrow R$ induced by $\langle\,,\,\rangle$, where $\wh{\Lambda}$ denotes the dual lattice of $\Lambda$, the submodules $\CF_{\Lambda}$ and $\CF_{\wh{\Lambda}}$ pair to $0$.
	\item[\fontfamily{cmtt}\selectfont LM8.] For all $\Lambda\in\CL$, the line $\bigwedge^n_R\CF_\Lambda\subset W(\Lambda)\otimes_{\CO_F}R$ is contained in $L^{n-1,1}_{-1}(\Lambda)(R)$.
\end{altenumerate}

The naive model $\CM_\CL^\naive(G,\mu)$ is defined similarly, except that axiom {\fontfamily{cmtt}\selectfont LM8} is replaced by the Kottwitz (determinant) condition.
\end{definition}

\begin{remark}\label{application_moduli:comparison-of-moduli}
By definition, when the hermitian form is split and $\CL$ is the standard lattice chain as in \S \ref{moduli_setup}, the functors $\CM_\CL^\naive(G,\mu)$ and $\CM_\CL(G,\mu)$ can be identified with the naive model and the strengthened spin model defined in \S \ref{moduli_naive} and \S \ref{moduli_ss} respectively.
\end{remark}

\begin{theorem}\label{application_moduli:main}
	The moduli functor $\CM_\CL(G,\mu)$ represents the Pappas-Zhu model $M^\loc_{K}(G,\{\mu\})$.
\end{theorem}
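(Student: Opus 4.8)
The plan is to reduce the non-split case to the split quasi-split case, where the identification is already known, using unramified descent. First I would note that by Remark \ref{application_moduli:comparison-of-moduli}, in the split case with $\CL$ the standard lattice chain, the moduli functor $\CM_\CL(G,\mu)$ is exactly the strengthened spin model $M_I$, which represents the local model $M_I^\loc$ by Theorem \ref{intro_back:main}, and the latter is the Pappas-Zhu model by Proposition \ref{application_nonsplit:PZ-vs-loc}. So the theorem holds whenever $(H,\phi)=(V^+,h_n)$ and $\CL$ is standard. The task is to propagate this to arbitrary $(H,\phi)$ and arbitrary polarized self-dual chain $\CL$.

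The key step is an \'etale/unramified descent argument. Choose the finite unramified extension $L_0/F_0$ splitting $\phi$, set $L=FL_0$, and use the isomorphism \eqref{application_nonsplit:local-model-triple-isom} of local model triples, which carries $\CL_{L_0}$ to the standard chain (up to conjugation). Both sides of the asserted identity — the moduli functor $\CM_\CL(G,\mu)$ and the Pappas-Zhu model $M_K^\loc(G,\{\mu\})$ — are projective $\CO_F$-schemes (the former because it is a closed subfunctor of the naive model $\CM^\naive_\CL(G,\mu)$, which is projective by the Rapoport-Zink construction \cite{RZ}, cut out by the strengthened spin condition, which is a closed condition as in \S \ref{moduli_ss}). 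After base change along $\CO_F\to\CO_L$, the moduli functor $\CM_\CL(G,\mu)\otimes_{\CO_F}\CO_L$ is canonically identified with $\CM_{\CL_{L_0}}(G_{L_0},\mu_{L_0})$ — here one checks that all the defining data (the $\CO_F\otimes_{\CO_{F_0}}R$-module structure, the periodicity isomorphism, the perfect pairing, and crucially the space $L^{n-1,1}_{-1}(\Lambda)(R)$, which by Remark \ref{moduli_ss:non-split-form} is built from $V=H\otimes_{F_0}F$ over which everything splits) are compatible with unramified base change. Via \eqref{application_nonsplit:local-model-isom} this identifies with $M^\loc_{P_I^\circ}(\GU(V^+,h_n),\{\mu\})\otimes\CO_L$. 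On the other side, $M_K^\loc(G,\{\mu\})\otimes_{\CO_F}\CO_L\xrightarrow{\sim}M^\loc_{P_I^\circ}(\GU(V^+,h_n),\{\mu\})\otimes\CO_L$ by \cite[Theorem 2.5, Proposition 2.7]{HPR}. Composing, we get a $\CK_{\CO_L}$-equivariant isomorphism $\CM_\CL(G,\mu)\otimes\CO_L\xrightarrow{\sim}M_K^\loc(G,\{\mu\})\otimes\CO_L$, both of which are already closed subschemes of $\CM^\naive_\CL(G,\mu)\otimes\CO_L$; since a closed subscheme of a projective scheme that agrees after faithfully flat base change must already agree, and since the closed immersion $M_K^\loc(G,\{\mu\})\hookrightarrow\CM^\naive_\CL(G,\mu)$ of \cite[Lemma 4.1]{HPR} is compatible with the embedding on the split side, we conclude $\CM_\CL(G,\mu)=M_K^\loc(G,\{\mu\})$ as closed subschemes of the naive model over $\CO_F$ itself.

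The last step is to make the descent rigorous: one should phrase the comparison not merely as an abstract isomorphism over $\CO_L$ but as an equality of two closed subschemes $\CM_\CL(G,\mu)$ and $M_K^\loc(G,\{\mu\})$ inside $\CM^\naive_\CL(G,\mu)$. Over $\CO_L$ they coincide; since $\CO_F\to\CO_L$ is faithfully flat and $\CM^\naive_\CL(G,\mu)$ is separated and of finite type, two closed subschemes with the same base change coincide over $\CO_F$ — this follows from fppf descent for closed subschemes. Alternatively, using that both $\CM_\CL(G,\mu)$ and $M_K^\loc(G,\{\mu\})$ are flat over $\CO_F$ with the same generic fiber (both being scheme-theoretic closures of the common generic fiber, which is the appropriate Grassmannian-type scheme attached to $(G,\{\mu\})$), one concludes directly. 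I would spell out the first argument since flatness of $\CM_\CL(G,\mu)$ is itself a consequence we want, not a hypothesis.

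The main obstacle I expect is verifying that the strengthened spin condition is genuinely compatible with unramified base change $\CO_{F_0}\to\CO_{L_0}$ — that is, that $L^{n-1,1}_{-1}(\Lambda)(R)$ behaves functorially and that the isomorphism \eqref{application_nonsplit:local-model-triple-isom} of algebraic groups, which involves choosing a splitting basis, carries the intrinsically-defined subspaces $W_{\pm1}$, $W^{r,s}$, and the lattices $W(\Lambda)^{r,s}_{\pm1}$ to the corresponding ones in the standard picture. The subtlety is the labeling of $W_1$ versus $W_{-1}$: as recalled in \S \ref{moduli_PR_spin} the two eigenspaces of the operator $a$ are only well-defined up to the choice of framing basis, swapped by elements of determinant $-1$. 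One must check that the sign convention $(-1)^s$ in the definition of $L^{r,s}_{(-1)^s}$ is preserved, i.e.\ that the splitting basis used to define $\CM_\CL(G,\mu)$ in \S \ref{application_nonsplit_moduli} can be taken inside $\SO$, so that no sign error is introduced; this is exactly the content of the sentence in \S \ref{application_nonsplit_moduli} that ``the splitting is intrinsic once we fix a framing basis,'' and it needs to be invoked carefully. Everything else — projectivity, the closed-immersion compatibilities, the descent — is formal once this compatibility is in hand.
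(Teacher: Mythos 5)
Your proposal is correct and follows essentially the same route as the paper: embed both schemes into the naive model via \cite[Lemma 4.1]{HPR}, base change along the unramified extension $L_0/F_0$ splitting the form to identify both with the split-case local model $M_I^\loc=M_I$ (Theorem \ref{intro_back:main} and Proposition \ref{application_nonsplit:PZ-vs-loc}), and descend the equality of closed subschemes back to $\CO_F$. Your extra care about descent of closed subschemes and about the $W_{\pm1}$ labeling is sound but handled in the paper by the commutative diagram and by fixing the sign through the framing isomorphism to $(V^+_F,h_n)$ in the definition of $\CM_\CL(G,\mu)$.
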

\begin{proof}
By  \cite[Lemma 4.1]{HPR}, the Pappas-Zhu model $M_{K}^\loc(G,\{\mu\})$ admits a natural closed embedding into the naive model $\CM_\CL^\naive(G,\mu)$:
\begin{equation*}
    M_{K}^\loc(G,\{\mu\})\hookrightarrow \CM_\CL^\naive(G,\mu)\hookleftarrow \CM_\CL(G,\mu).
\end{equation*}
As a result, it suffices to identify $\CM_\CL(G,\mu)$ and $M_K^\loc(G,\{\mu\})$ as closed subschemes of $\CM_\CL^\naive(G,\mu)$.

Let $L_0/F_0$ be a finite unramified extension that splits the hermitian form, inducing an isomorphism of local model triples and corresponding Pappas-Zhu models as in \eqref{application_nonsplit:local-model-triple-isom} and \eqref{application_nonsplit:local-model-isom}.
Fixing a splitting basis as in \S \ref{moduli_setup}, we obtain the following commutative diagram:
\begin{equation*}
\begin{aligned}
\xymatrix{
\CM^\loc_K(G,\{\mu\})_{\CO_L}\ar@{^(->}[r]\ar[d]^{\cong} &   \CM_\CL^\naive(G,\mu)_{\CO_L}\ar[d]^{\cong}\ar@{^(->}[r] &   \CM_\CL(G,\mu)_{\CO_L}\ar[d]^{\cong}\\
\CM^\loc_{P_I^\circ}(\GU(V^+,h_n),\{\mu\})_{\CO_L}\ar@{^(->}[r]  &   M^\naive_I\ar@{^(->}[r]  &   M_I.
}
\end{aligned}
\end{equation*}
The right-hand square commutes by the moduli description.
The left-hand square commutes since the Pappas-Zhu model is defined as the scheme-theoretic closure of the generic fiber of the naive model (see \cite[Lemma 4.1]{HPR}).

By Proposition \ref{application_nonsplit:PZ-vs-loc}, the two closed subschemes at the bottom of the diagram coincide.
Thus, the diagram identifies $\CM^\loc_K(G,\{\mu\})_{\CO_L}$ with $\CM_\CL(G,\mu)_{\CO_L}$.
Descending from $\CO_L$ to $\CO_F$, we conclude that $M_K^\loc(G,\{\mu\})$ is identified with $\CM_\CL(G,\mu)$.
\end{proof}

\subsection{Moduli description of the irreducible components over the special fiber}\label{application_moduli-irr}
In this subsection, we provide a moduli description of the irreducible components of the special fiber of the local model, in the case where the parahoric subgroup is strongly non-special.

Consider the moduli functor $\CM:=\CM_\CL(G,\mu)$ defined in \eqref{application_nonsplit:local-model-isom}. We may choose a lattice $\Lambda\in \CL$ such that
\begin{equation}\label{application_moduli-irr:lattice}
    \Lambda^\vee\subset \Lambda\subset\pi^{-1}\Lambda^\vee.
\end{equation}
This lattice $\Lambda$ can be extended to a self-dual lattice chain. When the hermitian space is split, such a lattice corresponds to $\Lambda_\kappa$ in \S \ref{moduli_setup}.

The inclusions of the lattices induce two transition maps in the moduli functor:
\begin{equation}\label{application_moduli-irr:trans}
    \CF_{\Lambda^\vee}\xrightarrow{\lambda^\vee}\CF_\Lambda\xrightarrow{\lambda}\CF_{\pi^{-1}\Lambda^\vee}.
\end{equation}
By periodicity, the moduli functor $\CM$ can be restricted to the subchain \eqref{application_moduli-irr:lattice} without loss of generality. For simplicity, we adopt this convention throughout the subsection.

\begin{definition}\label{application_moduli:definition}
The projective scheme $\CM_{s,1}$ is defined as a closed subscheme of $\CM_s$. It represents the moduli problem that assigns to each $k$-algebra $R$ the set of families
\begin{equation*}
    \CM_{s,1}(R):=\left\{(\CF_{\Lambda^\vee}\xrightarrow{\lambda^\vee}\CF_\Lambda\xrightarrow{\lambda}\CF_{\pi^{-1}\Lambda^\vee})\in\CM(S)\mid \lambda(\CF_\Lambda)\subset (\pi\otimes 1)\Lambda^\vee_S\right\}.
\end{equation*}
Similarly, we define $\CM_{s,2}$ as the projective scheme representing the moduli problem that assigns to each $\CO_F\otimes_{\CO_{F_0}} k$-algebra $R$ to the set
\begin{equation*}
    \CM_{s,2}(R):=\left\{(\CF_{\Lambda^\vee}\xrightarrow{\lambda^\vee}\CF_\Lambda\xrightarrow{\lambda}\CF_{\pi^{-1}\Lambda^\vee})\in\CM(S)\mid \lambda^\vee(\CF_{\Lambda^\vee})\subset (\pi\otimes 1)\Lambda_S\right\}.
\end{equation*}
We define their scheme-theoretic intersection as $\CM_{s,12}:=\CM_{s,1}\times_{\CM_s}\CM_{s,2}$.
\end{definition}
\begin{theorem}\label{application_moduli-irr:main}
The geometric special fiber $\CM_{s,\bar{k}}$ of the local model $\CM=\CM_\CL(G,\mu)$ has two irreducible components, denoted $\CM_{s,1,\bar{k}}$ and $\CM_{s,2,\bar{k}}$, whose intersection is $\CM_{s,12,\bar{k}}$. When the hermitian space splits, all statements valid without base change from $k$ to $\bar{k}$.
\end{theorem}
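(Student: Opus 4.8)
The plan is to reduce the global, coordinate-free statement of Theorem~\ref{application_moduli-irr:main} to the explicit affine-chart computation of Theorem~\ref{max_red:reduced_sym}, exactly as Theorem~\ref{application_moduli:main} reduced the moduli description of the Pappas--Zhu model to the split case. First I would choose a finite unramified extension $L_0/F_0$ splitting the hermitian form, so that by \eqref{application_nonsplit:local-model-triple-isom} and \eqref{application_nonsplit:local-model-isom} we obtain, compatibly with the parahoric structures, an isomorphism of local model triples identifying $(G,\{\mu\},K)$ with $(\GU(V^+,h_n),\{\mu\},P_I^\circ)$, where $I=\{\kappa\}$ is strongly non-special. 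The lattice $\Lambda$ satisfying \eqref{application_moduli-irr:lattice} corresponds to $\Lambda_\kappa$ of \S\ref{moduli_setup_notation}, and the transition maps $\lambda^\vee,\lambda$ of \eqref{application_moduli-irr:trans} become the maps $\Lambda_{n-\kappa}\to\Lambda_\kappa\to\Lambda_{n+\kappa}$ (after the identifications in \S\ref{equ_chart}). One then checks that under this identification the two defining conditions of $\CM_{s,1}$ and $\CM_{s,2}$ in Definition~\ref{application_moduli:definition} translate, in the affine chart $U$ of the worst point with coordinates as in \eqref{equ_chart:reordered-X}, into the vanishing of the blocks $X_4$ (respectively $X_1$): indeed $\lambda(\CF_\Lambda)\subset(\pi\otimes1)\Lambda^\vee_S$ says the image of $\CF_\kappa$ under the relevant transition map lands in the worst-point sublattice, which by the matrix description of the transition maps in \eqref{equ_nw:trans} is precisely $X_4=0$, and symmetrically for $X_1=0$.

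Once this dictionary is set up, I would argue as follows. The subschemes $\CM_{s,1}$ and $\CM_{s,2}$ are closed in $\CM_s$ by construction (the conditions are closed), and they are clearly contained in $\CM_s$. After base change to $\CO_L$, the intersection of $\CM_{s,j}$ with the worst-point chart $U$ is identified, via Theorem~\ref{equ_coord:ss-main} and the simplifications of \S\ref{max_simp} (in particular Remark~\ref{max_simp:simp-step2-simp-comp}), with $\Spec R_{s,j}$, and $\CM_{s,12}\cap U$ with $\Spec R_{s,12}$. By Theorem~\ref{max_red:reduced_sym}(ii)--(iii), over $\bar k$ these are the two irreducible components of $\Spec R_s \cong \CM_s\cap U$, with the stated intersection. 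Since the worst point lies in the closure of every Schubert cell (as recorded after Lemma~\ref{equ_chart:wp}), every irreducible component of $\CM_{s,\bar k}$ meets $U$; hence the irreducible components of $\CM_{s,\bar k}$ are exactly the closures of the irreducible components of $\Spec R_s\otimes\bar k$, namely $\CM_{s,1,\bar k}$ and $\CM_{s,2,\bar k}$, and likewise $\CM_{s,12,\bar k}=\CM_{s,1,\bar k}\cap\CM_{s,2,\bar k}$. Finally, to descend from $\bar k$ to $k$ one notes that when the hermitian space already splits we may take $L_0=F_0$, so $\CM_{s,j}$ is already identified over $k$ (indeed over $\CO_F\otimes k$) with $\Spec R_{s,j}$ on $U$ and with a union of Schubert varieties in $\Fl_{\{\kappa\}}$ globally; since $R_{s,j}$ is a domain over $k$ by Proposition~\ref{max_red:irr-red} and the Schubert varieties are geometrically irreducible, no base change is needed.

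The main obstacle I anticipate is not any single hard theorem but the bookkeeping required to verify that the intrinsic conditions ``$\lambda(\CF_\Lambda)\subset(\pi\otimes1)\Lambda^\vee$'' and ``$\lambda^\vee(\CF_{\Lambda^\vee})\subset(\pi\otimes1)\Lambda$'' correspond precisely to $X_4=0$ and $X_1=0$ in the reordered-basis chart, and that these match the ideals $I_1,I_2$ of Remark~\ref{max_simp:simp-step2_comp} (not some other pair of ideals cutting out the same reduced loci). This requires tracking the change of basis \eqref{equ_chart:reordered-basis}, the transition matrices \eqref{equ_nw:trans}, and the block decomposition \eqref{equ_chart:reordered-X} carefully; I would do this by a direct computation in the chart and cross-check that $\CM_{s,1}\cup\CM_{s,2}$ is topologically all of $\CM_s$ using Lemma~\ref{max_simp:simp-step2_comp-radical}. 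A secondary point to be careful about is functoriality of the construction under the unramified descent: one must check that the condition defining $\CM_{s,j}$ is stable under the Galois action coming from $L_0/F_0$ when the form splits only after extension, so that $\CM_{s,j}$ really descends to a scheme over $\CO_F$; this follows because the sublattice $(\pi\otimes1)\Lambda^\vee$ and the transition maps are defined over $F_0$, but it should be stated explicitly. Everything else is a formal consequence of Theorem~\ref{max_red:reduced_sym} and the already-established identification of local models.
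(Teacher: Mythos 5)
Your proposal follows essentially the same route as the paper: reduce to the split, strongly non-special case by an unramified extension, identify $\Lambda\subset\pi^{-1}\Lambda^\vee$ with $\Lambda_\kappa\subset\Lambda_{n-\kappa}$, translate the two degeneracy conditions of Definition \ref{application_moduli:definition} in the worst-point chart into the vanishing of a block of $X$, and then invoke Theorem \ref{max_red:reduced_sym} and Corollary \ref{max_red:irr-comp}, using that every irreducible component meets the chart of the worst point. The only slip is your tentative matching: computing $\lambda(\CF_\kappa)=A_\kappa\binom{X}{I_n}$ with \eqref{equ_nw:trans} and $X_2=0$ shows that $\lambda(\CF_\Lambda)\subset(\pi\otimes1)\Lambda^\vee$ is equivalent to $X_1=0$, while the condition on $\lambda^\vee$, via $Y_3=HX_2^tJ$ and $Y_4=HX_4^tH$ from \eqref{equ_nw:Y}, is equivalent to $X_4=0$ --- the opposite of what you guessed, but harmless for the statement, since either way each condition cuts out exactly one of the two irreducible components.
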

\begin{proof}
After passing to a suitable unramified extension, we may assume that $\CM=M^\loc_{\{\kappa\}}$ for some chosen $\kappa$ as in Theorem \ref{intro_back:max}. 
Up to conjugation, the lattices chosen in \eqref{application_moduli-irr:lattice} can be identified with those in \eqref{moduli_setup:lattice}:
\begin{equation*}
    \Lambda\simeq \Lambda_\kappa,\ 
    \Lambda^\vee\simeq \Lambda_{-\kappa}
    \text{ and }
    \pi^{-1}\Lambda^\vee\simeq \Lambda_{n-\kappa}.
\end{equation*}
By passing to the open affine chart around the worst point, as described in \S \ref{equ_chart}, and using the reordered basis from \eqref{equ_chart:reordered-basis}, it suffices to identify the two irreducible components $\Spec R_{s,1}$ and $\Spec R_{s,2}$ with $\CM_{s,1}$ and $\CM_{s,2}$, respectively.

Choose a $R$-point 
$\CF_\kappa=\left(\begin{matrix}
	X\\I_n
\end{matrix}\right)\in U_{\{\kappa\},s}^\loc$. By \eqref{equ_nw:trans}, we have
\begin{equation*}
    \lambda(\CF_\kappa)=
    \left(\begin{matrix}
	I_{2\kappa}    &          &          &   \\
	&          &          &   0\\
	&          &   I_{2\kappa}&   \\
	&   I_{n-2\kappa}  &          &   
\end{matrix}\right)
\left(\begin{matrix}
    X_1&X_2\\X_3&X_4\\I_{2\kappa}&\\&I_{n-2\kappa}
\end{matrix}\right)=
\left(\begin{matrix}
    X_1&X_2\\0&0\\I_{2\kappa}&0\\X_3&X_4
\end{matrix}\right).
\end{equation*}
Recall from Theorem \ref{equ_coord:ss-red} that the strengthened spin condition implies $X_2=0$.
Moreover, the worst point $(\pi\otimes 1)\Lambda^\vee_R=(\pi\otimes 1)\Lambda_{n-\kappa,R}$ is represented by the matrix $\left(\begin{matrix}
	0\\I_n
\end{matrix}\right)$.
Therefore, the condition $\lambda(\CF_\kappa)\subset (\pi\otimes 1)\Lambda_{n-\kappa,R}$ is equivalent to $X_1=0$.

Similarly, choose a $R$-point $\CF_{-\kappa}=\left(\begin{matrix}
	Y\\I_n
\end{matrix}\right)$; we have
\begin{equation*}
    \lambda(\CF_{-\kappa})=
    \left(
\begin{matrix}
	&          &   0      &   \\
	&   I_{n-2\kappa}  &          &   \\
	I_{2\kappa}    &          &          &   \\
	&          &          &I_{n-2\kappa}
\end{matrix}\right)
\left(\begin{matrix}
    Y_1&Y_2\\Y_3&Y_4\\I_{2\kappa}&\\&I_{n-2\kappa}
\end{matrix}\right)=
\left(\begin{matrix}
    0&0\\Y_3&Y_4\\Y_1&Y_2\\0&I_{n-2\kappa}
\end{matrix}\right).
\end{equation*}
Recall from \eqref{equ_nw:Y} that we have the identification
\begin{equation*}
    Y=
    \left(\begin{matrix}
    Y_1 &   Y_2 \\
    Y_3 &   Y_4 \\
\end{matrix}\right)=
    \left(\begin{matrix}
    -JX_1^t J &   -JX_3^t H  \\
    H X_2^t J   &   HX_4^t H
    \end{matrix}\right).
\end{equation*}
Therefore, by Theorem \ref{equ_coord:ss-red}, the condition $\lambda^\vee(\CF_{-\kappa})\subset (1\otimes \pi)\Lambda_{\kappa,R}$ is equivalent to $X_4=0$. 
The identification now follows, since over the open affine chart of the worst point, the irreducible components of $\CM_s$ are defined by the equations $X_1=0$ and $X_4=0$ respectively;
cf.\ Remark \ref{max_simp:simp-step2_comp} and Corollary \ref{max_red:irr-comp}.
\end{proof}

\subsection{Application to Shimura varieties}\label{application_Shimura}
We next provide a moduli description for the arithmetic model of unitary similitude Shimura varieties. For simplicity, we focus on the case where the hermitian form is defined over an imaginary quadratic field. Except possibly for the Hasse principle, all statements remain valid in the more general setting of a CM extension $\BF/\BF_0$, though the formulations become considerably more involved. For general treatments, see \cite[\S 11]{LZ2021} and \cite[\S 6]{LMZ25}.
		
Let $\BF\slash\BQ$ be an imaginary quadratic extension. Consider an $\BF$-vector space $\BV$ of dimension $n$, equipped with a skew-hermitian form $\varphi:\BV\times \BV\rightarrow \BQ$. 
We define the unitary similitude group $\BG = \GU(\BV, \varphi)$. Fixing an embedding $\BF \hookrightarrow \BC$, we assume that the base change $\varphi_\BC$ has signature $(1, n-1)$. This determines a conjugacy class of homomorphisms $h : \Res_{\BC/\BR} \BG_m \rightarrow \BG_\BR$, and the pair $(\BG_\BR, h)$ defines a Shimura datum of PEL type.
		
Now fix a prime $p\neq 2$ that is ramified in $\BF/\BQ$ so that the base change to $\BQ_p$ yields a ramified extension $F/\BQ_p$.
Let $G:=\BG_{\BQ_p}$ be the unitary similitude group associated with the induced hermitian form $\phi=\varphi_{\BQ_p}$. Choose a parahoric subgroup $K_p\subset G(F)$, which can always be realized as the connected component of the stabilizer of some self-dual chain of lattices $\CL$ in $V:=\BV_F$.

Up to conjugation, such lattice chains correspond bijectively to non-empty subsets of the local Dynkin diagram of $G_{\breve{F}}$. We say that $\CL$ is \emph{non-connected} if $n = 2m$ and the corresponding subset of the Dynkin diagram contains no special vertices; otherwise, we say $\CL$ is \emph{connected}.

Let $K_p':=\text{Stab}_{G(F)}(\CL)$ and $K_p=\text{Stab}^\circ_{G(F)}(\CL)$ denote the full and connected stabilizers, respectively. Then $K_p=K_p'$ if and only if $\CL$ is connected.

Finally, fix an open compact subgroup $K^p \subset G(\BA^p_f)$ and define $\BK := K_p K^p$ and $\BK’ := K_p’ K^p$ as compact open subgroups of $\BG(\BA_\BQ)$.

We define a Deligne-Mumford stack $\CA_{\BK'}$ over $\Spec\CO_F$ by modifying the construction in \cite{RZ}.
Let $R$ be a $\CO_F$-algebra; we associate to it the groupoid of the data $(A,\iota,\bar{\lambda},\bar{\eta})$ satisfying the following conditions:
\begin{altenumerate}
	\item $A=\{A_\Lambda\}_{\Lambda\in\CL}$ is an $\CL$-indexed family of abelian schemes over $\Spec R$, each equipped with a compatible action $\CO_F$:
 \begin{equation*}
     \iota:\CO_F\rightarrow \End{A_\Lambda}\otimes \BZ_p.
 \end{equation*}
	\item $\ov{\lambda}$ a $\BQ$-homogeneous principal polarization on the $\CL$-family $A$.
	\item $\ov{\eta}$ is a $K^p$-level structure
	\begin{equation*}
		\ov{\eta}: H_1(A,\BA_f^p)\simeq V\otimes \BA_f^p\mod K^p,
	\end{equation*}
	which respects the bilinear forms on both sides up to a constant in $(\BA_f^p)^\times$.
\end{altenumerate}
In addition, for each $\Lambda\in \CL$, consider the Hodge filtration on the de Rham homology:
\begin{equation}\label{application_Shimura:hodge-fil}
	\Fil^1(A_\Lambda)\hookrightarrow \BD(A_\Lambda).
\end{equation}
There exists an  \'etale cover $\Spec R'/\Spec R$ such that the chain of de Rham homologies $\{\BD(A_\Lambda)\}_{\Lambda\in \CL}$ becomes isomorphic to the corresponding lattice chain after base change: $\BD(A_\Lambda)\otimes_{\CO_S}\CO_{S'}\simeq \Lambda_{S'}$; cf.\ \cite[Appendix to Chapter 3]{RZ}.
We may then regard $\Fil^1(A_{\Lambda})_{S'}$ as a subbundle of $\Lambda_{S'}$. 
We impose the strengthened spin condition that
\begin{equation*}
	\bigwedge^n_R(\Fil^1(A_\Lambda)_{R'})\subset L_{-1}^{n-1,1}(\Lambda)(R').
\end{equation*}
The strengthened spin condition we impose here is independent of the choice of \'etale cover $\Spec R'/\Spec R$, thanks to the following lemma.

\begin{lemma}[\protect{\cite[Lemma 7.1]{RSZ2018}}]\label{application_shimura_strengthened-spin}
	For any $\CO_F$-algebra $R$ and lattice $\Lambda\in\CL$, the submodule $L^{r,s}_{\pm 1}(\Lambda)(R)\subset \bigwedge^n\Lambda\otimes_{\CO_F}R$ is stable under the natural action of $\underline{\Aut}(\CL)(R)$ on $\bigwedge^n\Lambda\otimes_{\CO_F}R$.\qed
\end{lemma}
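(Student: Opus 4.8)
The plan is to reduce the assertion, via a flatness argument for automorphism group schemes, to a concrete statement on the generic fibre, where it amounts to the observation that a unitary similitude acts $\CO_F$-linearly and lands in the \emph{connected} orthogonal similitude group.

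First I would record the structure of $\mathcal{G}:=\underline{\Aut}(\CL)$: by \cite{RZ} it is a smooth affine $\CO_{F_0}$-group scheme, and for an $\CO_F$-algebra $R$ an element of $\mathcal{G}(R)$ is a compatible family of $\CO_F\otimes_{\CO_{F_0}}R$-linear similitudes of the chain $\CL\otimes_{\CO_{F_0}}R$, acting $\CO_F$-linearly on each $W(\Lambda)\otimes_{\CO_F}R=\bigwedge^n_R(\Lambda\otimes_{\CO_{F_0}}R)$. Writing $N:=W^{r,s}_{(-1)^s}\cap W(\Lambda)\subset W(\Lambda)$, one notes that $W(\Lambda)/N$ embeds into the $F$-vector space $W/W^{r,s}_{(-1)^s}$, hence is $\CO_F$-torsion-free, so $N$ is an $\CO_F$-direct summand of $W(\Lambda)$ and $L^{r,s}_{(-1)^s}(\Lambda)(R)=N\otimes_{\CO_F}R$ inside $W(\Lambda)\otimes_{\CO_F}R$ for every $R$. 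The transporter $Z:=\{\,g\in\mathcal{G}_{\CO_F}\mid g\cdot(N\otimes R)\subseteq N\otimes R\,\}$ is then a closed subscheme of $\mathcal{G}_{\CO_F}$ — it is the zero locus of the composite $N\otimes R\hookrightarrow W(\Lambda)\otimes R\xrightarrow{\,g\,}W(\Lambda)\otimes R\twoheadrightarrow(W(\Lambda)/N)\otimes R$ — and on $Z$ the inclusion is automatically an equality, $g$ being invertible and $N$ of constant rank. So the lemma is equivalent to $Z=\mathcal{G}_{\CO_F}$; and since $\mathcal{G}_{\CO_F}$ is smooth, hence flat, over $\CO_F$, its generic fibre is schematically dense, and it suffices to show $Z_F=\mathcal{G}_F$. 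As $\mathcal{G}_F$ is reduced this reduces further to: every $\bar F$-point $g$ of $\mathcal{G}_F$, which is a unitary similitude acting $\CO_F$-linearly on $V$, stabilizes $W^{r,s}_{(-1)^s}$.

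The generic-fibre statement I would then settle in two steps. Because $g$ is $\CO_F$-linear it commutes with the semisimple operator $\pi\otimes 1$ on $V$, hence preserves its eigenspaces $V_\pi,V_{-\pi}$ and so the summand $W^{r,s}=\bigwedge^r_F V_{-\pi}\otimes_F\bigwedge^s_F V_\pi$. It remains to see that $g$ preserves $W_{(-1)^s}$ and does not interchange $W_1$ with $W_{-1}$; by the discussion in \S\ref{moduli_PR_spin} — where $W_1,W_{-1}$ are the $\pm1$-eigenspaces of the operator $a$, which a similitude conjugates to a nonzero scalar multiple of itself — this reduces to checking that $\GU$ lands in the special orthogonal similitude group of $(V,(\,,\,))$. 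That follows by taking determinants in the identity $\bar g^{\,t}Hg=c(g)H$ for the Gram matrix $H$ of the hermitian form: one gets $\det_{F_0}(g)=\Nm_{F/F_0}(\det_F g)=c(g)^{\,n}$, which is precisely $c(g)^{(\dim_F V)/2}$, the condition defining the neutral component of $\mathrm{GO}(V,(\,,\,))$. Combining the two steps, $W^{r,s}_{(-1)^s}=W^{r,s}\cap W_{(-1)^s}$ is $g$-stable. The case $r=s$ runs the same way over $F_0$, where $W^{r,s}$ and $W_{\pm1}$ are already $\Gal(F/F_0)$-stable.

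The substantive points, as I see them, are: the determinant computation placing $\GU$ inside the connected orthogonal similitude group — this is what yields the conclusion for all of $\underline{\Aut}(\CL)$, not merely its neutral component, since no unitary similitude can induce a $\det=-1$ orthogonal transformation — and the verification that the transporter $Z$ is closed and that transporting into forces stabilizing, so that no failure of equality appears at a non-generic point. The remaining ingredients — saturation of $N$, $\CO_F$-linearity forcing compatibility with the $\pi\otimes1$-grading, and schematic density of the generic fibre — are routine.
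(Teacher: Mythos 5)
The paper does not actually reprove this statement---it is quoted from \cite{RSZ2018}, Lemma 7.1---and your argument is correct and follows essentially the same route as the cited proof: use smoothness (hence flatness) of $\underline{\Aut}(\CL)$ to reduce stability of the saturated lattice $N=W^{r,s}_{\pm 1}\cap W(\Lambda)$ to the generic fibre, where it holds because elements of $\GU$ commute with $\pi\otimes 1$ (so preserve $W^{r,s}$) and, by the determinant identity $\det_{F_0}(g)=\Nm_{F/F_0}(\det_F g)=c(g)^{n}$, land in the proper orthogonal similitude group, hence preserve each of $W_{\pm 1}$ rather than swapping them. Your transporter-subscheme formulation of the flatness reduction and the saturation check making $N$ a direct summand are both sound, so no gap.
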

		
The main theorem is the following:
\begin{theorem}
The moduli functor $\CA_{\BK'}$ is representable by a flat and normal Deligne-Mumford stack over $\Spec(\CO_F)$. When the lattice chain $\CL$ is connected, $\CA_{\BK’}$ defines an integral model of the Shimura variety $\Sh_\BK(G,\mu)$. 
In the non-connected case, there exists a \'etale double cover $\CA_{\BK}\rightarrow\CA_{\BK'}$, together with an open immersion $\Sh_{\BK}(\BG,h)\otimes_{\BF}F\hookrightarrow\CA_{\BK}$, such that the following diagram is cartesian:
\begin{equation*}
\begin{aligned}
\xymatrix{
\Sh_{\BK}(\BG,h)\otimes_\BF F\ar@{}[rd]|{\square}\ar@{^(->}[r]\ar[d]&\CA_{\BK}\ar[d]\\
\Sh_{\BK'}(\BG,h)\otimes_{\BF}F\ar@{^(->}[r]&\CA_{\BK'}.
}
\end{aligned}
\end{equation*}
\end{theorem}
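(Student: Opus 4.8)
The plan is to run the standard local model diagram argument, now that Theorem~\ref{application_moduli:main} identifies the strengthened spin moduli functor with the Pappas--Zhu model.

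\emph{Representability.} Replacing axiom {\fontfamily{cmtt}\selectfont LM8} in the definition of $\CA_{\BK'}$ by the Kottwitz (determinant) condition yields the Rapoport--Zink moduli functor $\CA^\naive_{\BK'}$, which by \cite[Ch.~6]{RZ} is representable by a Deligne--Mumford stack of finite type over $\Spec\CO_F$ (a quasi-projective $\CO_F$-scheme once $K^p$ is sufficiently small). The strengthened spin condition is a closed condition: by Lemma~\ref{application_shimura_strengthened-spin} it is independent of the \'etale cover trivializing the chain $\{\BD(A_\Lambda)\}$, and on such a cover it cuts out the locus where the line $\bigwedge^n\Fil^1(A_\Lambda)\subset W(\Lambda)$ lies in the sub-bundle $L^{n-1,1}_{-1}(\Lambda)$. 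Hence $\CA_{\BK'}\hookrightarrow\CA^\naive_{\BK'}$ is a closed substack, so $\CA_{\BK'}$ is representable.

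\emph{Local model diagram, flatness and normality.} Let $\widetilde{\CA}_{\BK'}\to\CA_{\BK'}$ be the torsor under $\underline{\Aut}(\CL)$ parametrizing isomorphisms of the de Rham homology chain $\{\BD(A_\Lambda)\}_{\Lambda\in\CL}$ with the standard chain, compatibly with all PEL structures; it is smooth and surjective. The Hodge filtration \eqref{application_Shimura:hodge-fil} defines a morphism $\widetilde\varphi\colon\widetilde{\CA}_{\BK'}\to\CM^\naive_\CL(G,\mu)$, and Grothendieck--Messing theory shows $\widetilde\varphi$ is smooth, since lifts of $(A,\iota,\bar\lambda,\bar\eta)$ along a square-zero thickening correspond to lifts of the Hodge filtration inside the de Rham homology chain. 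The strengthened spin condition matches on both sides, so $\widetilde\varphi$ factors through $\CM_\CL(G,\mu)$, which by Theorem~\ref{application_moduli:main} equals the Pappas--Zhu model $M^\loc_K(G,\{\mu\})$. Thus $\widetilde{\CA}_{\BK'}$ is smooth over $M^\loc_K(G,\{\mu\})$, and the latter is flat and normal over $\CO_F$ by Corollary~\ref{max_red:normal-cohen-macaulay} in the strongly non-special case and by \S\ref{intro_moduli} together with \cite{PZ2013} in the remaining cases. Flatness and normality are smooth-local on source and target, so propagating them along the two smooth surjections $M^\loc_K(G,\{\mu\})\leftarrow\widetilde{\CA}_{\BK'}\to\CA_{\BK'}$ shows that $\CA_{\BK'}$ is flat and normal over $\Spec\CO_F$.

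\emph{Generic fibers and the two cases.} Over $\Spec F$ the Kottwitz and strengthened spin conditions hold automatically (the local model agrees with its generic fiber there), so $\CA_{\BK'}\otimes_{\CO_F}F=\CA^\naive_{\BK'}\otimes_{\CO_F}F$ is the Rapoport--Zink description of the Shimura variety for $(\BG,h)$ at level $\BK'$, and it agrees with $\Sh_{\BK'}(\BG,h)\otimes_\BF F$ up to the possible failure of the Hasse principle for $\BG$. If $\CL$ is connected then $K_p=K_p'$, hence $\BK=\BK'$, and $\CA_{\BK'}$ is the asserted integral model. If $\CL$ is non-connected, then $K_p$ is the identity component of $K_p'=\text{Stab}_{G(F)}(\CL)$ with $K_p'/K_p\simeq\BZ/2$ generated by the automorphism exchanging the two special lattices $\Lambda_{m'},\Lambda_m$ at the fork; one defines $\CA_\BK\to\CA_{\BK'}$ as the \'etale degree-two cover obtained by recording, in addition, an ordering of $(\Lambda_{m'},\Lambda_m)$ in the chain of de Rham homologies, equivalently a $\BZ/2$-torsor over $\CA_{\BK'}$. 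It inherits flatness and normality, its generic fiber is $\Sh_\BK(\BG,h)\otimes_\BF F$ up to the same caveat, the covering $\CA_\BK\to\CA_{\BK'}$ restricts on generic fibers to the natural map $\Sh_\BK\to\Sh_{\BK'}$, and the square is cartesian because both horizontal maps are open immersions onto the generic fibers and both vertical maps are the ``forget the ordering'' map. The steps I expect to be most delicate are precisely the two already flagged in the surrounding text: checking that $\widetilde\varphi$ genuinely factors through $\CM_\CL(G,\mu)$ and not merely through $\CM^\naive_\CL(G,\mu)$ --- where Theorem~\ref{application_moduli:main} is indispensable --- and that the disconnected group $\underline{\Aut}(\CL)$ still gives a torsor compatible with the $\CK$-action on $M^\loc_K(G,\{\mu\})$; and the comparison of $\CA^\naive_{\BK'}\otimes F$ with the honest Shimura variety, which requires controlling $\ker^1(\BQ,\BG)$, i.e.\ the Hasse-principle point explicitly excluded from the claim.
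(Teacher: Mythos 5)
Your treatment of representability, flatness and normality follows the same route as the paper: $\CA_{\BK'}$ is the closed substack of the Rapoport--Zink functor cut out by the strengthened spin condition (well posed by Lemma \ref{application_shimura_strengthened-spin}), and flatness and normality are transported through the local model diagram once Theorem \ref{application_moduli:main} identifies the strengthened spin functor $\CM_\CL(G,\mu)$ with the Pappas--Zhu model; your more explicit discussion of the smooth map from the $\underline{\Aut}(\CL)$-torsor to the local model is just an expansion of what the paper compresses into one sentence, and your Hasse-principle caveat is harmless since the claim only asserts an open immersion on generic fibres.

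The gap is in the non-connected case. You define the double cover by ``recording an ordering of $(\Lambda_{m'},\Lambda_m)$ in the chain of de Rham homologies'', but by definition the non-connected case is exactly the one in which the facet contains no special vertex, so neither $\Lambda_{m'}$ nor $\Lambda_m$ is a member of $\CL$: the moduli data of $\CA_{\BK'}$ consist of $\{A_\Lambda\}_{\Lambda\in\CL}$ only, and the proposed extra datum is simply not defined on this moduli problem. Nor is the generator of $K_p'/K_p$ canonically ``the automorphism exchanging the two special lattices'': in the building there are many vertices of each special type $m$ and $m'$ adjacent to the facet (take for instance $\CL$ corresponding to $I=\{0\}$, $n=2m$), so what could at best be permuted is the pair of \emph{types}, and the nontriviality of $K_p'/K_p\simeq\BZ/2$ is detected by the Kottwitz homomorphism rather than by a swap of two distinguished lattices. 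To carry out your plan one would have to introduce an honest auxiliary moduli datum (for example a refinement of the chain of crystals by a lattice at a special vertex, or equivalently a trivialization of the $\BZ/2$-torsor of connected components of $\underline{\Aut}(\CL)$), prove that it yields an \'etale degree-two cover, identify its generic fibre with $\Sh_{\BK}(\BG,h)\otimes_\BF F$, and check cartesianness; this is precisely the content the paper imports from \cite[\S 1.2, Proposition 1.1]{PR2009}, and your construction as written does not substitute for it.
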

		
\begin{proof}
By Lemma \ref{application_shimura_strengthened-spin}, the strengthened spin condition descends through the local model diagram and corresponds to the strengthened spin model.
The moduli functor $\CA_{\BK'}$ is a closed subfunctor of the naive moduli functor $\CA_{\BK'}^\naive$ defined in \cite{RZ}, cut out by the strengthened spin condition, which is a closed condition.
Therefore, $\CA_{\BK'}$ is representable by a Deligne-Mumford stack over $\Spec(\CO_F)$.
By Theorem \ref{application_moduli:main} and local model diagram, the stack $\CA_{\BK'}$ is flat and normal.
The \'etale double cover mentioned in the end of the theorem arises from the difference between $P_I$ and $P_I^\circ$, as explained in \cite[Section 1.2, Proposition 1.1]{PR2009}.
\end{proof}
		
\begin{remark}
As pointed out in \cite[\S 1.3]{PR2009}, the Shimura varieties $\Sh_\BK(G,h)$ and $\Sh_{\BK'}(\BG,h)$ have isomorphic geometric connected components.
\end{remark}

\section{Proofs of some results in Section \ref{equ}}
In this section, we give the proofs left in \S \ref{equ}.
Recall that the lattice $\Lambda_{\kappa}$ is generated by
\begin{equation*}
	\pi^{-1}e_1\otimes 1,\cdots,\pi^{-1}e_{\kappa}\otimes 1,e_{\kappa+1}\otimes 1,\cdots,e_n\otimes 1; e_1\otimes 1,\cdots,e_{\kappa}\otimes 1,\pi e_{\kappa+1}\otimes 1,\cdots,\pi e_n\otimes 1.
\end{equation*}
Throughout the proof, we will boldface the worst terms in each expression. Below are the terms of $g_i$, with their worst terms indicated in bold:
\begin{equation*}
	\mathbf{e_1\otimes 1}-\pi e_1\otimes\pi^{-1},\cdots,\mathbf{e_\kappa\otimes 1}-\pi e_\kappa\otimes\pi^{-1};e_{\kappa+1}\otimes 1-\mathbf{\pi e_{\kappa+1}\otimes\pi^{-1}},
	\cdots,
	e_{n}\otimes 1-\mathbf{\pi e_{n}\otimes\pi^{-1}};	
\end{equation*}
\begin{equation*}
	\mathbf{\frac{1}{2}e_1\otimes 1}+\frac{1}{2}\pi e_1\otimes \pi^{-1},
	\cdots,
	\mathbf{\frac{1}{2}e_\kappa\otimes 1}+\frac{1}{2}\pi e_\kappa\otimes \pi^{-1};	
	\frac{1}{2}e_{\kappa+1}\otimes 1+\mathbf{\frac{1}{2}\pi e_{\kappa+1}\otimes \pi^{-1}},
	\cdots
	\frac{1}{2}e_{n}\otimes 1+\mathbf{\frac{1}{2}\pi e_{n}\otimes \pi^{-1}}.	
\end{equation*}
Recall the notation convention $e_{[i,\wh{n+j}]}$ in \eqref{equ_ss-comp:not}.
\subsection{Proof of Proposition \ref{equ_ss-comp:g_S}}\label{Appendix_first-proof}
In this subsection, we will give the proof of Proposition \ref{equ_ss-comp:g_S}. The following lemma will be useful.
\begin{lemma}[\protect{\cite[equation before Lemma 4.9]{Smithling2015}}]\label{equ_ss-comp:two-wedges}
\[
	g_i\wedge g_{n+i} =  (e_i\otimes 1)\wedge(\pi e_i\otimes \pi^{-1})=  (e_i\otimes 1)\wedge (\pi^{-1}e_i\otimes \pi).	\qed
\]
\end{lemma}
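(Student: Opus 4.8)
The statement to prove is Lemma~\ref{equ_ss-comp:two-wedges}: the identity $g_i \wedge g_{n+i} = (e_i \otimes 1) \wedge (\pi e_i \otimes \pi^{-1}) = (e_i \otimes 1) \wedge (\pi^{-1} e_i \otimes \pi)$. The plan is to simply expand both wedge factors using the definition of the basis $g_1, \dots, g_{2n}$ of $V = F^n \otimes_{F_0} F$ introduced just above Lemma~\ref{equ_ss-setup:span}, and then multiply out, discarding the terms that vanish because a vector is wedged with itself.

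First I would recall that by definition $g_i = e_i \otimes 1 - \pi e_i \otimes \pi^{-1}$ and $g_{n+i} = \tfrac{1}{2}(e_i \otimes 1 + \pi e_i \otimes \pi^{-1})$, both of which lie in the two-dimensional $F$-subspace of $V$ spanned by $e_i \otimes 1$ and $\pi e_i \otimes \pi^{-1}$ (this is the subspace on which $\pi \otimes 1$ acts, with $e_i \otimes 1$ and $\pi e_i \otimes \pi^{-1}$ being, up to scaling, eigenvectors for the eigenvalues; indeed $g_i \in V_{-\pi}$ and $g_{n+i} \in V_\pi$ by construction of this basis). Then I would compute
\begin{align*}
g_i \wedge g_{n+i} &= (e_i \otimes 1 - \pi e_i \otimes \pi^{-1}) \wedge \tfrac{1}{2}(e_i \otimes 1 + \pi e_i \otimes \pi^{-1}) \\
&= \tfrac{1}{2}\bigl( (e_i \otimes 1)\wedge(e_i \otimes 1) + (e_i \otimes 1)\wedge(\pi e_i \otimes \pi^{-1}) \\
&\qquad\qquad - (\pi e_i \otimes \pi^{-1})\wedge(e_i \otimes 1) - (\pi e_i \otimes \pi^{-1})\wedge(\pi e_i \otimes \pi^{-1}) \bigr).
\end{align*}
The first and last terms vanish (a vector wedged with itself), and the middle two combine, using anticommutativity of the wedge, to give $\tfrac{1}{2}\cdot 2\,(e_i \otimes 1)\wedge(\pi e_i \otimes \pi^{-1}) = (e_i \otimes 1)\wedge(\pi e_i \otimes \pi^{-1})$, which is the first claimed equality.

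For the second equality, I would observe that $(e_i \otimes 1)\wedge(\pi e_i \otimes \pi^{-1}) = (e_i \otimes 1)\wedge(\pi^{-1} e_i \otimes \pi)$ because, as elements of $V = F^n \otimes_{F_0} F$ with the $F$-action on the right factor, one has $\pi e_i \otimes \pi^{-1} = e_i \otimes \pi^{-1}\cdot\pi^2 \cdot \pi^{-1} \cdot (\text{bookkeeping})$; more precisely, $\pi e_i \otimes \pi^{-1}$ and $\pi^{-1} e_i \otimes \pi$ differ by an element of $F_0$ acting on the two tensor factors symmetrically: $\pi e_i \otimes \pi^{-1} = (e_i \otimes 1)\cdot$ (scalar) is not literally true, so instead I note $\pi e_i \otimes \pi^{-1} - \pi^{-1} e_i \otimes \pi = (\pi - \pi^{-1}\pi_0) e_i \otimes \pi^{-1}$, and since $\pi^2 = \pi_0$ this is $(\pi - \pi) e_i \otimes \pi^{-1} = 0$. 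Hence the two expressions agree on the nose, and the second equality follows. There is no real obstacle here; the only point requiring a moment's care is the elementary relation $\pi e_i \otimes \pi^{-1} = \pi^{-1} e_i \otimes \pi$ in $V$, which is immediate from $\pi^2 = \pi_0 \in F_0$ and the definition of the tensor product over $F_0$.
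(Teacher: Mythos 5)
Your proof is correct: the direct expansion of $g_i\wedge g_{n+i}=(e_i\otimes 1-\pi e_i\otimes\pi^{-1})\wedge\tfrac12(e_i\otimes 1+\pi e_i\otimes\pi^{-1})$, together with the identification $\pi e_i\otimes\pi^{-1}=\pi^{-1}e_i\otimes\pi$ obtained by moving $\pi_0=\pi^2\in F_0$ across the tensor product, is exactly what is needed, and this is the same elementary computation as in the cited source (the paper itself gives no proof, only the reference to the displayed equation before Lemma 4.9 of Smithling). The only blemish is the garbled intermediate sentence in your second paragraph, which you rightly discard before giving the valid $\pi_0$-based verification, so no gap remains.
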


\begin{proof}[Proof of Proposition \ref{equ_ss-comp:g_S}]
\begin{altenumerate}
	\item When $S=\{1,\cdots,\wh{i},\cdots,n,n+i\}$ for some $1\leq i\leq \kappa$, we have
\begin{multline*}
g_S=(\mathbf{e_1\otimes 1}-\pi^{-1}e_1\otimes \pi)\wedge\cdots\wedge \wh{(\mathbf{e_i\otimes 1}-\pi^{-1}e_i\otimes \pi)}\wedge\cdots\wedge(\mathbf{e_\kappa\otimes 1}-\pi^{-1}e_\kappa\otimes \pi)\\
\wedge(e_{\kappa+1}\otimes 1-\mathbf{\pi e_{\kappa+1}\otimes \pi^{-1}})\wedge\cdots\wedge(e_n\otimes 1-\mathbf{\pi e_n\otimes \pi^{-1}})\wedge (\mathbf{\frac{1}{2}e_i\otimes 1}+\frac{1}{2}\pi^{-1}e_i\otimes \pi).
\end{multline*}
 A quick simplification shows that 
	\begin{align*}
		g_S={}   &\frac{1}{2}(-1)^{\kappa+i}\pi^{-(n-\kappa)}e_{\{n+1,\cdots,2n\}}\\
		&\quad+\frac{1}{2}(-1)^{\kappa+i}\pi^{-(n-\kappa-1)}\Bigl[\sum_{\sigma=1}^{i-1}(-1)^{\sigma}e_{[\sigma,\wh{n+\sigma}]}+\sum_{\sigma=i+1}^{\kappa}(-1)^{\sigma}e_{[\sigma,\wh{n+\sigma}]}+\sum_{\sigma=\kappa+1}^{n}(-1)^{\sigma}e_{[\sigma,\wh{n+\sigma}]}\Bigr]\\
		&\hspace{8cm}
		+\frac{1}{2}(-1)^{\kappa+1}\pi^{-(n-\kappa-1)}e_{[i,\wh{n+i}]}+o(\pi^{-(n-k-1}),\\
        ={}&
        \frac{1}{2}(-1)^{\kappa+i}\pi^{-(n-\kappa)}\Bigl[
        (-1)^ie_{\{n+1,\cdots,2n\}}+\pi\Bigl(\sum_{\sigma\neq i}(-1)^\sigma e_{[\sigma,\wh{n+\sigma}]}\Bigr)+(-1)^{i+1}\pi e_{[i,\wh{n+i}]}
        \Bigr]+o(\pi^{-(n-\kappa-1)}),\\
        ={}&\frac{1}{2}(-1)^{\kappa+i}\pi^{-(n-\kappa)}\Bigl[
        e_{\{n+1,\cdots,2n\}}+\pi\Bigl(
        2(-1)^{i+1}e_{[i,\wh{n+i}]}+\sum_{\sigma=1}^n(-1)^\sigma e_{[\sigma,\wh{n+\sigma}]}
        \Bigr)
        \Bigr]+o(\pi^{-(n-\kappa-1)}).
	\end{align*}
				
	\item When $S=\{1,\cdots,\wh{i},\cdots,n,n+i\}$ for some $\kappa+1\leq i \leq n$, the argument proceeds exactly as in the case $1 \leq i \leq \kappa$.
					
	\item When $S=\{1,\cdots,\wh{j},\cdots,n,n+i\}$ for some $i,j\leq \kappa$, $i\neq j$, we have
	\begin{align*}
		g_S={}  &(\mathbf{e_1\otimes 1}-\pi^{-1}e_1\otimes \pi)\wedge\cdots\wedge 
		\wh{(\mathbf{e_j\otimes 1}-\pi^{-1}e_i\otimes \pi)}\wedge\cdots\wedge(\mathbf{e_\kappa\otimes 1}-\pi^{-1}e_\kappa\otimes \pi)\\
		&\wedge(e_{\kappa+1}\otimes 1-\mathbf{\pi e_{\kappa+1}\otimes \pi^{-1}})\wedge\cdots\wedge(e_n\otimes 1-\mathbf{\pi e_n\otimes \pi^{-1}})\wedge (\mathbf{\frac{1}{2}e_i\otimes 1}+\frac{1}{2}\pi^{-1}e_i\otimes \pi),\\
		={}     &(e_1\otimes 1)\wedge\cdots\wedge\wh{(e_j\otimes 1)}\wedge\cdots\wedge (e_\kappa\otimes          1)\wedge(-\pi e_{\kappa+1}\otimes \pi^{-1})\\
		&\hspace{5.5cm}
		\wedge\cdots\wedge(-\pi e_n\otimes\pi^{-1})\wedge(\pi^{-1}e_i\otimes \pi)+o(\pi^{-(n-\kappa-1)}),\\
		={}&(-1)^{n-1}(-1)^{n-\kappa}\cdot\pi^{-(n-\kappa-1)}e_{[i,\wh{n+j}]}+o(\pi^{-(n-\kappa-1)}),\\
		={}&(-1)^{\kappa+1}\pi^{-(n-\kappa-1)}e_{[i,\wh{n+j}]}+o(\pi^{-(n-\kappa-1)}).
	\end{align*}
				
	\item When $S=\{1,\cdots,\wh{j},\cdots,n,n+i\}$ for some $i\leq \kappa <j$, we have
	\begin{align*}
		g_S={}&
		(e_1\otimes 1)\wedge\cdots\wedge\cdots(e_\kappa\otimes 1)\wedge(-\pi e_{\kappa+1}\otimes \pi^{-1})\wedge\cdots\wedge\wh{(-\pi e_j\otimes \pi^{-1})}\\
		&\hspace{5cm}\wedge\cdots\wedge(-\pi e_n\otimes \pi^{-1})\wedge(\pi^{-1} e_i\otimes \pi)+o(\pi^{-(n-\kappa-2)}),\\
		={}&(-1)^{n-1}\cdot(-1)^{n-\kappa-1}\cdot\pi^{(n-\kappa-1)+1}e_{[i,\wh{n+j}]}+o(\pi^{-(n-\kappa-2)}),\\
		={}&(-1)^{\kappa}\pi^{-(n-\kappa-2)}e_{[i,\wh{n+j}]}+o(\pi^{-(n-\kappa-2)}).
	\end{align*}

	\item When $S=\{1,\cdots,\wh{j},\cdots,n,n+i\}$ for some $j\leq \kappa<i$, we have
	\begin{align*}
		g_S={}&
		(e_1\otimes 1)\wedge\cdots\wedge\wh{(e_j\otimes 1)}\wedge\cdots\wedge (e_\kappa\otimes 1)\wedge(-\pi e_{\kappa+1}\otimes \pi^{-1})\\
		&\hspace{5.5cm}
		\wedge\cdots\wedge(-\pi e_n\otimes \pi^{-1})\wedge(e_i\otimes 1)+o(\pi^{-(n-\kappa)}),\\
		={}&(-1)^{n-1}\cdot(-1)^{n-\kappa}\cdot\pi^{-(n-\kappa)}e_{[i,\wh{n+j}]}+o(\pi^{-(n-\kappa)}),\\
		={}&(-1)^{\kappa+1}\pi^{-(n-\kappa)}e_{[i,\wh{n+j}]}+o(\pi^{-(n-\kappa)}).
	\end{align*}

	\item When $S=\{1,\cdots,\wh{j},\cdots,n,n+i\}$ for some $i,j>\kappa$, we have
	\begin{align*}
		g_S={}&
		(e_1\otimes 1)\wedge\cdots\wedge\cdots(e_\kappa\otimes 1)\wedge(-\pi e_{\kappa+1}\otimes \pi^{-1})\wedge\cdots\wedge\wh{(-\pi e_j\otimes \pi^{-1})}\\
		&\hspace{5cm}\wedge\cdots\wedge(-\pi e_n\otimes \pi^{-1})\wedge(e_i\otimes 1)+o(\pi^{-(n-\kappa-1)}),\\
		={}&(-1)^{n-1}\cdot(-1)^{n-\kappa-1}\cdot\pi^{(n-k-1)}e_{[i,\wh{n+j}]}+o(\pi^{-(n-\kappa-1)}),\\
		={}&(-1)^{\kappa}\pi^{-(n-\kappa-1)}e_{[i,\wh{n+j}]}+o(\pi^{-(n-\kappa-1)}).
	\end{align*}

\end{altenumerate}
\end{proof}
		
\subsection{Proof of Proposition \ref{equ_ss-comp:g_S-dual}}\label{Appendix_second-proof}
Recall from Remark \ref{equ_ss-setup:sigma-compute} that if $S=\{1,2,\cdots,\hat{j},\cdots,n,n+i\}$, then
\begin{equation*}
	\sgn(\sigma_S)=(-1)^{1+\cdots+n-j+n+i+\frac{n(n+1)}{2}}=(-1)^{n+i+j}.
\end{equation*}
Moreover, it is straightforward to see that
\begin{lemma}\label{equ_ss-comp:dual}
	$g_S$ and $g_{S^\perp}$ have the same worst term.\qed
\end{lemma}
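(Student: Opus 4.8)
\textbf{Plan for the proof of Lemma \ref{equ_ss-comp:dual}.}
The claim to establish is that $g_S$ and $g_{S^\perp}$ have the same worst term, where ``worst term'' is understood with respect to the basis $\{e_{S'}\}$ of $W$ and the standard lattice $W(\Lambda_\kappa)$, as in Definition \ref{equ_ss-setup:wt-def}. The plan is to extract this directly from the explicit formulas in Proposition \ref{equ_ss-comp:g_S}, which compute $g_S$ (together with its second-worst term) for every $S$ of type $(n-1,1)$, organized by the position of the omitted index $j$ and the extra index $i$ relative to $\kappa$.

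First I would observe that the operation $S \mapsto S^\perp$ preserves the type of $S$ (noted right after the definition of type in \S \ref{equ_ss-setup}) and, more importantly, acts on the combinatorial data $(i,j)$ in a transparent way: if $S = \{1,\dots,\wh{j},\dots,n,n+i\}$ then $S^\perp = \{1,\dots,\wh{i^\vee},\dots,n,n+j^\vee\}$, so the roles of ``omitted index'' and ``extra index'' get swapped and dualized via $\tau \mapsto \tau^\vee = n+1-\tau$. Next I would run through the six cases of Proposition \ref{equ_ss-comp:g_S} and check, in each, that the leading monomial $e_{[\cdot,\wh{\cdot}]}$ (or $e_{\{n+1,\dots,2n\}}$ in the $i=j$ subcase) together with its $\pi$-order is invariant under this swap. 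Concretely: the pair $(i,j)$ with $i,j\le\kappa$ goes to $(j^\vee,i^\vee)$ with $j^\vee,i^\vee > n-\kappa$ (so one compares case (iii) with case (vi), noting $n-\kappa$ and $\kappa$ play symmetric roles in the exponent bookkeeping); the pair $i\le\kappa<j$ goes to $j^\vee\le\kappa<i^\vee$ reversed appropriately, matching case (iv) with case (v); and the weight-$(1,\dots,1)$ case $i=j$ is manifestly self-dual in the relevant leading term $e_{\{n+1,\dots,2n\}}$. In each instance the leading $e$-index of $g_{S^\perp}$ is precisely the dual of that of $g_S$, but since the worst term of $g_{S^\perp}$ should be compared in the \emph{same} ambient basis, one uses that $W(\Lambda_\kappa)$ is set up symmetrically so that $\ord_\pi$ of the two leading coefficients agree; this is exactly the content one reads off from matching the exponents $-(n-\kappa-2), -(n-\kappa-1), -(n-\kappa)$ across the paired cases.

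The main obstacle I anticipate is purely bookkeeping: one must be careful that the sign conventions and the precise indexing of $e_{[i,\wh{n+j}]}$ (notation \eqref{equ_ss-comp:not}) are compatible under the perp operation, and that ``same worst term'' is interpreted as ``same monomial up to the scalar, with equal $\pi$-order,'' which is all that is needed downstream (the lemma is invoked only to justify that computing worst terms on balanced $S$ suffices). A clean way to avoid a six-fold case check is to note that Proposition \ref{moduli_ss:dual}'s Hodge-star argument already identifies $W(\Lambda_\kappa)$ with a twist of $W(\Lambda_\kappa^\perp)=W(\Lambda_{n-\kappa})$ compatibly with the $e_S$-basis up to relabeling $S\leftrightarrow S^\perp$; combined with the obvious fact $g_{S^\perp}=\star(g_S)$ up to the fixed scalar $\omega$ this gives the equality of worst terms at once. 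I would present the short version if the referee is comfortable with it, and otherwise fall back on the explicit verification against Proposition \ref{equ_ss-comp:g_S}, which is entirely mechanical.
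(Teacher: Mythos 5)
The paper supplies no argument for this lemma (it is asserted as immediate, with a \qed), so the only question is whether your proposed verification is sound — and it is not, because the exponent-matching you rely on fails in the mixed cases; in fact the lemma read literally fails there as well. If $S=\{1,\dots,\wh{j},\dots,n,n+i\}$ then indeed $S^\perp=\{1,\dots,\wh{i^\vee},\dots,n,n+j^\vee\}$, but when $i\le \kappa<j\le n-\kappa$ (equivalently $\kappa<j^\vee<n-\kappa+1$), $S$ falls under Proposition \ref{equ_ss-comp:g_S}(iv), whose worst term has order $-(n-\kappa-2)$, whereas $S^\perp$ has both of its relevant indices in $(\kappa,n-\kappa]$ and falls under case (vi), whose worst term has order $-(n-\kappa-1)$. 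Your pairing ``(iv) with (v)'' is not what $\perp$ does: the perp of a case-(iv) set is again case (iv) only when $j\ge n-\kappa+1$, and is case (vi) otherwise, and in the latter situation the two orders differ by one (similarly the perp of a case-(v) set with $\kappa<i\le n-\kappa$ lands in case (vi)). Concretely, for $n=7$, $\kappa=2$, $S=\{1,2,3,4,6,7,8\}$ (balanced, of type $(n-1,1)$) one computes $\ord_\pi\WT(g_S)=-3$ but $\ord_\pi\WT(g_{S^\perp})=-4$; this discrepancy is visible in the paper's own proof of Proposition \ref{equ_ss-comp:g_S-dual}, cases (viii) and (xi), where $\WT(g_S)$ and $\WT(g_{S^\perp})$ are recorded with different powers of $\pi$. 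Note also that even in the cases where the orders do agree, the two worst terms are supported on the distinct basis vectors $e_{[i,\wh{n+j}]}$ and $e_{[j^\vee,\wh{n+i^\vee}]}$, so ``the same worst term'' cannot be taken at face value; what the computations in \S\ref{Appendix_second-proof} actually use is only the weaker statement that for $S\ne S^\perp$ these supports are distinct, hence no cancellation occurs and $\WT\bigl(g_S-\sgn(\sigma_S)g_{S^\perp}\bigr)$ is assembled from $\WT(g_S)$ and $\WT(g_{S^\perp})$ separately (cancellation of worst terms only has to be tracked in Case 2, where $S$ and $S^\perp$ share the leading vector $e_{\{n+1,\dots,2n\}}$ and the second-worst terms are needed).

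Your fallback via Proposition \ref{moduli_ss:dual} does not repair this. The star operator there carries $W(\Lambda_\kappa)$ isomorphically onto a twist of $W(\Lambda_\kappa^\perp)=W(\Lambda_{n-\kappa})$, a different lattice; even granting $\star g_S\propto g_{S^\perp}$, this only compares the worst order of $g_S$ relative to $\Lambda_\kappa$ with the worst order of $g_{S^\perp}$ relative to $\Lambda_{n-\kappa}$, not relative to the same lattice $\Lambda_\kappa$, which is what the lemma asserts and what its downstream use requires. So both routes you propose establish statements different from the one claimed; the correct thing to prove (and all that is needed) is the no-cancellation assertion above, i.e.\ that the worst terms of $g_S$ and $g_{S^\perp}$ are supported on dual, hence distinct, index sets when $S\ne S^\perp$, not that they coincide or have equal $\pi$-order.
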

\begin{proof}[Proof of Proposition \ref{equ_ss-comp:g_S-dual}]\hfill

\bigskip
\noindent\textbf{Case 1.} When $S=S^\perp$ and $i\neq j$, these conditions are equivalent to
$j=i^\vee=n+1-i$ with $i\neq j$. 
We have
\[
	\sgn(\sigma_S)=(-1)^{n+i+j}=(-1)^{2n+1}=-1.
\]
Therefore, we have $\WT(g_S-\sgn(\sigma_S)g_{S^\perp})=2\WT(g_S)$, and (i)(ii)(iii) follow directly from Proposition \ref{equ_ss-comp:g_S}.
			
\bigskip		
\noindent\textbf{Case 2.} When $i=j$,
we have $\sgn(\sigma_S)=(-1)^{n+i+j}=(-1)^n.$

\noindent (iv) When $i=j\leq \kappa$, we have $i^\vee=j^\vee>n-\kappa$. Therefore, we have
\begin{align*}
g_S={}&\frac{1}{2}(-1)^{\kappa+i}\pi^{-(n-\kappa)}\Bigl[e_{\{n+1,\cdots,2n\}}
+\pi\Bigl(
2(-1)^{i+1}e_{[i,\wh{n+i}]}+\sum_{\sigma=1}^n(-1)^\sigma e_{[\sigma,\wh{n+\sigma}]}
\Bigr)\Bigr]+o(\pi^{-(n-\kappa-1)});\\
g_{S^\perp}={}&\frac{1}{2}(-1)^{\kappa+i^\vee+1}\pi^{-(n-\kappa)}\Bigl[e_{\{n+1,\cdots,2n\}}
+\pi\Bigl(
2(-1)^{i^\vee+1} e_{[i^\vee,\wh{n+i^\vee}]}+\sum_{\sigma=1}^n(-1)^\sigma e_{[\sigma,\wh{n+\sigma}]}
\Bigr)\Bigr]+o(\pi^{-(n-\kappa-1)}).\\
\end{align*}
Since $(-1)^n(-1)^{\kappa+i^\vee+1}=(-1)^{\kappa+i}$, the leading terms cancel. 
Therefore, we have
\begin{align*}
	g_S-\sgn(\sigma_S)g_{S^\perp}
	={} &\frac{1}{2}(-1)^{\kappa+i}\pi^{-(n-\kappa-1)}\biggl(
	2(-1)^{i+1}e_{[i,\wh{n+i}]}-2(-1)^{i^\vee+1} e_{[i^\vee,\wh{n+i^\vee}]}\biggr),\\
	={} & (-1)^{\kappa+1}\pi^{-(n-\kappa-1)}\left(e_{[i,\wh{n+i}]}+(-1)^ne_{[i^\vee,\wh{n+i^\vee}]}\right).
\end{align*}	

\noindent (v) When $\kappa+1\leq i=j\leq m$, we have $i^\vee=j^\vee\geq m+1>\kappa$. Therefore, we have 
\begin{align*}
	&g_S-\sgn(\sigma_S)g_{S^\perp}\\
	={} &   \Bigl(\frac{1}{2}(-1)^{\kappa+i+1}-\frac{1}{2}(-1)^n
	(-1)^{\kappa+i^\vee+1}\Bigr)\pi^{-(n-\kappa)}e_{\{n+1,\cdots,2n\}}\\
	&\qquad   +\frac{1}{2}(-1)^{\kappa+i+1}\pi^{-(n-\kappa-1)}
	\Bigl(
	2(-1)^{i+1} e_{[i,\wh{n+i}]}+\sum_{\sigma=1}^n(-1)^\sigma e_{[\sigma,\wh{n+\sigma}]}
	\Bigr)\\
	&\qquad   -\frac{1}{2}(-1)^n(-1)^{\kappa+i^\vee+1}\pi^{-(n-k-1)}\Bigl(
	2(-1)^{i^\vee+1} e_{[i^\vee,\wh{n+i^\vee}]}+\sum_{\sigma=1}^n(-1)^\sigma e_{[\sigma,\wh{n+\sigma}]}\Bigr)+o(\pi^{-(n-\kappa-1)}),\\
	={} &   (-1)^{\kappa+i+1}\pi^{-(n-\kappa)}\\
		&\qquad\Bigl[e_{\{n+1,\cdots,2n\}}+\pi\Bigl(
		(-1)^{i+1} e_{[i,\wh{n+i}]}
		+(-1)^{i^\vee+1} e_{[i^\vee,\wh{n+i^\vee}]}
		+\sum_{\sigma=1}^n(-1)^\sigma  e_{[\sigma,\wh{n+\sigma}]}\Bigr)\Bigr]+o(\pi^{-(n-\kappa-1)}).
\end{align*}

\noindent (vi) When $i = j = m + 1$ (which occurs only when $n$ is odd), the result follows directly from case (v).

\bigskip
\noindent\textbf{Case 3.}	When $S\neq S^\perp,i\neq j$ and $i<j^\vee$, as in Case 1, it suffices to consider only the worst term.

\noindent (vii) When $i<j^\vee\leq \kappa$, we have
$i<k<j,j^\vee\leq \kappa<i^\vee$. In this case, we have
\begin{equation*}
	\WT(g_S)=(-1)^\kappa\pi^{-(n-\kappa-2)}e_{[i,\wh{n+j}]},
	\quad
	\WT(g_{S^\perp})=(-1)^\kappa\pi^{-(n-\kappa-2)}e_{[j^\vee,\wh{n+i^\vee}]}.
\end{equation*}
Hence
\[
    \WT(g_S-\sgn(\sigma_S)g_{s^\perp})=(-1)^\kappa\pi^{-(n-\kappa-2)}
	\left(
	e_{[i,\wh{n+j}]}-(-1)^{n+i+j}e_{[j^\vee,\wh{n+i^\vee}]}
	\right).
\]
(viii) When $i\leq \kappa<j^\vee<\kappa^\vee$, we have
$i\leq \kappa<j,\kappa<i^\vee\neq j^\vee$. In this case, we have
\begin{equation*}
	\WT(g_S)=(-1)^\kappa\pi^{-(n-\kappa-2)}e_{[i,\wh{n+j}]},
	\quad\text{and}\quad
	\WT(g_{S^\perp})=(-1)^\kappa\pi^{-(n-\kappa-1)}e_{[j^\vee,\wh{n+i^\vee}]}.
\end{equation*}
Hence
\[
    \WT(g_S-\sgn(\sigma_S)g_{S^\perp})=(-1)^{n+\kappa+1+i+j}\pi^{-(n-\kappa-1)}e_{[j^\vee,\wh{n+i^\vee}]}.
\]
(xi) When $i\leq \kappa$ and $\kappa^\vee\leq j^\vee$, we have
$i\neq j\leq \kappa,\kappa<i^\vee\neq j^\vee$. In this case, we have
\begin{equation*}
	\WT(g_S)=(-1)^{\kappa+1}\pi^{-(n-\kappa-1)}e_{[i,\wh{n+j}]},
	\quad\text{and}\quad
	\WT(g_{S^\perp})=(-1)^\kappa\pi^{-(n-\kappa-1)}e_{[j^\vee,\wh{n+i^\vee}]}.
\end{equation*}
Hence
\[
	\WT(g_S-\sgn(\sigma_S)g_{S^\perp})=(-1)^{k+1}\pi^{-(n-k-1)}
	\left(
	e_{[i,\wh{n+j}]}+(-1)^{n+i+j}e_{[j^\vee,\wh{n+i^\vee}]}
	\right).
\]
(x) When $\kappa<i<j^\vee<\kappa^\vee$, we have
$k<i\neq j,\kappa<i^\vee\neq j^\vee$. In this case, we have
\begin{equation*}
	\WT(g_S)=(-1)^\kappa\pi^{-(n-\kappa-1)}e_{[i,\wh{n+j}]},
	\quad\text{and}\quad
	\WT(g_{S^\perp})=(-1)^\kappa\pi^{-(n-\kappa-1)}e_{[j^\vee,\wh{n+i^\vee}]}.
\end{equation*}
Hence
\[
    \WT(g_S-\sgn(\sigma_S)g_{S^\perp})=
	(-1)^\kappa\pi^{-(n-\kappa-1)}
    \left( 
	e_{[i,\wh{n+j}]}-(-1)^{n+i+j}e_{[j^\vee,\wh{n+i^\vee}]}
	\right).
\]
(xi) When $k<i<\kappa^\vee\leq j^\vee$, we have
$j\leq \kappa<i,\kappa<i^\vee\neq j^\vee$. In this case, we have
\begin{equation*}
	\WT(g_S)=(-1)^{\kappa+1}\pi^{-(n-\kappa)}e_{[i,\wh{n+j}]},
	\quad\text{and}\quad
	\WT(g_{S^\perp})=(-1)^\kappa\pi^{-(n-\kappa-1)}e_{[j^\vee,\wh{n+i^\vee}]}.
\end{equation*}
Hence
\[
	\WT(g_S-\sgn(\sigma_S)g_{S^\perp})=(-1)^{\kappa+1}\pi^{-(n-\kappa)}e_{[i,\wh{n+j}]}. 
\] 
(xii) When $\kappa^\vee\leq i<j^\vee$, we have
$j<\kappa<i$ and $i^{\vee}\le \kappa < j^{\vee}$. In this case, we have
\begin{equation*}
	\WT(g_S)=(-1)^{\kappa+1}\pi^{-(n-\kappa)}e_{[i,\wh{n+j}]},
	\quad\text{and}\quad
	\WT(g_{S^\perp})=(-1)^{\kappa+1}\pi^{-(n-\kappa)}e_{[j^\vee,\wh{n+i^\vee}]}.
\end{equation*}
Hence
\[
	\WT(g_S-\sgn(\sigma_S)g_{S^\perp})=(-1)^{\kappa+1}\pi^{-(n-\kappa)}\left(
	e_{[i,\wh{n+j}]}-(-1)^{n+i+j}e_{[j^\vee,\wh{n+i^\vee}]}
	\right).
\]
\end{proof}

\section{Integral equations of the local model}\label{intequ}

In this section, we explicitly express the affine ring of $U_{\{\kappa\}}^\loc\subset M_{\{\kappa\}}^\loc$ defined in \S \ref{equ_chart}.

\begin{theorem}\label{intequ:main}
Over the affine chart $U^\loc$ at the worst point of the local model, we write
\[
    X=\left(\begin{array}{c|c}
	X_1&X_2\\
	\hline
	X_3&X_4
	\end{array}\right)=
	\left(\begin{array}{cc|c}
		A&B&L\\
		C&D&M\\
		\hline
		E&F&X_4
	\end{array}\right)
\]
Then the affine chart $U^\loc$ is isomorphic to the factor ring of $\mathscr{O}_F[X]$ modulo the ideal generated by the entries of the following matrices:
\begin{altitemize}
	\item[\fontfamily{cmtt}\selectfont LM1.] $X_1^2+X_2X_3=\pi_0 I,\, X_1X_2+X_2X_4=0,\, X_3X_1+X_4X_3=0,\, X_3X_2+X_4^2=\pi_0I,$
	\item[\fontfamily{cmtt}\selectfont LM2.] $-JX_1+X_3^tHX_3+X_1^tJ=0,\, -JX_2+X_3^tHX_4=0,\, X_2^tJ+X_4^tHX_3=0,\, X_4^tHX_4-\pi_0 H_{n-2k}=0,$
	\item[\fontfamily{cmtt}\selectfont  LM2.] $X_1JX_1^t-\pi_0J=0,\, X_1JX_3^t-X_2H=0,\, X_3JX_1^t+HX_2^t=0,\, X_3JX_3^t-X_4H+HX_4^t=0$,
	\item[\fontfamily{cmtt}\selectfont  LM6.] $\bigwedge^2(X+\pi\id)=0,\, \bigwedge^n(X-\pi\id)=0$,
	\item[\fontfamily{cmtt}\selectfont  LM8.] $B=B^{\ad},\, C=C^{\ad},\, D=-2\pi I-A^{\ad},\, M=\pi E^{\ad},\, L=-\pi F^{\ad},\, X_4=X_4^{\ad},\, \tr(X_4)=-(n-2\kappa-2)\pi$.
\end{altitemize}
Hence, after simplification, the affine chart $U^\loc_{\{\kappa\}}$ is isomorphic to the scheme
\begin{equation*}
	\Spec\frac{\CO_F[X_3,X_4]}{\bigwedge^2(X_3,X_4+\pi\id),X_4-X_4^{\ad},\tr(X_4)+(n-2k-2)\pi}.
\end{equation*}
Let $\bA=H(X_4+\pi\id)$ and $\bB=X_4$, it further simplifies into
\begin{equation*}
	\Spec\frac{\CO_F[\bA,\bB]}{\bigwedge^2(\bA,\bB),\bA-\bA^{t},\tr(\bA H)-2\pi}.
\end{equation*}
\end{theorem}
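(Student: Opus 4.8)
The plan is to mirror, over the ring of integers $\CO_F$, the computation already carried out over the special fiber in \S\ref{equ} and \S\ref{max}, keeping track of the $\pi$-adic corrections that were killed by reduction mod $\pi$. First I would record the purely ``naive/wedge'' relations {\fontfamily{cmtt}\selectfont LM1}, {\fontfamily{cmtt}\selectfont LM2}, {\fontfamily{cmtt}\selectfont LM6}: these come directly from the matrix identities of \S\ref{equ_nw}, except that now one does not set $\pi_0=0$, so $X^2=\pi_0 I_n$, the transition-map conditions acquire their $\pi_0$-terms exactly as displayed before Proposition \ref{equ_nw:summary}, and the wedge condition becomes $\bigwedge^2(X+\pi\,\id)=0$, $\bigwedge^n(X-\pi\,\id)=0$ since $1\otimes\pi$ now acts by $\pi$ rather than $0$. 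None of this requires new ideas; it is bookkeeping of the formulas already in \S\ref{equ_nw} with $\pi_0$ and $\pi$ restored.

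The substance is the strengthened spin condition {\fontfamily{cmtt}\selectfont LM8}. Here I would redo \S\ref{equ_ss-comp} keeping the ``second-worst term'' of each $g_S$ (Proposition \ref{equ_ss-comp:g_S} already records these, and \S\ref{equ_ss-setup} explains why the second-worst term suffices). Running Proposition \ref{equ_ss-comp:g_S-dual} and the lattice computation of Proposition \ref{equ_ss-comp:lattice} \emph{without} discarding the $\pi$-order-one contributions yields an integral refinement of the basis of $L^{n-1,1}_{-1}(\Lambda_\kappa)(R)$ from Corollary \ref{equ_ss-comp:basis}; the new feature is that relations which over the special fiber read $C=C^{\ad}$, $B=B^{\ad}$, $D=-A^{\ad}$, $X_4=X_4^{\ad}$, $\tr X_4=0$ (see \S\ref{equ_coord_translation}) pick up $\pi$-shifts, becoming $D=-2\pi I - A^{\ad}$, $\tr(X_4)=-(n-2\kappa-2)\pi$, and there appear genuinely new relations $M=\pi E^{\ad}$, $L=-\pi F^{\ad}$ linking the blocks $L,M$ to $E,F$. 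The precise shift constants are forced by the $\pi$-linear terms in Proposition \ref{equ_ss-comp:g_S}(i)--(ii) and by the eigenvalue structure $V=V_\pi\oplus V_{-\pi}$ entering the definition of $W^{r,s}$; I would extract them by the same worst-term/second-worst-term matching used in the proof of Proposition \ref{equ_ss-comp:lattice}, now tracking both the $\pi^{-(n-\kappa)}$ and the $\pi^{-(n-\kappa-1)}$ coefficients.

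Having assembled the candidate ideal $\CI_{\CO}\subset\CO_F[X]$ generated by all the listed matrix entries, the remaining point is that $\CO_F[X]/\CI_{\CO}$ is exactly the affine chart $U^\loc_{\{\kappa\}}$ of the local model, i.e.\ that it is $\CO_F$-flat with the correct generic fibre. Flatness I would get from the ``$\pi$ not a zero-divisor'' criterion: $\CI_{\CO}\otimes_{\CO_F}k$ is the ideal $\CI$ of Theorem \ref{equ_coord:ss-main} (one checks the reductions of the new $\pi$-shifted relations collapse to the old ones and the new relations $M=\pi E^{\ad}$, $L=-\pi F^{\ad}$ reduce to $M=L=0$, consistent with $X_2=0$), and by Theorem \ref{equ_coord:ss-main} together with Theorem \ref{max_red:reduced_sym}(i) the special fibre is reduced of dimension $n-1$; meanwhile the generic fibre is smooth of dimension $n-1$ because over $F$ all the conditions defining $M^\naive$, $M^\wedge$, $M^\spin$, $M$ coincide (\S\ref{moduli_fc}). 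Reducedness of the special fibre plus the topological flatness of Theorem \ref{moduli_fc:top-flat} then gives, via \cite[Proposition 14.17]{GW2020}, that $\CO_F[X]/\CI_{\CO}$ is flat and agrees with the scheme-theoretic closure of its generic fibre, hence is $U^\loc_{\{\kappa\}}$; and finally the isomorphism with the displayed presentation $R_\CO$ in Theorem \ref{intro_back:maxco}(i) follows by the same block-elimination ($X_1 = -\tfrac12 J X_3^t H X_3$ etc., now with $\pi$-corrections) carried out in \S\ref{max_simp}. The main obstacle is the first part of the second paragraph: correctly pinning down the $\pi$-shift constants in the integral strengthened spin relations — the arithmetic of the second-worst terms is delicate and error-prone, and getting, say, the coefficient $n-2\kappa-2$ in $\tr(X_4)=-(n-2\kappa-2)\pi$ right requires careful handling of the sign conventions $\sgn(\sigma_S)$ and the normalization of the basis \eqref{moduli_PR:odd-basis}.
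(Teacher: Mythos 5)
Your proposal is essentially the paper's own proof: restore $\pi_0$ and $\pi$ in the {\fontfamily{cmtt}\selectfont LM1}/{\fontfamily{cmtt}\selectfont LM2}/{\fontfamily{cmtt}\selectfont LM6} relations of \S\ref{equ_nw}, rerun the worst-/second-worst-term analysis of \S\ref{equ_ss-comp} integrally to see that the strengthened spin condition forces the $\pi$-shifted relations in {\fontfamily{cmtt}\selectfont LM8} (including $M=\pi E^{\ad}$, $L=-\pi F^{\ad}$, $D=-2\pi I-A^{\ad}$, $\tr(X_4)=-(n-2\kappa-2)\pi$), and then identify the resulting closed subscheme of $U^\naive_{\{\kappa\}}$ with $U^\loc_{\{\kappa\}}$ using the generic-fibre equality, the already-computed reduced special fibre, and \cite[Proposition 14.17]{GW2020}. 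The only difference is cosmetic: the paper packages the last step as the sandwich $U^\loc_{\{\kappa\}}\subset U'\subset U^\naive_{\{\kappa\}}$ (and remarks one could instead verify the equations on the generic fibre), while you invoke Theorem \ref{moduli_fc:top-flat}; the substance is the same.
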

		
\subsubsection{}
The equations defined above determine a closed subscheme $U'$ of $U^\naive_{\{\kappa\}}\subset M^\naive_{\{\kappa\}}$.
We want to show that $U_{\{\kappa\}}^\loc\subset U'\subset U^\naive_{\{\kappa\}}$.
Once this inclusion holds, since $U^\loc_{\{\kappa\}}\subset U^\naive_{\{\kappa\}}$ is equality over the generic fiber, and $U'$ agrees with $U^\loc_{\{\kappa\}}$ over the special fiber by having the same defining equations, it follows from \cite[Proposition 14.17]{GW2020} that $U^\loc_{\{\kappa\}}=U'$. 
Therefore, it suffices to verify that the relations in {\fontfamily{cmtt}\selectfont  LM8} hold on $U$.
The argument is the same as in the proof of Theorem \ref{equ_coord:ss-red}, except that we must keep track of additional terms in the integral equations.

Note that since flatness has already been established, one may alternatively verify the defining equations over the generic fiber, as in \cite{Zachos}.

Recall that, with respect to the standard basis, we have $\CF_\kappa=\Span\left(\begin{matrix}\CX\\I_n\end{matrix}\right)$, where $\CX$ is defined in \eqref{equ_chart:standard-X}.
We write
\begin{equation*}
	\bigwedge^n\CF_k=\sum_S c_S e_S=\sum_{T \text{ balanced }}a_{T}(g_T-\sgn(\sigma_T)g_{T^\perp}),
\end{equation*}
where $S,T\subset \{1,\cdots,2n\}$ are subsets of size $n$. 
All relevant relations arise from comparing the coefficients $a_T$ and $c_S$ for those $S$ of the type $(n,0)$ and $(n-1,1)$, as in the special fiber analysis of \S \ref{equ_coord}.
Our goal is to express each $g_T-\sgn(\sigma_T)g_{T^\perp}$ as a linear combination of $c_S e_S$ with $S$ of type $(n,0)$ and $(n-1,1)$.
We write ``$O$'' for the terms involving$S$ of other types.

\subsubsection{}
Recall that we define $c_{ij}:=c_{\{i,n+1,\cdots,\wh{n+j},\cdots,2n\}}$.
In particular, we have
\begin{equation*}
    c_{ij}e_{[i,\wh{n+j}]}=e_{n+1}\wedge\cdots\wedge e_{n+j-1}\wedge (x_{ij}e_i)\wedge e_{n+j+1}\wedge\cdots e_{2n}=(-1)^{j-1}x_{ij}e_{[i,\wh{n+j}]}.
\end{equation*}
Hence, $x_{ij}=(-1)^{j-1}c_{ij}$.
			
It is straightforward from Proposition \ref{equ_ss-comp:g_S} or the proof of Proposition \ref{equ_ss-comp:g_S-dual} that we have 
\bigskip	

\noindent\textbf{Case 1.} When $S=S^\perp, i\neq j$, which is equivalent to $i+j=n+1$ with $i\neq j$.

\noindent (i) When $i\leq k$, we have
\[
	g_S-\sgn(\sigma_S)g_{S^\perp}
	=2(-1)^\kappa \pi^{-(n-\kappa-2)}e_{[i,\wh{n+j}]}+O.
\]
\noindent (ii) When $\kappa<i\leq n-\kappa,i\neq j$, we have
\[
    g_S-\sgn(\sigma_S)g_{S^\perp}
	=2(-1)^{\kappa+1} \pi^{-(n-\kappa-1)}e_{[i,\wh{n+j}]}+O.
\]
\noindent (iii) When $i>n-\kappa\Rightarrow j\leq \kappa,i\neq j$, we have 
\[
	g_S-\sgn(\sigma_S)g_{S^\perp}
	=2(-1)^{\kappa+1} \pi^{-(n-\kappa)}e_{[i,\wh{n+j}]}+O.
\]
			
\noindent\textbf{Case 2.} When $i=j$.

\noindent (iv) When $i=j\leq \kappa$, we have
\[
	g_S-\sgn(\sigma_S)g_{S^\perp}
	=(-1)^{\kappa+1}\pi^{-(n-\kappa-1)}
	\left(e_{[i,\wh{n+i}]}
	+(-1)^ne_{[i^\vee,\wh{n+i^\vee}]}\right)+O.
\]
\noindent (v) When $\kappa<i=j\leq m$, we have
\begin{align*}
	&g_{S_i}-\sgn(\sigma_{S_i})g_{S_i^\perp}\\
 ={}&(-1)^{\kappa+i+1}\pi^{-(n-\kappa)}\Bigl[e_{\{n+1,\cdots,2n\}}+\pi\Bigl(
	(-1)^{i+1}e_{[i,\wh{n+i}]}
	+(-1)^{i^\vee+1}e_{[i^\vee,\wh{n+i^\vee}]}
	+\sum_{\sigma=1}^n(-1)^\sigma e_{[\sigma,\wh{n+\sigma}]}\Bigr)\Bigr]+O
\end{align*}
(vi) When $i=j=m+1$ (which occurs only when $n$ is odd), we have
\begin{equation*}
    g_S-\sgn(\sigma_S)g_{s^\perp}=(-1)^{\kappa+m}\pi^{-(n-\kappa)}\Bigl[e_{\{n+1,\cdots,2n\}}
    +\pi\Bigl(2(-1)^{m}e_{[m+1,\wh{n+m+1}]}+
	\sum_{\sigma=1}^n (-1)^\sigma e_{[\sigma,\wh{n+\sigma}]}\Bigr)\Bigr]+O.
\end{equation*}
			
\noindent\textbf{Case 3.} When $S$ balanced, with $S \neq S^\perp$ and $i\neq j.$

\noindent (vii) When $i<j^\vee\leq \kappa$, we have
\[
	g_S-\sgn(\sigma_S)g_{s^\perp}=(-1)^\kappa\pi^{-(n-\kappa-2)}
	\left(
	e_{[i,\wh{n+j}]}-(-1)^{n+i+j}e_{[j^\vee,\wh{n+i^\vee}]}
	\right)+O.
\]
\noindent (viii) When $i\leq \kappa<j^\vee<n-\kappa+1$, we have
\[
    g_S-\sgn(\sigma_S)g_{S^\perp}=
	(-1)^\kappa\pi^{-(n-\kappa-2)}e_{[i,\wh{n+j}]}+
	(-1)^{n+\kappa+1+i+j}\pi^{-(n-\kappa-1)}e_{[j^\vee,\wh{n+i^\vee}]}+O.
\]
\noindent (ix) When $i\leq \kappa,j^\vee\geq n-\kappa+1$, we have
\[
	g_S-\sgn(\sigma_S)g_{S^\perp}=(-1)^{\kappa+1}\pi^{-(n-\kappa-1)}
	\left(
	e_{[i,\wh{n+j}]}+(-1)^{n+i+j}e_{[j^\vee,\wh{n+i^\vee}]}
	\right)+O.
\]
(x) When $\kappa<i<j^\vee<n-\kappa+1$, we have
\[
	g_S-\sgn(\sigma_S)g_{S^\perp}=
	(-1)^\kappa\pi^{-(n-\kappa-1)}
	\left( 
	e_{[i,\wh{n+j}]}-(-1)^{n+i+j}e_{[j^\vee,\wh{n+i^\vee}]}
	\right)+O.
\]
\noindent (xi) When $\kappa<i<n-\kappa+1\leq j^\vee$, we have
\[
	g_S-\sgn(\sigma_S)g_{S^\perp}
	=(-1)^{\kappa+1}\pi^{-(n-k)}e_{[i,\wh{n+j}]}
	+(-1)^{n+\kappa+1+i+j}\pi^{-(n-\kappa-1)}e_{[j^\vee,\wh{n+i^\vee}]}.
	+O.
\]
\noindent (xii) When $n-\kappa+1\leq i<j^\vee$, we have
\[
	g_S-\sgn(\sigma_S)g_{S^\perp}=(-1)^{\kappa+1}\pi^{-(n-\kappa)}\left(
	e_{[i,\wh{n+j}]}-(-1)^{n+i+j}e_{[j^\vee,\wh{n+i^\vee}]}
	\right)+O.
\]
Note that only case $(viii)$ and $(xi)$ are different from Proposition \ref{equ_ss-comp:g_S-dual}.

\subsubsection{}
Next, we compare all the coefficients appearing in the above expression.
			
\begin{altitemize}
\item 
For $1\leq i\leq \kappa$ and $n-\kappa+1\leq j\leq n$, we have
\begin{equation*}
c_{ij}=(-1)^\kappa\pi^{-(n-\kappa-2)}a_{ij},
\quad\text{and}\quad
c_{j^\vee i^\vee}=(-1)^{\kappa+n+1+j^\vee+i^\vee}\pi^{-(n-\kappa-2)}a_{j^\vee i^\vee}.
\end{equation*}
Hence
\begin{equation*}
x_{ij}=(-1)^{j-1}(-1)^\kappa\pi^{-(n-\kappa-2)}a_{ij},
\quad\text{and}\quad
x_{j^\vee i^\vee}=(-1)^{i^\vee-1}(-1)^{\kappa+n+1+i^\vee+j^\vee}\pi^{-(n-\kappa-2)}a_{ij}.
\end{equation*}
Therefore, we obtain $x_{ij}=x_{j^\vee i^\vee}$, which proves that $B=B^{\ad}$.
Similarly we have $C=C^{\ad}$.
				
\item 
For $1\leq i\leq \kappa$ and $\kappa+1\leq j\leq n-\kappa$, we have
\begin{equation*}
c_{ij}=(-1)^\kappa\pi^{-(n-\kappa-2)}a_{ij},
\quad\text{and}\quad
c_{j^\vee i^\vee}=(-1)^{n+\kappa+1+i+j}\pi^{-(n-\kappa-1)}a_{ij}.
\end{equation*}
Hence
\begin{equation*}
x_{ij}=(-1)^{j-1}(-1)^\kappa\pi^{-(n-\kappa-2)}a_{ij},
\quad\text{and}\quad
x_{j^\vee i^\vee}=(-1)^{i^\vee-1}(-1)^{n+\kappa+1+i+j}\pi^{-(n-\kappa-1)}a_{ij}.
\end{equation*}
Therefore, we obtain $x_{ij}=\pi x_{j^\vee i^\vee}$, which proves that $F=\pi L^{\ad}$.
				
\item 
For $\kappa+1\leq i\leq n-\kappa$ and  $1\leq j\leq \kappa$, we have
\begin{equation*}
c_{ij}=(-1)^{\kappa+1}\pi^{-(n-\kappa)}a_{ij},
\quad\text{and}\quad
c_{j^\vee i^\vee}=(-1)^{i^\vee -1}(-1)^{n+\kappa+1+i+j}\pi^{-(n-\kappa-1)}.
\end{equation*}
Hence
\begin{equation*}
x_{ij}=(-1)^{j-1}(-1)^{\kappa+1}\pi^{-(n-\kappa)}a_{ij},
\quad\text{and}\quad
x_{j^\vee i^\vee}=(-1)^{i^\vee-1}(-1)^{n+\kappa+1+i+j}\pi^{-(n-\kappa-1)}.
\end{equation*}
Therefore, we obtain $\pi x_{ij}=-x_{j^\vee i^\vee}$, which proves that $\pi M=-E^{\ad}$.
				
\item 
For $\kappa+1\leq i,j\leq n-\kappa$.
\begin{altenumerate}
\item When $i\neq j$ and $i+j<n+1$, we have
\begin{equation*}
c_{ij}=(-1)^\kappa\pi^{-(n-\kappa-1)}a_{ij},
\quad\text{and}\quad
c_{j^\vee i^\vee}=(-1)^{n+1+\kappa+i+j}\pi^{-(n-\kappa-1)}a_{j^\vee i^\vee}
\end{equation*}
Hence
\begin{equation*}
x_{ij}=(-1)^{j-1}(-1)^\kappa\pi^{-(n-k-1)}a_{ij},
\quad
x_{j^\vee i^\vee}=(-1)^{i^\vee-1}(-1)^{n+1+\kappa+i+j}\pi^{-(n-\kappa-1)}a_{ij}.
\end{equation*}
Therefore $x_{ij}=x_{j^\vee i^\vee}$.
\item When $i=j$.
For $i\neq m+1$, we have
\begin{equation*}
c_{ii}=(-1)^i\pi^{-(n-\kappa-1)}\sum_{\substack{\kappa\leq i\leq n-\kappa\\\sigma\neq i }} (-1)^{\kappa+\sigma+1}a_{\sigma\sigma},
\quad
c_{i^\vee i^\vee}=(-1)^{i^\vee}\pi^{-(n-\kappa-1)}\sum_{\substack{\kappa\leq i\leq n-\kappa\\\sigma\neq i }} (-1)^{\kappa+\sigma+1}a_{\sigma\sigma}.
\end{equation*}
					
For $i\neq m+1$, we have
\begin{equation*}
c_{ii}=(-1)^i\pi^{-(n-\kappa-1)}\sum_{\substack{\kappa\leq i\leq n-\kappa\\\sigma\neq i }} (-1)^{\kappa+\sigma+1}a_{\sigma\sigma},
\quad
c_{i^\vee i^\vee}=(-1)^{i^\vee}\pi^{-(n-\kappa-1)}\sum_{\substack{\kappa\leq i\leq n-\kappa\\\sigma\neq i }} (-1)^{\kappa+\sigma+1}a_{\sigma\sigma}.
\end{equation*}
					
For $i=m+1$, we have
\begin{equation*}
c_{m+1,m+1}=(-1)^\kappa\pi^{-(n-\kappa-1)}a_{m+1,m+1}+(-1)^{m+1}\pi^{-(n-\kappa-1)}\sum_{\sigma=\kappa+1}^M (-1)^{\kappa+\sigma+1}a_{\sigma\sigma}.
\end{equation*}
Therefore, when $i\neq m+1$, we have
\begin{equation*}
x_{ii}=(-1)^{\kappa}\pi^{-(n-\kappa-1)}\sum_{\sigma\neq i}(-1)^\sigma a_{\sigma\sigma},
\quad
x_{i^\vee i^\vee}=(-1)^{\kappa}\pi^{-(n-\kappa-1)}\sum_{\sigma\neq i}(-1)^\sigma a_{\sigma\sigma}.
\end{equation*}
\begin{equation*}
x_{m+1,m+1}=(-1)^{\kappa}\pi^{-(n-\kappa-1)}\sum_{\sigma\neq m+1}(-1)^\sigma a_{\sigma\sigma}+(-1)^{m+\kappa}\pi^{-(n-\kappa-1)}a_{m+1,m+1}.
\end{equation*}
\end{altenumerate}
Hence, we deduce that $X_4=X_4^{\ad}$.
				
\item
Next since $c_{\{n+1,\cdots,2n\}}=1$, we get
\begin{equation*}
\sum_{\sigma=\kappa+1}^M a_\sigma (-1)^{\kappa+\sigma+1} \pi^{-(n-\kappa)}=1.
\end{equation*}
Hence, we have
\begin{equation*}
\sum_{\sigma=\kappa+1}^M (-1)^\sigma a_\sigma=(-1)^{\kappa+1}\pi^{n-\kappa}.
\end{equation*}

\begin{altenumerate}
\item When $n=2m$,
\begin{align*}
\tr(X_4)={}	&	2(-1)^{\kappa}\pi^{-(n-\kappa-1)}\sum_{i=\kappa+1}^m \Bigl(\sum_{\sigma\neq i}(-1)^\sigma a_\sigma\Bigr),\\
={}&(-1)^\kappa\pi^{-(n-\kappa-1)}2\Bigl((m-\kappa-1)\sum_{\sigma=\kappa+1}^M (-1)^\sigma a_\sigma\Bigr),\\
={}	&	-(n-2\kappa-2)\pi.
\end{align*}
\item When $n=2m+1$,
\begin{align*}
\tr(X_4)={}	&	x_{m+1,m+1}+2\sum_{i=\kappa+1}^m x_{ii},\\
={}	&	(-1)^{m+\kappa}\pi^{-(n-\kappa-1)}a_{m+1}+(-1)^{\kappa}\pi^{-(n-\kappa-1)}\Bigl(\sum_{\sigma\neq m+1}(-1)^\sigma a_\sigma\Bigr)\\
&\hspace{6cm}+2(-1)^\kappa\pi^{-(n-\kappa-1)}\Bigl(\sum_{i=\kappa+1}^m\sum_{\sigma\neq i}(-1)^\sigma a_\sigma\Bigr),\\
={}	&	(-1)^\kappa\pi^{-(n-\kappa-1)}(2m-1-2\kappa)(-1)^{\kappa+1}\pi^{n-\kappa}\\
={}	&	-(n-2\kappa-2)\pi.
\end{align*}
\end{altenumerate}
This proves that $\tr(X_4)=-(n-2\kappa-2)\pi.$
				
\item Finally, when $1\leq i,j\leq \kappa$.
\begin{altenumerate}
\item When $i\neq j$, we have 
\begin{equation*}
c_{ij}=(-1)^{\kappa+1}\pi^{-(n-\kappa-1)}a_{ij},
\quad\text{and}\quad
c_{j^\vee i^\vee}=(-1)^{n+1+\kappa+i^\vee+j^\vee}\pi^{-(n-\kappa-1)}a_{ij}
\end{equation*}
Hence
\begin{equation*}
x_{ij}=(-1)^{j-1}(-1)^{\kappa+1}\pi^{-(n-\kappa-1)}a_{ij},
\quad\text{and}\quad
x_{j^\vee i^\vee}=(-1)^{i^\vee-1}(-1)^{n+1+\kappa+i^\vee+j^\vee}\pi^{-(n-\kappa-1)}a_{ij}.
\end{equation*}
Therefore, we obtain $x_{ij}=-x_{j^\vee i^\vee}$.
\item When $i=j$, we have
\begin{align*}
c_{ii}={}&(-1)^{\kappa+1}\pi^{-(n-\kappa-1)}a_{ii}+(-1)^i\sum_{\sigma=\kappa+1}^M(-1)^{\kappa+\sigma+1}\pi^{-(n-\kappa-1)},\\
c_{i^\vee i^\vee}={}&(-1)^{n+1+\kappa}\pi^{-(n-\kappa-1)}a_{i^\vee i^\vee}+(-1)^{i^\vee}\sum_{\sigma=\kappa+1}^M(-1)^{\kappa+\sigma+1}\pi^{-(n-\kappa-1)}.
\end{align*}
Hence
\begin{align*}
x_{ii}={}&(-1)^{\kappa+i}\pi^{-(n-\kappa-1)}a_{ii}-\sum_{\sigma=\kappa+1}^M(-1)^{\kappa+\sigma+1}\pi^{-(n-\kappa-1)}a_{\sigma,\sigma},\\
x_{i^\vee i^\vee}={}&(-1)^{n+\kappa+i^\vee}\pi^{-(n-\kappa-1)}a_{ii}-\sum_{\sigma=\kappa+1}^M(-1)^{\kappa+\sigma+1}\pi^{-(n-\kappa-1)}a_{\sigma,\sigma}.
\end{align*}

Recall that $\sum_{\sigma=\kappa+1}^M (-1)^\sigma a_\sigma=(-1)^{\kappa+1}\pi^{n-\kappa}$. Hence,
$x_{ii}+x_{i^\vee i^\vee}=-2\pi$, which implies that $D+A^{\ad}=-2\pi I$.
\end{altenumerate}
\end{altitemize}



\end{document}